\newcolumntype{C}[1]{>{\centering\arraybackslash}m{#1}} 
\definecolor{darkred}{rgb}{0.4,0,0} 
\definecolor{darkblue}{rgb}{0,0,0.4}
\definecolor{darkgreen}{rgb}{0,.4,0}
\newcommand{\BB}{\mathcal{B}} 
\newcommand{\CC}{\mathcal{C}} 
\newcommand{\PP}{\mathbb{P}} 
\newcommand{\EE}{\mathbb{E}} 
\newcommand{\TT}{\mathbb{T}} 
\newcommand{\dd}{\,\mathrm{d}} 
\newcommand{\RR}{\mathbb{R}} 
\newcommand{\ZZ}{\mathbb{Z}} 
\newcommand{\lng}{\langle} 
\newcommand{\rng}{\rangle} 
\newcommand{\ee}{\mathrm{e}} 
\newcommand{\ii}{\mathrm{i}} 
\newcommand{\al}{\alpha}
\newcommand{\eps}{\varepsilon} 
\newcommand{\bt}{\beta}
\newcommand{\gm}{\gamma}
\newcommand{\VVert}[1]{{\left\vert\kern-0.25ex \left\vert\kern-0.25ex \left \vert #1 
    \right\vert\kern-0.25ex \right\vert\kern-0.25ex \right\vert}} 
\theoremstyle{plain}
\newtheorem{theorem}{Theorem}[section]
\newtheorem{proposition}[theorem]{Proposition}
\newtheorem{corollary}[theorem]{Corollary}
\newtheorem{lemma}[theorem]{Lemma}
\theoremstyle{definition}
\newtheorem{definition}[theorem]{Definition}
\theoremstyle{remark}
\newtheorem{remark}[theorem]{Remark}
\numberwithin{equation}{section}
\title{Exponential loss of memory for the 2-dimensional  Allen-Cahn equation with small noise}
\author{Pavlos Tsatsoulis, Hendrik Weber}
\date{}
\begin{document}

\maketitle

\begin{abstract} We prove an asymptotic coupling theorem for the $2$-dimensional Allen--Cahn equation perturbed by a small space-time white noise. 
We show that with overwhelming  probability two profiles that start close to the minimisers of the potential of the deterministic system contract 
exponentially fast in a suitable topology. In the $1$-dimensional case a similar result was shown in \cite{MS88,MOS89}.

It is well-known that in more than one dimension solutions  of this equation are distribution-valued, and the equation has to be interpreted in 
a renormalised sense. Formally, this renormalisation corresponds to moving the minima of the potential infinitely far apart and making them 
infinitely deep. We show that despite this renormalisation, solutions behave like perturbations of the deterministic system without renormalisation:
they spend large stretches of time close to the minimisers of the (un-renormalised) potential and  the exponential 
contraction rate of different profiles  is given by the second derivative of the potential in these points. 

As an application we prove an Eyring--Kramers law for the transition times between the stable solutions of the deterministic system for fixed 
initial conditions.

\noindent \textsc{Keywords}:  Singular SPDEs, metastability, asymptotic coupling, Eyring--Kramers law. 

\noindent \textsc{MSC 2010}: 60H15, 35K57.

\end{abstract}

\begin{spacing}{0.05}
 \tableofcontents
\end{spacing}

\tikzsetexternalprefix{./macros/draft/}

\section{Introduction}

We are interested in the behaviour of solutions to the  Allen--Cahn equation, perturbed by a small noise term.
The deterministic equation is given by
\begin{equation} \label{eq:det_AC}
 (\partial_t-\Delta) X = - X^3 + X,
\end{equation}
and it is well-known that \eqref{eq:det_AC} is a gradient flow with respect to the potential
\begin{equation} \label{eq:V_def}
 V(X) := \int \left(\frac{1}{2}|\nabla X(z)|^2 - \frac{1}{2} |X(z)|^2 + \frac{1}{4} |X(z)|^4 \right) \dd z.
\end{equation}
The fluctuation-dissipation theorem  suggests an additive Gaussian space-time white noise $\xi$ as a natural random perturbation of 
\eqref{eq:det_AC}; so we consider
\begin{equation} \label{eq:AC}
 (\partial_t - \Delta) X = -X^3 + X + \sqrt{2\eps} \xi,
\end{equation}
for a small parameter $\eps>0$.

In the $1$-dimensional case, i.e. the case where the solution $X$ depends on time and a $1$-dimensional spatial argument, the behaviour 
of solutions to \eqref{eq:AC} is well-understood. Solutions exhibit the phenomenon of metastability, i.e.  they typically spend large stretches of time 
close to the minimisers of the potential \eqref{eq:V_def} with rare and relatively quick noise-induced transitions between them.
Early contributions go back to the 80s where Farris and Jona--Lasinio  \cite{FJL82} studied the system on the level of large deviations.

We are particularly interested in the ``exponential loss of memory property'' first observed  by Martinelli, Olivieri and Scoppola in \cite{MS88,MOS89}.
They studied the flow map induced by \eqref{eq:AC}, i.e. the random map $x \mapsto X(t;x)$ which associates to any initial condition the corresponding 
solution at time $t$, and showed that for large $t$ the map becomes essentially constant. 
They also showed that with overwhelming probability, solutions that start within the basin of attraction of the same minimiser of $V$ contract exponentially fast, with exponential rate given 
by the smallest eigenvalue of the linearisation of $V$ in this minimiser. This implies for example that the law of such solutions at large times is essentially insensitive 
to the precise location at which they are started.

It is very natural to consider higher dimensional analogues of \eqref{eq:AC}, but unfortunately for 
space dimension $d \geq 2$, equation \eqref{eq:AC} is ill-posed. In fact, for $d \geq 2$  the space-time
white noise becomes so irregular, that solutions have to be interpreted in the sense of Schwartz distributions, and the
interpretation of the nonlinear term is a priori unclear. These kind of singular stochastic partial differential equations (SPDEs)
have received a lot of attention recently (see e.g. \cite{DPD03,Ha14,GIP15}). The solution 
proposed in these works is to \emph{renormalise} the equation, by removing some infinite terms, formally leading to the equation
\begin{equation}
 (\partial_t - \Delta) X = - X^3 + (1+3\eps \infty) X + \sqrt{2\eps} \xi. \label{eq:Phi42} 
\end{equation} 
Note that formally, this renormalisation corresponds to moving the minima of the double-well potential
out to $\pm \infty$ and  making them infinitely deep at the same time.
So at first glance,  it seems unclear why these 
renormalised distribution-valued solutions should exhibit similar behaviour to the $1$-dimensional
function-valued solutions of \eqref{eq:AC}.

In \cite{HW15} Hairer and the second named author studied the small $\eps$ asymptotics for \eqref{eq:Phi42} for space dimension $d=2$ and $3$
on the level of Freidlin-Wentzell type large deviations. They obtained a large deviation principle with rate function $\mathcal{I}$  given by 
\begin{equation} \label{eq:I_def}
 \mathcal{I}(X) := \frac{1}{4} \int_0^T \int \left( \partial_t X(t,z) - \left( \Delta X(t,z) -\left( X(t,z)^3 - X(t,z) \right) \right)\right)^2 \dd z \dd t.
\end{equation}
 In fact, a result in a similar spirit had already appeared in the 90s \cite{JLM90}. The striking fact is that this rate function is exactly the 2-dimensional version of the  rate function 
obtained in the 1-dimensional case \cite{FJL82}. The infinite renormalisation constant 
does not  affect the rate functional.  This result implies that for small $\eps$ solutions of the renormalised SPDE \eqref{eq:Phi42} stay close to solutions
of the deterministic PDE \eqref{eq:det_AC} suggesting that \eqref{eq:Phi42} may indeed be the natural small noise perturbation of \eqref{eq:det_AC}.

In this article we consider \eqref{eq:Phi42}  over a $2$-dimensional torus $\TT^2 = \RR^2/L\ZZ^2$ for $L<2 \pi$. 
It is known that under this assumption on the torus size $L$, the deterministic equation \eqref{eq:det_AC} has exactly
3 stationary solutions, namely the constant profiles $-1,0,1$  (see \cite[Appendix B.1]{KORV07}). Here $\pm 1$  are stable minimisers of $V$ and $0$ is unstable. 
We prove that in the small noise regime with overwhelming probability solutions that start close to the same stable minimiser $\pm1$ contract exponentially fast.
The exponential contraction rate is arbitrarily close to $2$, the second derivative of the double-well $x \mapsto \frac14 x^4 - \frac12 x^2$ in $\pm 1$.
This is precisely the $2$-dimensional version of \cite[Corollary 3.1]{MOS89}. 

On a technical level we work with the Da Prato--Debussche decomposition (see Section \ref{s:prel} for more details). An immediate observation is that
differences of any two profiles have much better regularity than the solutions themselves. We split the time axis into random ``good'' and ``bad'' 
intervals depending on whether a reference profile is close to $\pm1$ or not. The key idea is that on ``good'' intervals solutions should contract exponentially,
while they should not diverge too fast on ``bad'' intervals. Furthermore,  ``good'' intervals should be much longer than ``bad'' intervals.

The control on the ``good'' intervals is relatively straightforward: the exponential contraction follows by linearising the equation and the fact that 
these intervals are typically long follows from exponential moment bounds on the explicit stochastic objects appearing in the Da Prato--Debussche approach.
The control on the ``bad'' intervals is much more involved:  in the $1$-dimensional case two  profiles cannot diverge too fast, because the 
second derivative of the double-well potential  is bounded from below. But in the $2$-dimensional case, where solutions are distribution-valued, there is no 
obvious counterpart of this property. Instead we use a strong a priori estimate obtained in our previous work 
\cite{TW18} and the local Lipschitz continuity of the non-linearity. Ultimately this yields an exponential growth bound where the exponential 
rate is given by a polynomial in the explicit stochastic objects. We use a large deviation estimate to prove that these intervals cannot be too long.
In the final step we show that the exponential contraction holds for all $t$ if a certain random walk with positive drift stays positive for all times. 
This random walk is then analysed using techniques developed for  the  classical Cram\'er--Lunberg model in risk theory.

The original motivation for our work was to prove  an Eyring--Kramers law for the transition times of $X$. In \cite{BGW17} Berglund, Di Ges\'u and the second named author 
studied spectral Galerkin approximations $X_N$ of \eqref{eq:Phi42} and obtained explicit estimates on the expected first transition times from a neighbourhood 
of $-1$ to a neighbourhood of $1$. These estimates give a precise asymptotic as $\eps\to 0$ and hold uniformly in the discretisation parameter $N$. 
Their method was based on the potential theoretic approach developed in the finite-dimensional context by Bovier et al. in \cite{BEGK04}. This approach relies heavily on the reversibility of the dynamics
and  provides explicit formulas for the expected transition times in terms of certain integrals of the reversible measure. The key observation in \cite{BGW17} was that in the context
of \eqref{eq:AC} these integrals can be analysed uniformly in the parameter $N$ using the classical Nelson's estimate \cite{Ne73} from constructive Quantum Field Theory.
However, the result in \cite{BGW17} was not optimal for the following two reasons: First, it does not allow 
to pass to the limit as $N\to\infty$ to retrieve the estimate for the transition times of $X$. Second, and more important, the bounds could only be obtained for a certain  
$N$-dependent choice of initial distribution on the neighbourhood of $-1$. This problem is inherent to the potential theoretic approach, which only yields an exact 
formula for the diffusion started in this so-called normalised equilibrium measure. In fact, a large part of the original work \cite{BEGK04} was dedicated to removing this
problem using regularity theory for the finite-dimensional transition probabilities.

In this paper we overcome these two barriers. We first justify the passage to the limit $N\to\infty$ based on our previous work \cite{TW18}: we use the strong a priori estimates on the level of the approximation $X_N$ and 
the support theorem obtained there to prove uniform integrability of the transition times of $X_N$.  The only difficulty here comes from the action of 
the Galerkin projection on the non-linearity which does not allow to test the equation with powers greater than $1$. To remove the unnatural assumption on the initial distribution we make use of our main result, the exponential contraction estimate.
This estimate allows us to  couple the solution started with an arbitrary but fixed initial condition with the solution started in the normalised equilibrium measure.

\subsection{Outline}

In Section \ref{s:prel} we briefly review  the solution theory of \eqref{eq:Phi42}. In Section \ref{s:method} we state our main result, Theorem \ref{thm:exp_contr}, and some 
key propositions needed for its proof. In Section \ref{s:exp_contr_proof} we present the proof the main theorem making full use of the auxiliary propositions presented in Section \ref{s:method}. 
These propositions are proved in Sections \ref{s:diff_est_proof} and \ref{s:rwe}. Finally, in Section 
\ref{s:eyr_kram_app}  we apply our main result,  Theorem \ref{thm:exp_contr}, to prove an Eyring--Kramers law for \eqref{eq:Phi42}, generalising 
\cite[Theorem 2.3]{BGW17}. Several known results that are used throughout this article as well as some additional technical statements can be found in 
the \hyperlink{appendix.0}{Appendix}.        

\subsection{Notation}

We fix a torus $\TT^2 = \RR^2/L\ZZ^2$ of size $0<L<2\pi$. All function spaces are defined over $\TT^2$. We write $\CC^\infty$ for the space of smooth 
functions and $L^p$, $p\in[1,\infty]$, for the space of $p$-integrable periodic functions endowed with the usual norm $\|\cdot\|_{L^p}$ and the usual 
interpretation if $p=\infty$. 

We denote by $\BB^\al_{p,q}$ the (inhomogeneous) Besov space of regularity $\al$ and exponents $p,q\in[1,\infty]$ with norm 
$\|\cdot\|_{\BB^\al_{p,q}}$ (see Definition \ref{def:besov}). We write $\CC^\al$ and $\|\cdot\|_{\CC^\al}$ to denote the space 
$\BB^\al_{\infty,\infty}$ and the corresponding norm. Many useful results about Besov spaces that we repeatedly use throughout the article can be found in Appendix 
\hyperref[app:besov_spaces]{A}.

For any Banach space $(V, \|\cdot\|_V)$ we denote by $B_V(x_0;\delta)$ the open ball $\{x\in V: \|x-x_0\|_V < \delta\}$ and by $\bar B_V(x_0;\delta)$ 
its closure.

Throughout this article we write $C$ for a positive constant which might change from line to line. In proofs we sometimes write $\lesssim$ instead of $\leq C$. 
We also write $a\vee b$ and $a\wedge b$ to denote the maximum and the minimum of $a$ and $b$. 

\subsection*{Acknowledgements}

We are grateful to Nils Berglund for suggesting this problem and for many interesting discussions. PT is supported by EPSRC as part of the MASDOC DTC
at the University of Warwick, Grant No. EP/HO23364/1. HW is supported by the Royal Society through the University Research
Fellowship UF140187.

\section{Preliminaries} \label{s:prel}

Fix a probability space  $(\Omega,\mathcal{F},\PP)$ and let $\xi$ be a space-time white noise defined over $\Omega$.
More precisely, $\xi$ is a family $\{\xi(\phi)\}_{\phi\in L^2((0,\infty)\times \TT^2)}$ of centred Gaussian random variables such that 
\begin{equation*}
 \EE\xi(\phi)\xi(\psi) = \lng \phi,\psi\rng_{L^2((0,\infty)\times \TT^2)}.
\end{equation*}
A natural filtration $\{\mathcal{F}_t\}_{t\geq 0}$ is given by the usual augmentation (as in \cite[Chapter 1.4]{RY99}) of  
\begin{equation*}
 \tilde{\mathcal{F}}_t = \sigma\left(\{\xi(\phi):\phi|_{(t,\infty)\times \TT^2}\equiv 0 \} 
 \right), \quad t\geq 0.
\end{equation*}
We interpret solutions of \eqref{eq:Phi42} following \cite{DPD03} and \cite{MW17i}. We write $X( \cdot;x)$ for the solution started in $x$ and 
use the decomposition 
$X(\cdot;x) = v(\cdot;x) + \eps^{\frac{1}{2}}\<1>(\cdot)$ where $\<1>$ solves the stochastic heat equation
\begin{align}
 & \left(\partial_t - (\Delta - 1\right)\big) \<1> = \sqrt{2} \xi \label{eq:1st_wick}\\
 & \<1>\big|_{t=0} = 0. \nonumber
\end{align}
The remainder term $v$ solves  
\begin{align}
 & \left(\partial_t - \Delta\right) v = -v^3 +v - \left(3 v^2 \eps^{\frac{1}{2}}\<1> + 3 v \eps\<2> + \eps^{\frac{3}{2}}\<3> 
 - 2 \eps^{\frac{1}{2}}\<1> \right) \label{eq:rem_eq}\\
 & v\big|_{t=0} = x \nonumber
\end{align}
where $\<2>$, $\<3>$ are the $2$nd and $3$rd Wick powers of $\<1>$. The random distributions $\<2>$ and $\<3>$ can be constructed as limits of $\<1>_N^2 - \Re_N$ and 
$\<1>_N^3 - 3\Re_N \<1>_N$, where $\<1>_N$ is a spatial Galerkin approximation 
of $\<1>$, and $\Re_N$ is a renormalisation constant which diverges logarithmically in the regularisation parameter $N$. The value of $\Re_N$ is
given by
\begin{equation} \label{eq:renorm_constant}
 \Re_N := \lim_{t\to \infty} \EE \<1>_N(t,z)^2 = \frac{1}{L^2} \sum_{|k|\leq N} \frac{1}{\left(2\pi|k|/L\right)^2+1}.
\end{equation}
Note that $\<1>_N(t,z)$ is stationary in the space variable $z$, hence the expectation is independent of $z$. We refer the reader to \cite[Lemma 3.2]{DPD03}, \cite[Section 2]{TW18} for more details on the construction of the Wick powers. We recall that 
$\<1>$, $\<2>$ and $\<3>$ can be realised as continuous processes taking values in $\CC^{-\al}$ for $\al >0$ and that $\PP$-almost surely for every $T>0$,  and $\al'>0$ 
\begin{equation} \label{eq:tree_norm}
\max\left\{ \sup_{t\leq T}\|\<1>(t)\|_{\CC^{-\al}} ,\;  \sup_{t\leq T} (t\wedge 1)^{\al'}\|\<2>(t)\|_{\CC^{-\al}}, \; \sup_{t\leq T} (t\wedge 1)^{2\al'}\|\<3>(t)\|_{\CC^{-\al}}  \right\}  < \infty.
\end{equation} 
The blow-up of $\|\<2>(t)\|_{\CC^{-\al}}$
and $\|\<3>(t)\|_{\CC^{-\al}}$ for $t$ close to $0$ is due to the fact that we define the stochastic objects $\<2>$ and $\<3>$ with zero initial condition, but we work 
with a time-independent renormalisation constant $\Re_N$ (see \eqref{eq:renorm_constant}).
We define the stochastic heat equation with a Laplacian with mass $1$ because
this allows us to prove exponential moment bounds of $\<1>$, $\<2>$ and $\<3>$ which hold uniformly in time (see Proposition \ref{prop:exp_mom}).
Throughout the paper we use $\<n>$ to refer to all the stochastic objects $\<1>$, $\<2>$ and $\<3>$ simultaneously. In this notation \eqref{eq:tree_norm}
turns into
\begin{equation*} 
 \sup_{t\leq T} (t\wedge 1)^{(n-1)\al'}\|\<n>(t)\|_{\CC^{-\al}} < \infty.
\end{equation*} 

We fix $\al_0\in(0,\frac{1}{3})$ (to measure the regularity of the initial condition $x$ in $\CC^{-\al_0}$), $\beta>0$ (to measure the regularity of $v$ in $\CC^\bt$)
and $\gamma >0$ (to measure the blow-up of $\|v(t;x)\|_{\CC^\bt}$ for $t$ close to $0$) such that
\begin{equation} \label{eq:beta_gamma}
 \gamma <\frac{1}{3}, \quad \frac{\al_0+\bt}{2} < \gamma.
\end{equation}
We also assume that $\al'>0$ and $\al>0$ in \eqref{eq:tree_norm} satisfy    
\begin{equation} \label{eq:al_al'}
 \al'<\gamma, \quad \al<\al_0, \quad \frac{\al+\bt}{2} + 2 \gamma < 1.
\end{equation}
In \cite[Theorems 3.3 and 3.9]{TW18}) it was shown that for every $x\in \CC^{-\al_0}$ there exist a unique solution $v\in C\left((0,\infty);\CC^\bt\right)$ of 
\eqref{eq:rem_eq} such that for every $T>0$ 
\begin{equation*}
 \sup_{t\leq T}(t\wedge 1)^\gamma\|v(t;x)\|_{\CC^\bt}<\infty.
\end{equation*}
\begin{remark} In Condition \eqref{eq:beta_gamma}  $\beta$ has to be strictly less than $\frac{2}{3}$. This is necessary if one wants to treat 
all of the terms arising in a fixed point problem for \eqref{eq:rem_eq} with the same norm for $v$.
A simple post-processing of \cite[Theorems 3.3 and 3.9]{TW18} shows that in fact $v$ is continuous in time taking values in $\CC^{2-\lambda}$ for any  $\lambda > \al$. 
\end{remark}

Equations \eqref{eq:1st_wick}, \eqref{eq:rem_eq}  suggest that indeed $X$ can be seen as a perturbation of the Allen-Cahn equation \eqref{eq:det_AC}, 
because the terms $\<1>$, $\<2>$ and $\<3>$ in  \eqref{eq:rem_eq} all appear with a positive power of $\eps$. 
It is important to note that $v$ is  much more regular than $X$. The irregular part of $X(\cdot;x)$ is  $\eps^{\frac{1}{2}}\<1>$. 
Therefore differences of solutions are much more regular than solutions themselves.

We repeatedly work with restarted stochastic terms: we define $\<1>_s$ as the solution of
 \begin{align*}
 & \left(\partial_t - (\Delta - 1\right)\big) \<1>_s = \sqrt{2} \xi ,  \qquad t >s \\
 & \<1>_s\big|_{t=s} = 0, 
\end{align*}
and let $\<2>_s$ and $\<3>_s$ be its Wick powers. By \cite[Proposition 2.3]{TW18} for every $s>0$,  $\<n>_s(s+\cdot)$ are independent of $\mathcal{F}_s$ and equal in law to $\<n>(\cdot)$. 
For $t\geq s$ we can define a restarted remainder $v_s(t;X(s;x))$ through the identity $X(t;x) = v_s(t;X(s;x)) + \eps^{\frac{1}{2}} \<1>_s(t)$. 
Rearranging \eqref{eq:rem_eq} and using the pathwise identities in \cite[Corollary 2.4]{TW18}  one can see that $v_s$ solves 
\begin{align} 
 & \left(\partial_t - \Delta\right) v_s = -v_s^3 + v_s -\left(3 v_s^2 \eps^{\frac{1}{2}}\<1>_s + 3 v_s \eps\<2>_s + \eps^{\frac{3}{2}}\<3>_s - 2 \eps^{\frac{1}{2}}\<1>_s \right)
 \label{eq:rem_eq_restart} \\
 & v_s\big|_{t=s} = X(s;x). \nonumber
\end{align}
In \cite[Theorem 4.2]{TW18} this is used to prove the Markov property for $X(\cdot;x)$.

\section{Main result and methodology} \label{s:method}

Our main result can be expressed as follows. 

\begin{theorem}\label{thm:exp_contr} For every $\kappa >0$ there exist $\delta_0, a_0, C>0$ and $\eps_0\in(0,1)$ such that for every
$\eps\leq \eps_0$ 
\begin{align*}
 & \inf_{\|x - (\pm 1)\|_{\CC^{-\al_0}} \leq \delta_0}\PP\left( \sup_{\|y-x\|_{\CC^{-\al_0}} \leq \delta_0} 
 \frac{\|X(t;y) - X(t;x)\|_{\CC^\bt}}{\|y-x\|_{\CC^{-\al_0}}} \leq C \ee^{-(2-\kappa)t} \text{ for every } t\geq 1\right) \\
 & \quad \geq 1 - \ee^{-a_0/\eps}.
\end{align*}
\end{theorem}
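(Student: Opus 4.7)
The plan is to work with the Da Prato--Debussche remainder difference $w(t) := v(t;y) - v(t;x)$, whose gain over $X(t;y) - X(t;x)$ is that the irregular term $\eps^{1/2}\<1>$ cancels, so $w$ takes values in the relatively regular space $\CC^\bt$. Subtracting \eqref{eq:rem_eq} written for $v(\cdot;y)$ and $v(\cdot;x)$ and factoring $a^3-b^3=(a-b)(a^2+ab+b^2)$ and $a^2-b^2=(a-b)(a+b)$, one sees that $w$ satisfies the linear equation
\begin{align*}
(\partial_t - \Delta) w = -\,w\,\bigl(v(\cdot;y)^2 + v(\cdot;y)\,v(\cdot;x) + v(\cdot;x)^2\bigr) + w - 3\,w\,\bigl(v(\cdot;y) + v(\cdot;x)\bigr)\,\eps^{1/2}\,\<1> - 3\,w\,\eps\,\<2>.
\end{align*}
From this I would extract a pathwise bound of the form $\|w(t)\|_{\CC^\bt}\leq \|w(s)\|_{\CC^\bt}\exp\!\bigl(\int_s^t(-\lambda(r) + E(r))\,\dd r\bigr)$ via heat-semigroup smoothing in $\CC^\bt$, where $\lambda(r)$ is the local linearised decay rate and $E(r)$ collects the stochastic errors involving $\<1>,\<2>$ and the deviation of $v(\cdot;x),\,v(\cdot;y)$ from $\pm 1$.

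Next I would partition $[1,\infty)$ into alternating good and bad intervals driven by the reference solution, calling $t$ good if $\|X(t;x)\mp 1\|_{\CC^{-\al_0}}\leq\delta$ for a threshold $\delta$ and bad otherwise. On a good interval, continuity of the Da Prato--Debussche flow and the smallness of $\eps^{1/2}\<1>$ force $v(\cdot;y)$ to remain close to $\pm 1$ as well, so the coefficient $v(\cdot;y)^2 + v(\cdot;y)v(\cdot;x) + v(\cdot;x)^2$ is close to $3$ and the deterministic part of the coefficient of $w$ is close to $-3+1=-2$. The stochastic contributions from $\eps^{1/2}\<1>$ and $\eps\<2>$ can be absorbed into a loss of at most $\kappa$ using the uniform exponential moment bounds on the $\<n>$ (Proposition \ref{prop:exp_mom}); this is where the effective rate $(2-\kappa)$ comes from.

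On a bad interval no contraction is available and one must settle for a non-explosion estimate. Using the strong a priori bound of \cite{TW18} to control $\|v(\cdot;x)\|_{\CC^\bt}$ and $\|v(\cdot;y)\|_{\CC^\bt}$ by a polynomial $P$ in the weighted stochastic norms $\sup (r\wedge 1)^{(n-1)\al'}\|\<n>(r)\|_{\CC^{-\al}}$, and substituting into the linear equation for $w$, one obtains $\|w(t)\|_{\CC^\bt}\leq \|w(s)\|_{\CC^\bt}\,\ee^{P(t-s)}$ on any bad interval $(s,t)$. Since the deterministic flow pulls trajectories back to a neighbourhood of $\pm 1$ exponentially fast and the noise is of order $\sqrt{\eps}$, excursion-length estimates give that bad intervals are of $O(1)$ typical length while good intervals typically have length of order at least $\ee^{a/\eps}$, which closes the quantitative loop.

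The last step is a random-walk argument. Setting $S_n := \sum_{k\leq n}\bigl((2-\kappa)G_k - P_k B_k\bigr)$, with $G_k$ and $B_k$ the lengths of the $k$-th good and bad intervals, the bound in the statement follows provided $S_n$ stays above a fixed constant for all $n$. By the Markov property of $X$ (\cite[Theorem 4.2]{TW18}) and the exponential moment bounds on $\<n>$, the increments of $S_n$ are asymptotically i.i.d.\ with strictly positive drift and sub-exponential tails for small $\eps$, and a Cram\'er--Lundberg-type estimate yields the $\ee^{-a_0/\eps}$ deviation probability. The main obstacle, in my view, is the coordination of parameters in this last stage: the thresholds $\delta_0,\delta$, the margin $\kappa$, and $\eps_0$ must be chosen jointly so that the drift of $S_n$ is positive, the bad-interval losses are dominated by the $\kappa$ slack taken on good intervals, and the whole estimate is uniform in $x\in\bar B_{\CC^{-\al_0}}(\pm 1;\delta_0)$ and in $y\in\bar B_{\CC^{-\al_0}}(x;\delta_0)$ (not merely for a single pair $(x,y)$).
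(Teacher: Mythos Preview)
Your proposal is essentially the same strategy the paper follows: work with the remainder difference, partition time into good and bad stretches defined by the reference profile, get contraction at rate close to $2$ on good stretches via linearisation, get polynomial-in-noise growth on bad stretches via the a priori bounds of \cite{TW18}, and close with a Cram\'er--Lundberg-type random walk estimate.

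A few implementation details in the paper differ from your sketch in ways that turn out to matter. First, the paper's good intervals (Definition~\ref{def:stop_times}) are defined not just by $X(\cdot;x)$ being close to $\pm 1$ but also by the \emph{restarted} stochastic objects $\<n>_{\rho_{i-1}(x)}$ being small; this makes the contraction estimate on good intervals (Proposition~\ref{prop:contr_bd}) purely deterministic once the stopping times are set. Second, on bad intervals the paper tiles $[\nu_i,\rho_i]$ into overlapping unit intervals and restarts the stochastic objects at each tile (Definition~\ref{def:L_k}, Figure~\ref{fig:v_bds}); this is needed so that Lemma~\ref{lem:rem_bnd} gives a bound on $\|v\|_{\CC^\bt}$ uniformly over \emph{all} initial data, which in turn makes the growth rate on bad intervals independent of $x,y$. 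Third, and most delicate, the growth factors on bad intervals are polynomials of high degree in Gaussian objects and therefore have only \emph{stretched} exponential tails; an exponential Chebyshev argument on $S_N$ fails, and the paper instead compares $\frac{\kappa}{2}\tau_i$ to genuine exponentials and uses stochastic domination (Propositions~\ref{prop:stoch_dom}--\ref{prop:reneq}) to reduce to the classical Cram\'er--Lundberg model. Finally, the passage from $y-x\in\CC^{-\al_0}$ to $y-x\in\CC^\bt$ is handled separately via \eqref{eq:contr_bd_rough_i.d.} and the restriction $t\geq 1$.
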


\begin{proof} See Section \ref{s:exp_contr_proof}.
\end{proof}

This theorem is a variant of \cite[Corollary 3.1]{MOS89} in space dimension $d=2$. There the supremum is taken over both $x$ and $y$
inside the probability measure. We also obtain this version of the theorem as a corollary.
\begin{corollary} \label{col:sup_x_y} For every $\kappa>0$ there exist $\delta_0,a_0,C>0$ and $\eps_0\in(0,1)$ such that for every $\eps\leq \eps_0$ 
\begin{equation*}
 \PP\left( \sup_{x,y\in \bar B_{\CC^{-\al_0}}(\pm 1;\delta_0)} 
 \frac{\|X(t;y) - X(t;x)\|_{\CC^\bt}}{\|y-x\|_{\CC^{-\al_0}}} \leq C \ee^{-(2-\kappa)t} \text{ for every } t\geq 1\right)
 \geq 1 - \ee^{-a_0/\eps}.
\end{equation*}
\end{corollary}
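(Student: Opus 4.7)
The plan is to reduce the uniform-in-$x,y$ bound to Theorem \ref{thm:exp_contr} by applying it once at the single point $x = \pm 1$, and then re-running the good/bad-interval machinery sketched in Section \ref{s:method} with $X(\cdot;\pm 1)$ serving as a single reference profile common to every pair of endpoints. The basic observation is that the sup over $y$ can already be brought inside the probability in Theorem \ref{thm:exp_contr}, and that the only $x$-dependence in the argument enters through the choice of reference profile used to define the good/bad decomposition. By fixing that reference to be $X(\cdot;\pm 1)$ one obtains a purely noise-dependent event on which the required bound holds simultaneously for every admissible $x$ and $y$.

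Concretely, I would first apply Theorem \ref{thm:exp_contr} at $x = \pm 1$, noting that $\pm 1 \in \bar B_{\CC^{-\al_0}}(\pm 1;\delta_0)$ trivially, to produce an event $\Omega_0$ of probability at least $1 - \ee^{-a_0/\eps}$ on which
\begin{equation*}
\|X(t;y) - X(t;\pm 1)\|_{\CC^\bt} \leq C \ee^{-(2-\kappa)t} \|y - (\pm 1)\|_{\CC^{-\al_0}}
\end{equation*}
for every $y \in \bar B_{\CC^{-\al_0}}(\pm 1;\delta_0)$ and every $t \geq 1$. For arbitrary $x, y$ in the ball I would then consider $w(t) := X(t;y) - X(t;x) = v(t;y) - v(t;x)$ (the stochastic objects $\<n>$ cancel since they are independent of the initial data) and subtract the two copies of \eqref{eq:rem_eq}. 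This yields a linear equation for $w$ whose coefficient is $1 - [v(t;y)^2 + v(t;y)v(t;x) + v(t;x)^2] - 3(v(t;y) + v(t;x))\eps^{\frac{1}{2}}\<1> - 3 \eps \<2>$. On $\Omega_0$ the remainders $v(\cdot;x)$ and $v(\cdot;y)$ both track $v(\cdot;\pm 1)$ uniformly in the endpoints, so at times when $X(t;\pm 1)$ is close to $\pm 1$ this coefficient equals $-2 + o(1)$, producing contraction of $w$ at a rate arbitrarily close to $2$.

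From here I would follow the strategy of Section \ref{s:method} with $X(\cdot;\pm 1)$ as the common reference profile: the good/bad interval decomposition is defined through $X(\cdot;\pm 1)$ alone and so is independent of $x, y$; on good intervals the linearised equation for $w$ contracts exponentially, and on bad intervals the a priori estimate of \cite{TW18} together with the local Lipschitz continuity of the nonlinearity yields the same controlled exponential growth used in the proof of Theorem \ref{thm:exp_contr}; applying the random walk argument of Section \ref{s:rwe} then delivers the required uniform Lipschitz bound on an event of probability at least $1 - \ee^{-a_0/\eps}$. The main obstacle is to verify that the bad-interval growth bounds of Section \ref{s:diff_est_proof}, originally derived with reference profile equal to $X(\cdot;x)$, remain uniform in $x, y$ once the reference is fixed to $X(\cdot;\pm 1)$; this uniformity is precisely what the first step buys us by controlling $X(\cdot;x)$ and $X(\cdot;y)$ in terms of the common profile $X(\cdot;\pm 1)$ independently of the endpoints.
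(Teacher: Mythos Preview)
Your proposal is correct and follows essentially the same route as the paper: fix the reference profile to be $X(\cdot;\pm 1)$, use Theorem~\ref{thm:exp_contr} at that single point to force every $X(\cdot;y)$ with $y$ in the ball to track it, and then re-run the good/bad-interval machinery with the stopping times defined via $X(\cdot;\pm 1)$ alone, so that the random-walk event is endpoint-independent. The paper makes one implementation detail explicit that you leave implicit: it shrinks the ball and the thresholds (working with $\delta_0'<\delta_0$ and $\delta_1'<\delta_1$) so that ``$X(t;1)$ is within $\delta_0'$ of $\pm1$'' forces ``$X(t;y)$ is within $\delta_0$ of $\pm1$'' on the coupling event, which is exactly what allows the stopping times for $y$ to be replaced by those for $1$; you would discover the same need when writing out the argument.
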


\begin{proof} See Section \ref{s:exp_contr_proof}.
\end{proof}

\begin{remark}
The restriction $t\geq 1$ in Theorem \ref{thm:exp_contr} appears only because we measure 
$y-x$ in a lower regularity norm than $X(t;y) - X(t;x)$. To prove the theorem we first prove Theorem \ref{thm:exp_contr_smooth_i.d.} were we assume
that $y-x\in\CC^\bt$ and in this case we prove a bound which holds for every $t>0$. 
\end{remark}

\begin{remark} Theorem \ref{thm:exp_contr} is an asymptotic coupling of solutions that start close to the same minimiser. In  \cite[Proposition 3.4]{MOS89} 
it was shown that in the 1-dimensional case, solutions which start with initial conditions $x$ and $y$ close to different minimisers also contract exponentially fast, 
but only after time $T_\eps\propto\ee^{[(V(0)-V(\pm1))+\eta]/\eps}$ 
for any $\eta>0$. This is the ``typical'' time needed for one of the two profiles to jump close to the other minimiser.  We expect that 
Theorem \ref{thm:exp_contr} and the large deviation theory developed in \cite{HW15} could be combined to prove a similar result in the case $d=2$.   
\end{remark}

We now define two sequences $\{\nu_i(x)\}_{i\geq 1}$ and $\{\rho_i(x)\}_{i\geq 1}$ of stopping times which partition our time axis and allow us to keep track of the time spent close to and away from the 
minimisers $\pm1$ (see Figure  \ref{fig:axis_part} for a sketch). On the ``good'' intervals $[\rho_{i-1}(x), \nu_i(x)]$ we require both the restarted diagrams $\<n>_{\rho_{i-1}(x)}$ to be small
and the restarted remainder $v_{\rho_{i-1}(x)}$ to be close to $\pm 1$. The ``bad'' intervals $[\nu_i(x),\rho_i(x)]$ end when
$X(\cdot;x)$ re-enters a small neighbourhood of the minimisers. The stopping times $\rho_i(x)$ are defined in terms of the $\CC^{-\al_0}$ norm for $X(\cdot;x)$, 
while we define good intervals in terms of the stronger $\CC^\beta$ topology for $v_{\rho_{i-1}(x)}$. To connect the two, we need to 
allow for a blow-up close to the starting point of the ``good'' intervals.
%

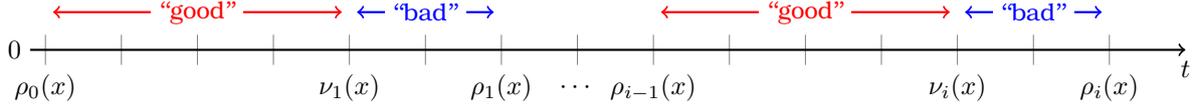
\begin{figure}
 \centering
 \begin{tikzpicture}

  \draw[step=1,very thin,color=gray] (0,-.2) grid (14,.2);
  
  \draw[->, thick] (-.2,0) node[left] {$0$} -- (0,0) -- (15,0);
  
  \path (0,-.5) node {$\rho_0(x)$} -- (4,-.5) node {$\nu_1(x)$} -- (6,-.5) node {$\rho_1(x)$} -- (7,-.5) node {$\cdots$}
  -- (8,-.5) node {$\rho_{i-1}(x)$} -- (12,-.5) node {$\nu_i(x)$} -- (14,-.5) node {$\rho_i(x)$} -- (15,-.5);
  
  \draw (15,-.25) node {$t$};
  
  \draw[<->,thick,red] (.1,.5) -- (2,.5) node[fill=white] {``good''} -- (3.9,.5);
  \draw[<->,thick,blue] (4.1,.5) -- (5,.5) node[fill=white] {``bad''} -- (5.9,.5);
  \draw[<->,thick,red] (8.1,.5) -- (10,.5) node[fill=white] {``good''} -- (11.9,.5);
  \draw[<->,thick,blue] (12.1,.5) -- (13,.5) node[fill=white] {``bad''} -- (13.9,.5);

 \end{tikzpicture}
 \caption{A partition of the time axis with respect to the times $\nu_i(x)$ and $\rho_i(x)$. The ``good'' intervals are ``typically'' much larger than 
 the ``bad'' intervals.}
 \label{fig:axis_part}
\end{figure}

\begin{definition} \label{def:stop_times} For $x\in \CC^{-\al_0}$ we define the sequence of stopping times $\{\rho_i(x)\}_{i\geq 0}$, $\{\nu_i(x)\}_{i\geq 1}$ 
recursively by $\rho_0(x) = 0$ and 
\begin{align*}
 \nu_i(x) & := \inf\Big\{t>\rho_{i-1}(x): \min_{x_*\in\{-1,1\}}((t-\rho_{i-1}(x))\wedge 1)^\gamma \|v_{\rho_{i-1}(x)}(t;X(\rho_i(x);x)) - x_*\|_{\CC^\bt}
 \geq \delta_1 \\
 & \, \quad \text{ or } 
 ((t-\rho_{i-1}(x))\wedge 1)^{(n-1) \al'} \|\eps^{\frac{n}{2}}\<n>_{\rho_{i-1}(x)}(t)\|_{\CC^{-\al}} \geq \delta_2^n\Big\} \\
 \rho_i(x) & := \inf\{t>\nu_i(x): \min_{x_*\in\{-1,1\}} \|X(t;x)- x_*\|_{\CC^{-\al_0}} \leq \delta_0\}.
\end{align*}
\end{definition}
We now define the time increments  
\begin{align}
 & \tau_i(x) = \nu_i(x) - \rho_{i-1}(x). \label{eq:tau_sigma} \\
 & \sigma_i(x) = \rho_i(x) - \nu_i(x). \nonumber
\end{align}
The process $X( \cdot; x)$ is expected to spend long time intervals close to the minimisers $\pm 1$, which corresponds to large values of $\tau_i(x)$. Large
values of $\sigma_i(x)$ are ``atypical''. This behaviour is established Propositions \ref{prop:f_i} and \ref{prop:ent_est}.

The following proposition shows contraction on the ``good'' intervals.  We distinguish between the cases 
\eqref{eq:contr_bd} and \eqref{eq:contr_bd_rough_i.d.} for $y-x$ that lie in $\CC^\bt$ and $\CC^{-\al_0}$ respectively. The Da Prato--Debussche decomposition
shows that differences of any two profiles lie in $\CC^\bt$ for any $t>0$ but at $t=0$ they maintain the irregularity of the initial conditions. Hence we only use 
\eqref{eq:contr_bd_rough_i.d.} on the first ``good'' interval.

\begin{proposition} \label{prop:contr_bd} For every $\kappa>0$ there exist $\delta_0,\delta_1,\delta_2>0$ and $C>0$ such that 
if $\|x-(\pm 1)\|_{\CC^{-\al_0}}\leq \delta_0$ and $y-x\in \CC^\bt$, $\|y-x\|_{\CC^\bt} \leq \delta_0$ then
\begin{equation} \label{eq:contr_bd}
 \|X(t;y) - X(t;x)\|_{\CC^\bt} 
 \leq C\exp\left\{-\left(2-\frac{\kappa}{2}\right) t\right\} \|y-x\|_{\CC^\bt}
\end{equation}
for every $t\leq \tau_1(x)$ defined with respect to $\delta_1$ and $\delta_2$. If we only assume that $\|y-x\|_{\CC^{-\al_0}}\leq \delta_0$ then 
\begin{equation} \label{eq:contr_bd_rough_i.d.}
 (t\wedge1)^\gamma\|X(t;y) - X(t;x)\|_{\CC^\bt} \leq C \exp\left\{-\left(2-\frac{\kappa}{2}\right) t\right\} \|y-x\|_{\CC^{-\al_0}}
\end{equation}
for every $t\leq \tau_1(x)$.
\end{proposition}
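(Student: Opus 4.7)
The plan is to derive a linear PDE for the difference $w(t) := X(t;y) - X(t;x)$ and close a Gronwall--Volterra argument around the semigroup $\ee^{t(\Delta-2)}$, which captures the second-order contact of the double-well potential at the minimiser $x_* \in \{\pm 1\}$. Since both solutions share the same noise, the singular parts of the Da Prato--Debussche decomposition cancel, so $w = v(\cdot;y) - v(\cdot;x)$. Using the identities $a^3 - b^3 = (a-b)(a^2+ab+b^2)$ and $a^2 - b^2 = (a-b)(a+b)$ in \eqref{eq:rem_eq} yields $(\partial_t - \Delta) w = L(t) w$ with
\begin{equation*}
L(t) = 1 - \bigl[v(t;y)^2 + v(t;y) v(t;x) + v(t;x)^2\bigr] - 3\bigl[v(t;y) + v(t;x)\bigr]\eps^{\frac{1}{2}}\<1>(t) - 3 \eps \<2>(t).
\end{equation*}
Expanding $v(t;\cdot) = x_* + (v(t;\cdot) - x_*)$ and using $x_*^2 = 1$ decomposes $L(t) = -2 + R(t)$, where $R(t)$ is a sum of products of the small quantities $v(t;y) - x_*$, $v(t;x) - x_*$, $\eps^{\frac{1}{2}}\<1>(t)$, $\eps\<2>(t)$.

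Next I would pass to the mild formulation $w(t) = \ee^{t(\Delta-2)} w(0) + \int_0^t \ee^{(t-s)(\Delta-2)} R(s) w(s)\,\dd s$ and combine the Schauder estimate $\|\ee^{\tau(\Delta-2)} f\|_{\CC^\bt} \lesssim (\tau \wedge 1)^{-(\al+\bt)/2} \ee^{-2\tau} \|f\|_{\CC^{-\al}}$ with the paraproduct inequality $\|fg\|_{\CC^{-\al}} \lesssim \|f\|_{\CC^\bt} \|g\|_{\CC^{-\al}}$ (valid for $\bt > \al$). On $[0, \tau_1(x)]$ the defining bounds of $\tau_1(x)$ together with the bootstrap hypothesis $\|v(s;y) - x_*\|_{\CC^\bt} \lesssim \delta_1 (s \wedge 1)^{-\gamma}$ yield
\begin{equation*}
\|R(s)\|_{\CC^{-\al}} \leq C \bigl(\delta_1 (s \wedge 1)^{-\gamma} + \delta_1^2 (s \wedge 1)^{-2\gamma} + \delta_2 + \delta_2^2 (s\wedge 1)^{-\al'}\bigr).
\end{equation*}
For \eqref{eq:contr_bd}, setting $h(t) := \ee^{(2-\kappa/2)t} \|w(t)\|_{\CC^\bt}$ recasts the mild inequality as $h(t) \leq \|w(0)\|_{\CC^\bt} + C \Phi(t) \sup_{s \leq t} h(s)$. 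The worst contribution to $\Phi(t)$ is the Beta-type integral $\int_0^t \ee^{-\kappa(t-s)/2} ((t-s)\wedge 1)^{-(\al+\bt)/2} (s \wedge 1)^{-2\gamma}\,\dd s$, whose uniform boundedness in $t$ is ensured by $(\al+\bt)/2 + 2\gamma < 1$ from \eqref{eq:al_al'} (for the singularity at $s=0,t$) and by the exponential factor (for large $t$). This gives $\Phi(t) \leq C(\delta_1 + \delta_1^2 + \delta_2 + \delta_2^2)$; shrinking $\delta_1, \delta_2$ so that this is $\leq 1/(2C)$ closes the Gronwall inequality and yields $h(t) \leq 2\|w(0)\|_{\CC^\bt}$. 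The bootstrap closes as soon as $\delta_0 \leq \delta_1/2$, since then $\|v(s;y) - x_*\|_{\CC^\bt} \leq \|w(s)\|_{\CC^\bt} + \|v(s;x) - x_*\|_{\CC^\bt} \leq 2\delta_0 + \delta_1(s\wedge 1)^{-\gamma} \leq 2\delta_1(s\wedge 1)^{-\gamma}$.

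For \eqref{eq:contr_bd_rough_i.d.} the initial data $y - x$ lies only in $\CC^{-\al_0}$, so I would switch to the weighted quantity $\tilde h(t) := \ee^{(2-\kappa/2)t} (t\wedge 1)^\gamma \|w(t)\|_{\CC^\bt}$. The Schauder smoothing $\|\ee^{t(\Delta-2)}(y-x)\|_{\CC^\bt} \lesssim (t\wedge 1)^{-(\al_0+\bt)/2} \ee^{-2t} \|y-x\|_{\CC^{-\al_0}}$ is compatible with this weight because $\gamma > (\al_0+\bt)/2$ by \eqref{eq:beta_gamma}. The main technical obstacle arises when $\|w(s)\|_{\CC^\bt}$ is re-expressed in terms of $\tilde h(s)$: the integrand acquires an extra $(s\wedge 1)^{-\gamma}$, which combines with the worst blow-up in $R(s)$ to produce convolution kernels like $((t-s)\wedge 1)^{-(\al+\bt)/2} (s\wedge 1)^{-(2\gamma+\al')}$ for the distribution-valued terms or, for the quadratic $\CC^\bt$-valued term which bypasses Schauder, the kernel $(s\wedge 1)^{-3\gamma}$. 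Reconciling these three competing singularities---the Schauder kernel, the blow-up of $R(s)$ near zero, and the initial-data weight---is precisely why \eqref{eq:al_al'} demands $(\al+\bt)/2 + 2\gamma < 1$ rather than the weaker $(\al+\bt)/2 + \gamma < 1$, and why \eqref{eq:beta_gamma} imposes $\gamma < 1/3$ and $\al' < \gamma$. The exact rate $2 - \kappa/2$ itself then drops out by ensuring the Gronwall coefficient is strictly less than $1$, with the remaining $\kappa/2$ of contraction absorbed into the exponential weight.
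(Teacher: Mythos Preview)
Your proposal is correct and follows essentially the same route as the paper: derive the equation for the difference $w=v(\cdot;y)-v(\cdot;x)$, isolate the linear rate $-2$ by expanding around the minimiser, pass to the mild form with the semigroup $\ee^{t(\Delta-2)}$, control the remainder via Schauder and paraproduct estimates, and close with a Gronwall argument coupled to a bootstrap/stopping-time step.

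The only organisational difference is that the paper substitutes $v(\cdot;y)=v(\cdot;x)+Y$ \emph{before} linearising, so the ``linear'' coefficient becomes $-3(v(\cdot;x)^2-1)$, which is controlled directly by the definition of $\tau_1(x)$; the dependence on the second profile is then pushed entirely into the higher-order term $\mathtt{Error}(v(\cdot;x);Y)=-Y^3-3v(\cdot;x)Y^2$, and the bootstrap runs on $(t\wedge1)^\gamma\|Y(t)\|_{\CC^\bt}$ via an explicit stopping time $\iota$. Your symmetric factorisation instead keeps $v(\cdot;y)$ in the coefficient $R(t)$ and bootstraps on $\|v(\cdot;y)-x_*\|_{\CC^\bt}$ through the triangle inequality. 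Both schemes close under the same smallness conditions on $\delta_0,\delta_1,\delta_2$. The paper also packages the final step as an application of a generalised Gronwall lemma (yielding the exponent $1/(1-\tfrac{\al+\bt}{2}-3\gamma)$ in the rate correction), whereas you absorb $\ee^{(2-\kappa/2)t}$ into the unknown and bound a sup directly; these are equivalent.
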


\begin{proof} See Section \ref{s:contr_bd_proof}. 
\end{proof}

Our next aim is to control the growth of the differences on the ``bad'' intervals in terms of the stochastic objects $\<n>$. This is done by
partitioning the intervals $[\nu_i(x), \rho_i(x)]$  into tiles of length one. To achieve independence we restart the stochastic objects at the starting point 
of each tile.

\begin{definition} \label{def:L_k}  For $k\geq 0$ and $\rho \geq \nu \geq 0$ let $t_k = \nu+k$. For $k \geq 1$ we define a random variable $L_k(\nu,\rho)$ by 
\begin{equation}\label{eq:L_k}
L_k(\nu,\rho) := \left(\sup_{t\in[t_{k-1},t_k\wedge \rho]} (t-t_{k-1})^{(n-1)\al'} \|\eps^{\frac{n}{2}}\<n>_{t_{k-1}}(t)\|_{\CC^{-\al}}\right)^{\frac{2}{n}}.
\end{equation}
%
%
\end{definition} 

In  our analysis we use a second tiling defined by setting $s_k=  t_k + \frac12 $, i.e. the tiles $[t_k,t_{k+1}]$ and $[s_k, s_{k+1}]$ 
overlap. In order to bound $X(t;y) - X(t;x)$ on a time interval $[t_k,s_k]$ we restart the stochastic objects at $s_{k-1}$ and write
$X(t;y) - X(t;x) = v_{s_{k-1}}(t;X(s_{k-1};y)) - v_{s_{k-1}}(t;X(s_{k-1};x))$. In Lemma \ref{lem:rem_bnd} we upgrade the strong a priori 
bound obtained in \cite[Proposition 3.7]{TW18} to get a control on the $\CC^\bt$ norm of both remainders.
This bound holds uniformly over all possible values of $X(s_{k-1};y)$ and $X(s_{k-1};x)$ and while the bound allows for a blow-up
for times $t$ close to $s_{k-1}$ it holds uniformly over all times in $[t_k,s_k]$. Ultimately, the bound only depends on 
$L_k(\nu+\frac{1}{2},\rho)$ in a polynomial way as shown in Figure \ref{fig:v_bds}. Then we can use the local Lipschitz property of the non-linearity in \eqref{eq:rem_eq} to bound the exponential growth rate of 
$X(t;y) - X(t;x)$. For the first interval $[t_0,t_1]$ we do not use this trick, because we want to avoid bounds that depend on 
the realisation of the white noise outside of $[\nu, \rho]$. On this interval, we make use of an a priori assumption that 
we have some control on $\|X(\nu;y)\|_{\CC^{-\al_0}}$ and $\|X(\nu;x)\|_{\CC^{-\al_0}}$.

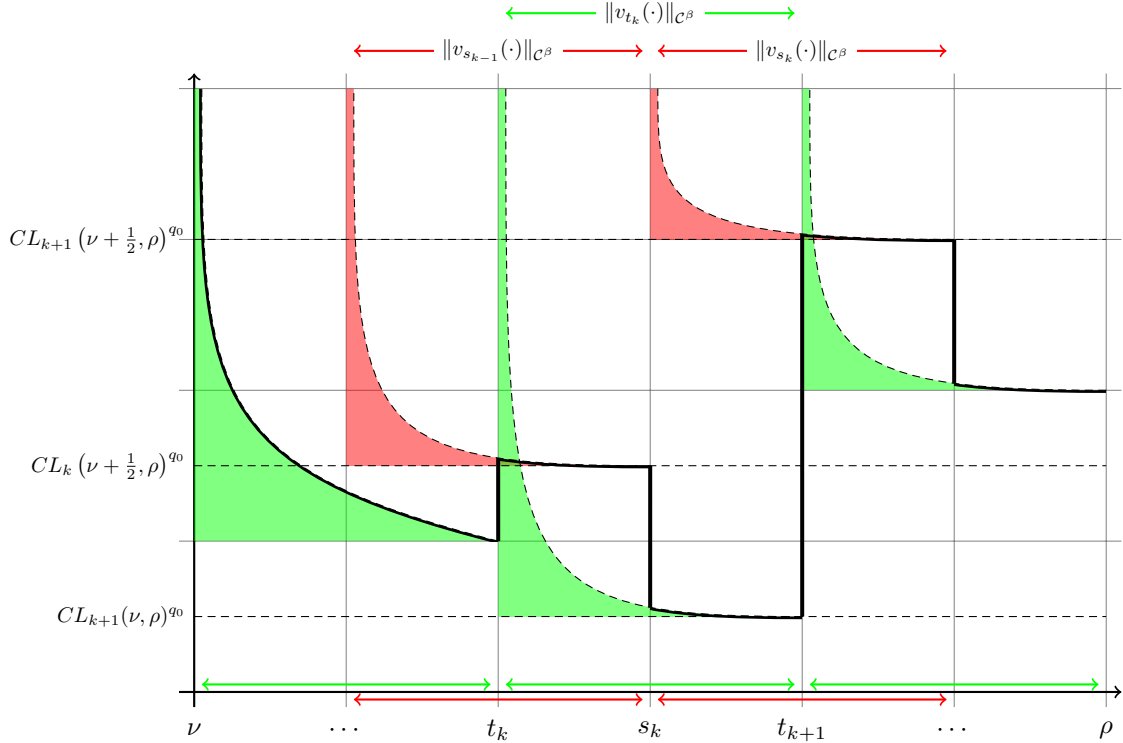
\begin{figure}
 \centering
 \begin{tikzpicture}
  
  \draw[step=2,very thin,color=gray] (-.2,-.2) grid (12.2,8.2);
  
  \draw[thick,->] (-.2,0) -- (12.2,0);
  
  \draw[thick,->] (0,-.2) -- (0,8.2);
  
  \path (0,-.5) node {$\nu$} -- (2,-.5) node {$\cdots$} -- (4,-.5) node {$t_k$} -- (6,-.5) node {$s_k$} -- (8,-.5) node {$t_{k+1}$}
  -- (10,-.5) node {$\cdots$} -- (12,-.5) node {$\rho$};
  
  \begin{scope}
  \clip (0,0) -- (4,0) -- (4,2) .. controls (.1,3) and (.1,4) .. (.1,8) -- (0,8) -- (0,0);
  \clip (0,2) -- (4,2) -- (4,8) -- (0,8) -- cycle;
  \fill[green,opacity=.5] (0,2) -- (4,2) -- (4,8) -- (0,8) -- cycle;
  \end{scope}
  
  \begin{scope}
  \clip (2,0) -- (6,0) -- (6,3) .. controls (2.1,3) and (2.1,3.5) .. (2.1,8) --(2,8) -- (2,0);
  \clip (2,3) -- (6,3) -- (6,8) -- (2,8) -- cycle;
  \fill[red,opacity=.5] (2,3) -- (6,3) -- (6,8) -- (2,8) -- cycle;
  \end{scope}
  
  \begin{scope}
  \clip (4,0) -- (8,0) -- (8,1) .. controls (4.1,1) and (4.1,1.5) .. (4.1,8) -- (4,8) -- (4,0);
  \clip (4,1) -- (8,1) -- (8,8) -- (4,8) -- cycle;
  \fill[green,opacity=.5] ((4,1) -- (8,1) -- (8,8) -- (4,8) -- cycle;
  \end{scope}
  
  \begin{scope}
  \clip (6,0) -- (10,0) -- (10,6) .. controls (6.1,6) and (6.1,6.5) .. (6.1,8) --(6,8) -- (6,0);
  \clip (6,6) -- (10,6) -- (10,8) -- (6,8) -- cycle;
  \fill[red,opacity=.5] (6,6) -- (10,6) -- (10,8) -- (6,8) -- cycle;
  \end{scope}
  
  \begin{scope}
  \clip (8,0) -- (12,0) -- (12,4) .. controls (8.1,4) and (8.1,4.5) .. (8.1,8) -- (8,8) -- (8,0);
  \clip (8,4) -- (12,4) -- (12,8) -- (8,8) -- cycle;
  \fill[green,opacity=.5] (8,4) -- (12,4) -- (12,8) -- (8,8) -- cycle;
  \end{scope}
  
  \draw[name path=c1, densely dashed] (4,2) .. controls (.1,3) and (.1,4) .. (.1,8);
  \draw[name path=c3, densely dashed] (8,1) .. controls (4.1,1) and (4.1,1.5) .. (4.1,8);
  \draw[name path=c5, densely dashed] (12,4) .. controls (8.1,4) and (8.1,4.5) .. (8.1,8);
  
  \draw[name path=c2, densely dashed] (6,3) .. controls (2.1,3) and (2.1,3.5) .. (2.1,8);
  \draw[name path=c4, densely dashed] (10,6) .. controls (6.1,6) and (6.1,6.5) .. (6.1,8);
  
  \path[name path = x1] (2,0) -- (2,8);
  \path[name path = x2] (4,0) -- (4,8);
  \path[name path = x3] (6,0) -- (6,8);
  \path[name path = x4] (8,0) -- (8,8);
  \path[name path = x5] (10,0) -- (10,8);
  
  \path[name intersections={of=c1 and x1,by=int1}];
  \path[name intersections={of=c2 and x2,by=int2}];
  \path[name intersections={of=c3 and x3,by=int3}];
  \path[name intersections={of=c4 and x4,by=int4}];
  \path[name intersections={of=c5 and x5,by=int5}];
  
  \draw[line width=1.5pt] (4,2) -- (int2);
  \draw[line width=1.5pt] (6,3) -- (int3);
  \draw[line width=1.5pt] (8,1) -- (int4);
  \draw[line width=1.5pt] (10,6) -- (int5);
  
  \begin{scope}
  \clip (0,0) -- (4,0) -- (4,2) .. controls (.1,3) and (.1,4) .. (.1,8) -- (0,8) -- (0,0);
  \clip (0,2) -- (4,2) -- (4,8) -- (0,8) -- cycle;
  \draw[line width=2pt] (4,2) .. controls (.1,3) and (.1,4) .. (.1,8);  
  \end{scope}
  
  \begin{scope} 
  \clip (2,0) -- (6,0) -- (6,3) .. controls (2.1,3) and (2.1,3.5) .. (2.1,8) --(2,8) -- (2,0);
  \clip (4,2) -- (6,2) -- (6,8) -- (4,8) -- cycle;
  \draw[line width = 2pt] (6,3) .. controls (2.1,3) and (2.1,3.5) .. (2.1,8);
  \end{scope}
  
  \begin{scope}
  \clip (4,0) -- (8,0) -- (8,1) .. controls (4.1,1) and (4.1,1.5) .. (4.1,8) -- (4,8) -- (4,0);
  \clip (6,0) -- (8,0) -- (8,8) -- (6,8) -- cycle;
  \draw[line width = 2pt] (8,1) .. controls (4.1,1) and (4.1,1.5) .. (4.1,8);
  \end{scope}
  
  \begin{scope}
  \clip (6,0) -- (10,0) -- (10,6) .. controls (6.1,6) and (6.1,6.5) .. (6.1,8) --(6,8) -- (6,0);
  \clip (8,5) -- (10,5) -- (10,8) -- (8,8) -- cycle;
  \draw[line width = 2pt] (10,6) .. controls (6.1,6) and (6.1,6.5) .. (6.1,8);
  \end{scope}
  
  \begin{scope}
  \clip (8,0) -- (12,0) -- (12,4) .. controls (8.1,4) and (8.1,4.5) .. (8.1,8) -- (8,8) -- (8,0);
  \clip (10,3) -- (12,3) -- (12,8) -- (10,8) -- cycle;
  \draw[line width = 2pt] (12,4) .. controls (8.1,4) and (8.1,4.5) .. (8.1,8);
  \end{scope}
  
  \draw[<->,red,thick] (2.1,8.5) -- (4,8.5) node[fill=white] {\scalebox{.8}{\color{black} $\|v_{s_{k-1}}(\cdot)\|_{\CC^\bt}$}} -- (5.9,8.5);
  
  \draw[<->,red,thick] (6.1,8.5) -- (8,8.5) node[fill=white] {\scalebox{.8}{\color{black} $\|v_{s_k}(\cdot)\|_{\CC^\bt}$}} -- (9.9,8.5);
  
  \draw[<->,green,thick] (4.1,9) -- (6,9) node[fill=white] {\scalebox{.8}{\color{black}$\|v_{t_k}(\cdot)\|_{\CC^\bt}$}} -- (7.9,9);
  
  \draw[densely dashed] (0,3) node[left] {\scalebox{.8}{$C L_k\left(\nu+\frac{1}{2},\rho\right)^{q_0}$}} -- (12,3);
  \draw[densely dashed] (0,1) node[left] {\scalebox{.8}{$C L_{k+1}(\nu,\rho)^{q_0}$}} -- (12,1);
  \draw[densely dashed] (0,6) node[left] {\scalebox{.8}{$C L_{k+1}\left(\nu+\frac{1}{2},\rho\right)^{q_0}$}} -- (12,6);
  
  \draw[<->,green,thick] (.1,.1) -- (3.9,.1); 
  
  \draw[<->,red,thick] (2.1,-.1) -- (5.9,-.1);
  
  \draw[<->,green,thick] (4.1,.1) -- (7.9,.1);
  
  \draw[<->,red,thick] (6.1,-.1) -- (9.9,-.1);
  
  \draw[<->,green,thick] (8.1,.1) -- (11.9,.1);
  
 \end{tikzpicture}
 \caption{Bounds on the $\CC^\bt$ norm of the restarted remainder $v$ on the overlapping tiles of the partition of $[\nu,\rho]$. On a
 time interval $[t_k,s_k]$ we restart the stochastic objects at time $s_{k-1}$ and bound $v_{s_{k-1}}$ by a polynomial function of
 $L_k\left(\nu+\frac{1}{2},\rho\right)$. On a time interval $[s_k,t_{k+1}]$ we restart the stochastic objects at time $t_k$ and bound 
 $v_{t_k}$ by a polynomial function of $L_k\left(\nu,\rho\right)$.} 
 \label{fig:v_bds}
\end{figure}

\begin{proposition}\label{prop:sep_bd} Let $R>0$. Then there exists a constant $C\equiv C(R)>0$ such that for every $\|X(\nu;x)\|_{\CC^{-\al_0}}$, $\|X(\nu;y)\|_{\CC^{-\al_0}} \leq R$, 
$\rho>\nu\geq0$ and $t\in [\nu,\rho]$ 
\begin{equation} \label{eq:sep_bd}
 \|X(t;y) - X(t;x)\|_{\CC^\bt} \leq C \exp\left\{L(\nu,\rho;t-\nu)\right\} \|X(\nu;y) - X(\nu;x)\|_{\CC^\bt},
\end{equation}
where 
\begin{equation} \label{eq:L_form}
L(\nu,\rho;t-\nu) = \frac{c_0}{2} \sum_{k=1}^{\lfloor t-\nu \rfloor} 
\sum_{l=0,\frac{1}{2}} \left(1\vee L_k(\nu+l,\rho)\right)^{p_0} + L_0 (t-\nu) 
\end{equation}
for $L_k$ as in \eqref{eq:L_k}, and for some constants $p_0\geq 1$ and $c_0\equiv c_0(R),L_0\equiv L_0(R)\geq 0$.
\end{proposition}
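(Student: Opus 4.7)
The plan is to derive a linear equation satisfied by the difference $w := X(\cdot;y)-X(\cdot;x)$ and to close a Grönwall-type bound in $\CC^\bt$ whose exponential rate is controlled by polynomials in the $\CC^\bt$-norms of $v(\cdot;x)$, $v(\cdot;y)$ and the $\CC^{-\al}$-norms of the stochastic objects. Because the rough part $\eps^{\frac{1}{2}}\<1>$ cancels in the Da Prato--Debussche decomposition, one has $w = v(\cdot;y)-v(\cdot;x)$, and subtracting two copies of \eqref{eq:rem_eq} gives, after factoring the cubic and quadratic differences,
\begin{equation*}
(\partial_t - \Delta)w = w - (v_x^2+v_x v_y + v_y^2)\,w - 3(v_x + v_y)\, w\, \eps^{\frac{1}{2}}\<1> - 3\, w\, \eps\<2>.
\end{equation*}
A mild-form Grönwall argument combined with the Schauder estimate and the multiplication bound $\CC^\bt\cdot\CC^{-\al}\hookrightarrow\CC^{-\al}$, which is valid under \eqref{eq:al_al'}, then produces
\begin{equation*}
\|w(t)\|_{\CC^\bt} \leq C \exp\left\{c_0 \int_\nu^t F(s)\,\dd s\right\}\|w(\nu)\|_{\CC^\bt},
\end{equation*}
where $F(s)$ is a fixed polynomial in $\|v_x(s)\|_{\CC^\bt}$, $\|v_y(s)\|_{\CC^\bt}$, $\|\eps^{\frac{1}{2}}\<1>(s)\|_{\CC^{-\al}}$ and $\|\eps\<2>(s)\|_{\CC^{-\al}}$.

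The substantive step is to convert $\int_\nu^t F(s)\,\dd s$ into the double sum in \eqref{eq:L_form}. For this I would partition $[\nu,\rho]$ using the overlapping tiles $[t_k,s_k]$ and $[s_k,t_{k+1}]$ depicted in Figure \ref{fig:v_bds}. On each tile I restart the stochastic objects half a time unit earlier -- at $s_{k-1}$ for the tile $[t_k, s_k]$ and at $t_k$ for $[s_k, t_{k+1}]$ -- and rewrite $v(\cdot;x)$ and $v(\cdot;y)$ using the restarted remainder equation \eqref{eq:rem_eq_restart}. The uniform-in-initial-condition a priori estimate of Lemma \ref{lem:rem_bnd} (an upgrade of \cite[Proposition 3.7]{TW18}) then yields
\begin{equation*}
\sup_{t \in [t_k, s_k]} \|v_{s_{k-1}}(t;X(s_{k-1};x))\|_{\CC^\bt} \leq C\, L_k\bigl(\nu+\tfrac12,\rho\bigr)^{q_0},
\end{equation*}
and an analogous bound on $[s_k,t_{k+1}]$ controlled by $L_{k+1}(\nu,\rho)^{q_0}$. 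Because the restart occurs half a time unit before each tile begins, the $(t-s)^{-\gamma}$ blow-up of $\|v_s(t;\cdot)\|_{\CC^\bt}$ as $t\downarrow s$ falls outside the tile, so the bound is uniform on it.

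The first tile $[t_0,t_1]=[\nu,\nu+1]$ must be handled separately because we do not wish to invoke information about $\<n>$ outside of $[\nu,\rho]$. Here I would directly use the existence theory of \cite{TW18}: since $\|X(\nu;x)\|_{\CC^{-\al_0}},\|X(\nu;y)\|_{\CC^{-\al_0}}\leq R$, the natural bound $\|v(t;x)\|_{\CC^\bt} \leq C(R)(t-\nu)^{-\gamma}$ on $[\nu,\nu+1]$ contributes a finite $R$-dependent constant $L_0(R)$ to $\int_\nu^{\nu+1}F\,\dd s$, producing the additive term $L_0(t-\nu)$ in \eqref{eq:L_form}. Summing the tile bounds for $k\geq 1$ and absorbing all lower-order and constant terms into the factor $1\vee L_k(\nu+l,\rho)$ gives the formula for $L(\nu,\rho;t-\nu)$, with $p_0$ equal to $q_0$ times the degree of $F$.

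The main obstacle is running the Grönwall argument in $\CC^\bt$ rather than in an $L^\infty$-type space: the distributional multipliers $\eps^{\frac{1}{2}}\<1>$ and $\eps\<2>$ acting on $w\in\CC^\bt$ genuinely lose $\al$ derivatives, which must be recovered from the $(\partial_t-\Delta)^{-1}$ smoothing; the condition $(\al+\bt)/2+2\gamma<1$ in \eqref{eq:al_al'} is precisely what makes the resulting time-singular kernel integrable near the restart points. Keeping track that the polynomial degree $p_0$ is independent of $R$ and $\eps$, and that the $R$-dependence is confined to $c_0$ and $L_0$, will require careful but routine bookkeeping through the Schauder-multiplication chain.
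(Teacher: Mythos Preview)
Your ingredients are the same as the paper's --- the tiling, the half-unit restarts, Lemma~\ref{lem:rem_bnd}, and a Gr\"onwall argument --- but the order in which you assemble them does not work as written, and the gap is not just cosmetic.

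You propose to run a single Gr\"onwall on all of $[\nu,t]$ to obtain $\|w(t)\|_{\CC^\bt}\le C\exp\{c_0\int_\nu^t F(s)\,\dd s\}\|w(\nu)\|_{\CC^\bt}$, where $F(s)$ is a polynomial in $\|v_\nu(s;x)\|_{\CC^\bt}$, $\|v_\nu(s;y)\|_{\CC^\bt}$ and the stochastic norms, and only afterwards bound the integral by restarting on tiles. There are two problems with this. First, the generalised Gr\"onwall lemma (Lemma~\ref{lem:grwl}) that absorbs the Schauder singularity $(t-s)^{-(\al+\bt)/2}$ takes a \emph{constant} coefficient $b$, not a variable $F(s)$; it does not produce an exponent of the form $\int_\nu^t F$. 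Second, and more fundamentally, even if it did, you cannot bound $\|v_\nu(s;x)\|_{\CC^\bt}$ for $s$ far from $\nu$ by switching to a restarted remainder: one has $v_\nu(s;x)=v_{s_{k-1}}(s;X(s_{k-1};x))+\eps^{1/2}\bigl(\<1>_{s_{k-1}}(s)-\<1>_\nu(s)\bigr)$, and the last term lies only in $\CC^{-\al}$, so it destroys any $\CC^\bt$ bound. The restart changes the difference $w$ not at all, but it \emph{does} change the individual coefficients in the equation.

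The paper therefore reverses the order: on each tile it \emph{rewrites the equation} for $Y$ in terms of the remainders and stochastic objects restarted half a unit earlier (so that Lemma~\ref{lem:rem_bnd} gives a uniform $\CC^\bt$ bound on the coefficients over the whole tile), runs Lemma~\ref{lem:grwl} with the resulting constant $b=(1\vee L_k(\nu+l,\rho))^{p_0}$ on that tile alone, and only then glues the tile-wise exponentials. The multiplicative constants accrued in the gluing are what produce the $L_0(t-\nu)$ term.

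A smaller point: on the first tile $[t_0,t_1]$ the bound $\|v_{t_0}(t;\cdot)\|_{\CC^\bt}\le C(R)(t-\nu)^{-\gamma}$ is not purely an $R$-dependent constant. Local solution theory only furnishes it up to a time $t_*\sim(R\vee L_1(\nu,\rho))^{-p_0}$, and Lemma~\ref{lem:rem_bnd} (with the larger exponent $\gamma'$) is needed to cover $[t_*,t_1]$; combining the two gives \eqref{eq:v_local_bd}, whose constant carries a factor $(1\vee L_1(\nu,\rho))^{p_0}$. This is what makes the $k=1$ term in the sum nontrivial rather than absorbed entirely into $L_0$.
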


\begin{proof} See Section \ref{s:sep_bd_proof}.
\end{proof}

If we assume that $y-x\in \CC^\bt$, combining the estimates in Propositions \ref{prop:contr_bd} and \ref{prop:sep_bd} suggest the bound 
\begin{align} 
 & \|X(\rho_N(x);y) - X(\rho_N(x);x)\|_{\CC^\bt} \label{eq:formal_gluing} \\
 & \quad \leq \exp\left\{\sum_{i\leq N} \left[-\left(2-\frac{\kappa}{2}\right) \tau_i + L(\nu_i(x),\rho_i(x);\sigma_i(x))+ 2\log C\right]\right\}
 \|y-x\|_{\CC^\bt}, \nonumber
\end{align}
for any $N\geq 1$. If we can show that the exponents satisfy
\begin{equation*} 
 \sum_{i\leq N}\left[-\left(2-\frac{\kappa}{2}\right) \tau_i + L(\nu_i(x),\rho_i(x);\sigma_i(x))+2\log C\right] \leq 
 -(2-\kappa)\rho_N(x),
\end{equation*}
then \eqref{eq:formal_gluing} yields exponential contraction at time $\rho_N(x)$ with rate $2-\kappa$. The difference of the
right hand side and the left hand side of the last inequality is given by the random walk $S_N(x)$ in the next definition. 


\begin{definition} \label{def:rw} Let $\|x-(\pm 1)\|_{\CC^{-\al_0}} \leq \delta_0$. We define the random 
walk $(S_N(x))_{N\geq 1}$ by 
\begin{equation*}
 S_N(x) := \sum_{i\leq N} \left[\frac{\kappa}{2} \tau_i(x) - \big(L(\nu_i(x),\rho_i(x);\sigma_i(x)) + (2-\kappa) \sigma_i(x) + M_0\big)\right]
\end{equation*}
where $M_0 = 2\log C$ for $C>0$ as in Propositions \ref{prop:contr_bd} and \ref{prop:sep_bd}.  
\end{definition}

The next proposition shows that the random walk $S_N(x)$ stays positive for every $N \geq 1$ with overwhelming probability (see Figure 
\ref{fig:rw_plots} for an illustration). The proof is based on a variant of the classical Cram\'er--Lundberg model in risk theory (see \cite[Chapter 1.2]{EKM97}).
In this classical model a random walk $S_N=\sum_{i\leq N}(f_i - g_i)$ with i.i.d. exponential random variables $f_i$ and 
i.i.d. non-negative random variables $g_i$ is considered. The probability for $S_N$ to stay positive for every $N\geq 1$ can be 
calculated explicitly in terms of the expectations of $f_i$ and $g_i$ using a renewal equation. 
In our case we use the Markov property and Propositions \ref{prop:f_i} and Proposition~\ref{prop:g_i}
to compare the random walk $S_N(x)$ in Definition \ref{def:rw} to this classical case.

\begin{remark} If the family $\{L(\nu_i(x),\rho_i(x);\sigma_i(x)) + (2-\kappa) \sigma_i(x) + M_0\}_{i\geq 1}$ had exponential moments, a simple exponential Chebyshev argument 
would imply the following  proposition without any reference to the Cram\'er--Lundberg model. However, by \eqref{eq:L_k} and \eqref{eq:L_form} 
one sees that $L(\nu_i(x),\rho_i(x);\sigma_i(x))$ is a polynomial of potentially high degree in the explicit stochastic objects (which are themselves 
polynomials of the Gaussian noise $\xi$). Hence, we cannot expect more than stretched exponential moments, and indeed, such bounds are 
established in Proposition \ref{prop:g_i}. In the proof of the next proposition we also use an exponential Chebyshev argument, but only to compare 
$\frac{\kappa}{2}\tau_i(x)$ with a suitable exponential random variable which does not depend on $x$. 
\end{remark}

\begin{figure}
 \centering
 \includegraphics[scale=.5]{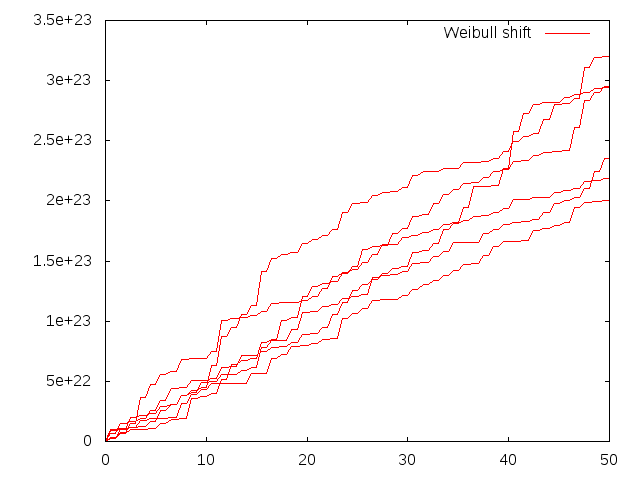}
 \caption{``Typical'' realisations of a random walk $S_N = \sum_{i\leq N} (f_i - g_i)$ for $f_i \sim \ee^{0.5/\eps}\exp(1)$,
 $g_i \sim \ee^{0.1/\eps} \mathrm{Weibull}(0.5,1)$, $N=50$ and $\eps=0.01$. The choice of a Weibull distribution here captures the fact that the random variables 
 $L(\nu_i(x),\rho_i(x);\sigma_i(x)) + (2-\kappa) \sigma_i(x) + M_0$ in Definition \ref{def:rw} have stretched exponential tails as shown in Proposition 
 \ref{prop:g_i}.}
 \label{fig:rw_plots}
\end{figure}

\begin{proposition}\label{prop:rwe} For every $\kappa>0$ there exist $a_0>0$ and $\eps_0\in(0,1)$ such that for every $\eps\leq \eps_0$
\begin{equation}\label{eq:rwe}
 \inf_{\|x - (\pm 1)\|_{\CC^{-\al_0}} \leq \delta_0} \PP(S_N(x) \geq 0  \text{ for every } N\geq 1) 
 \geq 1-\ee^{- a_0/\eps}.
\end{equation}
\end{proposition}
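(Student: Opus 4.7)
The plan is to stochastically dominate $S_N(x)$ from below by a classical Cram\'er--Lundberg risk process and then read off the ``non-ruin'' probability at zero initial surplus from the explicit formula. The key structural input is that by definition of $\rho_{i-1}(x)$ the profile $X(\rho_{i-1}(x);x)$ lies in $\bar B_{\CC^{-\al_0}}(\pm 1; \delta_0)$, so by the strong Markov property of \cite[Theorem 4.2]{TW18} the conditional law of the triple $(\tau_i(x), \sigma_i(x), L(\nu_i(x),\rho_i(x);\sigma_i(x)))$ given $\mathcal{F}_{\rho_{i-1}(x)}$ is a function of $X(\rho_{i-1}(x);x)$ alone, and all the bounds we invoke below are uniform over this ball.

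First I would use Proposition \ref{prop:f_i}, applied inductively on unit-length sub-intervals via the Markov property, to couple $\tau_i(x)$ from below with an exponential random variable $\mathfrak{f}_i \sim \mathrm{Exp}(\lambda_\eps)$ of rate $\lambda_\eps = e^{-c_1/\eps}$ for some $c_1>0$; a uniform one-step survival probability $\geq 1 - e^{-c_1/\eps}$ iterates to a geometric lower bound which in turn dominates this exponential. Similarly, Proposition \ref{prop:g_i} gives a uniform stretched exponential tail on $L(\nu_i(x),\rho_i(x);\sigma_i(x)) + (2-\kappa)\sigma_i(x) + M_0$, which is more than enough to construct i.i.d.\ non-negative upper bounds $\hat g_i$ with $\EE[\hat g_i] \leq e^{c_2/\eps}$ for some $c_2 \in (0, c_1)$. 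The noise driving the ``good'' interval $[\rho_{i-1}(x),\nu_i(x)]$ is independent from the noise driving the ``bad'' interval $[\nu_i(x),\rho_i(x)]$, so $\mathfrak{f}_i$ and $\hat g_i$ can be chosen independent within each cycle; across cycles the Markov property makes $\{(\mathfrak{f}_i,\hat g_i)\}_{i\geq 1}$ i.i.d. The auxiliary random walk
$$\tilde S_N := \sum_{i=1}^N \left(\tfrac{\kappa}{2}\mathfrak{f}_i - \hat g_i\right)$$
then satisfies $S_N(x) \geq \tilde S_N$ almost surely in the coupling.

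For small $\eps$ the net profit condition $\tfrac{\kappa}{2}\EE[\mathfrak{f}_i] = \tfrac{\kappa}{2} e^{c_1/\eps} > \EE[\hat g_i]$ is satisfied, and the classical Cram\'er--Lundberg formula (see \cite[Chapter 1.2]{EKM97}) for the ruin probability at zero, which is a direct consequence of the memoryless property of the exponential together with the associated renewal equation, gives
$$\PP(\tilde S_N < 0 \text{ for some } N \geq 1) = \frac{2\lambda_\eps \EE[\hat g_i]}{\kappa} \leq \frac{C}{\kappa}\, e^{-(c_1 - c_2)/\eps} \leq e^{-a_0/\eps}$$
for any $a_0 < c_1 - c_2$ and all sufficiently small $\eps$. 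Since every bound above is uniform in $x \in \bar B_{\CC^{-\al_0}}(\pm 1; \delta_0)$, this yields the claimed inequality. The main obstacle is the construction in the first step: one must verify that the single-step bound of Proposition \ref{prop:f_i} genuinely iterates through the Markov property to produce an exponential \emph{lower} bound on $\tau_i(x)$ of the right rate, uniformly in the starting state, and simultaneously match this coupling with the i.i.d.\ structure required by Cram\'er--Lundberg. As stressed in the remark preceding the proposition, one cannot shortcut this via an exponential Chebyshev argument applied to $L(\nu_i(x),\rho_i(x);\sigma_i(x))$, which is a high-degree polynomial in the stochastic objects $\<n>$ and thus only has stretched exponential moments.
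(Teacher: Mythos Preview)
Your overall Cram\'er--Lundberg strategy is the same as the paper's, but the implementation you sketch differs at the crucial point where you handle the ``good'' increments $f_i(x)=\tfrac{\kappa}{2}\tau_i(x)$, and this is exactly where your acknowledged obstacle lies. The paper does \emph{not} couple $\tau_i(x)$ below by an exponential. Instead it introduces a sequence $\{\tilde f_i\}_{i\geq 1}$ of auxiliary i.i.d.\ $\exp(1)$ variables, independent of everything, and splits the ruin event as
\[
\{-S_N(x)\geq 0\text{ for some }N\}\subset\Big\{-\textstyle\sum_{i\leq N} f_i(x)+\lambda\sum_{i\leq N}\tilde f_i\geq 0\text{ for some }N\Big\}\cup\Big\{-\tilde S_N(x)\geq 0\text{ for some }N\Big\},
\]
where $\tilde S_N(x)=\lambda\sum\tilde f_i-\sum g_i(x)$. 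The first event is controlled by an exponential Chebyshev inequality together with the \emph{single} tail bound of Proposition~\ref{prop:f_i}; no geometric iteration on $\tau_1(x)$ is needed. For the second event, the $\tilde f_i$ are genuinely i.i.d.\ exponential by construction, so after stochastically dominating $g_i(x)$ by i.i.d.\ $\tilde g_i$ (Propositions~\ref{prop:stoch_dom} and~\ref{prop:dom_constr}) the classical Cram\'er--Lundberg identity applies verbatim and gives ruin probability $\EE\tilde g_1/\lambda$. Choosing $\lambda\sim e^{(2a_0-b)/\eps}$ balances the two pieces.

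Your route would instead require $\PP(\tau_1(x)>k)\geq(1-p_\eps)^k$ for all $k$, uniformly in $x$. Proposition~\ref{prop:f_i} only gives one tail value, and iterating ``via the Markov property on unit sub-intervals'' is delicate because $\tau_1(x)$ is defined through $v_{\rho_0}$ and $\<n>_{\rho_0}$ restarted at $\rho_0$, not at the intermediate time $k$; there is no renewal at $k$. One can likely push this through by rewriting $\<n>(t)$ for $t\in[k,k+1]$ in terms of $\<n>(k)$ and the independent restarted objects $\<n>_k$, but this is extra work the paper avoids entirely with the pivot trick. Similarly, your claim that $\{(\mathfrak f_i,\hat g_i)\}_{i\geq 1}$ is i.i.d.\ is not literally true (the state $X(\rho_{i-1}(x);x)$ varies); what one actually has is uniform stochastic domination, and turning that into a comparison with an i.i.d.\ walk is precisely the inductive argument of Proposition~\ref{prop:stoch_dom}.
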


\begin{proof} See Section \ref{s:rwe_proof}.
\end{proof}

\section{Proof of the main theorem} \label{s:exp_contr_proof}

We first treat the case where $y-x\in \CC^\bt$: 
let $x\in \CC^{-\al_0}$ such that $\|x-(\pm 1)\|_{\CC^{-\al_0}} \leq \delta_0$ and let $y$ be such that $y-x\in \CC^\bt$
and $\|y-x\|_{\CC^\bt}\leq \delta_0$. We also write $Y(t) =
X(t;y) - X(t;x)$. We consider the event 
\begin{equation}\label{eq:rw_pos}
 \mathcal{S}(x) = \left\{S_N(x) \geq 0 \text{ for every } N\geq 1\right\}
\end{equation}
for $S_N(x)$ as in Definition \ref{def:rw}.

We first prove the following proposition which provides explicit estimates on the 
differences at the stopping times $\nu_N(x)$ and $\rho_N(x)$ for every $N\geq 1$ and $\omega \in \mathcal{S}(x)$
by iterating Propositions~\ref{prop:contr_bd} and \ref{prop:sep_bd}.
To shorten the notation we drop the explicit dependence on the starting point $x$ in the stopping times  $\nu_N$ and $\rho_N$ 
and the random walk $S_N$.
We also drop the dependence on the realisation $\omega$ but we assume throughout that  $\omega \in \mathcal{S}(x)$. 

\begin{proposition}\label{prop:exp_bds} For any $\kappa>0$ let $C>0$ be as in Proposition \ref{prop:contr_bd}. 
Then for every $\omega\in \mathcal{S}(x)$ and $N\geq 1$
\begin{align}
 \|Y(\nu_N)\|_{\CC^\bt} & \leq C \exp\left\{-S_{N-1}\right\}
 \exp\left\{-\frac{\kappa}{2}\tau_N\right\} \exp\left\{-(2-\kappa) \nu_N\right\} \|Y(0)\|_{\CC^\bt} \label{eq:precontr_1} \\
 \|Y(\rho_N)\|_{\CC^\bt} & \leq \exp\left\{-S_N\right\} \exp\left\{-(2-\kappa)\rho_N\right\}
 \|Y(0)\|_{\CC^\bt} \label{eq:precontr_2}.
\end{align}
\end{proposition}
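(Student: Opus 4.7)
The plan is to proceed by induction on $N \geq 1$, alternating applications of Proposition~\ref{prop:contr_bd} on the ``good'' intervals $[\rho_{i-1}, \nu_i]$ and Proposition~\ref{prop:sep_bd} on the ``bad'' intervals $[\nu_i, \rho_i]$, invoking the Markov property at each stopping time to restart the noise. At every step the event $\mathcal{S}(x)$ will be used to guarantee that the hypotheses of the two propositions remain satisfied.

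For the base case $N=1$, since $\|y-x\|_{\CC^\bt}\le\delta_0$ and $\|x-(\pm1)\|_{\CC^{-\al_0}}\le\delta_0$, I would apply \eqref{eq:contr_bd} directly to obtain $\|Y(\nu_1)\|_{\CC^\bt}\le C\exp\{-(2-\kappa/2)\nu_1\}\|Y(0)\|_{\CC^\bt}$; using $\rho_0=0$ and $\tau_1=\nu_1$ and the fact that $S_0=0$ (empty sum), a one-line rearrangement
\[
-(2-\tfrac{\kappa}{2})\nu_1 = -\tfrac{\kappa}{2}\tau_1-(2-\kappa)\nu_1
\]
produces \eqref{eq:precontr_1} for $N=1$. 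Then, restarting the noise at $\nu_1$, I would apply \eqref{eq:sep_bd} to bound $\|Y(\rho_1)\|_{\CC^\bt}$ by $C\exp\{L(\nu_1,\rho_1;\sigma_1)\}\|Y(\nu_1)\|_{\CC^\bt}$. Multiplying and using $C^2=\ee^{M_0}$, $\rho_1=\nu_1+\sigma_1$, together with the definition of $S_1$, collapses the exponents exactly to $-S_1-(2-\kappa)\rho_1$, giving \eqref{eq:precontr_2} for $N=1$.

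For the inductive step, assume \eqref{eq:precontr_2} at step $N$. Since $\omega\in\mathcal{S}(x)$ implies $S_N\ge 0$, the hypothesis yields $\|Y(\rho_N)\|_{\CC^\bt}\le\|Y(0)\|_{\CC^\bt}\le\delta_0$, while $\|X(\rho_N;x)-(\pm1)\|_{\CC^{-\al_0}}\le\delta_0$ holds by the very definition of $\rho_N$. Hence Proposition~\ref{prop:contr_bd} applies on the shifted interval starting at $\rho_N$; by the Markov property and the restart formulation of $\tau_{N+1}$ in Definition~\ref{def:stop_times},
\[
\|Y(\nu_{N+1})\|_{\CC^\bt}\le C\exp\{-(2-\tfrac{\kappa}{2})\tau_{N+1}\}\|Y(\rho_N)\|_{\CC^\bt}.
\]
Substituting \eqref{eq:precontr_2} at step $N$ and using $\nu_{N+1}=\rho_N+\tau_{N+1}$ to rewrite $-(2-\kappa/2)\tau_{N+1}-(2-\kappa)\rho_N=-\tfrac{\kappa}{2}\tau_{N+1}-(2-\kappa)\nu_{N+1}$ yields \eqref{eq:precontr_1} at step $N+1$.

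For \eqref{eq:precontr_2} at step $N+1$ I would apply Proposition~\ref{prop:sep_bd} on $[\nu_{N+1},\rho_{N+1}]$: the required a priori bound $\|X(\nu_{N+1};x)\|_{\CC^{-\al_0}},\|X(\nu_{N+1};y)\|_{\CC^{-\al_0}}\le R$ for some deterministic $R$ follows because at the stopping time $\nu_{N+1}$ the conditions in Definition~\ref{def:stop_times} imply, by continuity, that $v_{\rho_N}(\nu_{N+1};X(\rho_N;x))$ is controlled via $\delta_1$ and $\eps^{1/2}\<1>_{\rho_N}(\nu_{N+1})$ via $\delta_2$, whereas $\|X(\nu_{N+1};y)\|_{\CC^{-\al_0}}$ is then controlled by the previously established bound on $\|Y(\nu_{N+1})\|_{\CC^\bt}$ combined with the embedding $\CC^\bt\hookrightarrow\CC^{-\al_0}$. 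Combining Proposition~\ref{prop:sep_bd} with \eqref{eq:precontr_1} at step $N+1$, absorbing $C^2$ into $\ee^{M_0}$, and using $\rho_{N+1}=\nu_{N+1}+\sigma_{N+1}$ together with the defining formula for $S_{N+1}-S_N$ collapses the exponents to $-S_{N+1}-(2-\kappa)\rho_{N+1}$.

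The algebraic verifications are routine telescoping. The only genuine difficulty is ensuring that at every step the hypotheses of Propositions~\ref{prop:contr_bd} and~\ref{prop:sep_bd} remain valid; this is precisely where $\omega\in\mathcal{S}(x)$ enters, guaranteeing that the inductive bound on $\|Y(\rho_N)\|_{\CC^\bt}$ does not exceed $\delta_0$ so that the contraction step on the next good interval can be re-initiated without loss.
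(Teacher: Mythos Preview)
Your induction scheme is exactly the paper's: alternate Proposition~\ref{prop:contr_bd} on $[\rho_{i-1},\nu_i]$ with Proposition~\ref{prop:sep_bd} on $[\nu_i,\rho_i]$, using $\omega\in\mathcal{S}(x)$ to keep $\|Y(\rho_N)\|_{\CC^\bt}\le\delta_0$ so the contraction step can be re-initiated. The algebraic telescoping is fine.

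One point needs sharpening. To apply Proposition~\ref{prop:sep_bd} on $[\nu_{N+1},\rho_{N+1}]$ you need a \emph{deterministic} bound $\|X(\nu_{N+1};x)\|_{\CC^{-\al_0}}\le R$, and you justify this by saying that at $\nu_{N+1}$ the remainder $v_{\rho_N}$ ``is controlled via $\delta_1$''. But the control in Definition~\ref{def:stop_times} is only $((\nu_{N+1}-\rho_N)\wedge1)^\gamma\|v_{\rho_N}(\nu_{N+1})-x_*\|_{\CC^\bt}\le\delta_1$; if $\tau_{N+1}<1$ this allows $\|v_{\rho_N}(\nu_{N+1})\|_{\CC^\bt}$ to be arbitrarily large, and a direct embedding into $\CC^{-\al_0}$ does not give a uniform $R$. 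The paper closes this gap by invoking Proposition~\ref{prop:v_ex_est}, which shows (via a separate mild-formulation argument in the weaker norm $\CC^{-\al_0}$, where there is no blow-up at $t=\rho_N$) that $\sup_{t\le\nu_{N+1}}\|X(t;x)-(\pm1)\|_{\CC^{-\al_0}}\le 2\delta_1$; this yields $R=\delta_0+2\delta_1+1$ after the triangle inequality with $\|Y(\nu_{N+1})\|_{\CC^\bt}$. With that citation inserted your argument is complete and matches the paper's.
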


\begin{proof} We prove our claim by induction on $N\geq 1$, observing that it is obvious for $N=0$.


To prove \eqref{eq:precontr_1} for $N+1$ we first notice that by the definition of $\rho_N$ we have that $\|X^\eps(\rho_N;x)-(\pm 1)\|_{\CC^{-\al_0}}\leq \delta_0$ 
and since $\omega\in \mathcal{S}(x)$ \eqref{eq:precontr_2} implies that $\|Y(\rho_N)\|_{\CC^\bt} \leq \delta_0$. Hence we can use \eqref{eq:contr_bd} to get
\begin{equation*}
 \|Y(\nu_{N+1})\|_{\CC^\bt} 
 \lesssim \exp\left\{-\frac{\kappa}{2}\tau_{N+1}\right\} \exp\left\{-(2-\kappa)\tau_{N+1}\right\} 
 \|Y(\rho_{N})\|_{\CC^\bt}.
\end{equation*}
Combining with the estimate on $\|Y(\rho_N)\|_{\CC^\bt}$ the above implies \eqref{eq:precontr_1} for $N+1$.

To prove \eqref{eq:precontr_2} for $N+1$ we first notice that by Proposition \ref{prop:v_ex_est} 
$\|X(\nu_{N+1};x)\|_{\CC^{-\al_0}}\leq 2\delta_1+1$. This bound, \eqref{eq:precontr_1} for $N+1$ and the triangle inequality imply that 
$\|X(\nu_{N+1};y)\|_{\CC^{-\al_0}} \leq \delta_0 + 2\delta_1+1$. Hence we can use Proposition \ref{prop:sep_bd} 
for $\nu = \nu_{N+1}$, $\rho = \rho_{N+1}$ and $R=\delta_0 + 2\delta_1+1$ to obtain
\begin{equation*}
 \|Y(\rho_{N+1})\|_{\CC^\bt} \lesssim \exp\left\{L(\nu_{N+1},\rho_{N+1};\sigma_{N+1})\right\} \|Y(\nu_{N+1})\|_{\CC^\bt}.
\end{equation*}
If we combine with \eqref{eq:precontr_1} for $N+1$ we have that
\begin{align*}
 \|Y(\rho_{N+1})\|_{\CC^\bt} & \leq \exp\left\{L(\nu_{N+1},\rho_{N+1};\sigma_{N+1})+M_0\right\}
 \exp\left\{-S_{N}\right\} \\
 & \quad \times \exp\left\{-\frac{\kappa}{2}\tau_{N+1}\right\} \exp\left\{-(2-\kappa) \nu_{N+1}\right\} \|Y(0)\|_{\CC^\bt}.
\end{align*}
We then rearrange the terms to obtain \eqref{eq:precontr_2}, which completes the proof.
\end{proof}

We are ready to prove the following version of Theorem \ref{thm:exp_contr} for sufficiently smooth initial conditions.

\begin{theorem} \label{thm:exp_contr_smooth_i.d.} For every $\kappa>0$ there exist $\delta_0, a_0, C>0$ and 
$\eps_0\in(0,1)$ such that for every $\eps\leq \eps_0$ 
\begin{align*}
 & \inf_{\|x - (\pm 1)\|_{\CC^{-\al_0}} \leq \delta_0}\PP\left( \sup_{\substack{y-x\in \CC^\bt \\ 
 \|y-x\|_{\CC^\bt} \leq \delta_0}} 
 \frac{\|X(t;y) - X(t;x)\|_{\CC^\bt}}{\|y-x\|_{\CC^\bt}} \leq C \ee^{-(2-\kappa)t} \text{ for every } t\geq 0\right) \\
 & \quad \geq 1 - \ee^{-a_0/\eps}.
\end{align*}
\end{theorem}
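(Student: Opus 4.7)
The plan is to upgrade the endpoint estimates of Proposition \ref{prop:exp_bds} to a bound valid at every time $t\geq 0$, and then invoke Proposition \ref{prop:rwe} to control the probability. Fix $\kappa>0$, choose $\delta_0,\delta_1,\delta_2$ and set $M_0 = 2\log C$ according to Propositions \ref{prop:contr_bd}, \ref{prop:sep_bd} and \ref{prop:rwe}, and for $x$ with $\|x-(\pm 1)\|_{\CC^{-\al_0}}\leq\delta_0$ consider the event $\mathcal{S}(x)$ of \eqref{eq:rw_pos}. Proposition \ref{prop:rwe} already delivers $\PP(\mathcal{S}(x))\geq 1-\ee^{-a_0/\eps}$; moreover $\mathcal{S}(x)$ is measurable with respect to $X(\cdot;x)$ and the stochastic objects alone, so any deterministic bound I derive on $\mathcal{S}(x)$ will automatically be uniform in the admissible $y$. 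The real task is to show that on $\mathcal{S}(x)$ one has $\|Y(t)\|_{\CC^\bt}\leq C\ee^{-(2-\kappa)t}\|Y(0)\|_{\CC^\bt}$ for every $t\geq 0$, where $Y(t)=X(t;y)-X(t;x)$. I split the argument according to whether $t$ lies in a good or a bad interval.

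For $t\in[\rho_{N-1},\nu_N]$ the inductive bound \eqref{eq:precontr_2} together with $S_{N-1}\geq 0$ yields $\|Y(\rho_{N-1})\|_{\CC^\bt}\leq\ee^{-(2-\kappa)\rho_{N-1}}\|Y(0)\|_{\CC^\bt}\leq\delta_0$, while $\|X(\rho_{N-1};x)-(\pm1)\|_{\CC^{-\al_0}}\leq\delta_0$ holds by the very definition of $\rho_{N-1}$. By the Markov property, Proposition \ref{prop:contr_bd} applies to the process restarted at $\rho_{N-1}$ on the interval of length $\tau_N$, and since $2-\tfrac{\kappa}{2}>2-\kappa$ it yields
\begin{equation*}
\|Y(t)\|_{\CC^\bt}\leq C\ee^{-(2-\frac{\kappa}{2})(t-\rho_{N-1})}\|Y(\rho_{N-1})\|_{\CC^\bt}\leq C\ee^{-(2-\kappa)t}\|Y(0)\|_{\CC^\bt}.
\end{equation*}

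The bad interval $t\in[\nu_N,\rho_N]$ is where the real work lies. Applying Proposition \ref{prop:sep_bd} (with the norms $\|X(\nu_N;x)\|_{\CC^{-\al_0}},\|X(\nu_N;y)\|_{\CC^{-\al_0}}$ controlled as in the proof of Proposition \ref{prop:exp_bds}) and exploiting the monotonicity of $L(\nu_N,\rho_N;\cdot)$ in its time argument, which is manifest from \eqref{eq:L_form}, I first obtain
\begin{equation*}
\|Y(t)\|_{\CC^\bt}\leq C\,\ee^{L(\nu_N,\rho_N;\sigma_N)}\|Y(\nu_N)\|_{\CC^\bt}.
\end{equation*}
Chaining with \eqref{eq:precontr_1} and rearranging Definition \ref{def:rw} as $L(\nu_N,\rho_N;\sigma_N)-\tfrac{\kappa}{2}\tau_N = -(S_N-S_{N-1})-(2-\kappa)\sigma_N-M_0$, the prefactor $C^2\ee^{-M_0}$ collapses to $1$ by the specific choice $M_0=2\log C$, and after collecting $\nu_N+\sigma_N=\rho_N$ I arrive at
\begin{equation*}
\|Y(t)\|_{\CC^\bt}\leq\ee^{-S_N}\ee^{-(2-\kappa)\rho_N}\|Y(0)\|_{\CC^\bt}\leq\ee^{-(2-\kappa)t}\|Y(0)\|_{\CC^\bt},
\end{equation*}
using $S_N\geq 0$ and $t\leq\rho_N$.

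Combining both cases establishes the desired deterministic bound on $\mathcal{S}(x)$; dividing by $\|y-x\|_{\CC^\bt}$ and taking supremum over admissible $y$ then closes the proof. The main obstacle is the bad interval: one has to exploit the monotonicity of $L$ to extract an estimate at arbitrary $t\in[\nu_N,\rho_N]$ out of the endpoint-only information stored in $S_N$, and the exact cancellation $C^2\ee^{-M_0}=1$ hardwired into Definition \ref{def:rw} is precisely what prevents a geometric blow-up of the constant from interval to interval.
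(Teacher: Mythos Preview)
Your argument is correct and follows essentially the same route as the paper: split into good and bad intervals, apply Propositions \ref{prop:contr_bd} and \ref{prop:sep_bd} respectively, feed in the endpoint bounds \eqref{eq:precontr_1}--\eqref{eq:precontr_2}, and conclude via Proposition \ref{prop:rwe}. The only cosmetic difference is that on the bad interval you invoke the monotonicity of $L(\nu_N,\rho_N;\cdot)$ upfront to replace $t-\nu_N$ by $\sigma_N$ and then use $t\leq\rho_N$ at the end, whereas the paper keeps the exact time argument $t-\nu_N$ throughout and absorbs the same monotonicity into the step ``$\omega\in\mathcal{S}(x)$''; the algebra is equivalent.
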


\begin{proof} 
Let $\omega \in \mathcal{S}(x)$ as in \eqref{eq:rw_pos}. For any 
$t>0$ there exists $N\equiv N(\omega)\geq 0$ such that $t\in [\rho_N, \nu_{N+1})$ or $t\in [\nu_{N+1}, \rho_{N+1})$. 

If $t\in [\rho_N, \nu_{N+1})$ then
\begin{align*}
 &\|X(t;y) - X(t;x)\|_{\CC^\bt} \\
 & \quad \stackrel{\eqref{eq:contr_bd},\eqref{eq:precontr_2}}{\lesssim} \exp\left\{-\left(2-\frac{\kappa}{2}\right) (t - \rho_N)\right\} \|X(\rho_N;y)) - X(\rho_N;x)\|_{\CC^\bt} \\
 & \quad = \exp\left\{-\frac{\kappa}{2}(t-\rho_N)\right\} \exp\left\{-(2-\kappa)(t-\rho_N) \right\}
 \|X(\rho_N;y)) - X(\rho_N;y)\|_{\CC^\bt} \\
 & \quad \stackrel{\eqref{eq:precontr_2}}{\lesssim} \exp\left\{-(2-\kappa) t \right\} \|y - x\|_{\CC^\bt}. 
\end{align*}
%

If $t\in [\nu_{N+1}, \rho_{N+1})$ then 
\begin{align*}
 \|X(t;y) - X(t;x)\|_{\CC^\bt} & \stackrel{\eqref{eq:sep_bd}}{\lesssim}  \exp\{L(\nu_{N+1},\rho_{N+1};t-\nu_{N+1})\}
 \|X(\nu_{N+1};y) - X(\nu_{N+1};x)\|_{\CC^\bt} \\
 & = \exp\{L(\nu_{N+1},\rho_{N+1};t-\nu_{N+1})+(2-\kappa)(t-\nu_{N+1})\} \\
 & \quad \times \exp\{-(2-\kappa)(t-\nu_{N+1})\} \|X(\nu_{N+1};y) - X(\nu_{N+1};x)\|_{\CC^\bt} \\
 & \stackrel{\eqref{eq:precontr_1},\omega\in \mathcal{S}(x)}{\lesssim} \exp\{-(2-\kappa)t\} \|y - x\|_{\CC^\bt}.
\end{align*}
%
%
%
By Proposition \ref{prop:rwe} there exist $a_0>0$ and $\eps_0\in(0,1)$ such that for every $\eps\leq \eps_0$
\begin{equation*}
 \inf_{\|x - (\pm 1)\|_{\CC^{-\al_0}} \leq \delta_0}\PP(\mathcal{S}(x)) \geq 1 - \ee^{-a_0/\eps}
\end{equation*}
which completes the proof.
\end{proof}

We are now ready to prove Theorem \ref{thm:exp_contr} and Corollary \ref{col:sup_x_y}.

\begin{proof}[Proof of Theorem \ref{thm:exp_contr}] This is a consequence of \eqref{eq:contr_bd_rough_i.d.},
Proposition \ref{prop:f_i} and Theorem \ref{thm:exp_contr_smooth_i.d.}. 
%
%
Let $\delta_1,\delta_2>0$ sufficiently small such that $\delta_1+\delta_2<\delta_0$ and assume that $\tau_1(x)\geq 1$. By the definition of $\tau_1(x)$ 
\begin{equation*}
 \|X(1;x) - (\pm 1)\|_{\CC^{-\al_0}} \leq \|v(1;x) - (\pm1)\|_{\CC^\bt} + \|\eps^{\frac{1}{2}}\<1>(1)\|_{\CC^{-\al_0}} < \delta_1+\delta_2 <\delta_0.
\end{equation*}
If we also choose $\delta_0'< \delta_0$ by \eqref{eq:contr_bd_rough_i.d.} we have that for every $\|y-x\|_{\CC^{-\al_0}}\leq \delta_0'$
\begin{equation*}
 \|X(1;y) - X(1;x)\|_{\CC^\bt} \lesssim \|y-x\|_{\CC^{-\al_0}}.
\end{equation*}
The probability of the event $\{\tau_1(x) \geq 1\}$ can be estimated from below by Proposition \ref{prop:f_i} uniformly in 
$\|x-(\pm1)\|_{\CC^{-\al_0}}\leq \delta_0'$. Combining with Theorem \ref{thm:exp_contr_smooth_i.d.} completes the proof.
\end{proof}

\begin{proof}[Proof of Corollary \ref{col:sup_x_y}] We only prove the case where initial conditions are close to the minimiser $1$. 
We fix $\delta_0',\delta_1'>0$ such that $2\delta_0' <\delta_0$ and $\delta_0'+\delta_1'< \delta_1$. By Proposition 
\ref{prop:v_ex_est} if we chose $\delta_2$ sufficiently small then 
\begin{itemize}
\item $\sup_{t\leq 1}t^{(n-1)\al'} \|\<n>(t)\|_{\CC^{-\al}}\leq \delta_2 \Rightarrow 
\sup_{t\leq 1} t^\gamma\|v(t;y) - 1\|_{\CC^\bt} \leq 
\delta_1$ uniformly for $\|y-1\|_{\CC^{-\al_0}} \leq \delta_0'$.
\end{itemize}
This together with \eqref{eq:contr_bd_rough_i.d.} implies that for every $x,y\in B_{\CC^{-\al_0}}(1;\delta_0')$ 
\begin{equation*}
 \|X(1;y) - X(1;x)\|_{\CC^\bt} \lesssim \|y-x\|_{\CC^{-\al_0}} \lesssim \delta_0'.
\end{equation*}
Let  
\begin{equation*}
 \omega\in \mathcal{S}:= \left\{\sup_{\|y-1\|_{\CC^{-\al_0}}\leq \delta_0'}
 \frac{\|X(t;y) - X(t;1)\|_{\CC^\bt}}{\|y-1\|_{\CC^{-\al_0}}} \leq C \ee^{-(2-\kappa)t} \text{ for every } t\geq 1\right\},
\end{equation*}
$t\geq 1$ and $y\in B_{\CC^{-\al_0}}(-1;\delta_0')$. Then
\begin{itemize}
 \item $\sup_{s \leq t \leq T} (t-s)^\gamma\|v_s(t;X(s;1))- (\pm1)\|_{\CC^\bt}\leq \delta_1'
 \Rightarrow \sup_{s \leq t \leq T} (t-s)^\gamma\|v_s(t;X(s;y))- (\pm1)\|_{\CC^\bt}\leq \delta_1$
 for $T,s\geq 1$.
 \item $\|X(t;1)-(\pm 1)\|_{\CC^{-\al_0}} \leq \delta_0' \Rightarrow \|X(t;y)-(\pm1)\|_{\CC^{-\al_0}} \leq \delta_0$. 
\end{itemize}
This implies that if we consider the process $X(t;y)$ for $t\geq 1$, the times $\nu_i(X(1;y))$ and $\rho_i(X(1;y))$ of
Definition \ref{def:stop_times} for $\delta_0,\delta_1$ and $\delta_2$ can be replaced by the times $\nu_i(X(1;1))$ and $\rho_i(X(1;1))$
for $\delta_0',\delta_1'$ and the same $\delta_2$. Hence the corresponding random walk $S_N(X(1;y))$ in Definition \ref{def:rw} can be 
replaced by $S_N(X(1;1))$.

We can now repeat the proof of Theorem \ref{thm:exp_contr_smooth_i.d.} for the difference $X(t;y) - X(t;x)$, $t\geq 1$, step by step,
replacing the event in \eqref{eq:rw_pos} by 
\begin{equation}\label{eq:rw_pos_x_ind} 
 \mathcal{S}\cap\left\{\sup_{t\leq 1}t^{(n-1)\al'} \|\<n>(t)\|_{\CC^{-\al}}\leq \delta_2, \, S_N(X(1;1))\geq 0 \text{ for every } 
 N\geq 1\right\}.
\end{equation}
This allows us to prove that
\begin{equation*}
 \|X(t;y) - X(t;x)\|_{\CC^\bt} \leq C \ee^{-(2-\kappa)(t-1)} \|X(1;y) - X(1;x)\|_{\CC^\bt}
 \leq C \ee^{-(2-\kappa)t} \|y - x\|_{\CC^{-\al_0}}
\end{equation*}
uniformly in $y,x\in B_{\CC^{-\al_0}}(1;\delta_0')$. 

To estimate the event in \eqref{eq:rw_pos_x_ind} we use Theorem \ref{thm:exp_contr} and Propositions \ref{prop:diag_ex_est} 
and \ref{prop:rwe}. This completes the proof.
\end{proof}

\section{Pathwise estimates on the difference of two profiles} \label{s:diff_est_proof}

In this section we prove Propositions \ref{prop:contr_bd} and Propositions \ref{prop:sep_bd}. Our analysis here is pathwise 
and uses no probabilistic tools. 

\subsection{Proof of Proposition \ref{prop:contr_bd}} \label{s:contr_bd_proof}

\begin{proof}[Proof of Proposition \ref{prop:contr_bd}] 
We only prove \eqref{eq:contr_bd_rough_i.d.}. To prove \eqref{eq:contr_bd} we follow the same strategy as below. However in this case we do not need to 
encounter the blow-up of $\|Y(t)\|_{\CC^\bt}$ close to $0$ and hence we omit the proof since it poses no extra difficulties. 

Let $Y(t) = X(t;y) - X(t;x)$ and notice that from \eqref{eq:rem_eq} we get
\begin{equation*}
 (\partial_t-\Delta) Y = -\left(v(\cdot;y)^3 - v(\cdot;x)^3\right) + Y - 3 (v(\cdot;y) +v(\cdot;x)) \eps^{\frac{1}{2}}\<1> Y
 - 3 \eps\<2> Y.
\end{equation*}
We use the identity $v(\cdot;y) = v(\cdot;x) +Y$ to rewrite this equation in the form
\begin{equation*} 
 \left(\partial_t-(\Delta -2)\right) Y = -3 \left(v(\cdot;x)^2 - 1\right) Y +  \mathtt{Error}(v(\cdot;x);Y) - 3 ( Y + 2v(\cdot;x)) \eps^{\frac{1}{2}} \<1> Y
 - 3 \eps \<2> Y
\end{equation*}
where $\mathtt{Error}(v(\cdot;x);Y) = -Y^3 - 3v(\cdot;x) Y^2$ collects all the terms which are higher order in $Y$.  Then 
\begin{align}
 Y(t) & = \ee^{-2t} \ee^{\Delta  t} Y(0) + \int_0^t \ee^{-2(t-s)} \ee^{\Delta (t-s)}\Big[-3\left(v(s;x)^2 - 1 \right) Y(s)+ 
 \mathtt{Error}(v(s;x);Y(s)) \label{eq:Y_mild_form} \\ 
 & \quad - 3 ( Y(s) + 2v(s;x)) \eps^{\frac{1}{2}} \<1>(s) Y(s)
 - 3 \eps \<2>(s) Y(s)\Big] \dd s. \nonumber
\end{align}
We set 
\begin{equation*}
 \tilde \kappa = \sup_{t\leq \tau_1(x)} (t\wedge 1)^{2\gamma}\|-3\left(v(t;x)^2 - 1\right)\|_{\CC^\bt}. 
\end{equation*}
Let $\iota = \inf\{t>0: (t\wedge1)^\gamma\|Y(t)\|_{\CC^\bt} > \zeta\}$ for $1\geq\zeta>\delta_0$ and notice 
that for $t\leq \tau_1(x) \wedge \iota$ using \eqref{eq:Y_mild_form} we get
\begin{align*}
 \|Y(t)\|_{\CC^\bt} & \stackrel{\eqref{eq:Heat_Smooth},\eqref{eq:mult_ineq_1},\eqref{eq:mult_ineq_2}}{\leq} \ee^{-2t} C (t\wedge1)^{-\frac{\al_0+\bt}{2}} \|Y(0)\|_{\CC^{-\al_0}} + \tilde \kappa \int_0^t \ee^{-2(t-s)} (s\wedge 1)^{-2\gamma} \|Y(s)\|_{\CC^\bt} \dd s \\
 & \quad + \zeta \, C_1 \int_0^t \ee^{-2(t-s)} (s\wedge1)^{-2\gamma} \|Y(s)\|_{\CC^\bt} \dd s \\
 & \quad + \delta_2 \, C_2 \int_0^t \ee^{-2(t-s)} (t-s)^{-\frac{\al+\bt}{2}} (s\wedge 1)^{-\gamma} \|Y(s)\|_{\CC^\bt} \dd s \\
 & \quad + \delta_2 \, C_3 \int_0^t \ee^{-2(t-s)} (t-s)^{-\frac{\al+\bt}{2}} (s\wedge 1)^{-\al'} \|Y(s)\|_{\CC^\bt} \dd s
\end{align*}
were we also use that for $s\leq t$
\begin{equation*}
\|\mathtt{Error}(v(s;x);Y(s))\|_{\CC^\bt} \lesssim \zeta s^{-2\gamma}\|Y(s)\|_{\CC^\bt}.
\end{equation*}
Choosing $\zeta \leq \tilde \kappa/ C_1$ and $\delta_2 \leq \tilde \kappa/ C_2\vee C_3$ we have
\begin{equation*}
 \|Y(t)\|_{\CC^\bt} \leq \ee^{-2t} C (t\wedge 1)^{-\frac{\al+\bt}{2}}\|Y(0)\|_{\CC^{-\al_0}} + \tilde \kappa 
 \int_0^t \ee^{-2(t-s)} (t-s)^{-\frac{\al+\bt}{2}} (s\wedge 1)^{-2\gamma} \|Y(s)\|_{\CC^\bt} \dd s.
\end{equation*}
Then for $t\leq \tau_1(x) \wedge \iota$ by Lemma \ref{lem:grwl} on $f(t) = 
(t\wedge 1)^\gamma \|Y(t)\|_{\CC^{\bt}}$ there exist $c>0$ such that 
\begin{equation*}
 (t\wedge 1)^\gamma \|Y(t)\|_{\CC^\bt} \leq C \exp\left\{ - 2 t + c \tilde \kappa^{\frac{1}{1-\frac{\al+\bt}{2}-3\gm}} t + M\right\} 
 \|Y(0)\|_{\CC^{-\al_0}}.
\end{equation*}
We now fix $\delta_1>0$ such that $c \tilde \kappa^{\frac{1}{1-\frac{\al+\bt}{2}-3\gm}} \leq \frac{\kappa}{2}$. This implies that for 
$t\leq \tau_1(x) \wedge \iota$
\begin{equation*}
 (t\wedge 1)^\gamma\|Y(t)\|_{\CC^\bt} \leq C \exp\left\{ - \left(2 - \frac{\kappa}{2}\right) t\right\} \|Y(0)\|_{\CC^{-\al_0}}.
\end{equation*}
Finally choosing $\delta_0$ sufficiently small we furthermore notice that $\tau_1(x) \wedge \iota = \tau_1(x)$ 
which completes the proof of \eqref{eq:contr_bd_rough_i.d.}. 
\end{proof}

\subsection{Proof of Proposition \ref{prop:sep_bd}} \label{s:sep_bd_proof}

Before we proceed to the proof of Proposition \ref{prop:sep_bd} we need the following lemma which upgrades the a priori estimates 
in \cite[Proposition 3.7]{TW18}. Here and below we let $S(t) = \ee^{\Delta t}$.
\begin{lemma} \label{lem:rem_bnd} There exist $\al,\gamma',C>0$ and $p_0\geq 1$ such that if $\sup_{t\leq 1} t^{(n-1)\al'} \|\eps^{\frac{n}{2}}\<n>(t)\|_{\CC^{-\al}} \leq L^n$ then
\begin{equation*}
 \sup_{x\in \CC^{-\al_0}}\sup_{t\leq 1} t^{\gamma'} \|v(t;x)\|_{\CC^\bt} \leq C (1\vee L)^{p_0}.
\end{equation*}
\end{lemma}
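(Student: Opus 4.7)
The plan is to take the a priori estimate of \cite[Proposition 3.7]{TW18} as a black box and upgrade it via a one-step Schauder bootstrap in the mild formulation.

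First, I would recall that \cite[Proposition 3.7]{TW18} provides a coming-down-from-infinity bound: under the assumption on the stochastic objects, one has a bound of the form $\sup_{x \in \CC^{-\al_0}} \sup_{t \in [1/2,1]} \|v(t;x)\|_V \leq C(1\vee L)^{q_0}$ in some auxiliary space $V$ (such as $L^\infty$ or $\CC^{-\al_0}$), where the key point is the uniformity over the initial datum $x$. The dissipativity of the cubic nonlinearity $-v^3$ in \eqref{eq:rem_eq} is what makes this uniform estimate possible in the first place. The target is to strengthen this to the $\CC^\bt$ norm with only a blow-up at $t=0$ of order $t^{-\gm'}$, while keeping the dependence on $L$ polynomial.

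Next, for $t\in(0,1]$ I would restart the mild formulation at time $t/2$, writing
\begin{equation*}
v(t;x) = S(t/2) v(t/2;x) + \int_{t/2}^t S(t-s)\Bigl[-v(s)^3 + v(s) - 3 v(s)^2 \eps^{1/2}\<1>(s) - 3 v(s) \eps \<2>(s) - \eps^{3/2}\<3>(s) + 2\eps^{1/2}\<1>(s)\Bigr]\dd s.
\end{equation*}
The linear term $S(t/2)v(t/2;x)$ is estimated by the heat semigroup smoothing inequality \eqref{eq:Heat_Smooth}: one gains a factor $(t/2)^{-(\|\cdot\|_V\text{-regularity}+\bt)/2}$ and inserts the coming-down-from-infinity bound, giving a contribution bounded by $C t^{-\gm'} (1\vee L)^{q_0}$ for $\gm'$ large enough. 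The Duhamel integral is then controlled by the Besov multiplication estimates \eqref{eq:mult_ineq_1}--\eqref{eq:mult_ineq_2}: each term $v^2\<1>$, $v\<2>$, $\<3>$ can be bounded in $\CC^{-\al}$ by a power of $\|v\|_{\CC^\bt}$ times $\|\eps^{n/2}\<n>(s)\|_{\CC^{-\al}} \leq s^{-(n-1)\al'} L^n$, provided $\bt > \al$. Under the constraint \eqref{eq:al_al'} the resulting time singularities are integrable.

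Third, I would close the argument by a Gr\"onwall-type lemma (the analogue of Lemma~\ref{lem:grwl} used in the proof of Proposition~\ref{prop:contr_bd}) applied to the function $t \mapsto t^{\gm'}\|v(t;x)\|_{\CC^\bt}$ on $(0,1]$. Because the nonlinear term $v^3$ contributes a cubic dependence and all products with $\<n>$ contribute at most quadratic dependence on $\|v\|_{\CC^\bt}$ paired with polynomial-in-$L$ factors, a standard absorption argument (using the small factors of $\eps$ or a local-in-time iteration on small tiles of length proportional to a negative power of $L$) yields a closed bound of the form $t^{\gm'}\|v(t;x)\|_{\CC^\bt} \leq C(1\vee L)^{p_0}$, uniformly in $x$.

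The main technical obstacle is the bookkeeping of the polynomial degree $p_0$: the cubic term forces one to iterate, and at each iteration step a new power of $L$ is picked up through the estimates on $v^2\<1>$ and $v\<2>$. Thus one must set up the bootstrap on sufficiently short sub-intervals whose length scales like a negative power of $(1\vee L)$ so that the nonlinear contribution remains absorbable; combined with the number of iterations needed to cover $[t/2,1]$, this produces the polynomial growth $(1\vee L)^{p_0}$ with an explicit but possibly large $p_0$. Keeping the exponent $\gm'$ compatible with the constraints \eqref{eq:beta_gamma}--\eqref{eq:al_al'} used elsewhere in the paper is the other delicate point, but can be arranged by making $\bt$ slightly smaller if necessary.
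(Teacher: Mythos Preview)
Your overall plan—invoke the a priori bound from \cite[Proposition~3.7]{TW18} and then bootstrap to $\CC^\bt$ via the mild formulation restarted at $t/2$—matches the paper's starting point, but the execution diverges in an important way.

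The paper's proof does \emph{not} bootstrap on $\|v\|_{\CC^\bt}$ at all. It uses two inputs from \cite{TW18}: the $L^p$ coming-down bound $t^{1/2}\|v(t)\|_{L^p}\lesssim (1\vee L)^{q_0}$, \emph{and} an energy inequality giving $\int_s^t\|\nabla v(r)\|_{L^2}^2\,\dd r\lesssim (1\vee L)^{q_0}+\|v(s)\|_{L^2}^2$. In the Duhamel terms, the cubic piece $v^3$ and the linear pieces are controlled purely by the $L^p$ bound, while the mixed terms $v^2\<1>$ and $v\,\<2>$—which require \emph{positive} regularity of $v$ to multiply against $\<n>\in\CC^{-\al}$—are handled by passing through $\BB^1_{2,\infty}$ and invoking the energy bound via Cauchy--Schwarz in time. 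The result is a clean one-step estimate with no integral inequality to close.

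Your route instead estimates every nonlinear term by powers of $\|v\|_{\CC^\bt}$, producing a cubic integral inequality. Lemma~\ref{lem:grwl} is linear and does not apply here; you acknowledge this and fall back on a tile iteration. Two issues arise. First, the mention of ``small factors of $\eps$'' is a red herring: $\eps$ is already absorbed into $L$ via the hypothesis, so no smallness is available. Second, the tile argument is delicate: restarting at $t/2$ with $\|v(t/2)\|_{\CC^{-\al_0}}\lesssim t^{-1/2}(1\vee L)^{q_0}$, the local existence time $T_*$ from \cite[Theorem~3.3]{TW18} scales like a negative power of $t^{-1/2}(1\vee L)^{q_0}$, and for small $t$ one has $T_*\ll t/2$, so a single tile does not reach $t$. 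Iterating tiles while keeping the final bound polynomial in $L$ (and with a fixed blow-up exponent $\gamma'$ uniform in $t$) requires resetting via the $L^p$ bound at each tile and a careful accounting you have not carried out; na\"ive iteration risks exponential-in-$L$ growth.

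In short, your sketch is not wrong in spirit, but it omits the key technical device—the energy/gradient estimate—that lets the paper avoid the nonlinear closing problem entirely. If you want to pursue your route, you would need to make the tile iteration precise and verify the polynomial dependence; the paper's approach is both shorter and more transparent.
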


\begin{proof} Throughout this proof we simply write $v(t)$ to denote $v(t;x)$. By \cite[Proposition 3.7]{TW18} we have that for every $p\geq 2$ even 
\begin{equation}\label{eq:Lp_est}
 \sup_{x\in \CC^{-\al_0}}\sup_{t\leq 1} t^\frac{1}{2}\|v(t)\|_{L^p} \leq C 
 \left(1\vee \sup_{t\leq 1}t^{(n-1)\al'p_n}\|\eps^{\frac{n}{2}}\<n>(t)\|_{\CC^{-\al}}^{p_n}\right)
\end{equation}
for some exponents $p_n\geq 1$. Combining  \cite[Equations (3.13) and (3.22)]{TW18}  
and integrating from $s$ to $t$ we obtain
\begin{equation*}
 \|v(t)\|_{L^2}^2 - \|v(s)\|_{L^2}^2 + \int_s^t \|\nabla v(r)\|_{L^2}^2 \dd r 
 \leq C \int_s^t \Big(1+\sum_{n\leq 3}\|\eps^{\frac{n}{2}}\<n>(r)\|_{\CC^{-\al}}^{p_n}\Big) \dd r
\end{equation*}
which implies that
\begin{equation}\label{eq:grad_est}
 \int_s^t \|\nabla v(r)\|_{L^2}^2 \dd r 
 \leq C \int_s^t \Big(1+\sum_{n\leq 3}\|\eps^{\frac{n}{2}}\<n>(r)\|_{\CC^{-\al}}^{p_n}\Big) \dd r + \|v(s)\|_{L^2}^2.
\end{equation}
Using the mild form of \eqref{eq:rem_eq} we have for $1\geq t>s>0$
\begin{align}
 \|v(t)\|_{\CC^\bt} & \lesssim \underbrace{\|S(t-s)v(s)\|_{\CC^\bt}}_{=:I_1} + 
 \underbrace{\int_s^t \|S(t-r)v(r)^3\|_{\CC^\bt} \dd r}_{=:I_2} 
 + \underbrace{\int_s^t \|S(t-r)\left(v(r)^2 \eps^{\frac{1}{2}} \<1>(r)\right)\|_{\CC^\bt} \dd r}_{=:I_3} \label{eq:v_mild_bd} \\
 & \quad + \underbrace{\int_s^t \|S(t-r)\left(v(r) \eps \<2>(r)\right)\|_{\CC^\bt} \dd r}_{=:I_4} 
 + \underbrace{\int_s^t \|S(t-r) \eps^{\frac{3}{2}}\<3>(r)\|_{\CC^\bt} \dd r}_{=:I_5} \nonumber \\
 & \quad + \underbrace{\int_s^t \|S(t-r) \eps^{\frac{1}{2}}\<1>(r)\|_{\CC^\bt} \dd r}_{=:I_6} 
 + \underbrace{\int_s^t \|S(t-r)v(r)\|_{\CC^\bt} \dd r}_{=:I_7}. \nonumber
\end{align}
To estimate $\|v(t)\|_{\CC^\bt}$ we use the $L^p$ bound \eqref{eq:Lp_est}, the energy inequality \eqref{eq:grad_est} and the embedding 
$\BB^1_{2,\infty}$ to bound the terms appearing on the right hand side of the last inequality as shown below.

We treat each term in \eqref{eq:v_mild_bd} separately. Below $p$ may change from term to term and $\al,\lambda$ can be taken arbitrarily small. We write $p_1$ 
and $p_2$ for conjugate exponents of $p$, i.e. $\frac{1}{p} = \frac{1}{p_1}+\frac{1}{p_2}$. We also denote by $(1\vee L)^{p_0}$ a 
polynomial of degree $p_0\geq 1$ in the variable $1\vee L$ where the value of $p_0$ may change from line to line.

\underline{Term $I_1$}:
\begin{align*}
 I_1 \stackrel{\eqref{eq:Besov_Emb},\eqref{eq:Heat_Smooth}}{\lesssim} (t-s)^{-\frac{\bt+\frac{2}{p}}{2}} \|v(s)\|_{L^p} 
 \stackrel{\eqref{eq:Lp_est}}{\lesssim} (t-s)^{-\frac{\bt+\frac{2}{p}}{2}} s^{-\frac{1}{2}} (1\vee L)^{p_0}.
\end{align*}
\underline{Term $I_2$}: 
\begin{align*}
 I_2 & \stackrel{\eqref{eq:Besov_Emb},\eqref{eq:Heat_Smooth}}{\lesssim} \int_s^t (t-r)^{-\frac{\bt+\frac{2}{p}}{2}} \|v(r)^3\|_{L^p} \dd r
 \stackrel{\eqref{eq:Lp_est}}{\lesssim} (1\vee L)^{p_0} \int_s^t (t-r)^{-\frac{\bt+\frac{2}{p}}{2}} r^{-\frac{3}{2}} \dd r \\
 & \lesssim (1\vee L)^{p_0} s^{-\frac{3}{2}} \int_s^t (t-r)^{-\frac{\bt+\frac{2}{p}}{2}} \dd r.
\end{align*}
\underline{Term $I_3$}:
\begin{align*}
 I_3 & \stackrel{\eqref{eq:Besov_Emb},\eqref{eq:Heat_Smooth},\eqref{eq:mult_ineq_2},\lambda>0}{\lesssim} \int_s^t (t-r)^{-\frac{2\al+\lambda+\frac{2}{p}}{2}} \|v(r)^2\|_{\BB^{\al+\lambda}_{p,\infty}} \|\eps^{\frac{1}{2}}\<1>(r)\|_{\CC^{-\al}} \dd r \\
 & \stackrel{\eqref{eq:Lp_mult}}{\lesssim} \int_s^t (t-r)^{-\frac{2\al+\lambda+\frac{2}{p}}{2}} \|v(r)\|_{L^{p_1}} \|v(r)\|_{\BB^{\al+\lambda}_{p_2,\infty}} 
 \|\eps^{\frac{1}{2}}\<1>(r)\|_{\CC^{-\al}} \dd r \\
 & \stackrel{\eqref{eq:Besov_Emb},\eqref{eq:Lp_est}}{\lesssim} \int_s^t (t-r)^{-\frac{2\al+\lambda+\frac{2}{p}}{2}} r^{-\frac{1}{2}} \|v(r)\|_{\BB^{\al+\lambda+1-\frac{2}{p_2}}_{2,\infty}} 
 \|\eps^{\frac{1}{2}}\<1>(r)\|_{\CC^{-\al}} \dd r \\
 & \stackrel{\eqref{eq:Lp_est},\frac{2}{p_2} = \al+\lambda}{\lesssim} (1\vee L)^{p_0} s^{-\frac{1}{2}}
 \int_s^t  (t-r)^{-\frac{2\al+\lambda+\frac{2}{p}}{2}} \|v(r)\|_{\BB^1_{2,\infty}} \dd r \\
 & \stackrel{\text{Cauchy--Schwarz}}{\lesssim} (1\vee L)^{p_0} s^{-\frac{1}{2}}
 \left(\int_s^t  (t-r)^{-\left(2\al+\lambda+\frac{2}{p}\right)} \dd r \right)^{\frac{1}{2}} 
 \left(\int_s^t \|v(r)\|_{\BB^1_{2,\infty}}^2 \dd r\right)^{\frac{1}{2}}.
\end{align*}
\underline{Term $I_4$}:
\begin{align*}
 I_4 & \stackrel{\eqref{eq:Besov_Emb},\eqref{eq:Heat_Smooth},\eqref{eq:mult_ineq_2},\lambda>0}{\lesssim} \int_s^t (t-r)^{-\frac{2\al+\lambda+\frac{2}{p}}{2}} \|v(r)\|_{\BB^{\al+\lambda}_{p,\infty}} \|\eps\<2>(r)\|_{\CC^{-\al}} \dd r \\
 & \stackrel{\eqref{eq:Besov_Emb}}{\lesssim} \int_s^t (t-r)^{-\frac{2\al+\lambda+\frac{2}{p}}{2}} \|v(r)\|_{\BB^{\al+\lambda+1-\frac{2}{p}}_{2,\infty}} \|\eps\<2>(r)\|_{\CC^{-\al}} \dd r\\
 & \stackrel{\frac{2}{p} = \al+\lambda}{\lesssim} (1\vee L)^{p_0} s^{-\al'} \int_s^t (t-r)^{-\frac{2\al+\lambda+\frac{2}{p}}{2}} \|v(r)\|_{\BB^1_{2,\infty}} \dd r \\
 & \stackrel{\text{Cauchy--Schwarz}}{\lesssim} (1\vee L)^{p_0} s^{-\al'} \left(\int_s^t (t-r)^{-\left(2\al+\lambda+\frac{2}{p}\right)} \dd r\right)^{\frac{1}{2}} 
 \left(\int_s^t \|v(r)\|_{\BB^1_{2,\infty}}^2 \dd r\right)^{\frac{1}{2}}.
\end{align*}
\underline{Term $I_5$}:
\begin{align*}
 I_5 & \stackrel{\eqref{eq:Heat_Smooth}}{\lesssim} \int_s^t (t-r)^{-\frac{\al+\bt}{2}} \|\eps^{\frac{3}{2}}\<3>(r)\|_{\CC^{-\al}} \dd r
 \lesssim (1\vee L)^{p_0} \int_s^t (t-r)^{-\frac{\al+\bt}{2}} r^{-2\al'} \dd r \\
 & \lesssim (1\vee L)^{p_0} s^{-2\al'} \int_s^t (t-r)^{-\frac{\al+\bt}{2}} \dd r.
\end{align*}
\underline{Term $I_6$}:
\begin{align*}
 I_6 \stackrel{\eqref{eq:Heat_Smooth}}{\lesssim} \int_s^t (t-r)^{-\frac{\al+\bt}{2}} \|\eps^{\frac{1}{2}}\<1>(r)\|_{\CC^{-\al}} \dd r
 \lesssim (1\vee L)^{p_0} \int_s^t (t-r)^{-\frac{\al+\bt}{2}} \dd r.
\end{align*}
\underline{Term $I_7$}:
\begin{align*}
 I_7 & \stackrel{\eqref{eq:Besov_Emb},\eqref{eq:Heat_Smooth}}{\lesssim} \int_s^t (t-r)^{-\frac{\bt+ \frac{2}{p}}{2}} \|v(r)\|_{L^p} \dd r
 \stackrel{\eqref{eq:Lp_est}}{\lesssim} (1\vee L)^{p_0} \int_s^t (t-r)^{-\frac{\bt+ \frac{2}{p}}{2}} r^{-\frac{1}{2}} \dd r \\
 & \lesssim (1\vee L)^{p_0} s^{-\frac{1}{2}} \int_s^t (t-r)^{-\frac{\bt+ \frac{2}{p}}{2}} \dd r.
\end{align*}
Using Proposition \ref{prop:besov_grad_est}, \eqref{eq:Lp_est} and \eqref{eq:grad_est} we notice that 
\begin{equation*}
 \left(\int_s^t \|v(r)\|_{\BB^1_{2,\infty}}^2 \dd r\right)^{\frac{1}{2}} \lesssim 
 \left(\int_s^t \|\nabla v(r)\|_{L^2}^2 \dd r\right)^{\frac{1}{2}} + 
 \left(\int_s^t \|v(r)\|_{L^2}^2 \dd r\right)^{\frac{1}{2}} 
 \lesssim (1\vee L)^{p_0} s^{-\frac{1}{2}}   .
\end{equation*}
Combining the above and choosing $s=t/2$ we find $\gamma' > 0$ such that 
\begin{equation*}
 t^{\gamma'}\|v(t)\|_{\CC^\bt} \lesssim (1\vee L)^{p_0} 
\end{equation*}
which completes the proof.
\end{proof}

\begin{proof}[Proof of Proposition \ref{prop:sep_bd}] We denote by $(1\vee L)^{p_0}$ a polynomial of degree $p_0\geq 1$ in the variable $1\vee L$ where the 
value of $p_0$ may change from line to line.

For $k\geq 0$ recall that $t_k = \nu + k$ and $s_k = t_k + \frac{1}{2}$. As before, we  write $Y(t) = X(t;y) - X(t;x)$. 

Let $t\in (t_k,s_k]$, $k\geq 1$. We restart the stochastic terms at time $s_{k-1}$ and write $Y(t) = v_{s_{k-1}}(t;\tilde y) - v_{s_{k-1}}(t;\tilde x)$ where 
for simplicity $\tilde y = X(s_{k-1};y)$ and $\tilde x = X(s_{k-1};x)$. Together with \eqref{eq:rem_eq_restart}, this implies that
\begin{align*}
 & (\partial_t-\Delta) Y 
 = -\left(v_{s_{k-1}}(\cdot;\tilde y)^3 - v_{s_{k-1}}(\cdot;\tilde x)^3\right) + Y 
 - 3 (v_{s_{k-1}}(\cdot;\tilde y) +v_{s_{k-1}}(\cdot;\tilde x) )\eps^{\frac{1}{2}}\<1>_{s_{k-1}}Y - 3 \eps\<2>_{s_{k-1}} Y.
\end{align*}
Using the mild form of the above equation, now starting at $t_k = s_{k-1}+\frac12$, we get
%
%
\begin{align*}
 \|Y(t)\|_{\CC^\bt} & \stackrel{\eqref{eq:Heat_Smooth},\eqref{eq:mult_ineq_1},\eqref{eq:mult_ineq_2}}{\lesssim} \|Y(t_k)\|_{\CC^\bt} + \int_{t_k}^t \|v_{s_{k-1}}(r;\tilde y)^3 - 
 v_{s_{k-1}}(r;\tilde x)^3\|_{\CC^\bt} \dd r \\ 
 & \quad + \int_{t_k}^t (t-r)^{-\frac{\al+\bt}{2}} \|v_{s_{k-1}}(r;\tilde y)^2 - v_{s_{k-1}}(r;\tilde x)^2\|_{\CC^\bt} \|\eps^{\frac{1}{2}}\<1>_{s_{k-1}}(r)\|_{\CC^{-\al}} \dd r \\
 & \quad + \int_{t_k}^t (t-r)^{-\frac{\al+\bt}{2}} \|Y(r)\|_{\CC^\bt} \|\eps\<2>_{s_{k-1}}(r)\|_{\CC^{-\al}} \dd r + \int_{t_k}^t \|Y(r)\|_{\CC^\bt} \dd r.
\end{align*}
By Lemma \ref{lem:rem_bnd} there exist $\gamma'>0$ such that
\begin{equation*}
 \sup_{x\in \CC^{-\al_0}}\sup_{t\in [s_{k-1}, s_k]} \left(t- s_{k-1}\right)^{\gamma'} 
 \|v_{s_{k-1}}(t;x)\|_{\CC^\bt} \lesssim \left(1\vee L_k\left(\nu+\frac{1}{2},\rho\right)\right)^{p_0}.
\end{equation*}
Combining the above we get
\begin{equation*}
 \|Y(t)\|_{\CC^\bt} \lesssim \|Y(t_k)\|_{\CC^\bt} + \left(1\vee L_k\left(\nu+\frac{1}{2},\rho\right)\right)^{p_0}
 \int_{t_k}^t (t- r)^{-\frac{\al+\bt}{2}} \|Y(r)\|_{\CC^\bt} \dd r.
\end{equation*}
By Lemma \ref{lem:grwl} there exists $c_0>0$ such that 
\begin{equation}\label{eq:gluing_1}
 \|Y(t)\|_{\CC^\bt} \lesssim \exp\left\{c_0 \left(1\vee L_k\left(\nu+\frac{1}{2},\rho\right)\right)^{p_0} (t - s) \right\} \|Y(t_k)\|_{\CC^\bt}. 
\end{equation}
Following the same strategy we prove that for $t\in [s_k,t_{k+1}]$, $k\geq 1$,
\begin{equation}\label{eq:gluing_2}
 \|Y(t)\|_{\CC^\bt} \lesssim \exp\left\{c_0 \left(1\vee L_{k+1}\left(\nu,\rho\right)\right)^{p_0} (t - s)\right\} \|Y(s_k)\|_{\CC^\bt}.
\end{equation}
Finally, we also need a bound for $t\in[t_0,t_1]$. To obtain an estimate which does not depend on any information before time $t_0$ we use local 
solution theory. By \cite[Theorem 3.3]{TW18} there exists $t_*\in(t_0,t_1)$ such that
\begin{equation*}
  \sup_{\|x\|_{\CC^{-\al_0}}\leq R}\sup_{r\in [t_0,t_*]} (r-t_0)^\gamma \|v_{t_0}(r;x)\|_{\CC^\bt} \leq 1 
\end{equation*}
and furthermore we can take
\begin{equation*}
 t_* = \left(\frac{1}{C(R\vee L_1(\nu,\rho))}\right)^{p_0}.
\end{equation*}
By Lemma \ref{lem:rem_bnd} we also have that 
\begin{equation*}
 \sup_{x\in\CC^{-\al_0}} \sup_{r\in \left(t_0,t_1\right]} (r-t_0)^{\gamma'}
 \|v_{t_0}(r;x)\|_{\CC^\bt} \lesssim (1\vee L_1(\nu,\rho))^{p_0}.
\end{equation*}
Combining these two bounds we get 
\begin{equation} \label{eq:v_local_bd}
 \sup_{\|x\|_{\CC^{-\al_0}}\leq R}\sup_{r\in[t_0,t_1]} (r-t_0)^\gamma \|v_{t_0}(r;x)\|_{\CC^\bt} \lesssim (1\vee L_1(\nu,\rho))^{p_0} 
\end{equation}
were the implicit constant depends on $R$. Note that $\gamma<\frac{1}{3}$ whereas $\gamma'$ is much larger. We write $Y(t) = v_{t_0}(t;y)-v_{t_0}(t;x)$ and use the mild form starting at $t_0$. We then use 
\eqref{eq:v_local_bd} to bound $\|v_{t_0}(t;\cdot)\|_{\CC^\bt}$ on $[t_0,t_1]$ which implies the estimate
\begin{equation*}
 \|Y(t)\|_{\CC^\bt} \lesssim \|Y(t_0)\|_{\CC^\bt} + \left(1\vee L_1\left(\nu,\rho\right)\right)^{p_0}
 \int_{t_0}^t (t- r)^{-\frac{\al+\bt}{2}} (r-t_0)^{-2\gamma} \|Y(r)\|_{\CC^\bt} \dd r.
\end{equation*}
The extra term $(r-t_0)^{-2\gamma}$ in the last inequality appears because of the blow-up of $v_{t_0}(t;\cdot)$ and 
$\<n>_{t_0}(t)$ for $t$ close to $t_0$. By Lemma \ref{lem:grwl} we obtain that
\begin{equation} \label{eq:gluing_3}
 \|Y(t)\|_{\CC^\bt} \lesssim 
 \exp\left\{c_0 \left(1\vee L_1\left(\nu,\rho\right)\right)^{p_0} (t - s)\right\} \|Y(s)\|_{\CC^\bt}.
\end{equation}

For arbitrary $t\in [\nu,\rho]$ we glue together \eqref{eq:gluing_1}, \eqref{eq:gluing_2} and \eqref{eq:gluing_3} to get
\begin{equation*}
 \|Y(t)\|_{\CC^\bt} \lesssim \exp\left\{\frac{c_0}{2} \sum_{k=1}^{\lfloor t-\nu \rfloor} 
 \sum_{l=0,\frac{1}{2}} \left(1\vee L_k(\nu+l,\rho\right))^{p_0} + L_0 (t-\nu) \right\} 
 \|Y(\nu)\|_{\CC^\bt}
\end{equation*}
for some $L_0>0$ which collects the implicit constants in the inequalities. 
\end{proof}

\section{Random walk estimates} \label{s:rwe}

In this section we prove Proposition \ref{prop:rwe} based mainly on probabilistic arguments.
%
%
%
%
In Sections \ref{s:ex_time_est} and \ref{s:ent_time_est} we provide estimates on $\frac{\kappa}{2} \tau_i(x)$ and 
$L(\nu_i(x),\rho_i(x);\sigma_i(x)) + (2-\kappa) \sigma_i(x) + M_0$ from Definition \ref{def:rw}. In Section \ref{s:rwe_proof}
we use these estimates to prove Proposition \ref{prop:rwe}.

\subsection{Estimates on the exit times} \label{s:ex_time_est}

\begin{proposition} \label{prop:diag_ex_est} Let $\delta >0$ and $\tau_{\mathrm{tree}} = \inf\{t>0: (t\wedge 1)^{(n-1) \al'} \|\eps^{\frac{n}{2}}\<n>(t)\|_{\CC^{-\al}}\geq \delta^n\}$. 
Then there exist $a_0>0$ and $\eps_0\in(0,1)$ such that for every $\eps\leq \eps_0$
\begin{equation*}
 \PP\left(\tau_{\mathrm{tree}}\leq \ee^{3a_0/\eps}\right) \leq \ee^{-3a_0/\eps}.
\end{equation*}
\end{proposition}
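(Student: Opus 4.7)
The plan is to reduce to the exponential moment bounds for the stochastic objects $\<n>$ referenced as Proposition \ref{prop:exp_mom} and apply a union bound over a covering of the time interval $[0,\ee^{3a_0/\eps}]$ by unit intervals. The crucial feature of these bounds is that they hold uniformly in $t$ (this is exactly the reason the mass $1$ is put in \eqref{eq:1st_wick}), and they control not only pointwise values of $\|\<n>(t)\|_{\CC^{-\al}}^{2/n}$ but also suitably weighted suprema over bounded time intervals.

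First I would split the time axis into $[0,1]\cup\bigcup_{k\geq 1}[k,k+1]$. On the initial interval $[0,1]$ I would use the bound
\begin{equation*}
\EE\exp\Bigl(c\sup_{t\leq 1}(t\wedge 1)^{2(n-1)\al'/n}\|\<n>(t)\|_{\CC^{-\al}}^{2/n}\Bigr)\leq C,
\end{equation*}
coming from the moment estimates on the Wick powers, so that by Chebyshev
\begin{equation*}
\PP\Bigl(\sup_{t\leq 1}(t\wedge 1)^{(n-1)\al'}\|\eps^{n/2}\<n>(t)\|_{\CC^{-\al}}\geq \delta^n\Bigr)\leq C\exp(-c\delta^2/\eps).
\end{equation*}
On each interval $[k,k+1]$ with $k\geq 1$ the weight is $1$, so I would instead control $\sup_{t\in[k,k+1]}\|\<n>(t)\|_{\CC^{-\al}}^{2/n}$ by the analogous uniform-in-$k$ exponential moment bound, giving
\begin{equation*}
\PP\Bigl(\sup_{t\in[k,k+1]}\|\eps^{n/2}\<n>(t)\|_{\CC^{-\al}}\geq \delta^n\Bigr)\leq C\exp(-c\delta^2/\eps).
\end{equation*}

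A union bound over $k=0,1,\dots,\lfloor\ee^{3a_0/\eps}\rfloor$ and over the three stochastic objects yields
\begin{equation*}
\PP(\tau_{\mathrm{tree}}\leq \ee^{3a_0/\eps})\leq C\,\ee^{3a_0/\eps}\exp(-c\delta^2/\eps).
\end{equation*}
Choosing $a_0$ small enough depending on $\delta$ so that $6a_0<c\delta^2$, and then $\eps_0$ small enough to absorb the constant $C$, gives the claimed bound $\leq \ee^{-3a_0/\eps}$ for all $\eps\leq \eps_0$.

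The only delicate point is justifying the \emph{sup-norm} exponential moment bound on unit intervals rather than a pointwise one; this amounts to a Kolmogorov-type continuity estimate combined with Gaussian hypercontractivity (since $\<n>$ lives in the $n$-th Wiener chaos, its $L^p$ moments grow only like $p^{n/2}$, which is exactly what is needed for a stretched exponential moment of $\|\<n>\|_{\CC^{-\al}}^{2/n}$). This kind of estimate is by now standard for the $\Phi^4_2$ stochastic objects and is essentially the content of Proposition \ref{prop:exp_mom}; the rest of the argument is a clean Chebyshev and union bound.
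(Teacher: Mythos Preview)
Your proposal is correct and follows essentially the same approach as the paper: a union bound over unit intervals combined with the exponential Chebyshev inequality applied to the uniform-in-$k$ exponential moment bounds of Proposition~\ref{prop:exp_mom}. The only cosmetic difference is that the paper does not separate the interval $[0,1]$ from the later ones, since Proposition~\ref{prop:exp_mom} already incorporates the weight $(t\wedge 1)^{(n-1)\al'}$ uniformly over all $k\geq 0$.
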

 
\begin{proof} First notice that for $N\geq 1$ 
\begin{equation*}
 \PP(\tau_{\mathrm{tree}}\leq N) \leq \sum_{k=0}^{N-1} \PP(\tau_{\mathrm{tree}} \in (k,k+1)) 
 \leq \sum_{k=0}^{N-1} \PP\left(\sup_{t\in(k, k+1]} (t\wedge 1)^{(n-1)\al'} \|\eps^{\frac{n}{2}}\<n>(t)\|_{\CC^{-\al}} \geq \delta^n\right).
\end{equation*}
By Proposition \ref{prop:exp_mom} and the exponential Chebyshev inequality there exists $a_0>0$ such that for every $k\geq 0$
\begin{equation*}
 \PP\left(\sup_{t\in(k,k+1]} (t\wedge 1)^{(n-1)\al'} \|\eps^{\frac{n}{2}}\<n>(t)\|_{\CC^{-\al}} \geq \delta^n\right) \leq 
 \ee^{-6a_0/\eps}.
\end{equation*}
Hence 
\begin{equation*}
 \PP(\tau_{\mathrm{tree}}\leq N) \leq N \ee^{-6a_0/\eps}
\end{equation*}
and choosing $N = \ee^{3a_0/\eps}$ completes the proof.  
\end{proof}

\begin{proposition}\label{prop:v_ex_est} For $\delta_1>0$ sufficiently small there exist $\delta_0,\delta_2>0$ such that if
\begin{equation}\label{eq:tree_contr}
 \sup_{t\leq T}(t\wedge1)^{(n-1)\al'} \|\eps^{\frac{n}{2}}\<n>(t)\|_{\CC^{-\al}} < \delta_2^n
\end{equation}
then for every $\|x-(\pm 1)\|_{\CC^{-\al_0}} \leq \delta_0$
\begin{equation*}
 \sup_{t\leq T}(t\wedge 1)^\gm\|v(t;x) - (\pm 1)\|_{\CC^\bt} < \delta_1
\end{equation*}
and
\begin{equation*}
 \sup_{t\leq T} \|X(t;x)-(\pm1)\|_{\CC^{-\al_0}} \leq 2\delta_1. 
\end{equation*}
\end{proposition}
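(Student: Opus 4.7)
The plan is to linearise the equation \eqref{eq:rem_eq} around the minimiser $\pm 1$ and run a continuity/bootstrap argument on the weighted $\CC^\bt$ norm of the centred remainder $w(t;x) = v(t;x) \mp 1$. The key structural feature, which mirrors the 1-dimensional argument, is that $-V''(\pm 1) = -2$: rewriting $-v^3 + v = -2w \mp 3w^2 - w^3$ shows that $w$ satisfies
\begin{align*}
 (\partial_t - (\Delta - 2)) w &= \mp 3 w^2 - w^3 - \bigl(3(\pm 1 + w)^2 \eps^{\frac{1}{2}}\<1> + 3(\pm 1 + w) \eps \<2> + \eps^{\frac{3}{2}}\<3> - 2 \eps^{\frac{1}{2}}\<1>\bigr),
\end{align*}
with $w(0;x) = x \mp 1$ of $\CC^{-\al_0}$-norm at most $\delta_0$. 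The shifted semigroup $\ee^{(\Delta - 2) t}$ now produces an $\ee^{-2 t}$ factor in every Besov estimate, which is exactly what makes the minimiser attractive.

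The first step is to define the stopping time $\iota = \inf\{t \leq T : (t \wedge 1)^\gamma \|w(t;x)\|_{\CC^\bt} \geq \delta_1\}$ and argue by contradiction that $\iota = T$. On $[0,\iota]$ one has the a priori bound $\|w(t;x)\|_{\CC^\bt} \leq \delta_1 (t \wedge 1)^{-\gamma}$, and consequently $\|v(t;x)\|_{\CC^\bt}$ is bounded by $1 + \delta_1 (t \wedge 1)^{-\gamma}$. Taking $\CC^\bt$-norms in the mild form of the shifted equation and applying the heat regularisation estimate \eqref{eq:Heat_Smooth} together with the Besov multiplication inequalities \eqref{eq:mult_ineq_1}, \eqref{eq:mult_ineq_2} gives, for the initial data,
\begin{equation*}
 (t\wedge 1)^\gamma \|\ee^{(\Delta - 2) t} (x \mp 1)\|_{\CC^\bt} \lesssim \delta_0 \, \ee^{-2 t} (t\wedge 1)^{\gamma - (\al_0 + \bt)/2},
\end{equation*}
which is $O(\delta_0)$ thanks to \eqref{eq:beta_gamma}. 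The purely nonlinear Duhamel contribution from $\mp 3 w^2 - w^3$ is $O(\delta_1^2)$ because of the positive power of $w$ and convergence of the time integral (which uses $2\gamma < 1$). Each stochastic contribution is bounded by a monomial $\delta_2^{n} \cdot \delta_1^{p}$ with $p \in \{0,1,2\}$, and the relevant time integrals converge by \eqref{eq:al_al'} (in particular $\frac{\al + \bt}{2} + 2\gamma < 1$ and $\al' < \gamma$).

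Combining, one obtains on $[0,\iota]$ a bound of the form
\begin{equation*}
 (t \wedge 1)^\gamma \|w(t;x)\|_{\CC^\bt} \leq C \bigl( \delta_0 + \delta_1^2 + \delta_2 + \delta_2 \delta_1 + \delta_2^2 + \delta_2^3 \bigr).
\end{equation*}
One then fixes $\delta_1$ so that $C \delta_1^2 < \delta_1/3$, and afterwards chooses $\delta_0, \delta_2$ so small that the remaining terms sum to at most $2 \delta_1 / 3$. The resulting strict inequality $(t \wedge 1)^\gamma \|w(t;x)\|_{\CC^\bt} < \delta_1$ on $[0, \iota]$ contradicts the definition of $\iota$ by continuity of $t \mapsto (t \wedge 1)^\gamma \|w(t;x)\|_{\CC^\bt}$ (which follows from the local theory of \cite[Theorem~3.3]{TW18}), so $\iota = T$, giving the first assertion.

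For the second assertion, write $X(t;x) - (\pm 1) = w(t;x) + \eps^{\frac{1}{2}} \<1>(t)$. For $t \geq 1$ the Besov embedding $\CC^\bt \hookrightarrow \CC^{-\al_0}$ and the just-proved bound yield $\|X(t;x) - (\pm 1)\|_{\CC^{-\al_0}} \leq \delta_1 + \delta_2$. For $t \in [0,1]$ I would run the same mild-form estimate at the level of $\CC^{-\al_0}$: the leading term $\ee^{(\Delta - 2)t}(x \mp 1)$ is already bounded by $\delta_0$ by semigroup contraction, and the Duhamel term is estimated using the $\CC^\bt$ bound already obtained and gains a small factor from $\int_0^t (t-s)^{-(\al+\bt)/2}(s \wedge 1)^{-\gamma}\dd s \to 0$ as $t \to 0$. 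After shrinking $\delta_0, \delta_2$ if needed, one obtains $\|X(t;x) - (\pm 1)\|_{\CC^{-\al_0}} \leq 2\delta_1$. The main technical obstacle is the careful bookkeeping of the various short-time singularities (the $(t\wedge 1)^{-\gamma}$ of $v$, the $(t \wedge 1)^{-(n-1)\al'}$ of the $\<n>$, and the heat-kernel singularity $(t-s)^{-(\al+\bt)/2}$) so that every term is either $O(\delta_0)$, $O(\delta_1^2)$ or $O(\delta_2)$ and the bootstrap closes.
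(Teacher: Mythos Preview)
Your proposal is correct and follows essentially the same route as the paper: linearise around $\pm 1$ to bring out the $\ee^{-2t}$ contraction, run a stopping-time bootstrap on $(t\wedge 1)^\gamma\|w(t)\|_{\CC^\bt}$ using the mild form and Besov multiplication, then redo the estimate in $\CC^{-\al_0}$ for the second assertion. One cosmetic remark: in the $\CC^{-\al_0}$ estimate for the second bound there is no need for the heat-smoothing factor $(t-s)^{-(\al+\bt)/2}$, since the nonlinear terms already lie in $\CC^{-\al}\hookrightarrow\CC^{-\al_0}$; the paper simply bounds $\|u(t)\|_{\CC^{-\al_0}}$ uniformly over all $t\leq T$ rather than splitting into $t\lessgtr 1$, but your version works as well.
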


\begin{proof} Let $u(t) = v(t;x) - (\pm 1)$. A Taylor expansion of $-v^3+v$ around $\pm1$ implies that
\begin{equation}\label{eq:u}
 (\partial_t - (\Delta-2)) u = \mathtt{Error}(u) - \left(3 v^2 \eps^{\frac{1}{2}}\<1> + 3 v \eps \<2> + \eps^{\frac{3}{2}} \<3>\right) + 2 \eps^{\frac{1}{2}} \<1>
\end{equation}
where $\mathtt{Error}(u) = -u^3\pm3u^2$ and $\|\mathtt{Error}(u)\|_{\CC^\bt} \lesssim \|u\|_{\CC^\bt}^3+\|u\|_{\CC^\bt}^2$. Let $T>0$ and $\iota = 
\inf\{t>0: (t\wedge 1)^\gm \|u(t)\|_{\CC^\bt} \geq \delta_1\}$ for some $\delta_1>0$ which we fix below. Using the 
mild form of \eqref{eq:u} we get
\begin{align*}
 (t\wedge1)^\gamma\|u(t)\|_{\CC^\bt} & \stackrel{\eqref{eq:Heat_Smooth},\eqref{eq:mult_ineq_1},\eqref{eq:mult_ineq_2}}{\lesssim} \ee^{-2t} \|x-(\pm 1)\|_{\CC^{-\al_0}} + \int_0^t \ee^{-2(t-s)} \left(\|u(s)\|_{\CC^\bt}^3
 +\|u(s)\|_{\CC^\bt}^2\right) \dd s \\
 & \quad + \int_0^t \ee^{-2(t-s)} (t-s)^{-\frac{\al+\bt}{2}} \Big(\|v(s)\|_{\CC^\bt}^2 \|\eps^{\frac{1}{2}}\<1>(s)\|_{\CC^{-\al}}
 + \|v(s)\|_{\CC^\bt} \|\eps\<2>(s)\|_{\CC^{-\al}} \\
 & \quad + \|\eps^{\frac{3}{2}}\<3>(s)\|_{\CC^{-\al}} + \|\eps^{\frac{1}{2}}\<1>(s)\|_{\CC^{-\al}}\Big) \dd s.
\end{align*}
If we furthermore assume \eqref{eq:tree_contr} for $t\leq T\wedge \iota$ we obtain that
\begin{align*}
 & (t\wedge1)^\gamma \|u(t)\|_{\CC^\bt} \\
 & \quad \lesssim  \delta_0 \ee^{-2t} + \delta_1^3 \int_0^t \ee^{-2(t-s)} (s\wedge1)^{-3\gm}
 \dd s + \delta_1^2 \int_0^t \ee^{-2(t-s)} (s\wedge1)^{-2\gm} \dd s \\
 & \quad \quad 
 + \delta_2 \int_0^t \ee^{-2(t-s)} (t-s)^{-\frac{\al+\bt}{2}} 
 \left((s\wedge1)^{-2\gm} + (s\wedge1)^{-\gm} (s\wedge1)^{-\al'} + (s\wedge 1)^{-2\al'} +1\right) \dd s.
\end{align*}
Then Lemma \ref{lem:int_bd} implies the bound
\begin{equation*}
\sup_{t\leq T\wedge \iota} (t\wedge1)^\gamma \|u(t)\|_{\CC^\bt} \lesssim \delta_0 + \delta_1^3 + \delta_1^2 + \delta_2.
\end{equation*}
Choosing $\delta_0< \frac{\delta_1}{4C}$, $\delta_1<\frac{1}{4C}$ and $\delta_2<\frac{\delta_1}{4C}$ this implies that 
$\sup_{t\leq T\wedge \iota} (t\wedge1)^\gamma \|u(t)\|_{\CC^\bt} < \delta_1$ which in turn implies that $\iota \leq T$ and proves the first bound. 

To prove the second bound we notice that for every $t\leq T$
\begin{equation*}
 \|X(t;x)-(\pm 1)\|_{\CC^{-\al_0}} \leq \|u(t)\|_{\CC^{-\al_0}} + \|\<1>(t)\|_{\CC^{-\al_0}} \leq \|u(t)\|_{\CC^{-\al_0}} + \delta_2.
\end{equation*}
Hence it suffices to prove that $\sup_{t\leq T} \|u(t)\|_{\CC^{-\al_0}} \leq \delta_1$. Using again the mild form of \eqref{eq:u} we
get
\begin{align*}
 \|u(t)\|_{\CC^{-\al_0}} & \stackrel{\eqref{eq:Heat_Smooth},\eqref{eq:alpha_beta_ineq},\eqref{eq:mult_ineq_1},\eqref{eq:mult_ineq_2}}{\lesssim} \ee^{-2t} \|x-(\pm 1)\|_{\CC^{-\al_0}} + \int_0^t \ee^{-2(t-s)} \left(\|u(s)\|_{\CC^\bt}^3
 +\|u(s)\|_{\CC^\bt}^2\right) \dd s \\
 & \quad + \int_0^t \ee^{-2(t-s)} \Big(\|v(s)\|_{\CC^\bt}^2 \|\eps^{\frac{1}{2}}\<1>(s)\|_{\CC^{-\al}}
 + \|v(s)\|_{\CC^\bt} \|\eps\<2>(s)\|_{\CC^{-\al}} \\
 & \quad + \|\eps^{\frac{3}{2}}\<3>(s)\|_{\CC^{-\al}} + \|\eps^{\frac{1}{2}}\<1>(s)\|_{\CC^{-\al}}\Big) \dd s
\end{align*}
for every $t\leq T$. Plugging in \eqref{eq:tree_contr} and the bound $\sup_{t\leq T} (t\wedge 1)^\gamma \|u(t)\|_{\CC^\bt}\leq \delta_1$ the last 
inequality implies
\begin{align*}
 \|u(t)\|_{\CC^\bt} & \lesssim \delta_0 \ee^{-2t} + \delta_1^3 \int_0^t \ee^{-2(t-s)} (s\wedge1)^{-3\gm}
 \dd s + \delta_1^2 \int_0^t \ee^{-2(t-s)} (s\wedge1)^{-2\gm} \dd s \\
 & \quad + \delta_2 \int_0^t \ee^{-2(t-s)}  
 \left((s\wedge1)^{-2\gm} + (s\wedge1)^{-\gm} (s\wedge1)^{-\al'} + (s\wedge 1)^{-2\al'} +1\right) \dd s.
\end{align*}
Using again Lemma \ref{lem:int_bd} we obtain that $\sup_{t\leq T}\|u(t)\|_{\CC^{-\al_0}} < \delta_1$, which completes the proof.
\end{proof}

%

\begin{proposition}\label{prop:f_i} For every $\kappa>0$ and $\delta_1>0$ sufficiently small there exist $a_0,\delta_0,\delta_2>0$
and $\eps_0\in(0,1)$ such that for every $\eps\leq \eps_0$
\begin{equation*}
\sup_{\|x-(\pm 1)\|_{\CC^{-\al_0}} \leq \delta_0} \PP\left(\frac{\kappa}{2}\tau_1(x) \leq \ee^{2a_0/\eps}\right)\leq \ee^{-3a_0/\eps},
\end{equation*}
where $\tau_1(x)$ is given by \eqref{eq:tau_sigma}. 
\end{proposition}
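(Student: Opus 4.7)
The claim reduces to combining the pathwise estimate of Proposition \ref{prop:v_ex_est} with the probabilistic tail bound on the stochastic objects provided by Proposition \ref{prop:diag_ex_est}. Since $\rho_0(x) = 0$ we have $\tau_1(x) = \nu_1(x)$, and by Definition \ref{def:stop_times} the stopping time $\nu_1(x)$ is the minimum of two exit times: the weighted $\CC^\bt$-exit of $v(\cdot;x)$ from a $\delta_1$-neighbourhood of $\{-1,1\}$, and the blow-up time of $(t\wedge 1)^{(n-1)\al'}\|\eps^{\frac{n}{2}}\<n>(t)\|_{\CC^{-\al}}$ above the threshold $\delta_2^n$. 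I write
\[
\tau_{\mathrm{tree}} := \inf\bigl\{t > 0 \,:\, (t \wedge 1)^{(n-1)\al'} \|\eps^{\frac{n}{2}}\<n>(t)\|_{\CC^{-\al}} \geq \delta_2^n\bigr\}
\]
for the latter, which is exactly the quantity controlled by Proposition \ref{prop:diag_ex_est} with $\delta = \delta_2$.

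I first fix $\delta_1>0$ sufficiently small and invoke Proposition \ref{prop:v_ex_est} to choose $\delta_0,\delta_2>0$ with the property that, for every realisation and every $x$ with $\|x - (\pm 1)\|_{\CC^{-\al_0}} \leq \delta_0$, the bound $(t\wedge 1)^{(n-1)\al'}\|\eps^{\frac{n}{2}}\<n>(t)\|_{\CC^{-\al}} < \delta_2^n$ on $[0,T]$ forces $\sup_{t\leq T}(t\wedge 1)^\gamma\|v(t;x) - (\pm 1)\|_{\CC^\bt} < \delta_1$ on the same interval. This is a deterministic implication: on the event $\{\tau_{\mathrm{tree}} > T\}$ the $v$-exit condition in Definition \ref{def:stop_times} is never activated on $[0,T]$, and consequently $\nu_1(x) \geq \tau_{\mathrm{tree}}$. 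The uniformity in $x$ afforded by Proposition \ref{prop:v_ex_est} yields the pathwise inclusion
\[
\bigcup_{\|x - (\pm 1)\|_{\CC^{-\al_0}} \leq \delta_0} \{\tau_1(x) \leq T\} \;\subseteq\; \{\tau_{\mathrm{tree}} \leq T\}.
\]

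Taking $a_0 > 0$ and $\eps_0 \in (0,1)$ to be the constants produced by Proposition \ref{prop:diag_ex_est} for the choice $\delta = \delta_2$, and then shrinking $\eps_0$ if necessary so that $\frac{2}{\kappa}\ee^{2a_0/\eps} \leq \ee^{3a_0/\eps}$ for every $\eps \leq \eps_0$ (which holds once $\ee^{a_0/\eps} \geq 2/\kappa$), the inclusion above combined with Proposition \ref{prop:diag_ex_est} gives
\[
\sup_{\|x - (\pm 1)\|_{\CC^{-\al_0}} \leq \delta_0} \PP\Bigl(\tfrac{\kappa}{2}\tau_1(x) \leq \ee^{2a_0/\eps}\Bigr) \leq \PP\bigl(\tau_{\mathrm{tree}} \leq \ee^{3a_0/\eps}\bigr) \leq \ee^{-3a_0/\eps},
\]
which is the desired estimate.

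The argument is essentially bookkeeping: no new analytic input is required beyond the two auxiliary propositions. The only point deserving attention is the order in which the constants are fixed (first $\kappa$, then $\delta_1$, then $\delta_0,\delta_2$ from Proposition \ref{prop:v_ex_est}, then $a_0,\eps_0$ from Proposition \ref{prop:diag_ex_est}, finally a possible further reduction of $\eps_0$ to absorb the factor $\kappa/2$ into the exponential), so that all implicit dependences line up and the uniformity in $x$ is preserved throughout.
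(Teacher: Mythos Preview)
Your proof is correct and follows essentially the same approach as the paper: reduce $\tau_1(x)$ to $\tau_{\mathrm{tree}}$ via the deterministic implication of Proposition~\ref{prop:v_ex_est}, absorb the factor $\kappa/2$ into the exponential by shrinking $\eps_0$, and then invoke Proposition~\ref{prop:diag_ex_est} with $\delta=\delta_2$. Your write-up is in fact more explicit about the pathwise inclusion and the order in which constants are fixed than the paper's own proof, which is a terse two-line reference to the two auxiliary propositions.
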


\begin{proof} We first notice that there exists $\eps_0>0$ such that for every $\eps\leq \eps_0$
\begin{equation*}
 \PP\left(\frac{\kappa}{2}\tau_1(x) \leq \ee^{2a_0/\eps}\right) \leq \PP\left(\tau_1(x) \leq \ee^{3a_0/\eps}\right).
\end{equation*}
The last probability can be estimated by Propositions \ref{prop:v_ex_est} and \ref{prop:diag_ex_est} for $\delta=\delta_2$.
\end{proof}

\subsection{Estimates on the entry times} \label{s:ent_time_est}


In this section we use large deviation theory and in particular a lower bound of the form
\begin{align}
 & \liminf_{\eps\searrow 0}\log\eps\inf_{x\in \aleph}\PP(X(\cdot;x)\in \mathcal{A}(T;x)) \label{eq:LDP} \\ 
 & \quad \geq -\sup_{x\in \aleph} \inf_{\substack{f\in \mathcal{A}(T;x) \\ f(0) = x}} 
 \left\{\underbrace{\frac{1}{4}\int_0^T \|(\partial_t - \Delta)f(t) + f(t)^3 - f(t)\|_{L^2}^2 \dd t}_{=:I(f)}\right\} \nonumber
\end{align}
where $\aleph$ is a compact subset of $\CC^{-\al}$ and $\mathcal{A}(T;x)\subset \{f:(0,T) \to \CC^{-\al}\}$ is open. This bound is an 
immediate consequence of \cite{HW15}  and  the remark that the solution map 
\begin{equation*}
 \CC^{-\al_0} \times \left(\CC^{-\al}\right)^3 \ni\left(x,\left\{\eps^{\frac{n}{2}}\<n>\right\}_{n\leq 3}\right) \mapsto X(\cdot;x) \in \CC^{-\al}
\end{equation*}
is jointly continuous on compact time intervals. This estimate implies a ``nice'' lower bound for the probabilities 
$\PP(X(\cdot;x)\in \mathcal{A}(T;x))$ if a suitable path $f\in \mathcal{A}(T;x)$ is chosen. 

In the next proposition we use the lower bound \eqref{eq:LDP} for suitable sets $\aleph$ and $\mathcal{A}(T;x)$ to estimate probabilities of the entry time of $X$ in 
a neighbourhood of $\pm1$. We construct a path $f(\cdot;x)$ and obtain bounds on $I(f(\cdot;x))$ uniformly in $x\in \aleph$.
    
\begin{proposition} \label{prop:1st_entry} Let $\delta_0>0$ and $\sigma(x) = \inf\left\{t > 0 : \min_{x_*\in\{-1,1\}} \|X(t;x) - x_*\|_{\CC^{-\al_0}} \leq \delta_0 \right\}$. 
For every $R,b>0$ there exists $T_0>0$ such that
 \begin{equation*}
  \sup_{\|x\|_{\CC^{-\al_0}}\leq R} \PP(\sigma(x) \geq T_0) \leq 1 - \ee^{-b/\eps}.
 \end{equation*}
\end{proposition}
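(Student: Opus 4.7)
The plan is to deduce the estimate from the large deviation lower bound \eqref{eq:LDP} by exhibiting, for every starting point $x$ in the ball $\aleph := \bar B_{\CC^{-\al_0}}(0;R)$, a deterministic control path $f(\cdot;x)$ with small rate function that drives the solution into a neighbourhood of either $+1$ or $-1$ within time $T_0$.

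First I would check that $\aleph$ is compact in $\CC^{-\al}$ for any $\al>\al_0$: this is the compact Besov embedding $\CC^{-\al_0}\hookrightarrow \CC^{-\al}$, so \eqref{eq:LDP} is applicable. Next, for each $x\in\aleph$ I build $f(\cdot;x):[0,T_0]\to\CC^{-\al}$ in two stages. On $[0,1]$ I set $f(t;x)=X_{\mathrm{det}}(t;x)$ to be the solution of the deterministic equation \eqref{eq:det_AC}, using the standard local well-posedness for initial data in $\CC^{-\al_0}$ (with $\al_0<1/3$); this part contributes zero to the rate function. The smoothing effect of the heat semigroup guarantees that $X_{\mathrm{det}}(1;x)$ lies in a bounded subset of $\CC^\bt$ uniformly in $x\in\aleph$. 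On $[1,T_0]$ I prescribe a smooth interpolation from $X_{\mathrm{det}}(1;x)$ to the target $+1$, e.g.\ $f(t;x)=(1-s(t))X_{\mathrm{det}}(1;x)+s(t)\cdot 1$, where $s:[1,T_0]\to[0,1]$ is a smooth function with $s(1)=0$, $s\equiv 1$ on $[T_0-1,T_0]$, and derivatives supported on an interval of length $T_0-2$.

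The key rate-function estimate is that
\begin{equation*}
I(f(\cdot;x)) = \frac{1}{4}\int_1^{T_0}\big\|(\partial_t-\Delta)f+f^3-f\big\|_{L^2}^2\dd t \leq \frac{C(R)}{T_0-2},
\end{equation*}
since $\|\partial_t f\|_{L^2}\lesssim\|s'\|_\infty$ while the remaining terms are bounded uniformly on $\aleph$ by the smoothness of the interpolation. Hence for the given $b>0$, choosing $T_0$ large enough I can ensure $\sup_{x\in\aleph}I(f(\cdot;x))\leq b/2$.

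Now define the open neighbourhood
\begin{equation*}
\mathcal{A}(T_0;x):=\big\{g\in C([0,T_0];\CC^{-\al}):\ \|g(T_0)-1\|_{\CC^{-\al}}<\eta\big\},
\end{equation*}
for $\eta>0$ chosen small enough so that $g\in\mathcal{A}(T_0;x)$ implies $\min_{x_*\in\{\pm 1\}}\|g(T_0)-x_*\|_{\CC^{-\al_0}}\leq \delta_0$. The small technical obstacle here is that the LDP lives in $\CC^{-\al}$ while $\sigma(x)$ is defined using the stronger $\CC^{-\al_0}$ topology; I would handle this by using the Da Prato--Debussche decomposition at time $T_0$, writing $X(T_0;x)=v(T_0;x)+\eps^{1/2}\<1>(T_0)$, and combining closeness in $\CC^{-\al}$ with the a priori $\CC^\bt$-bounds on $v$ from \cite{TW18} (since $v$ is smooth, closeness of $X$ to $1$ in $\CC^{-\al}$ translates back to closeness of $v$ to $1$ in $\CC^{-\al_0}$), together with the fact that $\eps^{1/2}\<1>(T_0)$ is small in $\CC^{-\al_0}$ with high probability by Proposition \ref{prop:exp_mom}. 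With this choice, $\{X(\cdot;x)\in\mathcal{A}(T_0;x)\}\subseteq\{\sigma(x)\leq T_0\}$ up to an event of negligible probability.

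Applying \eqref{eq:LDP} then gives
\begin{equation*}
\liminf_{\eps\to 0}\eps\log\inf_{x\in\aleph}\PP(X(\cdot;x)\in\mathcal{A}(T_0;x))\geq -\sup_{x\in\aleph}I(f(\cdot;x))\geq -b/2,
\end{equation*}
so for $\eps\leq\eps_0$ small enough, $\PP(\sigma(x)<T_0)\geq\ee^{-b/\eps}$ uniformly on $\aleph$, which is the required bound. The main obstacle is the uniform control of the rate function and the reconciliation of the two different topologies; everything else is bookkeeping around the LDP.
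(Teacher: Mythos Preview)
Your rate-function estimate is where the argument breaks. You claim
\[
I(f(\cdot;x))=\frac14\int_1^{T_0}\bigl\|(\partial_t-\Delta)f+f^3-f\bigr\|_{L^2}^2\,\dd t\le\frac{C(R)}{T_0-2},
\]
but along the affine interpolation $f(t)=(1-s(t))\,a+s(t)\cdot 1$, $a=X_{\mathrm{det}}(1;x)$, only the contribution of $\partial_t f$ scales this way. The drift term $-\Delta f+f^3-f$ is bounded pointwise in $t$, yet its time integral does \emph{not} decay: writing $h(s)=\|-\Delta f_s+f_s^3-f_s\|_{L^2}^2$ for $f_s=(1-s)a+s$, the substitution $t\mapsto s(t)$ gives
\[
\int_1^{T_0}\|-\Delta f+f^3-f\|_{L^2}^2\,\dd t\;\gtrsim\;(T_0-2)\int_0^1 h(s)\,\dd s,
\]
and $\int_0^1 h(s)\,\dd s>0$ whenever $a\neq 1$. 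Thus $I(f)\to\infty$ as $T_0\to\infty$. Optimising instead over the interpolation time as in Lemma~\ref{lem:FJL} yields at best $I(f)\sim\|a-1\|_{L^2}$, which for generic $x\in\aleph$ is of order one and cannot be made $\le b/2$ for arbitrary $b>0$. Stretching a straight-line path never lowers the action below a fixed positive threshold.

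The paper's construction rests on the fact that following the deterministic flow costs nothing. After your Step~1, compactness in $\BB^1_{2,2}$ together with Proposition~\ref{prop:stat_conv} reduces to finitely many smooth reference points $y_i$ whose orbits $X_{\mathrm{det}}(\cdot;y_i)$ converge to some $x_*\in\{-1,0,1\}$. The path then alternates free-flow segments (cost $0$) with \emph{short} linear interpolations between points at $L^2$-distance $\le\delta$, each over time $\tau=\delta$, so that by Lemma~\ref{lem:FJL} each jump costs $\lesssim\delta$. If the flow lands near the saddle $0$, one further $\delta$-jump to a point on its unstable side (Proposition~\ref{prop:unst}) followed by free flow reaches $\pm1$. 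Taking $\delta$ small makes the total cost $<b$; the horizon $T_0$ is then dictated by finitely many deterministic convergence times, not used as a parameter to shrink $I$. The topology mismatch you flag is a minor side issue by comparison; the construction of a low-action path is the real content here.
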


\begin{proof} First notice that 
\begin{equation*}
 \PP(\sigma(x) \leq T_0)
 = \PP(\underbrace{\|X(T_*;x) - (\pm 1)\|_{\CC^{-\al_0}} < \delta_0 \text{ for some }
 T_*\leq T_0}_{=:\mathcal{A}(T_0;x)}).
\end{equation*}
By the large deviation estimate \eqref{eq:LDP} it suffices to bound  
\begin{equation*}
 \sup_{\|x\|_{\CC^{-\al_0}}\leq R} \inf_{\substack{f \in \mathcal{A}(T_0;x) \\ f(0) = x}} I(f(\cdot;x)).  
\end{equation*}
We construct a suitable path $g\in \mathcal{A}(T_0;x)$ and we use the trivial inequality 
\begin{equation*}
 \sup_{\|x\|_{\CC^{-\al_0}}\leq R} \inf_{\substack{f \in \mathcal{A}(T_0;x) \\ f(0) = x}} I(f(\cdot;x)) \leq 
 \sup_{\|x\|_{\CC^{-\al_0}}\leq R} I(g(\cdot;x)).
\end{equation*}
We now give the construction of $g$ which involves 5 different steps. In Steps $1$, $3$ and $5$, 
$g$ follows the deterministic flow. The contribution of these steps to the energy functional $I$ is zero.  
On Steps $2$ and $3$, $g$ is constructed by linear interpolation. The contribution of these steps is estimated
by Lemma \ref{lem:FJL}. Below we write $X_{det}(\cdot;x)$ to denote the solution of $\eqref{eq:det_AC}$ with
initial condition $x$. We also pass through the space $\BB^1_{2,2}$ to use convergence results for 
$X_{det}(\cdot;x)$ which hold in this topology (see Propositions \ref{prop:stat_conv} and \ref{prop:unst}).

\underline{Step 1 (Smoothness of initial condition via the deterministic flow)}: 

Let $\tau_1=1$. For $t\in[0,\tau_1]$ we set $g(t;x) = X_{det}(t;x)$. By Proposition 
\ref{prop:Linfty_2+lambda_reg} there exist $C\equiv C(r)>0$ and $\lambda>0$ such that
\begin{equation*}
 \sup_{\|x\|_{\CC^{-\al_0}}\leq R} \|X_{det}(1;x)\|_{\CC^{2+\lambda}} \leq C.
\end{equation*}

\smallskip

\underline{Step 2 (Reach points that lead to a stationary solution)}: 

By Step 1 $g(\tau_1;x)\in B_{\CC^{2+\lambda}}(0;C)$ uniformly for $\|x\|_{\CC^{-\al_0}}\leq R$. Let $\delta>0$ to be fixed below.
By compactness there exists $\{y_i\}_{1\leq i\leq N}$ such that  $B_{\CC^{2+\lambda}}(0;C)$ is covered by $\cup_{1\leq i\leq N} B_{\BB^1_{2,2}}(y_i;\delta)$.
Here we use that $\CC^{2+\lambda}$ is compactly embedded in $\BB^1_{2,2}$ (see Proposition \ref{prop:comp_emb}). 

Without loss of generality we assume that $\{y_i\}_{1\leq i\leq N}$ is such that $y_i\in \CC^\infty$ and $X_{det}(t;y_i)$ 
converges to a stationary solution $-1,0,1$ in $\BB^1_{2,2}$. Otherwise we choose $\{y_i^*\}_{1\leq i\leq N} \in B_{\BB^1_{2,2}}(y_i;\delta)$
such that $y_i^* \in \CC^\infty$ and relabel them. This is possible because of Proposition \ref{prop:stat_conv}.

Let $\tau_2=\tau_1+\tau$, for $\tau>0$ which we fix below. For $t\in[\tau_1,\tau_2]$ we set $g(t;x) = g(\tau_1;x) + \frac{t- \tau_1}{\tau_2-\tau_1} 
(y_i - g(\tau_1;x))$, where $y_i$ is such that $g(\tau_1;x) \in B_{\BB^1_{2,2}}(y_i;\delta)$.

\smallskip

\underline{Step 3 (Follow the deterministic flow to reach a stationary solution)}: 

Let $T_i^*$ be such that $X_{det}(t;y_i) \in B_{\BB^1_{2,2}}(x_*;\delta)$ for every $t\geq T_i^*$, where $x_*\in \{-1,0,1\}$ is the 
limit of $X_{det}(t;y_i)$ in $\BB^1_{2,2}$, for $\{y_i\}_{1\leq i\leq N}$ as in Step 2. Let $\tau_3=\tau_2+ \max_{1\leq i\leq N}T_i^*\vee 1$. For 
$t\in [\tau_2,\tau_3]$ we set $g(t;x) = X_{det}(t- \tau_2;y_i)$. If $X_{det}(\tau_3-\tau_2;y_i) \in B_{\BB^1_{2,2}}(\pm 1;\delta)$ we stop here. Otherwise 
$X_{det}(\tau_3-\tau_2;y_i)\in B_{\BB^1_{2,2}}(0;\delta)\cap B_{\CC^{2+\lambda}}(0;C)$ (here we use again Proposition \ref{prop:Linfty_2+lambda_reg} to ensure that
$X_{det}(\tau_3-\tau_2;y_i)\in B_{\CC^{2+\lambda}}(0;C)$) and we proceed to Steps 4 and 5. 

\smallskip

\underline{Step 4 (If an unstable solution is reached move to a point nearby which leads to a stable solution)}: 

We choose $y_0\in B_{\BB^1_{2,2}}(0;\delta)$ such that $y_0\in \CC^\infty$ and $X_{det}(t;y_0)$ converges to either $1$ or $-1$ 
in $\BB^1_{2,2}$. This is possible because of Proposition \ref{prop:unst}.

Let $\tau_4 = \tau_3 +\tau$ for $\tau>0$ as in Step 2 which we fix below. For $t\in [\tau_3,\tau_4]$ we set $g(t;x) = g(\tau_3;x) + 
\frac{t - \tau_3}{\tau_4 - \tau_3} (y_0 - g(\tau_3;x))$. 

\smallskip

\underline{Step 5 (Follow the deterministic flow again to finally reach a stable solution)}: 

Let $T_0^*$ be such that $X_{det}(t;y_0)\in B_{\BB^1_{2,2}}(\pm1;\delta)$ for every $t\geq T_0^*$, where $y_0$ is as in Step 4. Let $\tau_5 = \tau_4+T_0^*\vee 1$. 
For $t\in[\tau_4,\tau_5]$ we set $g(t;x) = X_{det}(t-\tau_4;y_0)$. 

\bigskip

For the path $g(\cdot;x)$ constructed above we see that after time $t \geq \tau_5$, $g(t;x)\in B_{\BB^1_{2,2}}(\pm 1;\delta)$ for every 
$\|x\|-{\CC^{-\al_0}}\leq R$. This implies that $\|g(t;x) - (\pm1)\|_{\CC^{-\al_0}} <C\delta$ since by  \eqref{eq:Besov_Emb}, $\BB^1_{2,2} \subset \CC^{-\al_0}$.
We now choose $\delta>0$ such that $C \delta< \delta_0$ and let $T_0=\tau_5+1$. Then $g\in \mathcal{A}(T_0;x)$. 

To bound $I(g(\cdot;x))$ we split our time interval based on the construction of $g$ i.e. $I_k=[\tau_{k-1},\tau_k]$ for $k=1,\ldots,4$ and $I_5=[\tau_5,T_0]$.  We 
first notice that for $k=1,3,5$ 
\begin{equation*}
 \frac{1}{4} \int_{I_k} \|(\partial_t-\Delta)g(t;x) +g(t;x)^3 - g(t;x)\|_{L^2}^2 \dd t = 0
\end{equation*}
since on these intervals we follow the deterministic flow. For the remaining two intervals, i.e. $k=2,4$, we first notice that by construction 
$\|g(\tau_{k-1};x)\|_{\CC^{2+\lambda}},\|g(\tau_k;x)\|_{\CC^{2+\lambda}}\leq C$.
By \eqref{eq:q1_q2_ineq}, $\CC^{2+\lambda} \subset \BB^2_{\infty,2}$ for every $\lambda >0$, hence we also have that $\|g(\tau_{k-1};x)\|_{\BB^2_{\infty,2}},
\|g(\tau_k;x)\|_{\BB^2_{\infty,2}}\leq C$ . We can now choose $\tau$ in Steps 2 and 4 according to Lemma \ref{lem:FJL}, which implies that  
\begin{equation*}
 \frac{1}{4}\int_{I_k} \|(\partial_t - \Delta) g(t;x) + g(t;x)^3 - g(t;x)\|_{L^2}^2 \dd t \leq C \delta.
\end{equation*}
Hence
\begin{equation*}
 \sup_{\|x\|_{\CC^{-\al_0}}\leq R}
 \frac{1}{4}\int_0^{T_0} \|(\partial_t - \Delta) g(t;x) + g(t;x)^3 - g(t;x))\|_{L^2}^2 \dd t \leq C\delta.
\end{equation*}
For $b>0$ we choose $\delta$ even smaller to ensure that $C\delta < b$. Finally, by \eqref{eq:LDP} there exists $\eps_0\in(0,1)$ such that for every $\eps\leq \eps_0$
\begin{equation*}
 \inf_{\|x\|_{\CC^{-\al_0}} \leq R}\PP(\sigma(x) \leq T_0) \geq \ee^{-b/\eps}
\end{equation*}
which completes the proof.
\end{proof}

\begin{lemma}[{\cite[Lemma 9.2]{FJL82}}] \label{lem:FJL} Let $f(t) = x + \frac{t}{\tau} (y-x)$ such that 
$\|x\|_{\BB^2_{2,2}}, \|y\|_{\BB^2_{2,2}} \leq R$ and $\|x-y\|_{L^2} \leq \delta$. There exist $\tau>0$ and 
$C\equiv C(R)$ such that
\begin{equation*}
 \frac{1}{4}\int_0^\tau \|(\partial_t - \Delta) f(t) + f(t)^3 - f(t)\|_{L^2}^2 \dd t \leq C \delta.
\end{equation*}
\end{lemma}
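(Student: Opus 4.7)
The plan is to compute the integrand explicitly, bound each piece using the 2D Sobolev embedding for $\BB^2_{2,2}$, and then optimise the length $\tau$ of the interpolation interval against $\delta$.

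First I would write $\partial_t f = (y-x)/\tau$ and $\Delta f(t) = \Delta x + (t/\tau)(\Delta y - \Delta x)$, so that pointwise in time
\begin{equation*}
\|(\partial_t - \Delta)f(t) + f(t)^3 - f(t)\|_{L^2}^2
\lesssim \frac{\|y-x\|_{L^2}^2}{\tau^2} + \|\Delta f(t)\|_{L^2}^2 + \|f(t)\|_{L^6}^6 + \|f(t)\|_{L^2}^2.
\end{equation*}
Since $f(t)$ is a convex combination of $x$ and $y$, the $\BB^2_{2,2}$ norm of $f(t)$ is bounded by $R$ uniformly in $t\in[0,\tau]$. Hence $\|\Delta f(t)\|_{L^2} \leq R$, and by the 2-dimensional embedding $\BB^2_{2,2}\hookrightarrow L^\infty$ (which holds since $2 > d/2 = 1$, cf.\ the Besov embedding results in Appendix A) one also has $\|f(t)\|_{L^p} \leq C(R)$ for every $p\in[2,\infty]$.

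Combining these bounds gives
\begin{equation*}
\|(\partial_t - \Delta)f(t) + f(t)^3 - f(t)\|_{L^2}^2 \leq \frac{\|y-x\|_{L^2}^2}{\tau^2} + \tilde C(R)
\end{equation*}
for a constant $\tilde C(R)$ depending polynomially on $R$. Integrating over $[0,\tau]$ and using $\|y-x\|_{L^2}\leq \delta$ yields
\begin{equation*}
\frac{1}{4}\int_0^\tau \|(\partial_t - \Delta)f(t) + f(t)^3 - f(t)\|_{L^2}^2 \dd t \leq C\left(\frac{\delta^2}{\tau} + \tilde C(R)\,\tau\right).
\end{equation*}
Choosing $\tau = \delta/\sqrt{\tilde C(R)}$ (or, more crudely, $\tau = \delta$ after absorbing constants) balances the two terms and produces the desired bound $C(R)\delta$.

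There is no real obstacle here: the argument is a direct calculation. The only point worth noting is that the cubic term $f^3$ is handled via the 2D Sobolev embedding, which is why the hypothesis $\|x\|_{\BB^2_{2,2}}, \|y\|_{\BB^2_{2,2}} \leq R$ (rather than something weaker like an $L^2$ bound) is essential; on the torus $\TT^2$ the $L^\infty$ control on $f(t)$ coming from this embedding automatically controls all $L^p$ norms of powers of $f$. The scaling in $\tau$ is dictated by the two competing terms $\delta^2/\tau$ (from the time derivative) and $\tau$ (from the dissipation plus nonlinearity), making $\tau \sim \delta$ the natural choice.
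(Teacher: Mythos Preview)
Your proof is correct and follows essentially the same approach as the paper: bound each piece of the integrand using the $\BB^2_{2,2}$ control on $x,y$ (hence on $f(t)$), obtain $\delta^2/\tau + C(R)\tau$, and set $\tau=\delta$. The only cosmetic difference is that the paper controls $\|f(t)\|_{L^6}$ via a chain of Besov embeddings $L^6\hookleftarrow\BB^0_{6,1}\hookleftarrow\BB^{2/3+\lambda}_{2,2}\hookleftarrow\BB^2_{2,2}$ rather than invoking $\BB^2_{2,2}\hookrightarrow L^\infty$ directly.
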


\begin{proof} We first notice that $\partial_t f(t) = \frac{1}{\tau} (y-x)$, hence $\|\partial_t f(t)\|_{L^2} \leq \frac{1}{\tau} \delta$.
For the term $\Delta f(t)$ we have
\begin{equation*}
 \|\Delta f(t)\|_{L^2} \leq \|\Delta x\|_{L^2} +\|\Delta y\|_{L^2} \lesssim \|x\|_{\BB^2_{2,2}} + \|y\|_{\BB^2_{2,2}} \lesssim R,
\end{equation*}
where we use that the Besov space $\BB^2_{2,2}$ is equivalent with the Sobolev space $H^1$. This is immediate from Definition \ref{def:besov}
for $p=q=2$ if we write $\|f*\eta_k\|_{L^2}$ using Plancherel's identity. For the term $f(t)^3 - f(t)$ we have
\begin{align*}
 \|f(t)^3 - f(t)\|_{L^2} & \lesssim \|f(t)\|_{L^6}^3 + \|f(t)\|_{L^2} 
 \stackrel{\eqref{eq:Lp_besov_control}}{\lesssim} \|f(t)\|_{\BB^0_{6,1}}^3 + \|f(t)\|_{\BB^0_{2,1}} \\
 & \stackrel{\eqref{eq:Besov_Emb},\lambda>0}{\lesssim} \|f(t)\|_{\BB^{\frac{2}{3}+\lambda}_{2,2}}^3 + \|f(t)\|_{\BB^\lambda_{2,2}}
 \stackrel{\eqref{eq:alpha_beta_ineq},\lambda < \frac{1}{3}}{\lesssim} \|f(t)\|_{\BB^2_{2,2}}^3 + \|f(t)\|_{\BB^2_{2,2}}.
\end{align*}
Hence for $C\equiv C(R)$
\begin{align*}
 \frac{1}{2}\int_0^\tau \|(\partial_t - \Delta) f(t) + f(t)^3 - f(t)\|_{L^2}^2 \dd t \leq \frac{1}{\tau} \delta^2 + C \tau.
\end{align*}
Choosing $\tau = \delta$ completes the proof.  
\end{proof}

In the next proposition we estimate the tails of the entry time of $X$ in a neighbourhood of $\pm1$ uniformly in 
the initial condition $x$. This is achieved by Proposition \ref{prop:1st_entry} and the Markov property combined with \cite[Corollary 3.10]{TW18} 
which implies that after time $t=1$ the process $X(\cdot;x)$ enters a compact subset of the state space with positive probability uniformly in $x$.  

\begin{proposition} \label{prop:ent_est} Let $\delta_0>0$ and $\sigma(x) = \inf\left\{t > 0 : \min_{x_*\in\{-1,1\}} \|X(t;x) - x_*\|_{\CC^{-\al_0}} \leq \delta_0 \right\}$.
For every $b>0$ there exist $T_0>0$ and $\eps_0\in(0,1)$ such that for every $\eps\leq \eps_0$ 
\begin{equation*}
 \sup_{x\in \CC^{-{\al_0}}} 
 \PP(\sigma(x) \geq mT_0) \leq \left(1-\ee^{-b/\eps}\right)^m
\end{equation*}
for every $m\geq 1$. 
\end{proposition}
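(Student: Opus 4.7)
The plan is to upgrade Proposition \ref{prop:1st_entry}, which controls $\sigma(x)$ only for $\|x\|_{\CC^{-\al_0}}\leq R$, to all initial data in $\CC^{-\al_0}$ by prepending a short regularising window, and then to iterate via the Markov property.

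\textbf{Step 1 (one-step estimate).} I would first prove that for every $b>0$ there exist $T_0>0$ and $\eps_0\in(0,1)$ such that
\begin{equation*}
\sup_{x\in \CC^{-\al_0}} \PP(\sigma(x)\geq T_0) \leq 1-\ee^{-b/\eps}\qquad\text{for every }\eps\leq\eps_0.
\end{equation*}
To this end I would split $[0,T_0]$ into $[0,1]$ and $[1,T_0]$ and apply the Markov property at time $1$. On the short window $[0,1]$, I invoke \cite[Corollary 3.10]{TW18}: it supplies a radius $R>0$ and a constant $p_0>0$ (with $p_0\geq \ee^{-b/(4\eps)}$ for $\eps$ small enough) such that
\begin{equation*}
\inf_{x\in \CC^{-\al_0}} \PP\bigl(\|X(1;x)\|_{\CC^{-\al_0}}\leq R\bigr)\geq p_0.
\end{equation*}
On the complementary event we have no control, but on the good event Proposition \ref{prop:1st_entry} applied with time horizon $T_0-1$, ball radius $R$ and rate parameter $b/4$ gives $\PP(\sigma(y)\leq T_0-1)\geq \ee^{-b/(4\eps)}$ uniformly for $\|y\|_{\CC^{-\al_0}}\leq R$. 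The Markov property at time $1$ then yields $\PP(\sigma(x)\leq T_0)\geq p_0\,\ee^{-b/(4\eps)}\geq \ee^{-b/\eps}$ for all sufficiently small $\eps$.

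\textbf{Step 2 (iteration).} For $m\geq 2$ I would apply the Markov property at the deterministic time $T_0$. Since $\sigma$ is defined as the entry time of $X(\cdot;x)$ into the given neighbourhood, on the event $\{\sigma(x)\geq T_0\}$ the remaining entry time for the solution started afresh from $X(T_0;x)$ equals $\sigma(x)-T_0$. Conditioning on $\mathcal{F}_{T_0}$ therefore gives
\begin{equation*}
\PP(\sigma(x)\geq mT_0) \leq \PP(\sigma(x)\geq T_0)\,\sup_{y\in\CC^{-\al_0}}\PP(\sigma(y)\geq (m-1)T_0),
\end{equation*}
and an easy induction on $m$, combined with Step~1, produces the bound $(1-\ee^{-b/\eps})^m$.

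\textbf{Main obstacle.} The delicate point is the regularisation in Step~1: one needs a lower bound on $\PP(\|X(1;x)\|_{\CC^{-\al_0}}\leq R)$ that is uniform in $x\in\CC^{-\al_0}$, even for very rough starting points. This is precisely the purpose of \cite[Corollary 3.10]{TW18}, which combines the strong a priori estimate with the support theorem proved there to force the solution into a compact subset of the state space after a fixed positive time. Once this ingredient is available, Step~1 reduces to Proposition \ref{prop:1st_entry} and Step~2 is a routine Markov iteration.
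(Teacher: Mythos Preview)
Your proposal is correct and follows essentially the same route as the paper: regularise for one unit of time using the uniform moment bound from \cite[Corollary 3.10]{TW18} (together with Markov's inequality --- the support theorem is not needed here, contrary to your description) to land in a bounded ball with probability at least $\tfrac12$ uniformly in $x$ and $\eps$, apply Proposition~\ref{prop:1st_entry} on that ball, and then iterate via the Markov property at the deterministic times $mT_0$. The paper's bookkeeping uses the fixed probability $\tfrac12$ for the regularisation step and arrives at $\bigl(1-\tfrac12\ee^{-b/\eps}\bigr)^m$ before relabelling, but the structure is identical to yours.
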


\begin{proof} By \cite[Corollary 3.10]{TW18} and a simple application of Markov's 
inequality there exist $R_0>0$ such that
\begin{equation} \label{eq:compact_entry}
 \sup_{x\in \CC^{-\al_0}} \sup_{\eps\in(0,1]} \PP(\|X(1;x)\|_{\CC^{-\al}} > R_0) \leq \frac{1}{2}.
\end{equation}
By Proposition \ref{prop:1st_entry} for every $b>0$ there exists $T_0>0$ and $\eps_0\in(0,1)$ such that for every $\eps\leq \eps_0$
\begin{equation} \label{eq:entry_est}
 \sup_{\|x\|_{\CC^{-\al_0}}\leq R_0} \PP(\sigma(x) \geq T_0) \leq 1-\ee^{-b/\eps}.
\end{equation}
Then for every $x\in \CC^{-\al_0}$ and $\eps\leq \eps_0$
\begin{align}
 \PP(\sigma(x) \geq T_0+1) & \leq 
 \EE\left(\mathbf{1}_{\{\|X(1;x)\|_{\CC^{-\al_0}}\leq R_0\}} \PP(\sigma(X(1;x)) \geq T_0)\right) 
 + \PP(\|X(1;x)\|_{\CC^{-\al_0}} > R_0) \label{eq:m=1_bd} \\
 & \stackrel{\eqref{eq:compact_entry},\eqref{eq:entry_est}}{\leq} 1 - \frac{1}{2} \ee^{-b/\eps} \nonumber 
\end{align}
Using the Markov property successively implies for every $m\geq 1$ and $x\in \CC^{-\al_0}$
\begin{equation} \label{eq:MP_success}
 \PP(\sigma(x) \geq m (T_0+1)) \leq \sup_{y\in \CC^{-\al_0}}\PP(\sigma(y) \geq (T_0+1)) 
 \, \PP(\sigma(x) \geq (m-1) (T_0+1)).
\end{equation}
Combining \eqref{eq:m=1_bd} and \eqref{eq:MP_success} we obtain that
\begin{equation*}
 \sup_{x\in \CC^{-\al_0}} \PP(\sigma(x) \geq m (T_0+1)) \leq \left(1 - \frac{1}{2} \ee^{-b/\eps}\right)^m.
\end{equation*}
The last inequality completes the proof if we relabel $b$ and $T_0$.
\end{proof}

\begin{proposition}\label{prop:g_i} Let $\delta_0>0$, $\nu_1(x)$, $\rho_1(x)$ as in Definition \ref{def:stop_times}, $\sigma_1(x)$ as in 
\eqref{eq:tau_sigma} and $L(\nu_1(x),\rho_1(x);\sigma_1(x))$ as in \eqref{eq:L_form}. For every $\kappa, M_0,b>0$ there exist $T_0 > 0$
and $\eps_0\in(0,1)$ such that for every $\eps\leq \eps_0$
\begin{equation*}
\sup_{\|x-(\pm1)\|_{\CC^{-\al_0}}\leq \delta_0} \PP\left(\left[L(\nu_1(x),\rho_1(x);\sigma_1(x)) + (2-\kappa) \sigma_1(x) + M_0\right]^\frac{1}{p_0} \geq m T_0\right) 
\leq \left(1-\ee^{-b/\eps}\right)^m
\end{equation*}
for every $m\geq 1$ and $p_0\geq 1$ as in \eqref{eq:L_form}.
\end{proposition}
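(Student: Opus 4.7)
The strategy is to split the event of interest according to the size of $\sigma_1(x)=\rho_1(x)-\nu_1(x)$, and to bound the two resulting sub-events using Proposition \ref{prop:ent_est} (for the length of the bad interval) and exponential moments of the Wick powers (Proposition \ref{prop:exp_mom}) respectively. The key structural observation is that the restarted diagrams $\{L_k(\nu_1+l,\rho_1)\}_{k\ge 1}$, thanks to the restart property at the integer shifts of $\nu_1(x)$, form i.i.d.\ sequences with uniformly bounded exponential moments after rescaling by $\eps$.

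First I will apply the subadditivity $(a+b)^{1/p_0}\le a^{1/p_0}+b^{1/p_0}$ (valid for $p_0\ge 1$) and the elementary inequality $(\sum_k a_k^{p_0})^{1/p_0}\le \sum_k a_k$ to the formula \eqref{eq:L_form}, obtaining the pathwise bound
\begin{equation*}
\bigl[L(\nu_1,\rho_1;\sigma_1)+(2-\kappa)\sigma_1+M_0\bigr]^{1/p_0}
\le C\sum_{k=1}^{\lfloor \sigma_1\rfloor}\sum_{l\in\{0,1/2\}}\bigl(1\vee L_k(\nu_1+l,\rho_1)\bigr)+C\bigl(1+\sigma_1^{1/p_0}\bigr).
\end{equation*}
This reduces the event of interest to the union of $A=\{\sigma_1\gtrsim (mT_0)^{p_0}\}$ and $B=\{\sum_{k,l}(1\vee L_k)\gtrsim mT_0\}$. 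For $A$, since $\nu_1(x)$ is a stopping time, the strong Markov property (\cite[Theorem 4.2]{TW18}) identifies the conditional law of $\sigma_1(x)$ given $\mathcal{F}_{\nu_1(x)}$ with that of the entry time $\sigma(y)$ of Proposition \ref{prop:ent_est} at $y=X(\nu_1(x);x)$; applying that proposition (and using that $(mT_0)^{p_0}\ge m T_1$ for $m\ge 1$ and $T_0$ large) yields the required geometric tail.

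For event $B$ I will further split on $\sigma_1\gtrless m$. The large-$\sigma_1$ piece is again controlled by Proposition \ref{prop:ent_est}; on $\{\sigma_1<m\}$ I dominate $\sum_{k=1}^{\lfloor \sigma_1\rfloor}(1\vee L_k(\nu_1+l,\rho_1))\le\sum_{k=1}^{\lfloor m\rfloor}(1\vee\tilde L_k^l)$, where $\tilde L_k^l$ is defined like $L_k$ but without the $\wedge\rho_1$ in the supremum. The restart property (\cite[Proposition 2.3]{TW18}) applied at the stopping times $\nu_1(x)+k-1+l$ ensures that $\{\tilde L_k^l\}_{k\ge 1}$ is an i.i.d.\ family for each fixed $l$, with common law independent of $x$. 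Proposition \ref{prop:exp_mom} combined with the scaling $(\eps^{n/2}\|\Psi^n\|)^{2/n}=\eps\|\Psi^n\|^{2/n}$ yields $\EE[e^{\theta\tilde L_1^l}]\le M$ for fixed $\theta>0$ and all $\eps$ small enough. An exponential Chebyshev bound then gives $\PP\bigl(\sum_{k=1}^{\lfloor m\rfloor}(1\vee\tilde L_k^l)\ge mT_0/(8C)\bigr)\le \rho^m$ for some $\rho<1$ once $T_0$ is chosen sufficiently large.

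The main obstacle is how to combine these two estimates into the single bound $(1-e^{-b/\eps})^m$. A naive union bound fails because $(1-e^{-b'/\eps})^m$ cannot be made a strict fraction of $(1-e^{-b/\eps})^m$ uniformly in $m\ge 1$ as $\eps\to 0$, and the exponential Chebyshev factor $\rho^m$ is only a constant power, not exponentially small in $1/\eps$. The remedy, mirroring the proof of Proposition \ref{prop:ent_est} itself, is to iterate directly at the level of the probability measure: for each $i\ge 1$ one defines the event $E_i$ that within a time window of length $T_1$ starting from the appropriate restart point, $X$ enters the $\delta_0$-neighborhood of $\pm 1$ \emph{and} the restarted Wick powers on this window satisfy the analogue of the bound in Definition \ref{def:stop_times}. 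Propositions \ref{prop:1st_entry} and \ref{prop:exp_mom} together ensure $\PP(E_i\mid\mathcal{F}_{\nu_1+(i-1)T_1})\ge e^{-b/\eps}$ uniformly in the conditioning, and successive conditioning (as in the proof of Proposition \ref{prop:ent_est}) produces the product bound $(1-e^{-b/\eps})^m$. Matching $T_0$ to $T_1$ and $M_0$ is then routine.
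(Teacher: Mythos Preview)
Your overall architecture---condition on $\nu_1(x)$ via the strong Markov property, then split on the size of $\sigma_1(x)$, controlling the long-interval piece by Proposition~\ref{prop:ent_est} and the sum of $L_k$'s on the short-interval piece by exponential Chebyshev using independence of the restarted diagrams---is exactly the paper's approach. The gap is in the exponential Chebyshev step, and it is what generates the artificial ``main obstacle'' you then try to work around.

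You correctly record the scaling $(\eps^{n/2}\|\<n>\|)^{2/n}=\eps\,\|\<n>\|^{2/n}$, but you only extract from it the weak consequence $\EE[e^{\theta\tilde L_1^l}]\le M$ for a \emph{fixed} $\theta>0$. The point of the scaling is much stronger: it gives $\EE\bigl[e^{c\tilde L_1^l/\eps}\bigr]\le M$ uniformly in $\eps$, by Proposition~\ref{prop:exp_mom}. Running exponential Chebyshev with tilt $c/\eps$ (not a fixed $\theta$) then yields
\[
\PP\Bigl(\sum_{k=1}^{\lfloor mT_1\rfloor}\tilde L_k^l \ge \tfrac{m(T_0-C)}{2}\Bigr)
\;\le\; \exp\Bigl\{-\tfrac{cm(T_0-C)}{2\eps}\Bigr\}\,M^{mT_1}
\;\le\; e^{-mT/\eps}
\]
for any prescribed $T$, once $T_0$ is chosen large enough and $\eps_0$ small enough. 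This is exponentially small in $1/\eps$, not merely $\rho^m$ for a fixed $\rho<1$.

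With this correct bound the combination step is immediate: given the target $b>0$, apply Proposition~\ref{prop:ent_est} with some $b'<b$, and use the elementary inequality $a^m+c^m\le(a+c)^m$ (valid for $a,c\ge 0$, $m\ge 1$) to get
\[
e^{-mT/\eps}+(1-e^{-b'/\eps})^m \le \bigl(e^{-T/\eps}+1-e^{-b'/\eps}\bigr)^m \le (1-e^{-b/\eps})^m,
\]
the last inequality holding for $\eps$ small whenever $b'<b<T$. So the ``naive union bound'' does work, and your proposed remedy via events $E_i$ is unnecessary (and, as sketched, does not clearly control the quantity $L(\nu_1,\rho_1;\sigma_1)$ anyway).
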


\begin{proof} We first condition on $\nu_1(x)$ to obtain the bound
\begin{align*}
 & \sup_{\|x-(\pm1)\|_{\CC^{-\al_0}}\leq \delta_0} \PP\left(\left[L(\nu_1(x),\rho_1(x);\sigma_1(x)) + (2-\kappa) \sigma_1(x) + M_0\right]^\frac{1}{p_0} 
 \geq m T_0\right) \\ 
 & \quad \leq \sup_{x\in\CC^{-\al_0}} 
 \underbrace{\PP\left(\left[L(0,\sigma(x);\sigma(x)) + (2-\kappa) \sigma(x) + M_0\right]^\frac{1}{p_0} 
 \geq m T_0\right)}_{=:\PP\left(g(\sigma(x))^{\frac{1}{p_0}} \geq m T_0\right)},
\end{align*}
where $\sigma(x) = \inf\left\{t > 0 : \min_{x_*\in\{-1,1\}} \|X(t;x) - x_*\|_{\CC^{-\al_0}} \leq \delta_0 \right\}$. Let $T_0 \geq 1$ to be fixed below
and notice that for any $T_1>0$
\begin{align*}
\PP\left(g(\sigma(x))^{\frac{1}{p_0}} \geq m T_0\right) & \leq \PP\left(g(\sigma(x))^{\frac{1}{p_0}} \geq m T_0, \, \sigma(x)\leq m T_1\right) 
+ \PP(\sigma(x) \geq m T_1) \\
& \leq \PP\left(\sum_{k=1}^{\lfloor m T_1 \rfloor} \sum_{l= 0,\frac{1}{2}} L_k(l,mT_1) \geq m (T_0 - C)\right)
+ \PP(\sigma(x) \geq m T_1) 
\end{align*}
for some $C>0$, where in the second inequality we use convexity of the mapping $g\mapsto g^{\frac{1}{p_0}}$ and the fact that 
$L_k(l,\sigma)$ is increasing in $\sigma$ by Definition \ref{def:L_k}. By Proposition \ref{prop:ent_est} we can choose $T_1>0$ and 
$\eps_0\in(0,1)$ such that for every $\eps\leq \eps_0$
\begin{equation*}
\sup_{x\in \CC^{-\al_0}} \PP(\sigma(x) \geq m T_1) \leq \left(1- \ee^{-b/\eps}\right)^m.
\end{equation*}
We also notice that 
\begin{align*}
 \PP\left(\sum_{k=1}^{\lfloor m T_1 \rfloor} \sum_{l= 0,\frac{1}{2}} L_k(l,mT_1) \geq m (T_0 - C)\right) 
 & \leq \sum_{l=0,\frac{1}{2}} \PP\left(\sum_{k=1}^{\lfloor m T_1 \rfloor}L_k(l,l+k) \geq m \left(\frac{T_0 - C}{2}\right)\right) \\
 & \leq \sum_{l=0,\frac{1}{2}}\exp\left\{-c m \left(\frac{T_0 - C}{2\eps}\right)\right\} 
 \left(\EE\ee^{cL_1(l,1)/\eps}\right)^{mT_1},
\end{align*}
where in the first inequality we use that $L_k(l,mT_1) \leq L_k(l,l+k)$, for every $1\leq k\leq \lfloor m T_1 \rfloor$, and in the second 
we use an exponential Chebyshev inequality, independence and equality in law of the $L_k(l,l+k)$'s. For any $T>0$ we choose $c\equiv c(n)>0$ 
according to Proposition \ref{prop:exp_mom}, $T_0$ sufficiently large and $\eps_0\in(0,1)$ sufficiently small such that for every 
$\eps\leq \eps_0$  
\begin{equation*}
 \sum_{l=0,\frac{1}{2}}\exp\left\{-c m \left(\frac{T_0 - C}{2\eps}\right)\right\} 
 \left(\EE\ee^{cL_1(l,1)/\eps}\right)^{mT_1} \leq \ee^{-mT/\eps}.
\end{equation*}
Combining all the previous inequalities imply that 
\begin{equation*}
 \sup_{\|x-(\pm1)\|_{\CC^{-\al_0}}\leq \delta_0} \PP\left(\left[L(\nu_1(x),\rho_1(x);\sigma_1(x)) + (2-\kappa) \sigma_1(x) + M_0\right]^\frac{1}{p_0} \geq m T_0\right) 
 \leq \ee^{-m T/\eps} + \left(1 - \ee^{-b/\eps}\right)^m.
\end{equation*}
This completes the proof if we relabel $b$ since $T$ is arbitrary.
\end{proof}

\subsection{Proof of Proposition \ref{prop:rwe}} \label{s:rwe_proof}

In this section we set
\begin{align*}
& f_i(x) := \frac{\kappa}{2} \tau_i(x). \\
& g_i(x) := L(\nu_i(x),\rho_i(x);\sigma_i(x)) + (2-\kappa) \sigma_i(x) + M_0.
\end{align*}
In this notation the random walk $S_N(x)$ in Definition \ref{def:rw} is given by $\sum_{i\leq N} (f_i(x) - g_i(x))$. 

To prove Proposition \ref{prop:rwe} we first consider a sequence of i.i.d. random variables 
$\{\tilde f_i\}_{i\geq 1}$ such that $\tilde f_1 \sim \exp(1)$. We furthermore assume that the family 
$\{\tilde f_i\}_{i\geq 1}$ is independent from both $\{f_i(x)\}_{i\geq 1}$ and $\{g_i(x)\}_{i\geq 1}$. For $\lambda >0$ which we 
fix later on, we set 
\begin{equation*} 
 \tilde S_N(x) := \lambda \sum_{i\leq N} \tilde f_i - \sum_{i\leq N} g_i(x).
\end{equation*}
In the proof of Proposition \ref{prop:rwe} below we compare the random walk $S_N(x)$ with $\tilde S_N(x)$. The idea is 
that $\sum_{i\leq N} f_i(x)$ behaves like $\lambda\sum_{i\leq N} \tilde f_i$ for suitable $\lambda>0$. 

In the next proposition we estimate the new random walk $\tilde S_N(x)$ using stochastic dominance. In particular we 
assume that the family of random variables $\{g_i(x)\}_{i\geq 1}$ is stochastically dominated by a family of i.i.d. random variables
$\{\tilde g_i\}_{i\geq 1}$ which does not depend on $x$ and obtain a lower bound on $\PP(-\tilde S_N(x) \leq u \text{ for every } 
N\geq 1)$.

From now on we denote by $\mu_Z$ the law of a random variable $Z$. 

\begin{proposition}\label{prop:stoch_dom} Assume that there exists a family of i.i.d. random variables $\{\tilde g_i\}_{i\geq 1}$, independent
from both $\{g_i(x)\}_{i\geq 1}$ and $\{\tilde f_i\}_{i \geq1}$, such that

\begin{equation*}
 \sup_{\|x-(\pm1)\|_{\CC^{-\al_0}}\leq \delta_0}\PP(g_i(x) \geq g) \leq \PP(\tilde g_i \geq g)
\end{equation*}
for every $g\geq 0$. Let $\tilde S_N = \lambda \sum_{i\leq N} \tilde f_i - \sum_{i\leq N} \tilde g_i$. Then
\begin{equation*}
 \inf_{\|x-(\pm1)\|_{\CC^{-\al_0}}\leq \delta_0}\PP(-\tilde S_N(x) \leq u \text{ for every } N\geq 1) 
 \geq \PP(-\tilde S_N \leq u \text{ for every } N \geq 1).
\end{equation*}
\end{proposition}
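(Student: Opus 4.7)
The plan is to establish the inequality by a coupling argument. I would enlarge the probability space to support a family $\{\tilde g_i\}_{i \ge 1}$, i.i.d. with the prescribed marginal law, that is jointly coupled with $\{g_i(x)\}_{i \ge 1}$ in such a way that $g_i(x) \le \tilde g_i$ almost surely for every $i$, while keeping the sequence $\{\tilde f_i\}_{i \ge 1}$ shared between the two random walks. Once such a coupling is in place, the pointwise inequality
$$-\tilde S_N(x) = \sum_{i \le N}\bigl(g_i(x) - \lambda \tilde f_i\bigr) \le \sum_{i \le N}\bigl(\tilde g_i - \lambda \tilde f_i\bigr) = -\tilde S_N$$
holds for every $N \ge 1$, so $\{-\tilde S_N \le u \text{ for every } N \ge 1\} \subseteq \{-\tilde S_N(x) \le u \text{ for every } N \ge 1\}$. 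Because the probabilities in the statement depend only on the joint distributions of $(\{\tilde f_i\}, \{\tilde g_i\})$ and $(\{\tilde f_i\}, \{g_i(x)\})$ respectively, and these are preserved by the coupling, the claimed inequality then follows by taking the infimum over $x$.

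The heart of the argument is the construction of this coupling, which hinges on upgrading the marginal stochastic dominance in the hypothesis to a conditional one via the strong Markov property of $X(\cdot;x)$ recalled in Section \ref{s:prel}. Applied at the stopping time $\rho_{i-1}(x)$, this yields that the conditional law of $g_i(x)$ given $\mathcal{F}_{\rho_{i-1}(x)}$ is the law of $g_1(y)$ evaluated at $y = X(\rho_{i-1}(x);x)$. By the very definition of $\rho_{i-1}$ one has $\|y - (\pm 1)\|_{\CC^{-\al_0}} \le \delta_0$, so the hypothesis gives
$$\PP\bigl(g_i(x) \ge g \bigm| \mathcal{F}_{\rho_{i-1}(x)}\bigr) \le \PP(\tilde g_1 \ge g) \quad \text{almost surely}$$
for every $g \ge 0$. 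Writing $F_i^x$ for the conditional CDF above and $G$ for the CDF of $\tilde g_1$, this is the pointwise bound $F_i^x \ge G$, which in turn translates to $(F_i^x)^{-1} \le G^{-1}$ on $(0,1)$. Enlarging the probability space with an i.i.d. Uniform$(0,1)$ sequence $\{U_i\}_{i \ge 1}$ independent of everything else, I would use it inductively to extract a conditionally uniform variable $V_i$ such that $g_i(x) = (F_i^x)^{-1}(V_i)$, and then simply set $\tilde g_i := G^{-1}(V_i)$. The ordering of the quantile functions gives $g_i(x) \le \tilde g_i$ almost surely, while the deterministic nature of $G$ together with the conditional i.i.d. uniform structure of $\{V_i\}$ ensures that $\{\tilde g_i\}$ is genuinely i.i.d. with the correct marginal and independent of $\{\tilde f_i\}$.

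I expect the main obstacle to be the technical step of realizing the quantile coupling when the conditional laws $F_i^x$ carry atoms, in which case one cannot simply set $V_i = F_i^x(g_i(x))$; the standard remedy is to use the auxiliary uniforms $\{U_i\}$ to interpolate across atoms so that $V_i$ is genuinely Uniform$(0,1)$ conditional on $\mathcal{F}_{\rho_{i-1}(x)}$, and one could alternatively package this as a direct invocation of Strassen's theorem in its conditional form. Once the coupling is set up, the rest is the one-line pathwise comparison above, together with the fact that the i.i.d. exponential sequence $\{\tilde f_i\}$ is by construction the same in both $\tilde S_N(x)$ and $\tilde S_N$.
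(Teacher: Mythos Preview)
Your coupling argument is correct and gives a clean alternative to the paper's proof. The paper proceeds instead by induction on $N$: setting $G_N(x,u) = \PP(-\tilde S_M(x) \le u \text{ for all } M \le N)$ and $G_N(u) = \PP(-\tilde S_M \le u \text{ for all } M \le N)$, it shows $G_N(x,u) \ge G_N(u)$ by conditioning on the first step $(\tilde f_1, g_1(x), X(\rho_1(x);x))$, invoking the Markov property to reduce to the inductive hypothesis, and then applying the elementary fact (their Lemma~\ref{lem:decr_exp}) that $\int H\, d\mu_{g_1(x)} \ge \int H\, d\mu_{\tilde g_1}$ for any decreasing $H$ under the stochastic dominance assumption. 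Your approach front-loads the work into constructing the pathwise coupling---which, as you correctly flag, requires some care with atoms via the auxiliary uniforms---but then finishes with a one-line pathwise comparison; the paper's approach sidesteps the explicit coupling construction entirely at the cost of a slightly less transparent induction. Both routes rest on the same essential input, namely the conditional stochastic dominance obtained from the strong Markov property at $\rho_{i-1}(x)$.
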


\begin{proof} Let 
\begin{align*}
 & G_N(x,u) = \PP(-\tilde S_M(x) \leq u \text{ for every } N \geq M \geq 1). \\
 & G_N(u) = \PP(-\tilde S_M \leq u \text{ for every } N \geq M \geq 1).
\end{align*}
We first prove that for every $N\geq 1$ and every $x$
\begin{equation}\label{eq:G_bd}
 G_N(x,u) \geq G_N(u).
\end{equation}
For $N = 1$ we have that 
\begin{align*}
 G_1(x,u) & = \PP(-\lambda\tilde f_1 + g_1(x) \leq u) 
 = \int_0^\infty \PP(g_1(x) \leq u+ \lambda f) \, \mu_{\tilde f_1}(\dd f) \\
 & \geq \int_0^\infty \PP(\tilde g_1 \leq u+\lambda f) \, \mu_{\tilde f_1}(\dd f) 
 = \PP(-\lambda\tilde f_1 + \tilde g_1 \leq u) = G_1(u).
\end{align*}
Let us assume that \eqref{eq:G_bd} holds for $N$. Let $\partial B_0 = \{y: \|y- (\pm 1)\|_{\CC^{-\al_0}} =\delta_0\}$. Conditioning 
on $\left(\tilde f_1, g_1(x), X(\nu_2(x);x)\right)$ and using independence of $\tilde f_1$ from the joint law of $\left(g_1(x), X(\nu_2(x);x)\right)$
we notice that
\begin{align}
 G_{N+1}(x,u) & = \int_0^\infty \int_{[0,u+\lambda f]\times \partial B_0} G_N(y,u+\lambda f-g) 
 \, \mu_{(g_1(x),X(\nu_2(x);x))}(\dd g, \dd y) \, \mu_{\tilde f_1}(\dd f) \label{eq:GN+1_1st_step}\\
 & \stackrel{\eqref{eq:G_bd}}{\geq} \int_0^\infty \int_{[0,u+\lambda f]\times \partial B_0} G_N(u+\lambda f-g) 
 \, \mu_{(g_1(x),X(\nu_2(x);x))}(\dd g, \dd y) \, \mu_{\tilde f_1}(\dd f) \nonumber \\
 & = \int_0^\infty \int_{[0,u+\lambda f]} G_N(u+\lambda f-g) 
 \, \mu_{g_1(x)}(\dd g) \, \mu_{\tilde f_1}(\dd f). \nonumber
\end{align}
In the last equality above we use that $G_N(u+\lambda f-g)$ does not depend on $y$, hence we can drop the integral with respect to $y$. Let
\begin{equation*}
 H(g) = \mathbf{1}_{\{g\leq u+ \lambda f\}} G_N(u+\lambda f - g). 
\end{equation*}
Then for fixed $u,f\geq 0$, $H$ is decreasing with respect to $g$. By Lemma \ref{lem:decr_exp}
\begin{align*}
 \int_{[0,u+\lambda f]} G_N(u+\lambda f-g) 
 \, \mu_{g_1(x)}(\dd g) & = \int H(g) \mu_{g_1(x)}(\dd g) 
 \geq \int H(g) \mu_{\tilde g_1}(\dd g) \\
 & = \int_{[0,u+\lambda f]} G_N(u+\lambda f-g) 
 \, \mu_{\tilde g_1}(\dd g).
\end{align*}
Integrating the last inequality with respect to $f$ with $\mu_{\tilde f_1}$ and combining with \eqref{eq:GN+1_1st_step} we obtain
\begin{align*}
 G_{N+1}(x,u) \geq \int_0^\infty \int_{[0,u+\lambda f]} G_N(u+\lambda f-g) 
 \, \mu_{\tilde g_1}(\dd g) \, \mu_{\tilde f_1}(\dd f) = G_{N+1}(u)
\end{align*}
which proves \eqref{eq:G_bd}. If we now take $N\to \infty$ in \eqref{eq:G_bd} we
get for arbitrary $x$ 
\begin{equation*}
 G(x,u) \geq G(u) 
\end{equation*}
which completes the proof. 
\end{proof}

In the next proposition we prove existence of a family of random variables $\{\tilde g_i\}_{i\geq 1}$ that satisfy the 
assumption of Proposition \ref{prop:stoch_dom} and estimate their first moment. 

\begin{proposition}\label{prop:dom_constr} There exists a family of i.i.d. random variables $\{\tilde g_i\}_{i\geq 1}$, 
independent from both $\{g_i(x)\}_{i\geq 1}$ and $\{\tilde f_i\}_{i\geq 1}$, such that 
\begin{equation*}
 \sup_{\|x-(\pm1)\|_{\CC^{-\al_0}}\leq \delta_0}\PP(g_i(x) \geq g) \leq \PP(\tilde g_i \geq g),
\end{equation*}
and furthermore for every $b>0$ there exist $\eps_0\in(0,1)$ and $C>0$ such that for every $\eps\leq \eps_0$ 
\begin{equation*}
 \EE \tilde g_1 \leq C \ee^{b/\eps}.
\end{equation*}
\end{proposition}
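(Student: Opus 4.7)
The plan is a standard stochastic-domination construction: I would use the tail estimate of Proposition \ref{prop:g_i} to produce an i.i.d. family of geometric-type random variables whose tail function dominates that of every $g_i(x)$ uniformly in $x$. The main subtlety is a calibration $b \mapsto b/p_0$ at the very first step, so that the polynomial factor $m^{p_0}$ appearing in the moments is absorbed.

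Since $b>0$ in Proposition \ref{prop:g_i} is arbitrary, first apply that proposition with $b/p_0$ in place of $b$ to obtain $T_0>0$ and $\eps_0 \in (0,1)$ such that for every $\eps \leq \eps_0$ and every $m \geq 1$,
\[
\sup_{\|x-(\pm 1)\|_{\CC^{-\al_0}} \leq \delta_0} \PP\bigl(g_i(x) \geq (m T_0)^{p_0}\bigr) \leq (1-p)^m, \qquad p := \ee^{-b/(p_0 \eps)}.
\]
Enlarging the probability space if necessary, introduce an i.i.d. family $\{M_i\}_{i\geq 1}$ of geometric random variables with parameter $p$ (so $\PP(M_i = m) = p(1-p)^{m-1}$ for $m \geq 1$), chosen independent of both $\{g_i(x)\}_{i\geq 1}$ and $\{\tilde f_i\}_{i\geq 1}$, and define $\tilde g_i := (M_i T_0)^{p_0}$.

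For the stochastic dominance: if $g < T_0^{p_0}$ then $\tilde g_i \geq T_0^{p_0} > g$ almost surely and the inequality is trivial. For $g \geq T_0^{p_0}$, set $m := \lceil g^{1/p_0}/T_0 \rceil \geq 1$, so that $g > ((m-1)T_0)^{p_0}$; then monotonicity together with the tail bound above gives
\[
\sup_{x} \PP(g_i(x) \geq g) \leq \sup_{x} \PP\bigl(g_i(x) \geq ((m-1) T_0)^{p_0}\bigr) \leq (1-p)^{m-1} = \PP(M_i \geq m) = \PP(\tilde g_i \geq g),
\]
which is the required stochastic domination.

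Finally, I would estimate $\EE \tilde g_1 = T_0^{p_0}\, \EE[M_1^{p_0}]$ directly. Comparing $\sum_{m\geq 1} m^{p_0} p(1-p)^{m-1}$ with the Gamma integral $\int_0^\infty t^{p_0} \ee^{-pt}\dd t = \Gamma(p_0+1)\, p^{-(p_0+1)}$ yields $\EE[M_1^{p_0}] \leq C_{p_0}\, p^{-p_0}$ once $p$ is small enough. Substituting $p = \ee^{-b/(p_0\eps)}$ produces $\EE \tilde g_1 \leq C\, T_0^{p_0}\, \ee^{b/\eps}$, as claimed. The whole argument is routine; the one mildly delicate point is the initial rescaling $b \mapsto b/p_0$, which is what matches the exponent $\ee^{b/\eps}$ in the statement rather than the naive $\ee^{p_0 b/\eps}$ one would otherwise obtain.
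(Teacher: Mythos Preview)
Your construction is correct and takes a genuinely different, more explicit route than the paper. The paper defines $\tilde g_i$ abstractly via the right-continuous version of the function $1-\sup_x \PP(g_1(x)\geq g)$ as a cumulative distribution function, and then bounds $\EE\tilde g_1$ by passing through a stretched-exponential moment $\EE\exp\{c_\eps \tilde g_1^{1/p_0}\}$ with a carefully tuned $c_\eps$ satisfying $c_\eps T_0=\log(1+\ee^{-b/\eps})$. Your approach instead builds $\tilde g_i=(M_iT_0)^{p_0}$ from an explicit geometric variable and reads off the moment bound directly from $\EE[M_1^{p_0}]\lesssim p^{-p_0}$; the calibration $b\mapsto b/p_0$ you highlight plays the same role as the paper's choice of $c_\eps$. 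Your argument is more elementary in that it never computes a stretched-exponential moment, while the paper's construction has the conceptual advantage of producing the \emph{minimal} dominating law (so any moment bound one later wants is automatically sharpest). One small omission: Proposition~\ref{prop:g_i} is stated only for $g_1(x)$, so before invoking it for general $i$ you need the one-line Markov-property reduction $\sup_x\PP(g_i(x)\geq g)\leq \sup_x\PP(g_1(x)\geq g)$, which the paper makes explicit.
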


\begin{proof} We first notice that by the Markov property 
\begin{equation*}
 \sup_{\|x-(\pm1)\|_{\CC^{-\al_0}} \leq \delta_0}\PP(g_i(x) \geq g) \leq 
 \sup_{\|x-(\pm1)\|_{\CC^{-\al_0}} \leq \delta_0 }\PP(g_1(x) \geq g).
\end{equation*}
Let $F(g)$ be the right continuous version of the increasing function $1 - \sup_{x\in\CC^{-\al_0}} \PP(g_1(x) \geq g)$. We consider a family of i.i.d. random variables such 
$\{\tilde g_i\}_{i\geq 1}$ independent from both $\{g_i(x)\}_{i\geq 1}$ and $\{\tilde f_i\}_{i\geq1}$ such that $\PP(\tilde g_i\leq g) = F(g)$. To estimate
$\EE \tilde g_1$ let $c_\eps>0$ to be fixed below. We notice that  
\begin{align}
 \EE \tilde g_1 \leq \sup_{g\geq 0} g \ee^{-c_\eps g^\frac{1}{p_0}} 
 \EE\exp\left\{c_\eps \tilde g_1^\frac{1}{p_0}\right\} \label{eq:1st_mom} \leq \left(\frac{p_0\ee^{-1}}{c_\eps}\right)^{p_0} 
 \EE\exp\left\{c_\eps \tilde g_1^\frac{1}{p_0}\right\}.
\end{align}
For $b>0$ we choose $T_0>0$ and $\eps_0\in(0,1)$ as in Proposition \ref{prop:g_i}. Then for every $\eps\leq \eps_0$
\begin{align*}
 \EE\exp\left\{c_\eps \tilde g_1^\frac{1}{p_0}\right\} &  = 1 + \int_0^\infty c_\eps\ee^{c_\eps g} 
 \PP\left(\tilde g_1^\frac{1}{p_0} \geq g\right) \dd g \leq 1+ \sum_{m\geq 0} \PP\left(\tilde g_1^\frac{1}{p_0} \geq mT_0\right) 
 \int_{mT_0}^{(m+1)T_0} c_\eps \ee^{c_\eps g} \dd g \\
 & = 1+ \sum_{m\geq 0} \sup_{\|x-(\pm 1)\|_{\CC^{-\al_0}}\leq \delta_0} \PP\left(g_1(x)^\frac{1}{p_0} \geq mT_0\right) 
 \int_{mT_0}^{(m+1)T_0} c_\eps \ee^{c_\eps g} \dd g \\
 & \leq 1 + \ee^{c_\eps T_0} \sum_{m\geq 0} \ee^{mc_\eps T_0} \left(1-\ee^{-b/\eps}\right)^m.   
\end{align*}
where in the last inequality we use Proposition \ref{prop:g_i} to estimate $\PP\left(g_1(x)^\frac{1}{p_0} \geq mT_0\right)$. We now choose 
$c_\eps>0$ such that $c_\eps T_0 = \log\left(1+\ee^{-b/\eps}\right)$. Then
\begin{align*}
 \EE\exp\left\{c_\eps \tilde g_1^\frac{1}{p_0}\right\} & \leq 1 + \left(1+\ee^{-b/\eps}\right) 
 \sum_{m\geq 0} \left(1+\ee^{-b/\eps}\right)^m \left(1-\ee^{-b/\eps}\right)^m 
 \leq 1+2 \sum_{m\geq 0} \left(1-\ee^{-2b/\eps}\right)^m \\
 & = 1+ 2 \ee^{2b/\eps}.
\end{align*}
Finally, by \eqref{eq:1st_mom} we obtain that
\begin{equation*}
 \EE \tilde g_1 \leq 
 \left(\frac{p_0\ee^{-1}T_0}{\log\left(1+\ee^{-b/\eps}\right)}\right)^{p_0}
 \left(1+ 2\ee^{2b/\eps}\right)
\end{equation*}
which completes the proof if we relabel $b$.
\end{proof}

\begin{remark} In the proof of Proposition \ref{prop:dom_constr} we use stretched exponential moments 
of $\tilde g_1$, although we only need 1st moments (see Lemma \ref{lem:CL_est} below). This simplifies our calculations. 
\end{remark}

From now on we let $\tilde S_N = \lambda \sum_{i\leq N} \tilde f_i - \sum_{i\leq N} \tilde g_i$ for $\{\tilde g_i\}_{i\geq 1}$ as in
Proposition \ref{prop:dom_constr}. 

In the next proposition we explicitly compute the probability $\PP(-\tilde S_N \leq 0 \text{ for every } N\geq 1)$. The proof is essentially
the same as the classical Cram\'er--Lundberg estimate (see \cite[Chapter 1.2]{EKM97}). We present it here for the reader's convenience.    

\begin{proposition} \label{prop:reneq} For the random walk $\tilde S_N$ the following estimate holds,
\begin{equation*}
 \PP(-\tilde S_N \leq 0 \text{ for every } N \geq 1) 
 = 1 - \frac{1}{\lambda} \EE\tilde g_1.
\end{equation*}
\end{proposition}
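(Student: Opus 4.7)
The plan is to reduce the discrete-time random walk problem to the ruin probability for a continuous-time compound Poisson process with linear drift, and then derive the formula via a renewal equation in the classical Cram\'er--Lundberg fashion.

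First, I would embed the random walk into a continuous-time process. Let $\{N_t\}_{t \geq 0}$ be a Poisson process of intensity $1$ whose inter-arrival times are $\{\tilde f_i\}_{i\geq 1}$ (this is possible since $\tilde f_i \sim \exp(1)$), and let $\{J_i\}_{i \geq 1}$ be i.i.d.\ copies of $\tilde g_1$, independent of $\{N_t\}$. Define the Lévy process
\begin{equation*}
X_t := -\lambda t + \sum_{i=1}^{N_t} J_i, \qquad T_N := \tilde f_1 + \cdots + \tilde f_N.
\end{equation*}
Then $X_{T_N} = -\tilde S_N$, and since $X_t$ decreases linearly with slope $-\lambda$ between jumps, $\sup_{t \geq 0} X_t = \sup_{N \geq 0} X_{T_N}$. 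Because $X_0 = 0$, the event in question becomes $\{\sup_{t \geq 0} X_t = 0\}$. So the problem is to compute $\phi(0)$ for $\phi(u) := \PP(\sup_{t \geq 0} X_t \leq u)$.

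Next I would derive the fundamental renewal equation. Conditioning on the pair $(T_1, J_1)$ and using the Markov property of the Lévy process $X$ (post-$T_1$, the shifted process is independent of $(T_1,J_1)$ with the same law as $X$), one gets
\begin{equation*}
\phi(u) = \int_0^\infty e^{-t} \int_0^{u + \lambda t} \phi(u + \lambda t - g) \, \mu_{\tilde g_1}(\dd g) \, \dd t,
\end{equation*}
and after the change of variables $s = u + \lambda t$,
\begin{equation*}
\phi(u) = \frac{e^{u/\lambda}}{\lambda} \int_u^\infty e^{-s/\lambda} \int_0^s \phi(s-g) \, \mu_{\tilde g_1}(\dd g) \, \dd s.
\end{equation*}
Differentiating in $u$ gives $\lambda \phi'(u) = \phi(u) - \int_0^u \phi(u-g)\, \mu_{\tilde g_1}(\dd g)$, and integrating from $0$ to $u$ followed by Fubini yields
\begin{equation*}
\lambda\bigl[\phi(u) - \phi(0)\bigr] = \int_0^u \phi(v)\bigl(1 - F(u-v)\bigr) \, \dd v,
\end{equation*}
where $F$ denotes the cdf of $\tilde g_1$.

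Finally I would let $u \to \infty$. The main technical point, and the key place where the standing hypothesis $\EE \tilde g_1 < \lambda$ enters, is the identity $\phi(\infty) = 1$: this follows from the strong law of large numbers applied to $-\tilde S_N/N \to \EE \tilde g_1 - \lambda < 0$, which forces $\sup X_t < \infty$ a.s. Then $\phi$ is non-decreasing with limit $1$, so by monotone convergence the right-hand side tends to $\int_0^\infty (1 - F(w))\, \dd w = \EE \tilde g_1$, giving $\lambda[1 - \phi(0)] = \EE \tilde g_1$ and hence the stated formula.

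The main obstacle I anticipate is a clean justification of the strong Markov/Lévy step in deriving the renewal equation, together with the limit $\phi(\infty) = 1$ and the interchange of limits at $u \to \infty$; everything else is routine calculus. In particular one should check that the bound is only meaningful in the regime $\EE \tilde g_1 < \lambda$, which will be the case for the application since $\lambda$ can be chosen of order $e^{a_0/\eps}$ while Proposition \ref{prop:dom_constr} bounds $\EE \tilde g_1 \lesssim e^{b/\eps}$ for $b$ arbitrarily small relative to $a_0$.
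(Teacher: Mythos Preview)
Your proposal is correct and follows essentially the same approach as the paper: both derive the identical renewal equation for the survival function by conditioning on the first pair $(\tilde f_1,\tilde g_1)$, differentiate, integrate back, and pass to the limit $u\to\infty$ using the law of large numbers. The continuous-time L\'evy embedding you introduce is a correct but unnecessary detour---the paper works directly with the discrete random walk and arrives at the same integral identity $G(u)=G(0)+\frac{1}{\lambda}\int_0^u G(u-\bar u)(1-F(\bar u))\,\dd\bar u$ without it.
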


\begin{proof} Let $G(u) = \PP(-\tilde S_N \leq u \text{ for every } N \geq 1)$. Conditioning on $(\tilde f_1,\tilde g_1)$ and using independence 
we notice that
\begin{align}
 G(u) & = \PP\left( -\lambda \sum_{i=2}^N \tilde f_i + \sum_{i=2}^N \tilde g_i \leq 
 u + \lambda \tilde f_1 - \tilde g_1 \text{ for every } N\geq 2,
 \, -\lambda \tilde f_1 + \tilde g_1 \leq u\right) \label{eq:1st_cond}\\
 & = \int_0^\infty \int_0^{u+\lambda f} G(u+\lambda f-g) 
 \, \mu_{\tilde g_1}(\dd g) \, \mu_{\tilde f_1}(\dd f) \nonumber \\
 & = \frac{1}{\lambda} \ee^{u/\lambda}
 \int_u^\infty \ee^{-\bar f/\lambda} \int_0^{\bar f} G(\bar f-g) 
 \, \mu_{\tilde g_1}(\dd g) \dd \bar f \nonumber
\end{align}
where in the last equality we use that $\tilde f_1\sim\exp(1)$ and we also make the change of variables $\bar f = u+\lambda f$. This 
implies that $G(u)$ is differentiable with respect to $u$ and in particular
\begin{equation*} 
 \partial_{\bar u} G(\bar u) = \frac{1}{\lambda} G(\bar u) - \frac{1}{\lambda} 
 \int_0^{\bar u} G(\bar u-g) 
 \, \mu_{\tilde g_1}(\dd g).
\end{equation*}
Integrating the last equation form $0$ to $u$ we obtain that
\begin{equation} \label{eq:G_renewal_eq_0}
 G(u) = G(x,0) + \frac{1}{\lambda} \int_0^u G(u -\bar u) \dd \bar u - \frac{1}{\lambda}
 \int_0^u \int_0^{\bar u} G(\bar u-g) 
 \, \mu_{\tilde g_1}(\dd g) \dd \bar u. 
\end{equation}
Let $F(g) := \mu_{\tilde g_1}([0,g])$. A simple integration by parts implies
\begin{align}
 \int_0^u \int_0^{\bar u} G(\bar u-g) 
 \, \mu_{\tilde g_1}(\dd g) \dd \bar u &  = \int_0^u \left( \left[G(\bar u - g)\right]_{g=0}^{\bar u} + 
 \int_0^{\bar u} \partial_g G(\bar u - g) F(g) \dd g\right) \dd \bar u \label{eq:int_by_parts} \\
 & = \int_0^u G(0) F(\bar u) \dd \bar u 
 + \int_0^u \int_g^u \partial_g G(\bar u-g) \dd \bar u F(g) \dd g \nonumber \\
 & = \int_0^u G(0) F(\bar u) \dd \bar u - \int_0^u  
 \left[-G(\bar u -g)\right]_g^u F(g) \dd g \nonumber \\
 & = \int_0^u G(u-g) F(g) \dd g. \nonumber
\end{align}
Combining \eqref{eq:G_renewal_eq_0} and \eqref{eq:int_by_parts} we get
\begin{equation*} 
 G(u) = G(0) + \frac{1}{\lambda} \int_0^u G(u - \bar u) \dd \bar u
 - \frac{1}{\lambda} \int_0^u G(u - \bar u ) F(\bar u) \dd \bar u.
\end{equation*}
By taking $u\to \infty$ in the last equation and using the dominated convergence theorem and the law of large numbers we finally obtain 
\begin{equation*}
 1 = G(0) + \frac{1}{\lambda} \EE \tilde g_1
\end{equation*}
which completes the proof. 
\end{proof}

Combining Propositions \ref{prop:stoch_dom}, \ref{prop:dom_constr} and \ref{prop:reneq} we obtain the 
following lemma. 

\begin{lemma}\label{lem:CL_est} For any $b>0$ there exist $\eps_0\in(0,1)$ and $C>0$ such that for every $\eps\leq \eps_0$
\begin{equation*}
 \inf_{\|x-(\pm1)\|_{\CC^{-\al_0}}\leq \delta_0} \PP(-\tilde S_N(x) \leq 0 \text{ for every } N\geq 1) 
 \geq 1 - C \frac{\ee^{b/\eps}}{\lambda}.
\end{equation*}
\end{lemma}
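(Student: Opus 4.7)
The plan is to simply chain together Propositions \ref{prop:stoch_dom}, \ref{prop:dom_constr}, and \ref{prop:reneq}, as this lemma is precisely their conclusion evaluated at $u=0$. No new analysis is required.

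First, I would invoke Proposition \ref{prop:dom_constr} to fix, for the given $b>0$, an i.i.d.\ family $\{\tilde g_i\}_{i\geq 1}$ that is independent of $\{g_i(x)\}_{i\geq 1}$ and $\{\tilde f_i\}_{i\geq 1}$, stochastically dominates each $g_i(x)$ uniformly over $\|x-(\pm 1)\|_{\CC^{-\al_0}}\leq \delta_0$, and satisfies $\EE \tilde g_1 \leq C\ee^{b/\eps}$ for some constant $C>0$ and all $\eps\leq \eps_0$ (with $\eps_0\in(0,1)$ as supplied by that proposition). This is the ingredient that feeds into both of the following steps.

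Next, I would apply Proposition \ref{prop:stoch_dom} with $u=0$ to this specific family $\{\tilde g_i\}_{i\geq 1}$, obtaining
\begin{equation*}
\inf_{\|x-(\pm1)\|_{\CC^{-\al_0}}\leq \delta_0} \PP(-\tilde S_N(x) \leq 0 \text{ for every } N\geq 1)
\geq \PP(-\tilde S_N \leq 0 \text{ for every } N\geq 1),
\end{equation*}
where $\tilde S_N = \lambda \sum_{i\leq N} \tilde f_i - \sum_{i\leq N} \tilde g_i$ is the stationary random walk built from the dominating variables. Then Proposition \ref{prop:reneq} evaluates the right-hand side exactly as $1 - \tfrac{1}{\lambda} \EE \tilde g_1$, and inserting the first moment bound from the first step produces
\begin{equation*}
\inf_{\|x-(\pm1)\|_{\CC^{-\al_0}}\leq \delta_0} \PP(-\tilde S_N(x) \leq 0 \text{ for every } N\geq 1)
\geq 1 - \frac{C\,\ee^{b/\eps}}{\lambda},
\end{equation*}
which is the desired conclusion (after relabelling $b$ if one prefers to absorb the constant $C$ into the exponent).

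There is no genuine obstacle here; the lemma is a bookkeeping statement that packages the three preceding propositions into a single estimate for later use. The only minor point is ensuring that the independence hypotheses match up: Proposition \ref{prop:dom_constr} already guarantees $\{\tilde g_i\}_{i\geq 1}$ independent of $\{\tilde f_i\}_{i\geq 1}$ and of $\{g_i(x)\}_{i\geq 1}$, which is exactly what Proposition \ref{prop:stoch_dom} requires as an input, so the chain closes cleanly.
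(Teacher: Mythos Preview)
Your proposal is correct and matches the paper's proof essentially line for line: the paper also simply invokes Propositions \ref{prop:stoch_dom}, \ref{prop:dom_constr}, and \ref{prop:reneq} in turn, obtaining $\PP(-\tilde S_N(x)\leq 0 \text{ for all } N\geq 1)\geq 1-\tfrac{1}{\lambda}\EE\tilde g_1$ and then plugging in the moment bound $\EE\tilde g_1\leq C\ee^{b/\eps}$. There is nothing to add.
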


\begin{proof} By Propositions \ref{prop:stoch_dom}, \ref{prop:dom_constr} and \ref{prop:reneq} and 
\begin{align*}
\inf_{\|x-(\pm1)\|_{\CC^{-\al_0}} \leq \delta_0}
\PP(-\tilde S_N(x) \leq 0 \text{ for every } N\geq 1) \geq \PP(-\tilde S_N \leq 0 \text{ for every } N\geq 1) 
= 1 - \frac{1}{\lambda} \EE \tilde g_1.
\end{align*}
Moreover, by Proposition \ref{prop:dom_constr} for every $b>0$ there exist $\eps_0\in(0,1)$ and $C>0$ such that 
for every $\eps\leq \eps_0$, $\EE \tilde g_1\leq C \ee^{b/\eps}$ which completes the proof.
\end{proof}

%
%

We are now ready to prove Proposition \ref{prop:rwe} which is the main goal of this section.

\begin{proof}[Proof of Proposition \ref{prop:rwe}.] We estimate $\PP(S_N(x) \leq 0 \text{ for some } N\geq 1)$ in the 
following way, 
\begin{align}
 \PP(-S_N(x) \geq 0 \text{ for some } N\geq 1) & \leq 
 \PP\left(-\sum_{i\leq N} f_i(x) + \lambda \sum_{i\leq N} \tilde f_i \geq 0 \text{ for some } N \geq 1\right) \label{eq:surv_prob} \\
 & \quad + \PP(-\tilde S_N(x) \geq 0 \text{ for some } N \geq 1). \nonumber
\end{align}
The second term on the right hand side can be estimated by Lemma \ref{lem:CL_est} which provides a bound of the form
\begin{equation} \label{eq:2nd_term}
 \sup_{\|x-(\pm1)\|_{\CC^{-\al_0}}\leq \delta_0} \PP(-\tilde S_N(x) \geq 0 \text{ for some } N \geq 1) 
 \leq C \frac{\ee^{b/\eps}}{\lambda}.
\end{equation}
For the first term we notice that
\begin{align*}
 & \PP\left(-\sum_{i\leq N} f_i(x) + \lambda \sum_{i\leq N} \tilde f_i \geq 0 \text{ for some } N \geq 1\right) \\
 & \quad \leq \sum_{N\geq 1} \PP\left(-\sum_{i\leq N} f_i(x) + \lambda \sum_{i\leq N} \tilde f_i \geq 0\right)  
 \leq \sum_{N\geq 1} \PP\left(\exp\left\{-\frac{1}{2\lambda}\sum_{i\leq N} f_i(x) 
 + \frac{1}{2} \sum_{i\leq N} \tilde f_i\right\}\geq 1\right).
\end{align*}
By Markov's inequality, independence of $\{f_i(x)\}_{i\geq1}$ and $\{\tilde f_i\}_{i\geq1}$ and equality
in law of the $\tilde f_i$'s the last inequality implies that
\begin{equation} \label{eq:IN_JN}
 \PP\left(-\sum_{i\leq N} f_i(x) + \lambda \sum_{i\leq N} \tilde f_i \geq 0 \text{ for some } N \geq 1\right)
 \leq \sum_{N\geq 1} \underbrace{\EE \exp\left\{-\frac{1}{2\lambda}\sum_{i\leq N}f_i(x)\right\}}_{=:I_N(x)} 
 \underbrace{\left(\EE \exp\left\{\frac{\tilde f_1}{2}\right\}\right)^N}_{\leq 2^N \text{ since } \tilde f_1\sim \exp(1)}.
\end{equation}
Let $\eps_0\in(0,1)$ as in Proposition \ref{prop:f_i}. For the term $I_N(x)$ we notice that for every $\eps\leq \eps_0$ 
\begin{align*} 
 I_N(x) & \leq 
 \left(\sup_{\|x-(\pm1)\|_{\CC^{-\al_0}}\leq \delta_0}\EE\exp\left\{-\frac{1}{2\lambda}f_1(x)\right\}\right)^N \\
 & \leq \left(\sup_{\|x-(\pm1)\|_{\CC^{-\al_0}}\leq \delta_0}\left[\EE\exp\left\{-\frac{1}{2\lambda}f_1(x)\right\} \mathbf{1}_{\{f_1(x) \geq \ee^{2a_0/\eps}\}}
 + \PP\left(f_1(x) \leq \ee^{2a_0/\eps}\right) \right] \right)^N \\
 & \leq \left(\ee^{-\ee^{2a_0/\eps}/2\lambda} + \ee^{-3a_0/\eps}\right)^N,
\end{align*}
where in the first inequality we use the Markov property and in the last we use Proposition \ref{prop:f_i}. If we choose 
$\frac{1}{2\lambda}=\ee^{-(2a_0-b)/\eps}$ and choose $\eps_0\in(0,1)$ even smaller the last inequality implies that for 
every $\eps\leq \eps_0$
\begin{equation*} 
 \sup_{\|x-(\pm1)\|_{\CC^{-\al_0}}\leq \delta_0} I_N(x) \leq \left(\ee^{-\ee^{b/\eps}} + \ee^{-3a_0/\eps}\right)^N
 \leq \ee^{-5a_0N/2\eps}.
\end{equation*}
Combining with \eqref{eq:IN_JN} we find $\eps_0\in(0,1)$ such that for every $\eps\leq \eps_0$
\begin{align}
 & \sup_{\|x-(\pm1)\|_{\CC^{-\al_0}}\leq \delta_0} 
 \PP\left(-\sum_{i\leq N} f_i(x) + \lambda \sum_{i\leq N} \tilde f_i \geq 0 \text{ for some } N \geq 1\right)
 \label{eq:1st_term} \\ 
 & \quad \leq
 \sum_{N\geq 1} \ee^{-5a_0N/2\eps} 2^N \leq \sum_{N\geq 1} \ee^{-2a_0N/\eps} 
 = \frac{\ee^{-2a_0/\eps}}{1-\ee^{-2a_0/\eps}}. \nonumber
\end{align}
Finally \eqref{eq:surv_prob}, \eqref{eq:2nd_term} and \eqref{eq:1st_term} imply that 
\begin{equation*}
 \sup_{\|x-(\pm1)\|_{\CC^{-\al_0}}\leq \delta_0}\PP(-S_N(x) \geq 0 \text{ for some } N\geq 1) \leq 
 C\frac{\ee^{b/\eps}}{\ee^{(2a_0-b)/\eps}} 
 + \frac{\ee^{-2a_0/\eps}}{1- \ee^{-2a_0/\eps}}
\end{equation*}
which completes the proof since $b$ is arbitrary.
\end{proof}

\section{Applications to Eyring--Kramers law} \label{s:eyr_kram_app}

In this section we consider the spatial Galerkin approximation $X_N(\cdot;x)$ of $X(\cdot;x)$ given by  
\begin{align}
 & (\partial_t - \Delta) X_N  = - \Pi_N\left(X_N^3 - X_N - 3 \eps \Re_N X_N \right) 
 + \sqrt{2\eps} \xi_N \label{eq:X_approx} \\
 & X_N\big|_{t=0} = x_N \nonumber
\end{align}
where $\Pi_N$ is the projection on $\{f \in L^2: f(z)=\sum_{|k|\leq N} \hat f(k) L^{-2}\ee^{2\pi\ii k\cdot z/L}\}$,
$\xi_N = \Pi_N \xi$, $x_N = \Pi_N x$ and $\Re_N$ is as in \eqref{eq:renorm_constant}. Here for $k\in \ZZ^2$ we set 
$|k|=|k_1|\vee|k_2|$. In this notation we have that $\Pi_N f = f*D_N$, where $D_N$ is the $2$-dimensional square Dirichlet
kernel given by $D_N(z) = \sum_{|k|\leq N} L^{-2}\ee^{2\pi\ii k\cdot z/L}$.  

To treat \eqref{eq:X_approx} we write $X_N(\cdot;x) = v_N(\cdot;x) + \eps^{\frac{1}{2}}\<1>_N(\cdot;x)$ for 
\begin{align*}
 & (\partial_t - (\Delta - 1)) \<1>_N =  \sqrt{2} \xi_N \\
 & \<1>_N(0) = 0.
\end{align*}
Then $v_N(\cdot;x)$ solves
\begin{align}
 & \left(\partial_t - \Delta\right) v_N = -\Pi_N(v^3_N) + v_N - \Pi_N\left(3 v^2_N \eps^{\frac{1}{2}} \<1>_N + 3 v_N \eps \<2>_N + \eps^{\frac{3}{2}} \<3>_N \right) 
 + 2 \eps^{\frac{1}{2}} \<1>_N \label{eq:v_approx} \\
 & v_N\big|_{t=0} = x_N \nonumber
\end{align}
where $\<2>_N = \<1>_N^2 - \Re_N$ and $\<3>_N = \<1>_N^3- 3 \Re_N \<1>_N$.

For $\delta\in(0,1/2)$ and $\al>0$ we define the symmetric subsets $A$ and $B$ of $\CC^{-\al}$ by  
\begin{align}
 & A := \left\{f\in \CC^{-\al} : \bar f \in [-1-\delta,-1+\delta], 
 \, f - \bar f \in D_{\perp} \right\} \label{eq:A}\\
 & B := \left\{f\in \CC^{-\al} : \bar f \in [1-\delta,1+\delta], 
 \, f -  \bar f \in D_{\perp}\right\} \label{eq:B}
\end{align}
where $D_\perp$ is a closed ball of radius $\delta$ in $\CC^{-\al}$ and $\bar f =  L^{-2} \lng f, 1 \rng$. If necessary we write
$A(\al;\delta)$ and $B(\al;\delta)$ to denote the specific value of the parameters $\al$ and $\delta$. Last for $x\in A$
we define
\begin{equation*}
 \tau_B(X_N(\cdot;x)) := \inf\left\{t>0: X_N(t;x)\in B\right\}
\end{equation*}
and
\begin{equation*}
 \tau_B(X(\cdot;x)) := \inf\left\{t>0: X(t;x)\in B\right\}.
\end{equation*}
For $k\in \ZZ^2$ let
\begin{equation*}
 \lambda_k := \left(\frac{2\pi|k|}{L}\right)^2 -1 \text{ and } \quad \nu_k : = \left(\frac{2\pi|k|}{L}\right)^2 +2 = \lambda_k + 3.
\end{equation*}
The sequences $\{\lambda_k\}_{k\in \ZZ^2}$ and $\{\nu_k\}_{k\in \ZZ^2}$ are the eigenvalues of the operators $-\Delta -1 $ and
$-\Delta + 2$ endowed with periodic boundary conditions. 

The next theorem is essentially \cite[Theorem 2.3]{BGW17}.  

\begin{theorem}[{\cite[Theorem 2.3]{BGW17}}] \label{thm:aver_approx} Let $0 < L < 2\pi$. For every $\al>0$, $\delta\in(0,1/2)$ and 
$\eps\in(0,1)$ there exists a sequence $\{\mu_{\eps,N}\}_{N\geq 1}$ of probability measures concentrated on $\partial A$ such that
\begin{align}
 & \limsup_{N\to\infty}\int \EE\tau_B(X_N(\cdot;x)) \, \mu_{\eps,N}(\dd x)
 \leq \frac{2\pi}{|\lambda_0|} \sqrt{\prod_{k\in \ZZ^2}\frac{|\lambda_k|}{\nu_k} 
 \exp\left\{\frac{\nu_k-\lambda_k}{\lambda_k+2}\right\}} \ee^{\left(V(0) - V(-1)\right)/\eps}\left(1+c_+\sqrt{\eps}\right) 
 \label{eq:aver_approx}\\
 & \liminf_{N\to\infty}\int \EE\tau_B(X_N(\cdot;x)) \, \mu_{\eps,N}(\dd x) \geq \frac{2\pi}{|\lambda_0|} \sqrt{\prod_{k\in \ZZ^2}\frac{|\lambda_k|}{\nu_k} 
 \exp\left\{\frac{\nu_k-\lambda_k}{\lambda_k+2}\right\}} \ee^{\left(V(0) - V(-1)\right)/\eps}\left(1-c_-\eps\right) \nonumber
\end{align}
where the constants $c_+$ and $c_-$ are uniform in $\eps$.
\end{theorem}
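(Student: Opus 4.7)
Since the theorem is stated as a direct quotation of \cite[Theorem 2.3]{BGW17}, my plan is simply to invoke that result; but let me sketch how the underlying proof proceeds in order to flag the main technical content being imported. The overall strategy is the potential-theoretic approach of Bovier--Eckhoff--Gayrard--Klein, applied at the level of the finite-dimensional Galerkin dynamics $X_N$.

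First, I would fix $N$ and observe that $X_N$ is a reversible diffusion on $\Pi_N L^2(\TT^2)$ with invariant (Gibbs) measure $\mu^G_{\eps,N}(\dd x) \propto \exp(-V_N(x)/\eps)\dd x$, where $V_N$ is the renormalized potential whose gradient is the drift in \eqref{eq:X_approx}. Taking $\mu_{\eps,N}$ to be the normalized equilibrium measure on $\partial A$ with respect to the Dirichlet form of $X_N$, the fundamental potential-theoretic identity gives the exact formula
\[
\int \EE\tau_B(X_N(\cdot;x)) \, \mu_{\eps,N}(\dd x) = \frac{\mu^G_{\eps,N}(A)}{\mathrm{cap}_N(A,B)}.
\]

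The second step is to carry out Laplace asymptotics on both numerator and denominator as $\eps\to 0$. The numerator is a Gaussian integral concentrated near the stable minimiser $-1$ and produces the factor involving $\sqrt{\prod_k |\lambda_k|^{-1}}$ together with the Boltzmann weight $\exp(-V(-1)/\eps)$. The capacity, by the Dirichlet variational principle, is localised near the saddle point $0$ and yields $\frac{|\lambda_0|}{2\pi}\sqrt{\prod_k \nu_k^{-1}}\exp(-V(0)/\eps)$ (up to lower order corrections). The renormalization shift $3\eps\Re_N X$ in \eqref{eq:X_approx} is precisely what produces the correction $\exp((\nu_k-\lambda_k)/(\lambda_k+2))$ in the product.

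The main obstacle is showing that the constants $c_+$ and $c_-$ can be chosen uniformly in $N$. This is where the argument in \cite{BGW17} relies on Nelson's estimate from constructive quantum field theory, which provides $N$-independent exponential integrability bounds for $V_N$; this uniformity is what renders both $\mu^G_{\eps,N}(A)$ and $\mathrm{cap}_N(A,B)$ amenable to Laplace analysis with $N$-independent errors. Once uniformity is in place, the infinite products in the statement converge in $d=2$ (because $\nu_k-\lambda_k = 3$ while $\lambda_k+2 = O(|k|^2)$), the finite-$N$ partial products converge to the infinite ones, and passing to $N\to\infty$ in the uniform estimates yields the $\limsup$ and $\liminf$ bounds.
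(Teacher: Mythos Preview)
Your sketch of the potential-theoretic machinery behind \cite{BGW17} is broadly accurate, but your premise that the theorem is a ``direct quotation'' of \cite[Theorem 2.3]{BGW17} and can simply be invoked is where the gap lies. The paper's own proof consists precisely of identifying and justifying two modifications that are needed because the present statement differs from \cite{BGW17} in setup.

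First, in \cite{BGW17} the sets $A$ and $B$ are defined with $D_\perp$ a ball in a negative-order Sobolev space $H^s$, whereas here $D_\perp$ is a ball in $\CC^{-\al}$. The only place this matters is the analogue of \cite[Lemma 5.9]{BGW17}, where one needs the zero-mean Gaussian measure $\gamma_0^\perp$ (with covariance governed by $-\Delta-1$) to assign mass at least $1-c\eps^2$ to $D_\perp$. The paper verifies this for the $\CC^{-\al}$ ball by checking Fourier-mode decay of the associated field and invoking a Besov-regularity criterion. Your proposal does not flag this issue.

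Second, \cite{BGW17} uses the renormalisation constant $C_N = L^{-2}\sum_{|k|\le N}|\lambda_k|^{-1}$, whereas the present paper uses $\Re_N = L^{-2}\sum_{|k|\le N}(\lambda_k+2)^{-1}$. This shifts the pre-factor: \cite{BGW17} obtains $\exp\{(\nu_k-\lambda_k)/\lambda_k\}$ inside the product, and one must multiply by $\exp\{-3L^2\lim_N(\Re_N-C_N)/2\lambda_0\}$ (per \cite[Remark 2.5]{BGW17}) to arrive at the $\exp\{(\nu_k-\lambda_k)/(\lambda_k+2)\}$ appearing here. You attribute the correction factor to the renormalisation shift, which is the right intuition, but you do not account for the discrepancy between $\Re_N$ and the constant actually used in \cite{BGW17}.

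A minor point: the exact formula you write has numerator $\mu^G_{\eps,N}(A)$, but the correct numerator is $\int h_{A,B}\,\dd\mu^G_{\eps,N}$ (equivalently, by symmetry of $A$ and $B$, half the total mass $\int \ee^{-V_N/\eps}$), not the mass of $A$ alone.
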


\begin{proof} The proof of \eqref{eq:aver_approx} is given in \cite[Sections 4 and 5]{BGW17}, but the following should be modified.
\begin{itemize} 
 \item In \cite{BGW17}, the sets $A$ and $B$ are defined as in \eqref{eq:A} and \eqref{eq:B} with $D_\perp$ replaced by a ball in $H^s$ for $s<0$. The explicit form of $D_\perp$ is 
 only used in \cite[Lemma 5.9]{BGW17}. There the authors consider the $0$-mean Gaussian measure $\gamma^\perp_0$ with quadratic form $\frac{1}{2\eps} \left(\|\nabla f\|_{L^2}^2 - 
 \|f-\bar f\|_{L^2}^2\right)$, and prove that $D_\perp$ has probability bounded from below by $1- c\eps^2$. Here we assume that $D_\perp$ is a ball in 
 $\CC^{-\al}$. To obtain the same estimate for this set, we first notice that the random field $f$ associated with the measure $\gamma^\perp_0$ satisfies  
 \begin{equation*}
  \EE \lng f, L^{-2}\ee^{2\ii\pi k \cdot/L} \rng  \lesssim \frac{\eps \log\eps^{-1} \log \lambda_k}{1+\lambda_k},
 \end{equation*}
 for every $k\in \ZZ^2$, where the explicit constant depends on $L$. This decay of the Fourier modes of $f$ and \cite[Proposition 3.6]{MWX17} imply that the 
 measure $\gamma^\perp_0$ is concentrated in $\CC^{-\al}$, for every $\al>0$, which in turn implies \cite[Lemma 5.9]{BGW17} for the set $D_\perp$ considered here.   
 \item In \cite{BGW17}, the authors consider \eqref{eq:X_approx} with $\Re_N$ replaced by 
 \begin{equation*}
 C_N = \frac{1}{L^2} \sum_{|k|\leq N} \frac{1}{|\lambda_k|}
 \end{equation*}
 and obtain \eqref{eq:aver_approx} with the pre-factor given by
 \begin{equation*}
  \frac{2\pi}{|\lambda_0|} \sqrt{\prod_{k\in \ZZ^2}\frac{|\lambda_k|}{\nu_k} 
  \exp\left\{\frac{\nu_k-\lambda_k}{\lambda_k}\right\}} = 
  \lim_{N\to\infty} \frac{2\pi}{|\lambda_0|} \sqrt{\prod_{|k|\leq N}\frac{|\lambda_k|}{\nu_k}}
  \exp\left\{\frac{3 L^2 C_N}{2}\right\}.
 \end{equation*}
 In our case one can check by \eqref{eq:renorm_constant} that $\Re_N$ is given by
 \begin{equation*}
 \Re_N = \frac{1}{L^2} \sum_{|k|\leq N} \frac{1}{|\lambda_k+2|}. 
 \end{equation*}
 According to \cite[Remark 2.5]{BGW17} this choice of renormalisation constant modifies \cite[Theorem 2.3]{BGW17} by multiplying 
 the pre-factor there with
 \begin{equation*}
 \exp\left\{-3 L^2 \lim_{N\to \infty} (\Re_N - C_N)/2\lambda_0\right\}. 
 \end{equation*}
\end{itemize}
\end{proof}

\begin{remark} The finite dimensional measure $\mu_{\eps,N}$ in \eqref{eq:aver_approx} is given by
 \begin{equation*}
  \mu_{\eps,N}(\dd f) = \frac{1}{\mathrm{cap}_A(B)} \ee^{-V(\Pi_Nf)/\eps} \rho_{A,B}( \dd f), 
 \end{equation*}
 where $\rho_{A,B}$ is a probability measure concentrated on $\partial A$, called the equilibrium measure, and $\mathrm{cap}_A(B)$ is a normalisation 
 constant. Under this measure and the assumption that the sets $A$ and $B$ are symmetric, the integrals appearing in \eqref{eq:aver_approx} can be 
 rewritten using potential theory as
 \begin{equation*}
  \int \EE\tau_B(X_N(\cdot;x)) \, \mu_{\eps,N}(\dd x) 
  =  \frac{1}{2\mathrm{cap}_A(B)} \int_{\RR^{(2N+1)^2}} \ee^{-V(\Pi_Nf)/\eps} \dd f. 
 \end{equation*}
 This formula is derived in \cite[Section 3]{BGW17} and it is then analysed to obtain \eqref{eq:aver_approx}. 
\end{remark}
 
In the next theorem, which is the main result of this section, we generalise \eqref{eq:aver_approx} for the limiting process $X(\cdot;x)$
for fixed initial condition $x$ in a suitable neighbourhood of $-1$. By symmetry the same results holds if we swap the neighbourhoods 
of $-1$ and $1$ below. 

\begin{theorem} \label{thm:eyr_kr} There exist $\delta_0>0$ such that the following holds. For every $\al\in(0,\al_0)$ and 
$\delta\in(0,\delta_0)$ there exist $c_+,c_->0$ and $\eps_0\in(0,1)$ such that for every $\eps\leq \eps_0$ 
\begin{align}
 & \sup_{x\in {A(\al_0;\delta)}} \EE\tau_{B(\al;\delta)} (X(\cdot;x)) \leq \frac{2\pi}{|\lambda_0|} \sqrt{\prod_{k\in \ZZ^2}\frac{|\lambda_k|}{\nu_k} 
 \exp\left\{\frac{\nu_k-\lambda_k}{\lambda_k+2}\right\}} \ee^{\left(V(0) - V(-1)\right)/\eps}\left(1+c_+\sqrt{\eps}\right). 
 \label{eq:eyr_kr_final}\\
 & \inf_{x\in A(\al_0;\delta)} \EE\tau_{B(\al;\delta)}(X(\cdot;x)) \geq \frac{2\pi}{|\lambda_0|} \sqrt{\prod_{k\in \ZZ^2}\frac{|\lambda_k|}{\nu_k} 
 \exp\left\{\frac{\nu_k-\lambda_k}{\lambda_k+2}\right\}} \ee^{\left(V(0) - V(-1)\right)/\eps}\left(1-c_-\eps\right). \nonumber
\end{align}
\end{theorem}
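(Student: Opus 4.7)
The plan is to combine Theorem \ref{thm:aver_approx} with the two extensions highlighted in the introduction: passing to the limit $N\to\infty$ (which requires uniform-in-$N$ integrability of $\tau_B(X_N)$, obtained from \cite{TW18}), and replacing the averaging against the specific initial measure $\mu_{\eps,N}$ by a fixed initial condition $x\in A$ via the exponential contraction of Theorem \ref{thm:exp_contr}. Write $K_\eps$ for the full right-hand side of \eqref{eq:aver_approx} including the exponential factor $\ee^{(V(0)-V(-1))/\eps}$.

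First I would pass to the limit $N\to\infty$ in \eqref{eq:aver_approx} to obtain
\[
K_\eps\,(1-c_-\eps) \;\leq\; \int \EE\tau_B(X(\cdot;y))\,\mu_\eps(\dd y) \;\leq\; K_\eps\,(1+c_+\sqrt{\eps}),
\]
where $\mu_\eps$ is an appropriate limit point of $\{\mu_{\eps,N}\}_{N}$ and the radius $\delta$ may need to be slightly perturbed. The weak convergence $X_N\to X$ in $C([0,T];\CC^{-\al})$ combined with the support theorem of \cite{TW18} gives convergence in distribution $\tau_B(X_N)\to\tau_B(X)$. To upgrade to convergence of expectations one needs uniform-in-$N$ integrability of $\tau_B(X_N)$. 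I would obtain this along the lines of Proposition \ref{prop:ent_est}: the strong a priori estimate of \cite[Corollary 3.10]{TW18} gives, uniformly in $N$ and in $y\in\CC^{-\al_0}$, that $\|X_N(1;y)\|_{\CC^{-\al}}\leq R_0$ with probability $\geq 1/2$; together with a Galerkin-level analogue of Proposition \ref{prop:1st_entry} (whose rate function coincides with \eqref{eq:I_def} uniformly in $N$ by \cite{HW15}), this yields geometrically-decaying tails for $\tau_B(X_N)$ uniformly in $N$.

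For the second step, fix $x\in A(\al_0;\delta)$ and let $x'$ be an arbitrary point in the support of $\mu_\eps$. By Theorem \ref{thm:exp_contr} applied with $\kappa$ small, there is an event $\mathcal{E}_\eps$ of probability $\geq 1-\ee^{-a_0/\eps}$ on which
\[
\|X(t;x) - X(t;x')\|_{\CC^\bt} \leq 2C\delta\,\ee^{-(2-\kappa)t}\quad\text{for all } t\geq 1.
\]
Choosing $T_\eps := \tfrac{1}{2-\kappa}\log(2C\delta/\delta')$ with $\delta'\ll \delta$ fixed but positive, the two coupled trajectories differ by at most $\delta'$ in $\CC^\bt\hookrightarrow \CC^{-\al}$ for every $t\geq T_\eps$ on $\mathcal{E}_\eps$, which gives the sandwich
\[
\tau_{B(\al;\delta-\delta')}(X(\cdot;x')) \;\leq\; \tau_{B(\al;\delta)}(X(\cdot;x)) \;\leq\; T_\eps + \tau_{B(\al;\delta+\delta')}(X(\cdot;x')).
\]
Taking expectations, averaging over $x'\sim \mu_\eps$ and invoking Step 1 for $B(\al;\delta\pm\delta')$---noting that the Eyring--Kramers prefactor is asymptotically independent of the precise radius---yields both inequalities of \eqref{eq:eyr_kr_final}. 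The contribution of the bad event $\mathcal{E}_\eps^c$ is controlled by Cauchy--Schwarz as $\ee^{-a_0/\eps}\bigl(\sup_y\EE\tau_B(X(\cdot;y))^2\bigr)^{1/2}$, which is $o(K_\eps)$ for $a_0$ large enough thanks to Proposition \ref{prop:ent_est}.

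The principal obstacle is Step 1, specifically the uniform-in-$N$ integrability of $\tau_B(X_N)$. The Galerkin projection $\Pi_N$ applied to the cubic nonlinearity obstructs the usual coercivity identity, since $\int X_N\,\Pi_N(X_N^3)\,\dd z \neq \int X_N^4\,\dd z$; one cannot simply test \eqref{eq:X_approx} with powers greater than one. The workaround uses the tailor-made a priori estimates of \cite[Section 3]{TW18}, which were designed to survive the Galerkin truncation, together with an $N$-uniform repetition of the large-deviations argument of Proposition \ref{prop:1st_entry}. Identifying the weak limit $\mu_\eps$ of $\{\mu_{\eps,N}\}_{N}$ is a secondary subtlety but not a real issue, since the coupling in Step 2 makes $y\mapsto\EE\tau_B(X(\cdot;y))$ essentially constant on $A$ to leading order, so only tightness of $\{\mu_{\eps,N}\}_{N}$ is needed.
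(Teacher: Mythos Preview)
Your overall two-step strategy matches the paper's exactly: first pass to the limit $N\to\infty$ in Theorem~\ref{thm:aver_approx} to obtain the averaged bound with respect to a limit measure $\mu_\eps$ (this is Proposition~\ref{prop:aver_lim}), then use the exponential contraction of Theorem~\ref{thm:exp_contr} to replace the $\mu_\eps$-average by a fixed initial point. Your identification of the uniform-in-$N$ integrability of $\tau_B(X_N)$ as the main technical hurdle in Step~1 is also correct; the paper handles this in Proposition~\ref{prop:exp_tails} and Corollary~\ref{cor:stop_exp_mom}, though by a slightly different route than you propose (rather than a Galerkin-level large deviation argument, it invokes the support theorem for the \emph{limiting} process $X$ and then uses the convergence $X_N\to X$).

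There is, however, a genuine gap in your Step~2. Your sandwich inequality is only valid provided the transition time exceeds the coupling time $T_\eps$; on $\mathcal{E}_\eps$ alone it fails, because if $X(\cdot;x')$ enters $B$ at some time $t^*<T_\eps$ you have no control on $\|X(t^*;x)-X(t^*;x')\|$. The paper deals with this by enlarging the good event to include $\{\tau_B(X(\cdot;x))>T\}$ and then invoking a separate large deviation \emph{lower} bound on the transition time, Proposition~\ref{prop:rev_ent_est}, which shows $\PP(\tau_B\leq T_0)\leq \ee^{-a_0/\eps}$. This ingredient is missing from your sketch and is essential. (Incidentally, the signs on your $\delta\pm\delta'$ are swapped: to force $X(\cdot;x)$ into $B(\al;\delta)$ you need $X(\cdot;x')$ to hit the \emph{smaller} set $B(\al;\delta-\delta')$, not the larger one.)

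A second, smaller issue: the Cauchy--Schwarz control of the bad event requires a second-moment bound on the \emph{transition} time $\tau_B$, which is of order $\ee^{2(V(0)-V(-1))/\eps}$. Proposition~\ref{prop:ent_est} concerns the entry time to $\{-1,+1\}$ (either well) and gives a bound of order $\ee^{2b/\eps}$ for arbitrarily small $b$, which is the wrong scale. The paper instead proves a dedicated estimate, Proposition~\ref{prop:prefact_free_est}, by constructing a path that crosses the saddle at cost $V(0)-V(-1)+\eta$; combined with the $\ee^{-a_0/\eps}$ from the good event this yields an error of order $\ee^{(\eta-a_0)/\eps}K_\eps$, which is $o(\sqrt{\eps}\,K_\eps)$ once $\eta<a_0$.
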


\begin{proof} See Section \ref{s:eyr_kram_proof}.
\end{proof}

To prove this theorem we first fix $\al\in(0,\al_0)$ and pass to the limit as $N\to \infty$ in \eqref{eq:aver_approx} to prove a version of \eqref{eq:eyr_kr_final} where the initial condition $x$ 
is averaged with respect to a measure $\mu_\eps$ concentrated on a closed ball with respect to the weaker topology $\CC^{-\al_0}$ (see Proposition \ref{prop:aver_lim}). This measure is the weak limit, up to a subsequence, of the 
measures $\mu_{\eps,N}$ in Theorem \ref{thm:aver_approx}. We then use our ``exponential loss of memory'' result, Theorem \ref{thm:exp_contr}, to pass from averages of initial 
conditions with respect to the limiting measure $\mu_\eps$ to fixed initial conditions.

The rest of this section is structured as follows. In Section \ref{s:gal_conv_a_priori} we prove convergence of the Galerkin approximations $X_N(\cdot;x_N)$ and obtain estimates uniform
in the initial condition $x$ and the regularisation parameter $N$. In Section \ref{s:lim_pass} we prove uniform integrability of the stopping times $\tau_B(X(\cdot;x))$ and pass to the 
limit as $N\to \infty$ in \eqref{eq:aver_approx}. Finally in Section \ref{s:eyr_kram_proof} we prove Theorem \ref{thm:eyr_kr}.

\subsection{Convergence of the Galerkin scheme and a priori estimates} \label{s:gal_conv_a_priori}

In the next proposition we prove convergence of $X_N(\cdot;x)$ to $X(\cdot;x_N)$ in $C([0,T];\CC^{-\al})$ using convergence of the stochastic objects $\<n>_N$ 
which is proven in \cite[Proposition 2.3]{TW18}. This is a technical result and the proof is given in the Appendix. 

\begin{proposition} \label{prop:X_approx_conv} Let $\aleph\subset \CC^{-\al_0}$ be bounded and assume that for every $x\in \aleph$, 
there exists a sequence $\{x_N\}_{N\geq 1}$ such that $x_N\to x$ uniformly in $x$. Then for every $\al\in(0,\al_0)$ 
and $0<s<T$ 
\begin{equation*}
 \lim_{N\to\infty} \sup_{x\in \aleph} \sup_{t\in[s,T]} \|X_N(t;x_N) - X(t;x)\|_{\CC^{-\al}} = 0
\end{equation*}
in probability. 
\end{proposition}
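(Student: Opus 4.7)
The plan is to use the Da Prato--Debussche decomposition
\[
X_N(t;x_N) - X(t;x) = \bigl(v_N(t;x_N) - v(t;x)\bigr) + \eps^{\frac{1}{2}}\bigl(\<1>_N(t) - \<1>(t)\bigr),
\]
so that since $\<1>_N \to \<1>$ in $C([0,T];\CC^{-\al})$ in probability, and analogously for the higher Wick powers with the appropriate $(t\wedge 1)^{(n-1)\al'}$ weights by \cite[Proposition 2.3]{TW18}, matters reduce to showing that the remainder difference $w_N(t) := v_N(t;x_N) - v(t;x)$ tends to $0$ in $C([s,T];\CC^\bt)$, uniformly in $x\in\aleph$, in probability.

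First I would derive the equation for $w_N$ by subtracting \eqref{eq:v_approx} from \eqref{eq:rem_eq}. The right-hand side is linear in $w_N$ with coefficients polynomial in $v$, $v_N$ and in the stochastic objects, plus three kinds of inhomogeneities: the initial discrepancy $x_N - x$; Galerkin commutators $(I - \Pi_N)$ applied to $v^3$, $v^2 \<1>$, $v\<2>$, $\<3>$ and $\<1>$; and products of $v$, $v^2$ with the differences $\<n>_N - \<n>$. Next I would invoke the a priori estimates of \cite[Theorems 3.3 and 3.9]{TW18} and their Galerkin counterparts (which depend on $N$ only through the norms of the $\<n>_N$) to obtain, on an event of probability close to $1$, a random $K$ with
\[
\sup_{N\geq 1}\sup_{x\in\aleph}\sup_{t\leq T}(t\wedge 1)^\gamma\bigl(\|v(t;x)\|_{\CC^\bt}+\|v_N(t;x_N)\|_{\CC^\bt}\bigr)\leq K,
\]
since those estimates depend on the initial data only through $\|x\|_{\CC^{-\al_0}}$, which is uniformly controlled on $\aleph$. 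Feeding this into the mild form of the equation for $w_N$ and applying \eqref{eq:Heat_Smooth} together with the multiplicative inequalities \eqref{eq:mult_ineq_1}--\eqref{eq:mult_ineq_2} yields a Volterra-type inequality whose inhomogeneity $r_N$ collects the three errors; Lemma \ref{lem:grwl} then gives $(t\wedge 1)^\gamma\|w_N(t)\|_{\CC^\bt}\leq C(K)\,r_N$. Finally I would verify that $r_N\to 0$ in probability uniformly in $x\in\aleph$: the initial term because $\|\Pi_N x - x\|_{\CC^{-\al}}\to 0$ uniformly on bounded subsets of $\CC^{-\al_0}$ whenever $\al<\al_0$; the commutator terms because $(I-\Pi_N)$ converges strongly to $0$ on $\CC^{\bt-\lambda}$ applied to quantities uniformly bounded in such a space by the a priori estimate; and the stochastic object differences by \cite[Proposition 2.3]{TW18}.

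The main obstacle is the uniformity in $x\in\aleph$: since $\aleph$ is only bounded, not compact, in $\CC^{-\al_0}$, one cannot cover it by finitely many balls and argue by continuity of the solution map at each center. The decisive observation is that the a priori bounds of \cite{TW18} are quantitative in $\|x\|_{\CC^{-\al_0}}$, so the Gr\"onwall constants appearing above are uniform over $\aleph$ and the convergence of $r_N$ is uniform. The restriction $t\geq s>0$ cannot be relaxed, because both $v(\cdot;x)$ and $v_N(\cdot;x_N)$ exhibit a prescribed $(t\wedge 1)^{-\gamma}$ blow-up at $t=0$, while $x_N - x$ converges only in the weaker norm $\CC^{-\al}$.
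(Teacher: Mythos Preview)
Your outline follows the same route as the paper---Da Prato--Debussche decomposition, mild form for the difference $w_N=v_N-v$, Volterra inequality, Gr\"onwall---but there is one genuine gap. You assume a uniform-in-$N$ bound $\sup_N\sup_{x\in\aleph}\sup_{t\leq T}(t\wedge 1)^\gamma\|v_N(t;x_N)\|_{\CC^\bt}\leq K$, claiming it follows from Galerkin analogues of \cite[Theorems 3.3 and 3.9]{TW18} that depend on $N$ only through the Wick powers. But the equation \eqref{eq:v_approx} for $v_N$ also carries $\Pi_N$ in front of the nonlinearity, and this is a separate $N$-dependence you have not addressed. The global bound in \cite{TW18} comes from testing against $v^{p-1}$ and using the sign of $-v^3$; for the Galerkin equation this produces $\int v_N^{p-1}\Pi_N(v_N^3)$, which equals $\int v_N^{p+2}$ only when $p=2$ (because $v_N$ lies in the range of $\Pi_N$ but $v_N^{p-1}$ does not for $p>2$). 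The paper makes exactly this point before Proposition~\ref{prop:X_approx_bd}, and this is why only $L^2$-based a priori bounds on $v_N$ are available uniformly in $N$. A uniform $\CC^\bt$ bound on $v_N$ is therefore not a consequence of the cited references.

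The paper avoids this by introducing the stopping time $\iota=\inf\{t>0:(t\wedge 1)^\gamma\|v_N(t)-v(t)\|_{\CC^\bt}\geq 1\}$ and running the estimates on $[0,T\wedge\iota]$. On that interval $\|v_N\|_{\CC^\bt}\leq\|v\|_{\CC^\bt}+(t\wedge 1)^{-\gamma}$, so only the a priori bound for $v$ (which \emph{is} available from \cite{TW18}) enters the implicit constants. Once the inhomogeneity is shown to vanish as $N\to\infty$, Lemma~\ref{lem:grwl} forces $(t\wedge 1)^\gamma\|w_N(t)\|_{\CC^\bt}<1$ on $[0,T\wedge\iota]$ for $N$ large, hence $\iota>T$, closing the argument. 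Inserting this standard continuation trick into your outline fills the gap.

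Two minor points. For the commutator terms you need the quantitative estimate $\|(I-\Pi_N)f\|_{\CC^{\bt-\lambda}}\lesssim(\log N)^2 N^{-\lambda}\|f\|_{\CC^\bt}$ of Proposition~\ref{prop:proj_bound}, not merely strong convergence of $I-\Pi_N$, since strong convergence is not uniform over a bounded (but non-compact) set. And the proposition allows a general sequence $x_N\to x$ uniformly in $x$, so the initial term is handled directly by the hypothesis rather than by the special case $x_N=\Pi_N x$.
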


\begin{proof} See Appendix \hyperlink{proof:X_approx_conv}{F}.
\end{proof}

The next proposition provides a bound for $X_N(\cdot;x)$ uniformly in the initial condition $x$ and the regularisation parameter $N$ in the $\BB^{-\al}_{2,2}$ norm, for $0<\al<\al_0$.
This result has been already established in \cite[Corollary 3.10]{TW18} for the limiting process $X(\cdot;x)$ in the $\CC^{-\al}$ norm. There \eqref{eq:rem_eq} is tested
with $v(\cdot;x)^{p-1}$, for $p\geq 2$ even, to bound $\|v(\cdot;x)\|_{L^p}$ by using the ``good'' sign of the non-linear term $-v^3$. In the case of \eqref{eq:v_approx} 
this argument allows us to bound $\|v_N(\cdot;x)\|_{L^p}$ for $p=2$ only, because of the projection $\Pi_N$ in front of the non-linearity. 

\begin{proposition}\label{prop:X_approx_bd} For every $\al\in(0,\al_0]$ and $p\geq 1$ we have that
\begin{equation}\label{eq:X_approx_bd}
 \sup_{N\geq 1}\sup_{x\in\CC^{-\al_0}}\sup_{t\leq 1} t^{\frac{p}{2}} 
 \EE\|X_N(s;x)\|_{\BB^{-\al}_{2,2}}^p <\infty. 
\end{equation}
\end{proposition}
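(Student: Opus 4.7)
The plan is to decompose $X_N(\cdot;x_N)=v_N(\cdot;x_N)+\eps^{\frac{1}{2}}\<1>_N$ and bound each term separately, using the embedding $\|\cdot\|_{\BB^{-\al}_{2,2}}\lesssim\|\cdot\|_{L^2}$ to reduce everything to $L^2$ estimates. The stochastic convolution is easy: by Plancherel and the explicit Fourier representation of $\<1>_N$,
\[
\EE\|\<1>_N(t)\|_{\BB^{-\al}_{2,2}}^2 \lesssim \sum_{|k|\le N}\frac{1}{(1+|k|^2)^\al\bigl((2\pi|k|/L)^2+1\bigr)}\le C_\al,
\]
uniformly in $t\ge 0$ and in $N$ for every $\al>0$; Gaussian hypercontractivity then upgrades this to every $p$-th moment, and the factor $t^{p/2}\le 1$ is harmless.

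For $v_N$ the crucial observation is that $\Pi_N v_N=v_N$, since $v_N$ lies in the range of $\Pi_N$. Testing \eqref{eq:v_approx} against $v_N$ therefore gives the clean cancellation $\langle\Pi_N(v_N^3),v_N\rangle=\|v_N\|^4_{L^4}$, which is the only reason the energy computation of \cite[Proposition 3.7]{TW18} survives the Galerkin projection. Proceeding as in that argument, but keeping the test function equal to $v_N$ (this is the source of the restriction $p=2$ flagged in the excerpt), and bounding the three noise pairings $\langle\Pi_N(v_N^{3-n}\eps^{n/2}\<n>_N),v_N\rangle$ by Besov duality and Young's inequality with small constants absorbed into the $\|\nabla v_N\|^2_{L^2}$ and $\|v_N\|^4_{L^4}$ terms, I expect the energy estimate
\[
\partial_t\|v_N\|^2_{L^2}+c\|\nabla v_N\|^2_{L^2}+c\|v_N\|^4_{L^4}\le C\Bigl(1+P\bigl(\{\|\eps^{n/2}\<n>_N(t)\|_{\CC^{-\al}}\}_{n\le 3}\bigr)\Bigr)
\]
for some polynomial $P$, uniformly in the initial condition and in $N$.

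Using $\|v_N\|_{L^4}^4\gtrsim\|v_N\|_{L^2}^4$ on the bounded torus, this reduces to a scalar ODE $\dot y+cy^2\le h(t)$ for $y(t)=\|v_N(t)\|_{L^2}^2$. Comparison with the coming-down-from-infinity solution of $\dot z + cz^2=0$ with $z(0)=+\infty$ gives $\|v_N(t)\|_{L^2}^2\lesssim t^{-1}+\sup_{s\le t}h(s)$ uniformly in $x\in\CC^{-\al_0}$, which after multiplication by $t^{p/2}$, $p/2$-th power, and expectation yields the claimed bound once $\EE\,P(\{\|\<n>_N\|_{\CC^{-\al}}\})^{p/2}$ is controlled uniformly in $N$ via Proposition~\ref{prop:exp_mom} and the uniform convergence of the stochastic objects. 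The main obstacle is the careful bookkeeping in the noise pairings: the exponents in the Besov multiplicative inequalities must be arranged so that the ``smooth'' factors can be absorbed into the left-hand side for $\al$ small, while a finite power of $\|\<n>_N\|_{\CC^{-\al}}$ carries the singular behaviour. It is precisely here that taking $p>2$ would require the identity $\Pi_N(v_N^{p-1})=v_N^{p-1}$, which generally fails, so the $p$-th moment in the statement must be extracted from the right-hand side polynomial rather than from an $L^p$ spatial estimate on $v_N$ itself.
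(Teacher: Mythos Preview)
Your proposal is correct and follows essentially the same route as the paper: test \eqref{eq:v_approx} against $v_N$ (using $\Pi_N v_N=v_N$ to recover $\langle\Pi_N(v_N^3),v_N\rangle=\|v_N\|_{L^4}^4$), run the coming-down-from-infinity argument of \cite[Proposition 3.7]{TW18} to get an $L^2$ bound on $v_N$ uniform in $x$ and $N$, embed $L^2\hookrightarrow\BB^{-\al}_{2,2}$, and control the expectation via uniform moments of $\<n>_N$. The only slip is that your $h(t)$ blows up at $t=0$ through $\|\<2>_N(t)\|_{\CC^{-\al}}$ and $\|\<3>_N(t)\|_{\CC^{-\al}}$, so ``$\sup_{s\le t}h(s)$'' must be replaced by the weighted supremum $\sup_{s\le t}s^{(n-1)\al'p_n}\|\eps^{n/2}\<n>_N(s)\|_{\CC^{-\al}}^{p_n}$ as in the paper's \eqref{eq:v_approx_bd}; with $\al'$ small this extra $t^{-(n-1)\al'p_n}$ is absorbed by the prefactor $t^{p/2}$.
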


\begin{proof} Proceeding exactly as in the proof of \cite[Proposition 3.7]{TW18} we first show that there exist $\al \in(0,1)$ 
and $p_n\geq 1$ such that for every $t\in (0,1)$
\begin{equation} \label{eq:v_approx_bd}
 \|v_N(t;x)\|_{L^2}^2 \lesssim t^{-1}\vee 
 \left(\sum_{n=1}^3 t^{-\al'(n-1)p_n} \sup_{s\leq t} s^{\al'(n-1)p_n} \|\eps^{\frac{n}{2}}\<n>_N(s)\|_{\CC^{-\al}}^{p_n}\right)^{\frac{1}{2}}
\end{equation}
for every $\al'\in(0,1)$, uniformly in $x\in \CC^{-\al_0}$. We then proceed as in the proof of \cite[Corollary 3.10]{TW18} 
and use \eqref{eq:v_approx_bd} to prove \eqref{eq:X_approx_bd}. The only difference is that here we use the norm 
$\|\cdot\|_{\BB^{-\al}_{2,2}}$ and the embedding $L^2\hookrightarrow \BB^{-\al}_{2,2}$ on the level of $v_N(\cdot;x)$ together with the 
fact that
\begin{equation*}
 \sup_{N\geq 1}\EE\left(\sup_{t\leq 1} t^{(n-1)\al'}\|\<n>_N(t)\|_{\CC^{-\al}}\right)^p < \infty
\end{equation*}
for every $\al,\al'>0$ and $p\geq1$, which is immediate from \cite[Proposition 2.2, Proposition 2.3]{TW18}. 
\end{proof}

\subsection{Passing to the limit} \label{s:lim_pass}

In this section we pass to the limit as $N\to\infty$ in \eqref{eq:aver_approx} using uniform integrability of the stopping time $\tau_B(X_N(\cdot;x))$. To obtain uniform
integrability we prove exponential moment bounds for $\tau_B(X_N(\cdot;x))$ uniformly in the initial condition $x \in \CC^{-\al_0}$ and the regularisation parameter $N$. We first bound 
$\PP\left(\tau_B(X_N(\cdot;x)) \geq 1\right)$ using a support theorem and a strong a priori bound for $X_N(\cdot;x)$ in $\CC^{-\al}$. A support theorem for the limiting process 
$X(\cdot;x)$ has been already established in \cite[Corollary 6.4]{TW18}. To use it for $X_N(\cdot;x)$ we combine it with the convergence result in Proposition \ref{prop:X_approx_conv}. To obtain a strong a priori bound 
for $X_N(\cdot;x)$ in $\CC^{-\al}$ we first use Proposition \ref{prop:X_approx_bd} which implies the bound in $\BB^{-\al}_{2,2}$ and then use Proposition \ref{prop:L2_to_Linfty} to pass from 
the $\BB^{-\al}_{2,2}$ norm to the $\CC^{-\al}$ norm. 

\begin{proposition}\label{prop:exp_tails} For every $\al\in(0,\al_0)$, $\delta\in(0,1/2)$ and $\eps\in(0,1)$ there exist $c_0 \in(0,1)$ and $N_0\geq 1$ such that for every 
$N\geq N_0$
\begin{equation*}
 \sup_{x\in \CC^{-\al_0}}\PP\left(\tau_B(X_N(\cdot;x)) \geq 1\right) \leq c_0.
\end{equation*}
\end{proposition}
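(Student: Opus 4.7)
The strategy is to show, uniformly in $x \in \CC^{-\al_0}$ and in $N$ large, a positive lower bound on the probability that $X_N(\cdot;x)$ enters $B$ by time one. I would split $[0,1]$ into a short regularising phase $[0,s]$ in which the smoothing of the heat semigroup places $X_N(s;x)$ in a fixed compact subset of $\CC^{-\al}$ with high probability, and a driving phase $[s,1]$ in which a support-theorem argument provides positive probability of entering $B$ from that compact set.

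More precisely, I would fix $s \in (0,1)$ small. By Proposition \ref{prop:X_approx_bd} and Markov's inequality, for any $\eta > 0$ there exists $R > 0$, independent of $N$ and $x$, with
$$\sup_{N\geq 1}\sup_{x\in\CC^{-\al_0}}\PP\left(\|X_N(s;x)\|_{\BB^{-\al'}_{2,2}} > R\right) \leq \eta$$
for some $\al' \in (\al,\al_0)$; the compact Besov embedding $\BB^{-\al'}_{2,2} \hookrightarrow \CC^{-\al}$, combined with Proposition \ref{prop:L2_to_Linfty} to pass from the $\BB^{-\al'}_{2,2}$-bound to a $\CC^{-\al}$-bound, forces $X_N(s;x)$ into a fixed compact set $K \subset \CC^{-\al}$ with probability at least $1-\eta$. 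For the driving phase I would apply the support theorem \cite[Corollary 6.4]{TW18} to the limit process $X(\cdot;y)$, choosing, for each representative in a finite cover of $K$, a smooth target path from that representative into the interior of $B(\al;\delta')$ for some $\delta' < \delta$. Combined with continuity of the solution map in the initial data, this would yield a single $p > 0$ such that the probability that $X(\cdot;y)$ enters $B(\al;\delta')$ by time $1-s$ is at least $p$, uniformly in $y \in K$.

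Finally, I would upgrade from $X$ to $X_N$ via Proposition \ref{prop:X_approx_conv}: for $N \geq N_0$, the event $\{\sup_{t \in [s,1]}\|X_N(t;x) - X(t;x)\|_{\CC^{-\al}} < \delta - \delta'\}$ has probability at least $1-\eta$ uniformly over initial data in $K$, so that the target set can be enlarged from $B(\al;\delta')$ back to $B(\al;\delta)$. Chaining the three estimates (starting location in $K$ at time $s$, entry into $B(\al;\delta')$ before time $1$ for $X$, and closeness of $X_N$ to $X$ on $[s,1]$) via the Markov property applied at time $s$, and taking $\eta$ sufficiently small, yields $\PP(\tau_B(X_N(\cdot;x)) < 1) \geq p - 3\eta > 0$, i.e.\ the claim with $c_0 = 1-(p-3\eta)$. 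The main obstacle is the uniformity over $K$ in the support theorem: the standard statement is pointwise in the initial condition, and promoting it to a single lower bound $p$ on a compact set requires a compactness and continuity argument analogous to the one underlying Proposition \ref{prop:1st_entry}, together with choosing target trajectories robust enough to absorb the $N$-dependent approximation error between $X_N$ and $X$ in $\CC^{-\al}$.
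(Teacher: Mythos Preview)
Your proposal follows essentially the same route as the paper: split via the Markov property into a regularising phase (enter a fixed compact set uniformly in $x$ and $N$) and a driving phase (support theorem for the limit $X$, then transfer to $X_N$ via Proposition~\ref{prop:X_approx_conv}). The ingredients you name---Proposition~\ref{prop:X_approx_bd}, Proposition~\ref{prop:L2_to_Linfty}, \cite[Corollary~6.4]{TW18}, and Proposition~\ref{prop:X_approx_conv}---are exactly those used in the paper.

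One correction: the embedding $\BB^{-\al'}_{2,2}\hookrightarrow\CC^{-\al}$ you invoke is \emph{false} in dimension $2$ (passing from $L^2$ to $L^\infty$ integrability costs a full derivative, cf.\ Proposition~\ref{prop:Besov_Emb}), so you cannot obtain a $\CC^{-\al}$-compact set at the \emph{same} time $s$ at which you land in a $\BB^{-\al'}_{2,2}$-ball. The paper resolves this by an additional Markov split inside the regularising phase: at time $1/4$ one lands in a $\BB^{-\al}_{2,2}$-ball via Proposition~\ref{prop:X_approx_bd}, and then Proposition~\ref{prop:L2_to_Linfty} (which uses the dynamics, not an embedding) places $X_N(1/2;\cdot)$ in a $\CC^{-\al}$-ball. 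Your mention of Proposition~\ref{prop:L2_to_Linfty} is the right repair, but it requires this extra time step. Finally, your worry about uniformity of the support theorem over the compact set is already built into \cite[Corollary~6.4]{TW18}, which the paper cites directly for the bound $\inf_{y\in\aleph}\PP\left(X(1/2;y)\in B_{\CC^{-\al}}(1;\delta')\right)\geq c_1$.
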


\begin{proof} Let $\al\in(0,\al_0)$ and let $\aleph$ be a compact subset of $\CC^{-\al_0}$ which we fix below. Using the Markov property   
\begin{equation*}
 \PP(\tau_B(X_N(\cdot;x))\geq 1) \leq \sup_{y\in \aleph} \PP(\tau_B(X_N(\cdot;y))\geq 1/2) \, \PP(X_N(1/2;x)\in \aleph) 
 + \PP(X_N(1/2;x)\notin \aleph).
\end{equation*}
The proof is complete if for every $N\geq N_0$
\begin{equation}\label{eq:prob_bd}
 \sup_{y\in \aleph} \PP(\tau_B(X_N(\cdot;y))\geq 1/2) < 1, \,  \sup_{x\in \CC^{-\al_0}}\PP(X_N(1/2;x)\notin \aleph) <1.
\end{equation}
We notice that there exists $\delta'>0$ such that for any $y\in \aleph$
\begin{align}
 \PP(\tau_B(X_N(\cdot;y))\leq 1/2) & \geq \PP(X_N(1/2;y)\in B) \geq \PP\left(X(1/2;y)\in B_{\CC^{-\al}}(1;\delta')\right) \label{eq:bd_1} \\
 & \quad - \PP\left(\|X_N(1/2;y) - X(1/2;y)\|_{\CC^{-\al}} \geq \delta'\right). \nonumber
\end{align}
Here we use that if $\|X(1/2;y)-1\|_{\CC^{-\al}},\|X_N(1/2;y) - X(1/2;y)\|_{\CC^{-\al}} \leq \delta'$, then $X_N(1/2;y) \in B$ for 
$\delta'$ sufficiently small. By the support theorem \cite[Corollary 6.4]{TW18} there exists $c_1 \equiv c_1(\delta,\eps)>0$ such that
\begin{equation} \label{eq:bd_2}
 \inf_{y\in \aleph}\PP\left(X(1/2;y) \in B_{\CC^{-\al}}(1; \delta')\right) \geq c_1.
\end{equation}
On the other hand Proposition \ref{prop:X_approx_conv} implies convergence in probability of $X_N(1/2;y)$ to $X(1/2;y)$ in $\CC^{-\al}$ 
uniformly in $y\in \aleph$. Hence there exists $N_0\geq 1$ such that for every $N\geq N_0$
\begin{equation} \label{eq:bd_3}
 \sup_{y\in \aleph}\PP\left(\|X_N(1/2;y) - X(1/2;y)\|_{\CC^{-\al}} \geq \delta/2\right) \leq c_1/2.
\end{equation}
Plugging \eqref{eq:bd_2} and \eqref{eq:bd_3} in \eqref{eq:bd_1} implies the first bound in \eqref{eq:prob_bd}.

We now prove the second bound in \eqref{eq:prob_bd}. By the Markov inequality for every $R>0$
\begin{equation*}
 \PP\left(\|X_N(1/4;x)\|_{\BB^{-\al}_{2,2}} \geq R\right) \leq \frac{1}{R} \EE\|X_N(1/4;x)\|_{\BB^{-\al}_{2,2}}.
\end{equation*}
By \eqref{eq:X_approx_bd} the expectation on the right hand side of the last inequality is uniformly bounded over 
$x\in \CC^{-\al_0}$ and $N\geq 1$. Thus choosing $R>0$ large enough  
\begin{equation} \label{eq:ent_L2}
 \sup_{x\in \CC^{-\al_0}} \PP\left(\|X_N(1/4;x)\|_{\BB^{-\al}_{2,2}} \geq R\right) \leq \frac{1}{2}.
\end{equation}
By Proposition \ref{prop:L2_to_Linfty} for every $K,R>0$ there exist $C\equiv C(K,R)$ such that 
\begin{equation*} 
 \sup_{\|y\|_{\BB^{-\al}_{2,2}}\leq R} \PP\left(\|X_N(1/4;y)\|_{\CC^{-\al}} \geq C\right) \leq 
 \PP\left(\sup_{t\leq 1} t^{(n-1)\al'}\|\eps^{\frac{n}{2}}\<n>_N(t)\|_{\CC^{-\al}} \geq K\right).
\end{equation*}
Choosing $K$ sufficiently large, combining the last inequality with \cite[Propositions 2.2 and 2.3]{TW18} and 
using the Markov inequality imply that 
\begin{equation} \label{eq:ent_Linfty}
 \sup_{\|y\|_{\BB^{-\al}_{2,2}}\leq R} \PP\left(\|X_N(1/4;y)\|_{\CC^{-\al}} \geq C\right) \leq \frac{1}{2}.
\end{equation}
Using the Markov property and \eqref{eq:ent_L2} and \eqref{eq:ent_Linfty} we get for arbitrary $x\in \CC^{-\al_0}$
\begin{align*}
 & \PP\left(\|X_N(1/2;x)\|_{\CC^{-\al}} \geq C\right) \\ 
 & \quad \leq \PP\left(\|X_N(1/4;x)\|_{\BB^{-\al}_{2,2}} \leq R\right) \sup_{y\in \BB^{-\al}_{2,2}} \PP(\|X_N(1/4;y)\|_{\CC^{-\al}} \geq C)    
 + \PP\left(\|X_N(1/4;x)\|_{\BB^{-\al}_{2,2}} \geq R\right) 
 \leq \frac{3}{4}.
\end{align*}
We finally notice that for every $\al<\al_0$ the set $\aleph = \{f\in \CC^{-\al_0}: \|f\|_{\CC^{-\al}}\leq C\}$ 
is compact in $\CC^{-\al_0}$ which implies the second bound in \eqref{eq:prob_bd}. 
\end{proof}

In the next corollary we use Proposition \ref{prop:exp_tails} to prove exponential moments for the stopping time $\tau_B(X_N(\cdot;x))$.

\begin{corollary} \label{cor:stop_exp_mom} For every $\delta>0$ and $\eps\in(0,1)$ there exist $\eta_0>0$ and $N_0\geq 1$ such that 
\begin{equation*}
 \sup_{N\geq N_0}\sup_{x\in \CC^{-\al_0}} \EE\exp\{\eta_0 \tau_B(X_N(\cdot;x))\} <\infty.
\end{equation*}
\end{corollary}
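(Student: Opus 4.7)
The plan is to upgrade the one-step bound in Proposition \ref{prop:exp_tails} to exponential moments via a standard geometric-series argument based on the Markov property for $X_N(\cdot;x)$. Since the bound in Proposition \ref{prop:exp_tails} is uniform in $x\in\CC^{-\al_0}$, iteration over unit time intervals will produce an estimate of the form $\PP(\tau_B(X_N(\cdot;x))\geq m)\leq c_0^m$, and the exponential moments then follow by summing.

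More concretely, fix $\delta>0$ and $\eps\in(0,1)$, and let $c_0\in(0,1)$ and $N_0\geq 1$ be as in Proposition \ref{prop:exp_tails}. For $N\geq N_0$, $x\in\CC^{-\al_0}$ and $m\geq 1$, I would argue that on the event $\{\tau_B(X_N(\cdot;x))\geq m\}$ we in particular have $\tau_B(X_N(\cdot;x))\geq m-1$, and the restarted process $X_N(\,\cdot\,;X_N(m-1;x))$ must take at least time $1$ to hit $B$. By the Markov property and the uniform bound of Proposition \ref{prop:exp_tails},
\begin{equation*}
\PP(\tau_B(X_N(\cdot;x))\geq m) \leq \EE\bigl[\mathbf{1}_{\{\tau_B(X_N(\cdot;x))\geq m-1\}}\PP(\tau_B(X_N(\cdot;y))\geq 1)\big|_{y=X_N(m-1;x)}\bigr] \leq c_0\,\PP(\tau_B(X_N(\cdot;x))\geq m-1).
\end{equation*}
Iterating yields $\sup_{x\in\CC^{-\al_0}}\PP(\tau_B(X_N(\cdot;x))\geq m)\leq c_0^m$ uniformly in $N\geq N_0$.

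To conclude, I would choose $\eta_0>0$ such that $c_0\,\ee^{\eta_0}<1$ (for instance $\eta_0 = \tfrac12\log(1/c_0)$) and split the expectation over unit intervals:
\begin{equation*}
\EE\exp\{\eta_0\tau_B(X_N(\cdot;x))\} \leq \sum_{m=0}^\infty \ee^{\eta_0(m+1)}\PP(\tau_B(X_N(\cdot;x))\geq m)\leq \ee^{\eta_0}\sum_{m=0}^\infty (c_0\ee^{\eta_0})^m = \frac{\ee^{\eta_0}}{1-c_0\ee^{\eta_0}},
\end{equation*}
which is finite and independent of both $N\geq N_0$ and $x\in\CC^{-\al_0}$.

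There is no real obstacle here: the essential work has been carried out in Proposition \ref{prop:exp_tails}, where the support theorem, the a priori bound in $\BB^{-\al}_{2,2}$ and the Galerkin convergence result are combined to obtain the crucial uniform-in-$x$ and (for $N\geq N_0$) uniform-in-$N$ control on the probability of not having entered $B$ by time one. The only small care needed is to verify that the Markov property is available at the level of the spatial Galerkin approximation $X_N(\cdot;x)$, which follows from the fact that $X_N$ solves a well-posed SDE in the finite-dimensional Fourier space spanned by modes $|k|\leq N$.
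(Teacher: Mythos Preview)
Your proof is correct and follows essentially the same approach as the paper: both use the Markov property to iterate the one-step bound from Proposition~\ref{prop:exp_tails} into the geometric tail estimate $\PP(\tau_B(X_N(\cdot;x))\geq m)\leq c_0^m$, and then sum to obtain exponential moments for any $\eta_0<\log c_0^{-1}$. The only cosmetic difference is that the paper writes the exponential moment via the layer-cake integral $1+\int_0^\infty \eta_0\ee^{\eta_0 t}\PP(\tau_B\geq t)\,\dd t$ before partitioning into unit intervals, whereas you partition directly.
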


\begin{proof} By the Markov property we have that 
\begin{align*}
 \PP(\tau_B(X_N(\cdot;x)) \geq k+1) 
 \leq \sup_{y\in \CC^{-\al_0}} \PP(\tau_B(X_N(\cdot;y))\geq 1) \, \PP(\tau_B(X_N(\cdot;x))\geq k).
\end{align*}
Iterating this inequality and using Proposition \ref{prop:exp_tails} we obtain that
\begin{equation*}
 \sup_{x\in \CC^{-\al_0}} \PP(\tau_B(X_N(\cdot;x)) \geq k+1) \leq c_0^{k+1}.
\end{equation*}
Then
\begin{align*}
 \EE\exp\{\eta_0 \tau_B(X_N(\cdot;x))\} & = 1 + \int_0^\infty \eta_0 \ee^{\eta_0 t} \PP(\tau_B(X_N(\cdot;x)) \geq t) \dd t \\
 & \leq 1 + \sum_{k=0}^\infty \PP(\tau_B(X_N(\cdot;x)) \geq k) \int_k^{k+1} \eta_0 \ee^{\eta_0 t} \dd t \\
 & \leq 1 + \ee^{\eta_0} \sum_{k=0}^\infty \ee^{\eta_0 k} c_0^k 
\end{align*}
and the proof is complete if we choose $\eta_0<\log c_0^{-1}$.
\end{proof}

In the next proposition we pass to the limit as $N\to \infty$ in \eqref{eq:aver_approx}. Here we use Corollary \ref{cor:stop_exp_mom}, 
which implies uniform integrability of $\tau_B(X_N(\cdot;x))$, and the weak convergence of the measures $\mu_{\eps,N}$. 

\begin{proposition} \label{prop:aver_lim} For every $\al\in(0,\al_0)$, $\delta\in(0,1/2)$ except possibly a countable 
subset, and $\eps\in(0,1)$ there exists a probability measure 
$\mu_\eps\in \mathcal{M}_1\left(A(\al_0;\delta)\right)$ such that
\begin{align}
 & \int \EE\tau_{B(\al;\delta)}(X(\cdot;x)) \, \mu_\eps(\dd x) \leq \frac{2\pi}{|\lambda_0|} \sqrt{\prod_{k\in \ZZ^2}\frac{|\lambda_k|}{\nu_k} 
 \exp\left\{\frac{\nu_k-\lambda_k}{\lambda_k+2}\right\}} \ee^{\left(V(0) - V(-1)\right)/\eps}\left(1+c_+\sqrt{\eps}\right) \label{eq:aver_lim}\\
 & \int \EE\tau_{B(\al;\delta)}(X(\cdot;x)) \, \mu_\eps(\dd x) \geq \frac{2\pi}{|\lambda_0|} \sqrt{\prod_{k\in \ZZ^2}\frac{|\lambda_k|}{\nu_k} 
 \exp\left\{\frac{\nu_k-\lambda_k}{\lambda_k+2}\right\}} \ee^{\left(V(0) - V(-1)\right)/\eps}\left(1-c_-\eps\right) \nonumber
\end{align}
where the constants $c_+$ and $c_-$ are uniform in $\eps$.
\end{proposition}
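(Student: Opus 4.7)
The plan is to extract a weak subsequential limit $\mu_\eps$ of $\{\mu_{\eps,N}\}_{N\geq 1}$ in $\CC^{-\al_0}$ and to pass to the limit $N\to\infty$ in the two-sided bound \eqref{eq:aver_approx} from Theorem \ref{thm:aver_approx}. The three ingredients are: convergence $X_N(\cdot;x_N)\to X(\cdot;x)$ for $x_N\to x$ (Proposition \ref{prop:X_approx_conv}), the uniform a priori bound from Proposition \ref{prop:X_approx_bd}, and the uniform exponential integrability of $\tau_B(X_N(\cdot;x))$ in $N$ and $x\in\CC^{-\al_0}$ given by Corollary \ref{cor:stop_exp_mom}.

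For tightness I would note that each $\mu_{\eps,N}$ is concentrated on $\partial A(\al;\delta)$, which by the definitions \eqref{eq:A}--\eqref{eq:B} is a bounded subset of $\CC^{-\al}$. Since the embedding $\CC^{-\al}\hookrightarrow \CC^{-\al_0}$ is compact for $\al<\al_0$, the sequence $\{\mu_{\eps,N}\}$ is tight as a family of Borel probability measures on $\CC^{-\al_0}$, and Prokhorov's theorem yields a subsequence (not relabelled) together with a limit $\mu_\eps\in\mathcal{M}_1(\CC^{-\al_0})$. The constraint $\bar f\in[-1-\delta,-1+\delta]$ passes to the limit because $f\mapsto \bar f$ is continuous on $\CC^{-\al_0}$, and the ball constraint on $f-\bar f$ passes by lower semicontinuity of the norm together with the uniform $\CC^{-\al}$-bound on the supports; modulo an embedding constant which can be absorbed by adjusting $\delta$, this yields $\mu_\eps\in\mathcal{M}_1(A(\al_0;\delta))$.

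The main work is to pass to the limit in $\int\EE\tau_{B(\al;\delta)}(X_N(\cdot;x))\,\mu_{\eps,N}(\dd x)$. Fix a truncation level $T>0$; by the uniform exponential moments from Corollary \ref{cor:stop_exp_mom} it suffices to establish
\begin{equation*}
\int\EE\!\left[\tau_{B(\al;\delta)}(X_N(\cdot;x))\wedge T\right]\mu_{\eps,N}(\dd x)\longrightarrow\int\EE\!\left[\tau_{B(\al;\delta)}(X(\cdot;x))\wedge T\right]\mu_\eps(\dd x)
\end{equation*}
and then remove the truncation by dominated convergence and $T\to\infty$. Combining the weak convergence $\mu_{\eps,N}\to\mu_\eps$ with the uniform path convergence of Proposition \ref{prop:X_approx_conv}, this reduces to the joint continuity property: whenever $x_N\to x$ in $\CC^{-\al_0}$, one has $\EE[\tau_{B}(X_N(\cdot;x_N))\wedge T]\to\EE[\tau_{B}(X(\cdot;x))\wedge T]$.

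The hardest step, and the main obstacle, is precisely this continuity: the hitting time of a closed set is only lower semicontinuous as a functional of the path, so one must rule out that $X(\cdot;x)$ dwells on $\partial B(\al;\delta)$ with positive probability or hits it tangentially. The standard fix, which explains the \emph{except possibly a countable subset} caveat in the statement, is to use monotonicity of $\tau_{B(\al;\delta)}$ in $\delta$: the family $\delta\mapsto \EE[\tau_{B(\al;\delta)}(X(\cdot;x))\wedge T]$ is monotone and hence continuous at all but countably many $\delta$, and at any continuity point the distribution of $X(\tau_B;x)$ does not charge $\partial B(\al;\delta)$. At such $\delta$ the support theorem \cite[Corollary 6.4]{TW18} combined with Proposition \ref{prop:X_approx_conv} yields almost-sure convergence $\tau_B(X_N(\cdot;x_N))\wedge T\to\tau_B(X(\cdot;x))\wedge T$ along the subsequence, and uniform integrability lifts this to convergence of expectations. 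Sandwiching the resulting identity with the upper and lower bounds of \eqref{eq:aver_approx} delivers \eqref{eq:aver_lim}.
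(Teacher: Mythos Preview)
Your outline is essentially the paper's proof: tightness via the compact embedding $\CC^{-\al}\hookrightarrow\CC^{-\al_0}$, extraction of a weak limit $\mu_\eps$, pathwise convergence from Proposition~\ref{prop:X_approx_conv}, and uniform integrability from Corollary~\ref{cor:stop_exp_mom} (the paper phrases the last step via Vitali rather than truncation, but that is cosmetic).

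Two points deserve tightening. First, you write that the argument ``reduces to the joint continuity property: whenever $x_N\to x$ \ldots'', but weak convergence $\mu_{\eps,N}\to\mu_\eps$ does not by itself produce such sequences. The paper closes this gap by invoking Skorokhod's representation theorem: it realises $x_N\stackrel{\text{law}}{=}\mu_{\eps,N}$ and $x\stackrel{\text{law}}{=}\mu_\eps$ on an auxiliary space $(\Omega_\mu,\PP_\mu)$ with $x_N\to x$ $\PP_\mu$-a.s., and then works under $\PP\otimes\PP_\mu$. Without this (or an equivalent device, e.g.\ proving that $x\mapsto\EE[\tau_B(X(\cdot;x))\wedge T]$ is continuous on $\CC^{-\al_0}$ and that the approximating functionals converge uniformly), the reduction you state is not justified.

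Second, your countability argument via ``$\delta\mapsto\EE[\tau_{B(\al;\delta)}(X(\cdot;x))\wedge T]$ is monotone'' is stated for fixed $x$, so the exceptional set of $\delta$'s depends on $x$; since $x$ is integrated over $\mu_\eps$ (which itself depends on $\delta$), a countable union over uncountably many $x$ does not yield a countable set. The paper instead identifies the bad $\delta$'s directly at the path level: writing $x(t)=L^{-2}\langle X(t;x),1\rangle$, discontinuity of $\tau_{B(\al;\delta)}$ at $X(\cdot;x)$ forces the process $t\mapsto|x(t)-1|\vee\|X(t;x)-x(t)\|_{\CC^{-\al}}$ to have a local minimum at height $\delta$. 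The set of $\delta$'s for which this occurs with positive $\PP\otimes\PP_\mu$-probability is at most countable (this is the argument from \cite[Proof of Theorem 6.1]{MW17i}), giving a single exceptional set valid for the coupled initial data.
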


\begin{proof} We only prove the upper bound in \ref{eq:aver_lim}. The lower bound follows similarly. 

Let $\al\in(0,\al_0)$ and $\delta\in(0,1/2)$. Using the compact embedding $\CC^{-\al} \hookrightarrow \CC^{-\al_0}$ (see Proposition \ref{prop:comp_emb}), 
for any $\al<\al_0$, we have that $A(\al;\delta) \subset
A(\al_0;\delta)$. Let $\{\mu_{\eps,N}\}_{N\geq 1}$ be the family of probability measures in \eqref{eq:aver_approx}. 
Using again the compact embedding $\CC^{-\al} \hookrightarrow \CC^{-\al_0}$, for any $\al<\al_0$, this family is trivially tight since it is 
concentrated on $\partial A(\al;\delta)$. Hence there exists $\mu_\eps \in \mathcal{M}_1\left(A(\al_0;\delta)\right)$ such
that $\mu_{\eps,N} \stackrel{\text{weak}}{\to} \mu_\eps$ up to a subsequence. 

By Skorokhod's represantation theorem (see \cite[Theorem 2.4]{DPZ92}) there exist a probability space $(\Omega_\mu, \mathcal{F}_\mu, \PP_\mu)$ and random variables 
$\{x_N\}_{N\geq 1}$ and $x$ taking values in $A(\al_0;\delta)$ such that $x_N\stackrel{\text{law}}{=} \mu_N$,
$x\stackrel{\text{law}}{=} \mu_\eps$ and $x_N\to x$ $\PP_{\mu_\eps}$-almost surely in $\CC^{-\al_0}$. If we denote by 
$\EE_{\PP\otimes\PP_{\mu_\eps}}$ the expectation of the probability measure $\PP\otimes\PP_{\mu_\eps}$, we have that
\begin{align} 
 & \EE_{\PP\otimes\PP_{\mu_\eps}}\tau_{B(\al;\delta)}(X_N(\cdot;x_N)) = \int \EE\tau_{B(\al;\delta)}(X_N(\cdot;x)) \, \mu_N(\dd x) \label{eq:exp_eq} \\
 & \EE_{\PP\otimes\PP_{\mu_\eps}}\tau_{B(\al;\delta)}(X(\cdot;x)) = \int \EE\tau_{B(\al;\delta)}(X(\cdot;x)) \, \mu_\eps(\dd x). \nonumber
\end{align}
By Proposition \ref{prop:X_approx_conv} $X_N(\cdot;x_N)$ converges to $X(\cdot;x)$ $\PP\otimes\PP_{\mu_\eps}$-almost surely on 
compact time intervals of $(0,\infty)$ up to a subsequence. Let 
\begin{equation*}
 \mathtt{L} = \left\{\delta\in(0,1/2): \PP\left(\tau_{B(\al;\delta)}(\cdot) \text{ is discontinuous on } X(\cdot;x)\right) >0\right\}
\end{equation*}
and notice that for $x(t) = L^{-2}\lng X(t;x), 1\rng$
\begin{equation*}
 \mathtt{L} \subset \left\{\delta\in(0,1/2): 
 \PP\left(t\mapsto |x(t) - 1|\vee\|X(t;x) - x(t)\|_{\CC^{-\al}} \text{ has a local minimum at height } 
 \delta \right) > 0
 \right\}.
\end{equation*}
As in \cite[Proof of Theorem 6.1]{MW17i} the last set is at most countable, hence $\tau_{B(\al;\delta)}(X_N(\cdot;x_N))
\to \tau_{B(\al;\delta)}(X(\cdot;x))$ 
$\PP\otimes \PP_{\mu_\eps}$-almost surely up to a subsequence, except possibly a countable number of $\delta\in(0,1/2)$. 

By Corollary \ref{cor:stop_exp_mom} the family $\{\tau_{B(\al;\delta)}(X_N(\cdot;x))\}_{N\geq N_0}$ is uniformly integrable. Hence
by Vitali's convergence theorem (see \cite[Theorem 4.5.4]{Bo07}) we obtain that 
\begin{equation*}
 \EE_{\PP\otimes \PP_{\mu_\eps}}\tau_{B(\al;\delta)}(X_N(\cdot;x_N)) \to \EE_{\PP\otimes \PP_{\mu_\eps}}\tau_{B(\al;\delta)}(X(\cdot;x)).
\end{equation*}
Combining with \eqref{eq:aver_approx} and \eqref{eq:exp_eq} the proof of the upper bound is complete.
\end{proof}

\subsection{An Eyring-Kramers law} \label{s:eyr_kram_proof}

In this section we combine Proposition \ref{prop:aver_lim} and Theorem \ref{thm:exp_contr} to prove Theorem \ref{thm:eyr_kr}.
The idea we use here was first implemented in the $1$-dimensional case in \cite{BG13}. Generally speaking, if we restrict 
ourselves on the event where the first transition from a neighbourhood of $-1$ to a neighbourhood of $1$ happens after the
``exponential loss of memory'', $\tau_{B(\al;\delta)}(X(\cdot;x))$ behaves like $\int \tau_{B(\al;\delta)}(X(\cdot;x)) \, \mu_{\eps}(\dd x)$ 
for $x\in A(\al_0;\delta)$.
The probability of this event is quantified by Theorem \ref{thm:exp_contr} and Proposition \ref{prop:rev_ent_est}. 
On the complement of this event the transition time $\tau_{B(\al;\delta)}(X(\cdot;x))$ is estimated using Proposition 
\ref{prop:prefact_free_est}.  

In the next proposition we prove that the first transition from a neighbourhood of $-1$ to a neighbourhood of $1$ happens only
after some time $T_0>0$ with overwhelming probability. This is a large deviation event which can be estimated using continuity 
of $X$ with respect to the initial condition $x$ and the stochastic objects $\left\{\eps^{\frac{n}{2}}\<n>\right\}_{n\leq 3}$. 
We sketch the proof for completeness. 

\begin{proposition} \label{prop:rev_ent_est} For every $\al\in(0,\al_0)$ and $\delta\in(0,1/2)$ there exist $a_0,\delta_0,T_0>0$ and 
$\eps_0\in(0,1)$ such that for every $\eps\leq \eps_0$
\begin{equation*}
 \sup_{\|x-(-1)\|_{\CC^{-\al_0}}\leq \delta_0} \PP(\tau_{B(\al;\delta)}(X(\cdot;x)) \leq T_0) \leq \ee^{-a_0/\eps}.
\end{equation*}
\end{proposition}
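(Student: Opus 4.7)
The plan is to combine the pathwise confinement result of Proposition \ref{prop:v_ex_est} with exponential moment bounds on the stochastic objects, in the spirit of Proposition \ref{prop:diag_ex_est}. The intuition is that for $X(t;x)$ to reach the neighbourhood $B(\al;\delta)$ of $+1$ starting from a neighbourhood of $-1$, the noise must push the system over the energy barrier, and on any fixed time horizon $T_0$ this is a large-deviation event whose probability decays exponentially in $\eps$.

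Concretely, I would first fix $\delta_1>0$ small enough that a $(2\delta_1+\delta_2)$-neighbourhood of $-1$ in $\CC^{-\al_0}$ is disjoint from $B(\al;\delta)$; this is possible because $B(\al;\delta)$ sits at distance at least $2-2\delta > 1$ from $-1$ in $\CC^{-\al_0}$, using the explicit description \eqref{eq:B} and $\delta < 1/2$. By Proposition \ref{prop:v_ex_est}, for this choice of $\delta_1$ there exist $\delta_0,\delta_2>0$ such that on the event
\[
\mathcal{E} := \left\{\sup_{t\leq T_0}(t\wedge 1)^{(n-1)\al'}\|\eps^{\frac{n}{2}}\<n>(t)\|_{\CC^{-\al}} < \delta_2^n \text{ for } n=1,2,3\right\},
\]
every $x$ with $\|x-(-1)\|_{\CC^{-\al_0}}\leq \delta_0$ satisfies $\sup_{t\leq T_0}\|X(t;x)-(-1)\|_{\CC^{-\al_0}}\leq 2\delta_1+\delta_2$. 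This forces $X(t;x)\notin B(\al;\delta)$ for every $t\leq T_0$, so $\tau_{B(\al;\delta)}(X(\cdot;x))>T_0$ on $\mathcal{E}$, uniformly in $x$ in the claimed ball.

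It therefore remains to bound $\PP(\mathcal{E}^c)$. Arguing exactly as in the proof of Proposition \ref{prop:diag_ex_est}, I would partition $[0,T_0]$ into unit intervals, use time-homogeneity and Proposition \ref{prop:exp_mom} on each tile, and apply an exponential Chebyshev inequality, producing $\PP(\mathcal{E}^c)\leq T_0\,\ee^{-a_0'/\eps}$ for some $a_0'>0$ depending on $\delta_2$, valid for all sufficiently small $\eps$. Choosing any $a_0\in(0,a_0')$ and shrinking $\eps_0$ absorbs the polynomial prefactor $T_0$ and yields the claimed bound. I do not anticipate any serious obstacle: the uniformity in $x$ is already packaged into Proposition \ref{prop:v_ex_est}, and the probabilistic estimate is a direct copy of Proposition \ref{prop:diag_ex_est}, so the proof is essentially a composition of results already in hand.
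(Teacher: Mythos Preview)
Your proposal is correct and follows essentially the same route as the paper: both reduce the event $\{\tau_{B(\al;\delta)}\leq T_0\}$ to the failure of the confinement $\sup_{t\leq T_0}\|X(t;x)-(-1)\|_{\CC^{-\al_0}}\leq \delta_1$, invoke Proposition~\ref{prop:v_ex_est} (the paper phrases this as ``continuity of $X$ in $x$ and the stochastic objects'') to reduce further to smallness of $\eps^{n/2}\<n>$, and then apply Proposition~\ref{prop:exp_mom} via exponential Chebyshev. The only cosmetic difference is that you spell out the tiling of $[0,T_0]$ explicitly, whereas the paper absorbs this into a single reference to Proposition~\ref{prop:exp_mom}; since $T_0$ is a constant you are free to choose, this extra step is harmless but not needed.
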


\begin{proof} We first notice that for $\|x-(-1)\|_{\CC^{-\al_0}}\leq \delta_0$
\begin{equation*}
 \PP(\tau_{B(\al;\delta)}(X(\cdot;x)) \geq T_0) \geq 
 \PP\left(\sup_{t\leq T_0} \|X(t;x) - (-1)\|_{\CC^{-\al_0}} \leq \delta_1\right) 
\end{equation*}
for some $\delta_1>0$. Using continuity of $X$ with respect to $x$ and the stochastic objects $\left\{\eps^{\frac{n}{2}}\<n>\right\}_{n\leq 3}$, the last probability can be estimated from below uniformly in 
$\|x-(-1)\|_{\CC^{-\al_0}}\leq \delta_0$, for $\delta_0$ sufficiently small, by
\begin{equation*}
 \PP\left(\sup_{t\leq T_0} \|X(t;x) - (-1)\|_{\CC^{-\al_0}} \leq \delta_1\right) \geq 
 \PP\left(\sup_{t\leq T_0}(t\wedge 1)^{-(n-1)\al'}\|\eps^{\frac{n}{2}}\<n>(t)\|_{\CC^{-\al}} \leq \delta_2\right)
\end{equation*}
for some $\delta_2>0$. Last by Proposition \ref{prop:exp_mom} we find $a_0>0$ and $\eps_0\in(0,1)$ such that for ever $\eps\leq \eps_0$
\begin{equation*}
 \PP\left(\sup_{t\leq T_0}(t\wedge 1)^{-(n-1)\al'}\|\eps^{\frac{n}{2}}\<n>(t)\|_{\CC^{-\al}} 
 \leq \delta_2\right) \geq 1-\ee^{-a_0/\eps}
\end{equation*}
which completes the proof.
\end{proof}

In the next proposition we estimate the second moment of the transition time $\tau_{B(\al;\delta)}(X(\cdot;x))$ using the large 
deviation estimate \eqref{eq:LDP}. The proof combines the ideas in Propositions \ref{prop:1st_entry} and \ref{prop:ent_est}. However
here we construct a path $g$ which is different from the one in the proof of Proposition \ref{prop:1st_entry} to ensure that the process
$X(\cdot;x)$ returns to a neighbourhood of $-1$. The same proof implies exponential moments of the transition time 
$\tau_{B(\al;\delta)}(X(\cdot;x))$, but we only need to estimate the second moment in the proof of Theorem \ref{thm:eyr_kr}. 

\begin{proposition} \label{prop:prefact_free_est} Let $\al\in(0,\al_0)$ and $\delta\in(0,1/2)$. For every $\eta>0$ there exists 
$\eps_0\in(0,1)$ such that for every $\eps\leq \eps_0$
\begin{equation*}
 \sup_{x\in \CC^{-\al_0}} \EE\tau_{B(\al;\delta)}(X(\cdot;x))^2 \leq C \ee^{2\left[(V(0)-V(-1))+\eta\right]/\eps}
\end{equation*}
for some $C>0$ independent of $\eta$ and $\eps$.
\end{proposition}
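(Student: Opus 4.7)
The plan is to follow the template of Propositions \ref{prop:1st_entry} and \ref{prop:ent_est}: construct a deterministic time $T_0>0$ and, for each $x\in \CC^{-\al_0}$, a control path $g(\cdot;x)\in \mathcal{A}(T_0;x)$ ending in $B(\al;\delta)$ whose rate functional $I(g(\cdot;x))$ is bounded by $(V(0)-V(-1))+\eta/2$ uniformly in $x$. Once such a path is produced, the large deviation lower bound \eqref{eq:LDP} gives $\inf_{x} \PP(\tau_{B}(X(\cdot;x)) \leq T_0) \geq \ee^{-b/\eps}$ for $b:=(V(0)-V(-1))+\eta/2$ and every $\eps$ small enough. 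Iterating the strong Markov property as in the proof of Proposition \ref{prop:ent_est} yields the geometric tail bound $\sup_x \PP(\tau_B \geq mT_0)\leq (1-\ee^{-b/\eps})^m$. Writing $\EE \tau_B^2 = \int_0^\infty 2t\,\PP(\tau_B\geq t)\,\dd t \leq C T_0^2\sum_{m\geq 0}(m+1)(1-\ee^{-b/\eps})^m$, the right hand side is bounded by $C' \ee^{2b/\eps}\leq C'\ee^{2[(V(0)-V(-1))+\eta]/\eps}$, which is the claim.

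The first phase of the construction of $g(\cdot;x)$ is \emph{verbatim} Steps 1--3 of the proof of Proposition \ref{prop:1st_entry}: using the smoothing of the deterministic flow, the compact embedding $\CC^{2+\lambda}\hookrightarrow \BB^1_{2,2}$, a covering of $B_{\CC^{2+\lambda}}(0;C)$ by small $\BB^1_{2,2}$-balls around smooth seeds $\{y_i\}$ whose deterministic trajectories converge in $\BB^1_{2,2}$ to one of $-1,0,+1$, and linear interpolations whose cost is controlled by Lemma \ref{lem:FJL}, one drives $g(\cdot;x)$ into an arbitrarily small $\BB^1_{2,2}$-neighbourhood of one of the three stationary solutions, with total cost $\leq \eta/4$. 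The new ingredient is that we must end in $B(\al;\delta)$ rather than merely a neighbourhood of some stationary point. If the reached point is close to $+1$ we are done. If it is close to $0$ we perturb, using Proposition \ref{prop:unst}, onto a smooth seed $y_0$ whose deterministic trajectory converges specifically to $+1$, again at cost $\leq \eta/4$, and follow the deterministic flow into $B(\al;\delta)$ for free.

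The remaining and main case is when the first phase places $g(\cdot;x)$ in a small neighbourhood of $-1$; here we append an \emph{instanton} segment realising the minimum-energy heteroclinic connection from $-1$ to $0$. The standard Freidlin--Wentzell identity, valid since \eqref{eq:det_AC} is the $L^2$-gradient flow of $V$, gives
\begin{equation*}
\tfrac14\int_0^\tau \|\dot f - \Delta f + f^3 - f\|_{L^2}^2\,\dd t \;=\; \tfrac14\int_0^\tau \|\dot f + \Delta f - f^3 + f\|_{L^2}^2\,\dd t \;+\; V(f(\tau))-V(f(0)),
\end{equation*}
so that a path obtained by time-reversing the deterministic descent from a small neighbourhood of $0$ down to $-1$ has rate functional exactly $V(0)-V(-1)$. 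Concatenating with a linear interpolation of cost $\leq \eta/4$ onto a smooth $y_0$ near $0$ flowing to $+1$ and then with the deterministic flow brings $g$ into $B(\al;\delta)$ with total cost $\leq (V(0)-V(-1))+\eta/2$. The main technical obstacle is to ensure that all interpolation endpoints can be chosen in $\CC^\infty$ with the $\BB^2_{2,2}$ bounds needed by Lemma \ref{lem:FJL} and that the instanton segment, a priori a classical smooth object, respects the regularity requirements for \eqref{eq:LDP} on $\CC^{-\al}$; both are handled exactly as in Proposition \ref{prop:1st_entry} using the smoothing of the deterministic flow at the two ends of the instanton.
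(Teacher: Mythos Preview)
Your proposal is correct and follows essentially the same approach as the paper: Steps 1--3 of Proposition \ref{prop:1st_entry} to reach a neighbourhood of some stationary point, then in the main case (near $-1$) a time-reversed deterministic trajectory from a point near $0$ that flows to $-1$ (your ``instanton''), whose cost is computed via the Freidlin--Wentzell gradient-flow identity to be at most $V(0)-V(-1)$ up to arbitrarily small error, followed by a short interpolation near $0$ and a free deterministic descent to $+1$; the geometric tail and second-moment computation are identical to yours. The only structural detail you leave implicit is the paper's explicit Step 4, a short linear interpolation near $-1$ connecting the endpoint of the first phase to the specific start-point $X_{det}(T_0^*;y_{0,-})$ of the reversed trajectory, but this is exactly what your final sentence about handling the ``two ends of the instanton'' via smoothing and Lemma \ref{lem:FJL} covers.
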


\begin{proof} We first prove that for every $R,\eta>0$ there exists $T_0>0$ and $\eps_0\in(0,1)$ such that for every 
$\eps\leq \eps_0$  
\begin{equation*}
 \sup_{\|x\|_{\CC^{-\al_0}}\leq R}\PP(\tau_{B(\al;\delta)}(X(\cdot;x))\geq T_0) \leq 1 - \ee^{-\left[(V(0)-V(-1))+ \eta\right]/\eps}.
\end{equation*}
We notice that there exists $\delta'>0$ such that 
\begin{equation*}
 \PP(\tau_{B(\al;\delta)}(X(\cdot;x))\leq T_0) \geq \PP(\underbrace{\|X(T_*;x) - 1\|_{\CC^{-\al}} \leq \delta' 
 \text{ for some } T_*\leq T_0}_{=:\mathcal{A}(T_0;x)}).
\end{equation*}
Here we use that if $\|X(T_*;x) - 1\|_{\CC^{-\al}} \leq \delta'$, for $\delta'$ sufficiently small then $X(T_*;x)\in B(\al;\delta)$. By the large deviation estimate \eqref{eq:LDP} we need to bound
\begin{equation*}
 \sup_{\|x\|_{\CC^{-\al_0}}\leq R} \sup_{\substack{f \in \mathcal{A}(T_0;x) \\ f(0) = x}}I(f(\cdot;x)).
\end{equation*}
To do so we proceed as in the proof of Proposition \ref{prop:1st_entry} by constructing a suitable path $g\in \mathcal{A}(T_0;x)$. 
The construction here is similar but some of the steps differ since we need to ensure that $g$ returns to a neighbourhood of 
$1$. To avoid repeating ourselves we give a sketch of the proof highlighting the different steps of the construction.

Steps 1, 2 and 3 are exactly as in the proof of Proposition \ref{prop:1st_entry}. However we need to distinguish the value of $\delta$
there from the value of $\delta$ in the statement of the proposition. If $g(\tau_3;x) \in B_{\BB^1_{2,2}}(1;\delta)\cap 
B_{\CC^{2+\lambda}}(0;C)$ we stop at Step 3. If not then $g(\tau_3;x) \in B_{\BB^1_{2,2}}(-1;\delta)\cap B_{\CC^{2+\lambda}}(0;C)$ or $B_{\BB^1_{2,2}}(0;\delta)\cap 
B_{\CC^{2+\lambda}}(0;C)$. We only explain how to proceed in the first case since it also covers the other.  

Before we describe the remaining steps we recall that by Proposition \ref{prop:unst} there exist 
$y_{0,-},y_{0,+} \in B_{\BB^1_{2,2}}(0;\delta)$ such that $y_{0,-},y_{0,+}\in \CC^\infty$ and $X_{det}(t;y_{0,\pm}) 
\to \pm1$ in $\BB^1_{2,2}$. In particular there exists $T_0^*>0$ such that $X_{det}(T_0^*;y_{0,\pm})\in B_{\BB^1_{2,2}}(\pm1;\delta)
\cap B_{\CC^{2+\lambda}}(0;C)$.  

\smallskip

\underline{Step 4 (Jump to $X_{det}(T_0^*;y_{0,-})$)}:

Let $\tau_4 = \tau_3+\tau$, for $\tau>0$ as in Step 2 which we fix below according to Lemma \ref{lem:FJL}. For $t\in[\tau_3,\tau_4]$ we set 
$g(t;x) = g(\tau_3;x) + \frac{t - \tau_3}{\tau_4 - \tau_3}(X_{det}(T_0^*;y_{0,-}) - g(\tau_3;x))$.

\smallskip

\underline{Step 5 (Follow the deterministic flow backward to reach $0$)}:

Let $\tau_5 = \tau_4+ T_0^*$. For $t\in[\tau_4,\tau_5]$ we set $g(t;x) = X_{det}(\tau_5 - t;y_{0,-})$.

\smallskip

\underline{Step 6 (Jump to $y_{0,+}$)}:

Let $\tau_6 = \tau_5 +\tau$, for $\tau$ as in Step 4. For $t\in[\tau_5,\tau_6]$ we set 
$g(t;x) = g(\tau_5;x) + \frac{t - \tau_5}{\tau_6-\tau_5}(y_{0,+} - g(\tau_5;x))$.

\smallskip

\underline{Step 7 (Follow the deterministic flow forward to reach $1$)}:

Let $\tau_7 = \tau_6+ T_0^*$. For $t\in[\tau_6,\tau_7]$ we set $g(t;x) = X_{det}(t - \tau_6;y_{0,+})$.

For the path $g$ constructed above we notice that for every $\|x\|_{\CC^{-\al_0}} \leq R$, if $t\geq \tau_7$ then
$g(t;x)\in B_{\BB^1_{2,2}}(1;\delta)$. By \eqref{eq:Besov_Emb}, $\BB^1_{2,2} \subset \CC^{-\al}$, for every $\al>0$,
hence if we choose $\delta$ sufficiently small and set $T_0 = \tau_7+1$ then $g\in \mathcal{A}(T_0;x)$.

To bound $I(g(\cdot;x))$ we proceed exactly as in the proof of Proposition \ref{prop:1st_entry} using Lemma \ref{lem:FJL}. But 
when considering the contribution from Step 5 we get
\begin{align*}
 & \frac{1}{4}\int_{\tau_4}^{\tau_5} \|(\partial_t - \Delta) g(t;x) + g(t;x)^3 - g(t;x)\|_{L^2}^2 \dd t \\
 & \quad  = 2 \int_0^{T_0^*} \left\lng \partial_t X_{det}(t;y_{0,+}), \Delta X_{det}(t;y_{0,+}) - X_{det}(t;y_{0,+})^3 
 + X_{det}(t;y_{0,+}) \right\rng \dd t \\
 & \quad = - 2 \left(V(X_{det}(T_0^*;y_0)) - V(y_{0,+})\right) \\
 & \quad \leq 2 \left(V(0) - V(-1)\right).
\end{align*}
In total we obtain the bound
\begin{equation*}
 \sup_{\|x\|_{\CC^{-\al_0}}\leq R} I(g(\cdot;x)) \leq  2\left(V(0) - V(-1)\right) + C\delta. 
\end{equation*}
For $\eta>0$ we choose $\delta$ even smaller to ensure that $C\delta <\eta$. Then by \eqref{eq:LDP} we find $\eps_0\in(0,1)$ such
that for every $\eps\leq \eps_0$ 
\begin{equation*}
 \inf_{\|x\|_{\CC^{-\al_0}}\leq R} \PP(\tau_{B(\al;\delta)}(X(\cdot;x)) \leq T_0) \geq \ee^{-\left[(V(0) - V(-1)) + \eta\right]/\eps}.
\end{equation*}
The next step is to use the this estimate to show that for any $\eta>0$ there exists $\eps_0\in(0,1)$ and possibly a different 
$T_0>0$ such that for every $\eps\leq \eps_0$
\begin{equation*}
 \sup_{x\in\CC^{-\al_0}} \PP(\tau_{B(\al;\delta)}(X(\cdot;x)) \geq m T_0) \leq 
 \left(1 - \ee^{-\left[(V(0) - V(-1)) + \eta\right]/\eps}\right)^m.
\end{equation*}
We omit the proof since it is the same as the one of Proposition \ref{prop:ent_est}. 

Finally we notice that
\begin{align*}
 \EE\tau_{B(\al;\delta)}(X(\cdot;x))^2 & = \int_0^\infty 2t \, \PP(\tau_{B(\al;\delta)}(X(\cdot;x))\geq t) \dd t \\
 & \leq \sum_{m=0}^\infty \PP(\tau_{B(\al;\delta)}(X(\cdot;x))\geq mT_0) \int_{mT_0}^{(m+1)T_0} 2t \dd t \\
 & \leq 2T_0^2 \sum_{m=0}^\infty (m+1) 
 \left(1 - \ee^{-\left[(V(0) - V(-1)) + \eta\right]/\eps}\right)^m \\
 & = 2 T_0^2 \ee^{2\left[(V(0) - V(-1)) + \eta\right]/\eps}
\end{align*}
which completes the proof. 
\end{proof}

\begin{proof}[Proof of Theorem \ref{thm:eyr_kr}] Let 
\begin{equation*}
 \mathtt{Pr}(\eps) = \frac{2\pi}{|\lambda_0|} \sqrt{\prod_{k\in \ZZ^2}\frac{|\lambda_k|}{\nu_k} 
 \exp\left\{\frac{\nu_k-\lambda_k}{\lambda_k+2}\right\}} \ee^{\left(V(0) - V(-1)\right)/\eps} 
\end{equation*}
and $\delta\in(0,\delta_0)$, for $\delta_0\in(0,1/2)$ which we fix below. 

To prove the upper bound in \eqref{eq:eyr_kr_final} let $\delta_-<\delta$ and $T>0$ which we also fix below. 
For $x\in A(\al_0;\delta_-)$ we define the set
\begin{equation*}
 A_T(x) = \left\{\tau_{B(\al;\delta_-)}(X(\cdot;x)) > T, \sup_{\|\bar y - x\|_{\CC^{-\al_0}}\leq \delta_0}
 \frac{\|X(t;\bar y) - X(t;x)\|_{\CC^\bt}}{\|\bar y-x\|_{\CC^{-\al_0}}}
 \leq C \ee^{-(2-\kappa) t} \text{ for every } t\geq T\right\}
\end{equation*}
where $\delta_0$ and $C$ are as in Theorem \ref{thm:exp_contr}. For $y\in A(\al_0;\delta)$ and $x\in A(\al_0;\delta_-)$
we have that $\|y-x\|_{\CC^{-\al_0}}, \|x-(-1)\|_{\CC^{-\al_0}} \leq \delta_0$, if we choose $\delta_0$ sufficiently small.
Furthermore for $y\in A(\al_0;\delta)$, $x\in A(\al_0;\delta_-)$ and $\omega\in A_T(x)$ 
\begin{equation*}
 \tau_{B(\al;\delta)}(X(\cdot;y)) \leq \tau_{B(\al;\delta_-)}(X(\cdot;x)),
\end{equation*}
if we choose $T$ sufficiently large. By Proposition \ref{prop:rev_ent_est} and Theorem \ref{thm:exp_contr} there 
exist $a_1>0$ and $\eps_0\in(0,1)$ such that for every $\eps\leq\eps_0$
\begin{align*}
 \sup_{x\in A(\al_0;\delta_-)} \PP(A_T(x)^c) 
 \leq \sup_{\|x - (-1)\|_{\CC^{-\al_0}}\leq \delta_0} \PP(A_T(x)^c) \leq \ee^{-a_1/\eps}.
\end{align*}
Then for every $y\in A(\al_0;\delta)$, $x\in A(\al_0;\delta_-)$ and $\eta>0$, which we fix below, there exists $\eps_0\in(0,1)$ such 
that for every $\eps\leq \eps_0$
\begin{align}
 \EE\tau_{B(\al;\delta)}(X(\cdot;y)) & \leq \EE\tau_{B(\al;\delta_-)}(X(\cdot;x)) + \EE\tau_{B(\al;\delta)}(X(\cdot;y))\mathbf{1}_{A_T(x)^c} \label{eq:tau_control} \\
 & \stackrel{\text{Cauchy--Schwarz}}{\leq} \EE\tau_{B(\al;\delta_-)}(X(\cdot;x)) + \left(\EE\tau_{B(\al;\delta)}(X(\cdot;y))^2\right)^{\frac{1}{2}} \PP(A_T(x)^c)^{\frac{1}{2}} \nonumber \\
 & \stackrel{\text{Prop. }\ref{prop:prefact_free_est}}{\leq} \EE\tau_{B(\al;\delta_-)}(X(\cdot;x)) + C \ee^{\left((V(0) - V(-1)) + \eta - \frac{a_1}{2}\right)/\eps} \nonumber
\end{align}
for some $C>0$ independent of $\eps$. By Proposition \ref{prop:aver_lim} there exist $\delta_-\in(0,\delta)$, $c_+>0$ and 
$\mu_\eps\in \mathcal{M}_1\left(A(\al_0;\delta_-)\right)$ such that for every $\eps\in(0,1)$ 
\begin{equation*}
 \int \EE\tau_{B(\al;\delta_-)}(X(\cdot;x)) \, \mu_\eps(\dd x) \leq \mathtt{Pr}(\eps) (1+ c_+ \sqrt{\eps}).
\end{equation*}
Integrating \eqref{eq:tau_control} over $x$ with respect to $\mu_\eps$ implies that 
\begin{align*}
 & \sup_{y\in A(\al_0;\delta)} \EE\tau_{B(\al;\delta)}(X(\cdot;y)) \\
 & \quad \leq \mathtt{Pr}(\eps) \left( (1 + c_+\sqrt{\eps}) 
 +  \ee^{\left(\eta - \frac{a_1}{2}\right)/\eps} C \left(\frac{2\pi}{|\lambda_0|} \sqrt{\prod_{k\in \ZZ^2}\frac{|\lambda_k|}{\nu_k} 
 \exp\left\{\frac{\nu_k-\lambda_k}{\lambda_k+2}\right\}}\right)^{-1} \right).
\end{align*}
Let $\zeta>0$. Choosing $\eta<\frac{a_1}{2}$ we can find $\eps_0\in(0,1)$ such that for every $\eps\leq \eps_0$
\begin{equation*}
 \ee^{\left(\eta - \frac{a_1}{2}\right)/\eps} C \left(\frac{2\pi}{|\lambda_0|} \sqrt{\prod_{k\in \ZZ^2}\frac{|\lambda_k|}{\nu_k} 
 \exp\left\{\frac{\nu_k-\lambda_k}{\lambda_k+2}\right\}}\right)^{-1} \leq \zeta \sqrt{\eps}
\end{equation*}
which in turn implies that
\begin{equation*}
 \sup_{y\in A(\al_0;\delta)} \EE\tau_{B(\al;\delta)}(X(\cdot;y)) \leq \mathtt{Pr}(\eps) \left(1 + (c_+ + \zeta)\sqrt{\eps}\right) 
\end{equation*}
and proves the upper bound in \eqref{eq:eyr_kr_final}. 
 
To prove the lower bound, we let $\delta_+\in (\delta,\delta_0)$ which we fix below and for $y\in A(\al_0;\delta)$ and 
$x\in A(\al_0;\delta_+)$ we define the set
\begin{equation*}
 B_T(y,x) = \left\{\tau_{B(\al;\delta)}(X(\cdot;y)) \geq T, \sup_{\|\bar y-x\|_{\CC^{-\al_0}}\leq \delta_0} 
 \frac{\|X(t;\bar y) - X(t;x)\|_{\CC^\bt}}{\|\bar y-x\|_{\CC^{-\al_0}}}
 \leq C \ee^{-(2-\kappa) t}  \text{ for every } t\geq T\right\}.
\end{equation*}
For $y\in A(\al_0;\delta)$ and $x\in A(\al_0;\delta_+)$ we have that 
$\|y-x\|_{\CC^{-\al_0}}, \|y-(-1)\|_{\CC^{-\al_0}}, \|x-(-1)\|_{\CC^{-\al_0}} \leq \delta_0$, if we choose $\delta_0$ sufficiently 
small. We also notice that for $y\in A(\al_0;\delta)$, $x\in A(\al_0;\delta_+)$ and $\omega\in B_T(y,x)$ 
\begin{equation*}
 \tau_{B(\al;\delta_+)}(X(\cdot;x)) \leq \tau_{B(\al;\delta)}(X(\cdot;y)),
\end{equation*}
if we choose $T$ sufficiently large. By Proposition \ref{prop:rev_ent_est} and Theorem \ref{thm:exp_contr} there exists $a_1>0$ and $\eps_0\in(0,1)$ such that for
every $\eps\leq \eps_0$
\begin{equation*}
 \sup_{\substack{ y \in A(\al_0;\delta) \\ x\in A(\al_0;\delta_+)}} \PP(B_T(y,x)^c) 
 \leq \sup_{\substack{ \|y-(-1)\|_{\CC^{-\al_0}}\leq \delta_0 \\ \|x-(-1)\|_{\CC^{-\al_0}} \leq \delta_0}} \PP(B_T(y,x)^c)
 \leq \ee^{-a_1/\eps}.
\end{equation*}
Then for every $y\in A(\al_0;\delta)$, $x\in A(\al_0;\delta_+)$ and $\eps\leq \eps_0$
\begin{align*}
 \EE\tau_{B(\al;\delta)}(X(\cdot;y)) & \geq \EE\tau_{B(\al;\delta_+)}(X(\cdot;x)) \mathbf{1}_{B_T(y,x)} 
 = \EE\tau_{B(\al;\delta_+)}(X(\cdot;x)) - \EE\tau_{B(\al;\delta_+)}(X(\cdot;x)) \mathbf{1}_{B_T(y,x)^c} \\
 & \stackrel{\text{Cauchy--Schwarz}}{\geq} \EE\tau_{B(\al;\delta_+)}(X(\cdot;x)) - 
 \left(\EE\tau_{B(\al;\delta_+)}(X(\cdot;x))^2\right)^{\frac{1}{2}} \PP\left(B_T(y,x)^c\right)^{\frac{1}{2}} \\
 & \geq \EE\tau_{B(\al;\delta_+)}(X(\cdot;x)) - \left(\EE\tau_{B(\al;\delta_+)}(X(\cdot;x))^2\right)^{\frac{1}{2}} \ee^{-a_1/2\eps}
\end{align*}
and we proceed as in the case of the upper bound, using Proposition \ref{prop:prefact_free_est} for 
$\EE\tau_{B(\al;\delta_+)}(X(\cdot;x))^2$ and Proposition \ref{prop:aver_lim} to find 
$\delta_+\in(\delta,\delta_0)$, $c_->0$ and $\mu_\eps\in\mathcal{M}_1\left(A(\al_0;\delta_+)\right)$
such that for every $\eps\in(0,1)$ 
\begin{equation*}
 \int \EE\tau_{B(\al;\delta_+)}(X(\cdot;x)) \, \mu_\eps(\dd x) \geq \mathtt{Pr}(\eps) (1 - c_- \eps).
\end{equation*}
\end{proof}

\pdfbookmark{Appendix}{appendix}
\begin{appendices} 
%
\stepcounter{section}
\section*{\large Appendix \thesection} \label{app:besov_spaces} 

\begin{definition} \label{def:besov} Let $\al\in\RR$ and $p,q\in[1,\infty]$. The Besov norm $\|\cdot\|_{\BB^\al_{p,q}}$ is defined as 
\begin{equation}\label{eq:besov_norm}
\|f\|_{\mathcal{B}_{p,q}^\alpha}:=\left\|\left(2^{\alpha \kappa}\|f*\eta_\kappa\|_{L^p}\right)_{\kappa\geq-1}\right\|_{\ell^q}.
\end{equation}
Here the family of functions $\{\eta_\kappa\}_{\kappa\geq -1}$ is given by $\hat \eta_\kappa = \chi_\kappa$ in Fourier space for $\{\chi_\kappa\}_{\kappa\geq -1}$
a suitable dyadic partition of unity as in \cite[Proposition 2.10]{BCD11}. The Besov space space $\BB^\al_{p,q}$ is defined as the completion
of $\CC^\infty$ with respect to \eqref{eq:besov_norm}.
\end{definition}

In this appendix we present several useful results from \cite{MW17i, MW17ii} about Besov spaces that we repeatedly use in this article. For a complete survey of 
the full-space analogues of these results we refer the reader to \cite{BCD11}. A discussion on the validity of these results in the periodic case can be found
in \cite[Section 4.2]{MW17i}.

The following estimate is immediate from the definition of the Besov norm \eqref{eq:besov_norm},
%
%
\begin{equation}
 \|f\|_{\BB^\al_{p,q}} \leq C \|f\|_{\BB^\bt_{p,q}}, \text{ if } \bt > \al \label{eq:alpha_beta_ineq}.
\end{equation}

\begin{proposition}[{\cite[Remark 9]{MW17i}}] Let $\al\in \RR$ and $p,q_1,q_2\in[1,\infty]$ such that $q_2> q_1$. For every $\lambda>0$
\begin{equation}
 \|f\|_{\BB^\al_{p,q_2}} \leq C \|f\|_{\BB^{\al+\lambda}_{p,q_1}}. \label{eq:q1_q2_ineq}
\end{equation}
\end{proposition}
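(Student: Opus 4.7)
The plan is to reduce the claim to two elementary estimates on weighted sequence spaces, once the Besov norms are unpacked via Definition \ref{def:besov}. Setting $c_\kappa := \|f \ast \eta_\kappa\|_{L^p}$, the inequality to prove reads
\begin{equation*}
\left\|(2^{\al\kappa} c_\kappa)_{\kappa\geq -1}\right\|_{\ell^{q_2}} \leq C \left\|(2^{(\al+\lambda)\kappa} c_\kappa)_{\kappa\geq -1}\right\|_{\ell^{q_1}}.
\end{equation*}

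First I would invoke the standard sequence space embedding $\ell^{q_1}\hookrightarrow \ell^{q_2}$, which is valid because $q_2>q_1$ and gives $\|(a_\kappa)\|_{\ell^{q_2}}\leq \|(a_\kappa)\|_{\ell^{q_1}}$ for every non-negative sequence. Applying this with $a_\kappa = 2^{\al\kappa}c_\kappa$ reduces the problem to comparing both sides in a common $\ell^{q_1}$ norm.

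Next I would absorb the regularity gap by factoring $2^{\al\kappa} = 2^{-\lambda\kappa}\cdot 2^{(\al+\lambda)\kappa}$ and observing that, for $\kappa \geq -1$ and $\lambda > 0$, the geometric factor $2^{-\lambda\kappa}$ is uniformly bounded by $2^{\lambda}$. This yields the pointwise bound $2^{\al\kappa} c_\kappa \leq 2^{\lambda}\cdot 2^{(\al+\lambda)\kappa} c_\kappa$ for every $\kappa\geq -1$; taking $\ell^{q_1}$ norms and chaining with the embedding above closes the argument with $C = 2^{\lambda}$.

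No genuine obstacle arises: the result is a bookkeeping statement reflecting the independence of the two relaxations in the definition of the Besov norm, namely the regularity exponent and the summability exponent. The one point worth keeping in mind is that the naive Hölder splitting $1/q_2 = 1/q_1 + 1/r$ with $r$ chosen to handle the sequence $(2^{-\lambda\kappa})$ would force $r\leq 0$ when $q_2 > q_1$ and is therefore unavailable; this is precisely why one must invoke the $\ell^{q_1}\hookrightarrow \ell^{q_2}$ embedding rather than a Hölder-type argument.
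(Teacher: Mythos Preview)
Your argument is correct and is the standard two-step proof of this embedding: first pass from $\ell^{q_2}$ to $\ell^{q_1}$ via the inclusion $\ell^{q_1}\subset\ell^{q_2}$ (valid since $q_1<q_2$), then trade the $\lambda$ worth of regularity against nothing by bounding $2^{-\lambda\kappa}\leq 2^{\lambda}$ for $\kappa\geq -1$. The paper itself gives no proof of this proposition; it is quoted directly from \cite[Remark 9]{MW17i}, so there is nothing to compare against beyond confirming that your reasoning matches the routine justification one would expect behind such a citation.

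One minor remark on your closing paragraph: the H\"older route you dismiss does in fact handle the complementary regime $q_2\leq q_1$ (there $1/r = 1/q_2 - 1/q_1 \geq 0$ and the weight $(2^{-\lambda\kappa})_{\kappa\geq -1}$ lies in every $\ell^r$), so the full statement actually holds for \emph{all} $q_1,q_2\in[1,\infty]$ once $\lambda>0$. The restriction $q_2>q_1$ in the paper's formulation is not essential to the result, only to the particular proof strategy you chose---which is exactly the one the hypothesis was tailored for.
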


\begin{proposition}[{\cite[Remarks 10 and 11]{MW17i}}] For every $p\in[1,\infty]$
\begin{equation}
 C^{-1} \|f\|_{\BB^0_{p,\infty}} \leq \|f\|_{L^p} \leq C \|f\|_{\BB^0_{p,1}}. \label{eq:Lp_besov_control}
\end{equation}
\end{proposition}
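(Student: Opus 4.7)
The plan is to establish both inequalities directly from the definition \eqref{eq:besov_norm}, exploiting the fact that the family $\{\eta_\kappa\}_{\kappa \geq -1}$ arises from a Littlewood--Paley partition of unity $\sum_{\kappa\geq -1}\chi_\kappa \equiv 1$ in Fourier space, so that $\sum_{\kappa \geq -1} f*\eta_\kappa = f$ in the sense of tempered (periodic) distributions for any $f \in \CC^\infty$. Throughout I will work first with smooth $f$ and then extend to the full Besov spaces, which are defined in \eqref{eq:besov_norm} as completions of $\CC^\infty$.

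For the upper bound $\|f\|_{L^p} \leq C \|f\|_{\BB^0_{p,1}}$, I would decompose $f = \sum_{\kappa \geq -1} f*\eta_\kappa$ and apply Minkowski's inequality in $L^p$ to obtain
\[
\|f\|_{L^p} \;\leq\; \sum_{\kappa\geq-1} \|f*\eta_\kappa\|_{L^p} \;=\; \|f\|_{\BB^0_{p,1}},
\]
so that in fact $C=1$ works on this side. For smooth $f$ the convergence of the Littlewood--Paley series is unproblematic since only finitely many terms contribute after smoothing via high-frequency decay; the inequality then extends to arbitrary $f \in \BB^0_{p,1}$ by density.

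For the lower bound $\|f\|_{\BB^0_{p,\infty}} \leq C\|f\|_{L^p}$, I would apply Young's convolution inequality to each dyadic block, giving $\|f*\eta_\kappa\|_{L^p} \leq \|\eta_\kappa\|_{L^1} \|f\|_{L^p}$. The essential point is that the high-frequency cut-offs $\chi_\kappa$ ($\kappa\geq 0$) are, by construction, dyadic dilates of a single fixed Schwartz bump, i.e.\ $\chi_\kappa(\xi)=\chi(2^{-\kappa}\xi)$; passing to the physical side gives $\eta_\kappa(x) = 2^{d\kappa}\eta(2^\kappa x)$ with $d=2$, and the $L^1$ norm is scale-invariant, so $\|\eta_\kappa\|_{L^1}=\|\eta\|_{L^1}$ is independent of $\kappa$. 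Combined with the fixed low-frequency contribution $\|\eta_{-1}\|_{L^1}$, this yields a uniform constant $C := \max(\|\eta_{-1}\|_{L^1},\|\eta\|_{L^1})$. Taking the supremum over $\kappa\geq -1$ on the left completes the proof.

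I do not expect any genuine obstacle: both bounds reduce to Minkowski/Young plus the scaling properties of the Littlewood--Paley kernels. The only mild subtlety is that the identity $f = \sum_\kappa f*\eta_\kappa$ in the upper bound must be justified at the level of convergence in $L^p$ (resp.\ in $\BB^0_{p,1}$ for $p=\infty$), which is handled by density of $\CC^\infty$, and that the $L^1$ scaling invariance of the dyadic blocks is preserved in the periodic setting of $\TT^2=\RR^2/L\ZZ^2$; this last point is exactly the kind of book-keeping addressed in \cite[Section 4.2]{MW17i} when porting the full-space Besov theory to the torus.
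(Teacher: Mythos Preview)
The paper does not actually give a proof of this proposition; it is merely quoted from \cite[Remarks 10 and 11]{MW17i} alongside the other Besov-space facts collected in Appendix~A. Your argument is the standard one --- Minkowski on the Littlewood--Paley decomposition for the upper bound, Young's inequality plus $L^1$-scale-invariance of the dyadic kernels for the lower bound --- and it is correct, with the periodic adaptation handled exactly as you note via \cite[Section~4.2]{MW17i}.
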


\begin{proposition}[{\cite[Proposition 2]{MW17i}}] \label{prop:Besov_Emb} Let $\bt\geq \al$ and $p,q\geq 1$ such that $p\geq q$ and 
$\beta=\alpha+ d \left(\frac{1}{q}-\frac{1}{p}\right)$. Then
\begin{equation} \label{eq:Besov_Emb}
  \|f\|_{\BB_{p,\infty}^\alpha}\leq C \|f\|_{\BB_{q,\infty}^\beta}.
\end{equation} 
\end{proposition}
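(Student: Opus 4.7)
The plan is to prove this classical Besov embedding via Bernstein's inequality applied blockwise to the Littlewood--Paley decomposition. The key input is the observation that the convolution kernels $\eta_\kappa$ have Fourier support in annuli of scale $2^\kappa$ (with $\eta_{-1}$ supported in a fixed ball), so each $f * \eta_\kappa$ is a band-limited function at scale $2^\kappa$, and band-limited functions enjoy improved $L^p$-to-$L^q$ control.

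First I would recall Bernstein's inequality in the form: if $g \in L^q(\TT^d)$ has Fourier support in a ball of radius comparable to $2^\kappa$, then for $p \geq q$
\begin{equation*}
\|g\|_{L^p} \leq C\, 2^{\kappa d \left(\frac{1}{q} - \frac{1}{p}\right)} \|g\|_{L^q},
\end{equation*}
with the constant $C$ independent of $\kappa$. This is a consequence of Young's convolution inequality applied to $g = g * \phi_\kappa$, where $\phi_\kappa$ is a smooth function whose Fourier transform equals $1$ on the support of $\widehat{g}$, combined with the scaling $\|\phi_\kappa\|_{L^r} \sim 2^{\kappa d (1 - 1/r)}$ for an appropriate $r$ chosen via Young's inequality.

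Next I would apply this to $g = f * \eta_\kappa$ for each $\kappa \geq -1$, which by construction has Fourier support in an annulus (or fixed ball, for $\kappa = -1$) of scale $2^\kappa$. This yields
\begin{equation*}
\|f * \eta_\kappa\|_{L^p} \leq C\, 2^{\kappa d\left(\frac{1}{q} - \frac{1}{p}\right)} \|f * \eta_\kappa\|_{L^q}.
\end{equation*}
Multiplying both sides by $2^{\alpha \kappa}$ and using the hypothesis $\beta = \alpha + d(1/q - 1/p)$, we obtain
\begin{equation*}
2^{\alpha \kappa} \|f * \eta_\kappa\|_{L^p} \leq C\, 2^{\beta \kappa} \|f * \eta_\kappa\|_{L^q}.
\end{equation*}
Taking the supremum over $\kappa \geq -1$ on both sides, in view of the definition of the Besov norm \eqref{eq:besov_norm} with $q = \infty$ on the $\ell^q$ side, gives exactly $\|f\|_{\BB^\alpha_{p,\infty}} \leq C \|f\|_{\BB^\beta_{q,\infty}}$.

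There is no serious obstacle here; this is the standard Bernstein-based proof and the only care needed is the bookkeeping for the low-frequency block $\kappa = -1$ (which is handled identically since $\eta_{-1}$ is supported in a fixed ball) and the periodicity, which is not an issue because Bernstein's inequality holds equally well on $\TT^d$ as on $\RR^d$ for functions whose Fourier series is supported in finitely many modes up to scale $2^\kappa$.
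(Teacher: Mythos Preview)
Your proof is correct and is the standard Bernstein-inequality argument for the Besov embedding. The paper itself does not supply a proof of this proposition: it is quoted directly from \cite[Proposition~2]{MW17i} and stated without argument in the appendix, so there is no in-paper proof to compare against. Your write-up would serve perfectly well as the missing justification.
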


\begin{proposition}[{\cite[Proposition 5]{MW17i}}] \label{prop:Heat_Smooth} For every $\beta\geq \alpha$  
\begin{equation}\label{eq:Heat_Smooth}
  \|\ee^{t\Delta}f\|_{\BB_{p,q}^\beta}\leq C (t\wedge 1)^{\frac{\alpha-\beta}{2}}\|f\|_{\BB_{p,q}^\alpha}.
\end{equation}
\end{proposition}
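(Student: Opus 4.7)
The statement to prove is the heat-semigroup smoothing estimate on Besov spaces. My plan is to carry out the standard Littlewood--Paley reduction and then combine a Fourier-localised Bernstein-type estimate for $\ee^{t\Delta}$ with elementary one-variable calculus. Throughout I take $q=\infty$ for transparency; the case of general $q\in[1,\infty]$ is identical because the $\ell^q$-norm inherits the pointwise (in $\kappa$) bound.

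First, by Definition \ref{def:besov}, I would reduce the estimate to showing, for each $\kappa \geq -1$, a bound of the form
\begin{equation*}
 2^{\beta\kappa}\|\ee^{t\Delta}f\cnv\eta_\kappa\|_{L^p}
 \;\leq\; C\,\Phi(t,\kappa)\cdot 2^{\alpha\kappa}\|f\cnv\eta_\kappa\|_{L^p},
\end{equation*}
with $\sup_{\kappa\geq -1}\Phi(t,\kappa)\lesssim (t\wedge 1)^{(\alpha-\beta)/2}$. Since $\ee^{t\Delta}$ commutes with $\cnv\eta_\kappa$, the quantity on the left is $\|\ee^{t\Delta}(f\cnv\eta_\kappa)\|_{L^p}$, so the whole question is an $L^p$-estimate for the heat flow acting on a function with Fourier support in the dyadic annulus $\{|\xi|\sim 2^\kappa\}$ (or ball, for $\kappa=-1$).

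The key ingredient is the Bernstein-type bound
\begin{equation*}
 \|\ee^{t\Delta}g\|_{L^p}\;\leq\; C\,\ee^{-c\,t\,2^{2\kappa}}\,\|g\|_{L^p}\qquad (\kappa\geq 0),
\end{equation*}
valid whenever $\hat g$ is supported in the annulus of $\eta_\kappa$, with $c>0$ depending only on the dyadic partition. I would prove this the usual way: pick an auxiliary $\tilde\eta_\kappa$ with $\hat{\tilde\eta}_\kappa\equiv 1$ on the support of $\hat\eta_\kappa$; then $\ee^{t\Delta}g = g\cnv h_{t,\kappa}$ where $\hat h_{t,\kappa}(\xi)=\ee^{-t|\xi|^2}\hat{\tilde\eta}_\kappa(\xi)$. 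Rescaling $\xi\mapsto 2^\kappa\xi$ and extracting the factor $\ee^{-c t 2^{2\kappa}}$ from $\ee^{-t|\xi|^2}$ on the annulus $|\xi|\sim 2^\kappa$ reduces the $L^1$ estimate on $h_{t,\kappa}$ to a uniform bound on the residual multiplier, which follows by standard Schwartz decay (this is the usual Bahouri--Chemin--Danchin-type argument). For $\kappa=-1$ I simply use that $\ee^{t\Delta}$ is an $L^p$-contraction (its kernel is a probability density), so no decay factor appears.

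Combining, for $\kappa \geq 0$ I obtain
\begin{equation*}
 \Phi(t,\kappa)\;\leq\; C\,\ee^{-c\,t\,2^{2\kappa}}\,2^{(\beta-\alpha)\kappa},
\end{equation*}
while $\Phi(t,-1)\lesssim 2^{-(\beta-\alpha)}$ is trivially bounded. The last step is the one-variable calculus: writing $\mu=\beta-\alpha\geq 0$ and $s=t\,2^{2\kappa}$,
\begin{equation*}
 \ee^{-cs}\,2^{\mu\kappa}\;=\;\ee^{-cs}\,(s/t)^{\mu/2}\;\leq\;\Big(\sup_{s>0}s^{\mu/2}\ee^{-cs}\Big)\,t^{-\mu/2}\;\leq\;C\,t^{-\mu/2},
\end{equation*}
which handles $t\leq 1$. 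For $t\geq 1$ the exponential factor $\ee^{-ct2^{2\kappa}}\leq \ee^{-c}$ for $\kappa\geq 0$, and this beats $2^{\mu\kappa}$ once $\kappa$ is large while staying bounded otherwise, so $\Phi(t,\kappa)\lesssim 1 = (t\wedge 1)^{-\mu/2}$. Assembling the pieces and taking $\ell^q$ in $\kappa$ (pointwise bounds in $\kappa$ give the $\ell^q$ bound since the constant is $\kappa$-independent) yields the claim.

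The main obstacle, if any, is the Bernstein-type heat bound; it is textbook material but is the one place where one uses more than the definitions, so I would either cite \cite[Lemma 2.4]{BCD11} directly or include the short $L^1$-kernel computation sketched above. Everything else is bookkeeping.
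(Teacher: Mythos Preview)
Your argument is correct and is the standard Littlewood--Paley proof of the heat-semigroup smoothing estimate. There is nothing to compare against here: the paper does not give its own proof of this proposition but simply quotes it from \cite[Proposition~5]{MW17i}, where the same Bernstein-type argument (ultimately \cite[Lemma~2.4]{BCD11}) underlies the result. One small cosmetic point: in your $t\geq 1$ case you could simply note that $\sup_{\kappa\geq 0}\ee^{-c\,2^{2\kappa}}2^{\mu\kappa}<\infty$ directly, rather than splitting into ``large $\kappa$'' and ``otherwise''.
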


\begin{proposition}[{\cite[Corollary 1]{MW17i}}] \label{prop:mult_ineq_1} Let $\al \geq 0$ and $p,q\in[1,\infty]$. Then 
\begin{equation} \label{eq:mult_ineq_1}
 \|fg\|_{\BB^\al_{p,q}} \leq C \|f\|_{\BB^\al_{p_1,q_1}} \|g\|_{\BB^\al_{p_2,q_2}},
\end{equation}
where $p = \frac{1}{p_1}+\frac{1}{p_2}$ and $p = \frac{1}{q_1}+\frac{1}{q_2}$.
\end{proposition}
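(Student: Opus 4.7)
The plan is to follow the standard paraproduct route. Using the Littlewood--Paley blocks $\Delta_\kappa h := h * \eta_\kappa$ and the partial sums $S_m h := \sum_{\kappa \leq m} \Delta_\kappa h$, write Bony's decomposition
\begin{equation*}
fg = T_f g + T_g f + R(f,g),
\end{equation*}
where $T_h \phi := \sum_{\kappa\geq 0} S_{\kappa - 1} h \cdot \Delta_\kappa \phi$ is the low--high paraproduct and $R(f,g) := \sum_{|i-j|\leq 1} \Delta_i f \cdot \Delta_j g$ is the resonant (high--high) part. Since $\al\geq 0$ the Besov norms on the right control the corresponding $L^{p_i}$ norms via \eqref{eq:Lp_besov_control} and \eqref{eq:alpha_beta_ineq} (up to an arbitrarily small loss in regularity if $\al = 0$), so each of the three pieces makes sense, and the job reduces to estimating them separately in $\BB^\al_{p,q}$.

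For the paraproduct $T_f g$, the key point is the spectral support: $S_{\kappa-1} f \cdot \Delta_\kappa g$ has Fourier support in an annulus of radius $\sim 2^\kappa$, so $\Delta_k(T_f g)$ is a finite sum of such pieces with $\kappa \approx k$. H\"older's inequality with $1/p = 1/p_1 + 1/p_2$ gives
\begin{equation*}
\|\Delta_k(T_f g)\|_{L^p} \lesssim \|f\|_{L^{p_1}} \sum_{\kappa \approx k} \|\Delta_\kappa g\|_{L^{p_2}}
\lesssim \|f\|_{L^{p_1}} \, 2^{-\al k} c_k \, \|g\|_{\BB^\al_{p_2,q_2}},
\end{equation*}
with $(c_k)\in \ell^{q_2}$ of unit norm. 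Multiplying by $2^{\al k}$ and taking the $\ell^q$ norm (using $1/q = 1/q_1 + 1/q_2 \geq 1/q_2$) closes the bound for this piece, and the same argument applied symmetrically handles $T_g f$.

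For the resonant part $R(f,g)$ the spectral support of $\Delta_i f \cdot \Delta_j g$ with $|i - j| \leq 1$ sits in a ball of radius $\sim 2^j$, so $\Delta_m R(f,g)$ only sees indices $j \gtrsim m$. Writing $\|\Delta_j f\|_{L^{p_1}} = 2^{-\al j} a_j$ and $\|\Delta_j g\|_{L^{p_2}} = 2^{-\al j} b_j$ with $(a_j)\in \ell^{q_1}$ and $(b_j)\in \ell^{q_2}$ of unit norm, H\"older gives
\begin{equation*}
\|\Delta_m R(f,g)\|_{L^p} \lesssim \sum_{j \gtrsim m} 2^{-2\al j} a_j b_j \, \|f\|_{\BB^\al_{p_1,q_1}} \|g\|_{\BB^\al_{p_2,q_2}}.
\end{equation*}
For $\al > 0$ the factor $2^{-2\al j}$ is geometric, and a discrete Young-type inequality combined with H\"older in $\ell^{q_1}\times \ell^{q_2}\to \ell^q$ gives the desired control of the $\ell^q$ norm of $2^{\al m}\|\Delta_m R(f,g)\|_{L^p}$.

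The main obstacle is the resonant term at the critical regularity $\al = 0$, where no geometric decay is available and one must exploit the finer summability in the index $j$ coming from the relation $1/q = 1/q_1 + 1/q_2$ through a discrete convolution/Young argument; this is exactly the regime where the paraproduct expansion is the most delicate, but it is nevertheless handled by the standard Littlewood--Paley machinery as in \cite[Corollary 1]{MW17i} (where the corresponding result on the torus is established).
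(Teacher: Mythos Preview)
The paper does not prove this proposition at all: it is simply quoted from \cite[Corollary~1]{MW17i} and listed in the appendix alongside other standard Besov-space facts. Your Bony paraproduct decomposition is precisely the standard route and is the argument one finds in the cited reference (and in \cite{BCD11}), so in that sense your approach matches the intended proof.

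Your sketch is correct for $\al>0$. One small point worth tightening: at $\al=0$ the difficulty is not confined to the resonant term. In your bound for $T_f g$ you invoke $\|S_{\kappa-1}f\|_{L^{p_1}}\lesssim \|f\|_{L^{p_1}}$ and then want to control $\|f\|_{L^{p_1}}$ by $\|f\|_{\BB^0_{p_1,q_1}}$; the embedding $\BB^0_{p_1,q_1}\hookrightarrow L^{p_1}$ used here fails for general $q_1$ (only $\BB^0_{p_1,1}\hookrightarrow L^{p_1}$ holds in full generality, cf.\ \eqref{eq:Lp_besov_control}). So the paraproduct pieces also require the more careful $\ell^{q_1}\times\ell^{q_2}\to\ell^q$ bookkeeping at the endpoint, not just the resonant one. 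Since the paper only ever applies \eqref{eq:mult_ineq_1} with $\al=\bt>0$, this endpoint subtlety is immaterial for the applications here.
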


\begin{proposition}[{\cite[Corollary 2]{MW17i}}] \label{prop:mult_ineq_2} Let $\al<0$, $\bt>0$ such that $\al+\bt >0$ and $p,q\in[1,\infty]$. Then 
\begin{equation} \label{eq:mult_ineq_2}
 \|fg\|_{\BB^\al_{p,q}} \leq C \|f\|_{\BB^\al_{p_1,q_1}} \|g\|_{\BB^\bt_{p_2,q_2}},
\end{equation}
where $p = \frac{1}{p_1}+\frac{1}{p_2}$ and $p = \frac{1}{q_1}+\frac{1}{q_2}$.
\end{proposition}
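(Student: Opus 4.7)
The plan is to prove this classical Bony paraproduct estimate using the Littlewood--Paley decomposition implicit in Definition \ref{def:besov}. I write $f = \sum_{j\geq -1} \Delta_j f$ and $g = \sum_{k\geq -1} \Delta_k g$ with $\Delta_j f = f * \eta_j$, and decompose the product as
\begin{equation*}
fg = \sum_{j,k\geq -1} \Delta_j f \, \Delta_k g = \underbrace{\sum_{j<k-1} \Delta_j f \, \Delta_k g}_{=: T_f g} + \underbrace{\sum_{|j-k|\leq 1} \Delta_j f \, \Delta_k g}_{=: R(f,g)} + \underbrace{\sum_{k<j-1} \Delta_j f \, \Delta_k g}_{=: T_g f},
\end{equation*}
the standard splitting into two paraproducts and a resonant term. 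The spectral support of $\Delta_j f \, \Delta_k g$ determines which Littlewood--Paley blocks $\Delta_\kappa(fg)$ it contributes to, and this localisation is exactly what makes the estimates possible.

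For the paraproduct $T_g f = \sum_{k<j-1} \Delta_j f \, \Delta_k g$, each summand has spectral support in an annulus of size $\sim 2^j$, so $\Delta_\kappa(T_g f)$ picks up only finitely many $j$'s with $j \sim \kappa$. Using $\|\Delta_j f\|_{L^{p_1}} \lesssim 2^{-j\alpha}\|f\|_{\BB^\alpha_{p_1,q_1}}$ together with the uniform bound $\|S_{j-1} g\|_{L^{p_2}} \lesssim \|g\|_{L^{p_2}} \lesssim \|g\|_{\BB^\beta_{p_2,q_2}}$ (which requires $\beta>0$ so that the low-frequency sum $\sum_{k<j-1} \Delta_k g$ converges absolutely in $L^{p_2}$) and Hölder's inequality, one obtains
\begin{equation*}
2^{\kappa \alpha} \|\Delta_\kappa(T_g f)\|_{L^p} \lesssim \|g\|_{\BB^\beta_{p_2,q_2}} \sum_{j\sim \kappa} 2^{(\kappa-j)\alpha} \, 2^{j\alpha}\|\Delta_j f\|_{L^{p_1}},
\end{equation*}
and the $\ell^q$-norm in $\kappa$ is controlled by Young's convolution inequality together with the $q_1,q_2$-Hölder relation. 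The term $T_f g$ is symmetric but asymmetric in the indices: since now $g$ carries the high frequency, $\Delta_\kappa(T_f g)$ localises at $\kappa \sim k$, so we bound $\|S_{k-1} f\|_{L^{p_1}} \lesssim \sum_{j<k-1} 2^{-j\alpha}\|f\|_{\BB^\alpha_{p_1,q_1}} \cdot 2^{j\alpha}$, and here the negativity $\alpha<0$ is crucial because otherwise the geometric sum diverges; we get a factor $2^{-k\alpha}$, which combines with $\|\Delta_k g\|_{L^{p_2}} \lesssim 2^{-k\beta}\|g\|_{\BB^\beta_{p_2,q_2}}$ to give the desired $2^{-\kappa\alpha}$ bound.

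The resonant term $R(f,g) = \sum_{|j-k|\leq 1} \Delta_j f \, \Delta_k g$ is where the main obstacle lies: each summand has spectral support in a \emph{ball} of size $\sim 2^k$ (not an annulus), so \emph{every} high block $\Delta_\kappa$ picks up all contributions from $j,k \gtrsim \kappa$. Thus
\begin{equation*}
\|\Delta_\kappa R(f,g)\|_{L^p} \lesssim \sum_{k\geq \kappa - N_0} \|\Delta_k f\|_{L^{p_1}} \|\Delta_k g\|_{L^{p_2}} \lesssim \sum_{k\geq \kappa - N_0} 2^{-k(\alpha+\beta)} a_k b_k,
\end{equation*}
with $(a_k) \in \ell^{q_1}$, $(b_k) \in \ell^{q_2}$. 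Multiplying by $2^{\kappa\alpha}$ and writing $2^{\kappa\alpha} 2^{-k(\alpha+\beta)} = 2^{(\kappa-k)\alpha} \cdot 2^{-k\beta}$, the assumption $\alpha+\beta>0$ makes the tail summable: one can bound the expression by a discrete convolution of an $\ell^1$ sequence with the product $(a_k b_k)$, and Hölder plus Young then closes the estimate in $\ell^q$. This is precisely where the hypothesis $\alpha+\beta>0$ is used, and it is the only non-routine point in the argument.
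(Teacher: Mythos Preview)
The paper does not supply its own proof of this proposition; it is stated in the appendix as a quotation of \cite[Corollary~2]{MW17i} and used as a black box throughout. Your argument via the Bony paraproduct decomposition is the standard one and is correct in outline.

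One point to tighten in the resonant term: the factorisation you write, $2^{\kappa\al}2^{-k(\al+\bt)} = 2^{(\kappa-k)\al}\cdot 2^{-k\bt}$, does not directly yield an $\ell^1$ convolution kernel, since $m\mapsto 2^{m\al}\mathbf{1}_{\{m\leq N_0\}}$ diverges as $m\to -\infty$ (here $\al<0$). The clean factorisation is
\[
2^{\kappa\al}2^{-k(\al+\bt)} = 2^{(\kappa-k)(\al+\bt)}\cdot 2^{-\kappa\bt},
\]
so that the kernel $m\mapsto 2^{m(\al+\bt)}\mathbf{1}_{\{m\leq N_0\}}$ lies in $\ell^1$ precisely because $\al+\bt>0$; Young's inequality then gives the $\ell^q$ bound for the convolution with $(a_kb_k)\in\ell^q$, and the remaining factor $2^{-\kappa\bt}$ is harmless since $\bt>0$. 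Equivalently, one first shows $R(f,g)\in\BB^{\al+\bt}_{p,q}$ and then embeds into $\BB^\al_{p,q}$. With this adjustment your proof is complete.
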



\begin{proposition}[{\cite[Proposition 10]{MW17i}}] \label{prop:comp_emb} For every $\al<\al'$ the embedding 
$\CC^{\al'}\hookrightarrow\BB^\al_{\infty,1}$ is compact.
\end{proposition}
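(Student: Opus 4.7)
The plan is to combine three ingredients: the elementary fact that $\al<\al'$ forces summability of the weights in the $\BB^\al_{\infty,1}$-norm; a Bernstein-type equicontinuity estimate on each individual Littlewood--Paley block; and a diagonal Arzel\`a--Ascoli extraction.

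First I would verify that the embedding is bounded: for any $f\in \CC^{\al'}$,
\begin{equation*}
\|f\|_{\BB^\al_{\infty,1}}=\sum_{\kappa\geq -1}2^{\al\kappa}\|f*\eta_\kappa\|_{L^\infty}\leq \|f\|_{\CC^{\al'}}\sum_{\kappa\geq -1}2^{(\al-\al')\kappa}\leq C \|f\|_{\CC^{\al'}},
\end{equation*}
since $\al-\al'<0$ makes the geometric series converge.

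For compactness, let $\{f_n\}_{n\geq 1}\subset \CC^{\al'}$ with $M:=\sup_n\|f_n\|_{\CC^{\al'}}<\infty$. Fix $\kappa\geq -1$. Since $\eta_\kappa$ has Fourier support in an annulus of size $\sim 2^\kappa$, the block $f_n*\eta_\kappa$ is a smooth function on $\TT^2$ whose Fourier support lies in a ball of radius $\lesssim 2^\kappa$. By Bernstein's inequality
\begin{equation*}
\|\nabla(f_n*\eta_\kappa)\|_{L^\infty}\lesssim 2^\kappa \|f_n*\eta_\kappa\|_{L^\infty}\lesssim 2^{(1-\al')\kappa}M,
\end{equation*}
so for each fixed $\kappa$ the family $\{f_n*\eta_\kappa\}_n$ is uniformly bounded and uniformly Lipschitz on the compact torus $\TT^2$. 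By Arzel\`a--Ascoli it is precompact in $L^\infty(\TT^2)$. A standard diagonal argument across $\kappa\geq -1$ then produces a subsequence, still denoted $\{f_n\}$, such that $f_n*\eta_\kappa$ converges in $L^\infty$ for every fixed $\kappa$.

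It remains to prove this subsequence is Cauchy in $\BB^\al_{\infty,1}$. Given $\varepsilon>0$, use the tail bound
\begin{equation*}
\sum_{\kappa>K}2^{\al\kappa}\|(f_n-f_m)*\eta_\kappa\|_{L^\infty}\leq 2M\sum_{\kappa>K}2^{(\al-\al')\kappa},
\end{equation*}
which can be made smaller than $\varepsilon/2$ by choosing $K$ large, uniformly in $n,m$. For the remaining finite sum $\sum_{-1\leq \kappa\leq K}2^{\al\kappa}\|(f_n-f_m)*\eta_\kappa\|_{L^\infty}$, the $L^\infty$-convergence of each block ensures this is less than $\varepsilon/2$ for $n,m$ large. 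Hence $\{f_n\}$ is Cauchy in $\BB^\al_{\infty,1}$, proving compactness.

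There is no real obstacle here: the argument is the standard compactness-of-Besov-embedding trick, and the only point requiring a little care is writing the diagonal extraction cleanly and using the decay $2^{(\al-\al')\kappa}$ to decouple the high-frequency tail from the low-frequency convergence.
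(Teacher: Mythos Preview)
Your argument is correct and follows the standard route to compactness of Besov embeddings. Note, however, that the paper does not supply its own proof of this proposition: it is simply quoted from \cite{MW17i}, so there is nothing to compare against. Your proof is self-contained and sound. The geometric-series bound gives continuity of the embedding; Bernstein's inequality plus Arzel\`a--Ascoli on the compact torus yields precompactness of each fixed block in $L^\infty$; and the tail/finite-sum splitting upgrades the diagonal extraction to a genuine $\BB^\al_{\infty,1}$-Cauchy sequence. Since the paper defines Besov spaces as completions of $\CC^\infty$ (Definition~\ref{def:besov}), completeness is automatic and the Cauchy sequence converges in the target space, which is all that is needed.
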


\begin{proposition}[{\cite[Proposition A.6]{MW17ii}}] \label{prop:besov_grad_est} For every $p\in[1,\infty)$ 
\begin{equation*} 
 \|f\|_{\BB^1_{p,\infty}} \leq C (\|\nabla f\|_{L^p} + \|f\|_{L^p}).
\end{equation*}
\end{proposition}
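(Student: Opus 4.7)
The plan is to estimate $\|f*\eta_\kappa\|_{L^p}$ for each dyadic block separately using the Fourier support of $\chi_\kappa = \hat\eta_\kappa$, and then take the supremum in $\kappa$. The low-frequency block $\kappa = -1$ is trivial: since $\eta_{-1}$ is a fixed Schwartz function, Young's convolution inequality yields $\|f*\eta_{-1}\|_{L^p} \leq \|\eta_{-1}\|_{L^1} \|f\|_{L^p} \lesssim \|f\|_{L^p}$, which contributes the $\|f\|_{L^p}$ term on the right-hand side.

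For $\kappa \geq 0$, I would exploit the fact that $\chi_\kappa$ is supported in an annulus of the form $\{|\xi| \sim 2^\kappa\}$, bounded away from the origin. On this annulus $1/|\xi|^2$ is smooth, so I can factor the Fourier multiplier as
\begin{equation*}
\chi_\kappa(\xi) = \sum_{j=1}^{d} m_\kappa^{(j)}(\xi) \cdot i\xi_j, \qquad m_\kappa^{(j)}(\xi) := \frac{-i\xi_j}{|\xi|^2}\chi_\kappa(\xi),
\end{equation*}
so that $f * \eta_\kappa = \sum_j (\partial_j f) * \check{m}_\kappa^{(j)}$, where $\check{m}_\kappa^{(j)}$ is the inverse Fourier transform of $m_\kappa^{(j)}$. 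Using the standard dyadic scaling $\chi_\kappa(\xi) = \chi(2^{-\kappa}\xi)$ for $\kappa \geq 0$, a change of variables gives $\check{m}_\kappa^{(j)}(x) = 2^{-\kappa} \cdot 2^{\kappa d}\check{m}^{(j)}(2^\kappa x)$ where $\check{m}^{(j)}$ is a fixed Schwartz function (since $\chi$ has compact support away from the origin, $m^{(j)}$ is smooth with compact support). In particular $\|\check{m}_\kappa^{(j)}\|_{L^1} \lesssim 2^{-\kappa}$ uniformly in $\kappa$. By Young's inequality,
\begin{equation*}
\|f*\eta_\kappa\|_{L^p} \leq \sum_{j=1}^{d} \|\partial_j f\|_{L^p} \|\check{m}_\kappa^{(j)}\|_{L^1} \lesssim 2^{-\kappa}\|\nabla f\|_{L^p},
\end{equation*}
which gives $2^\kappa \|f*\eta_\kappa\|_{L^p} \lesssim \|\nabla f\|_{L^p}$.

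Taking the supremum over $\kappa \geq -1$ of both contributions yields the claim. The only real subtlety I anticipate is that this proposition is stated on the torus, so Fourier analysis takes place on $\ZZ^2$ rather than on $\RR^2$; the argument however is essentially unchanged, since the partition of unity on $\TT^2$ still localises to $|k|\sim 2^\kappa$ and the convolution kernels $\check{m}_\kappa^{(j)}$ on the torus can be obtained by periodisation of their $\RR^2$ counterparts, whose $L^1(\TT^2)$ norms inherit the scaling $2^{-\kappa}$ from the Schwartz decay on $\RR^2$. This periodisation step is the only minor obstacle, but it is routine given that $\check{m}^{(j)}$ has rapid decay.
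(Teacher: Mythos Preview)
Your proof is correct and is the standard argument for this estimate. Note, however, that the paper does not actually prove this proposition: it is quoted verbatim from \cite[Proposition~A.6]{MW17ii} and stated without proof in the appendix, so there is no ``paper's own proof'' to compare against. The argument you give --- splitting off the low-frequency block by Young's inequality, and for $\kappa\geq 0$ factoring the annular multiplier as $\chi_\kappa(\xi)=\sum_j m_\kappa^{(j)}(\xi)\, i\xi_j$ with $\|\check m_\kappa^{(j)}\|_{L^1}\lesssim 2^{-\kappa}$ by dyadic scaling --- is exactly the approach one finds in standard references (and presumably in \cite{MW17ii} as well). Your remark on periodisation for the torus setting is accurate and handles the only nontrivial adaptation.
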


\begin{proposition}[{\cite[Corollary A.8]{MW17ii}}] \label{prop:Lp_mult} Let $\al>0$ and $p,q\in[1,\infty]$. Then
\begin{equation} \label{eq:Lp_mult}
 \|f^2\|_{\BB^\al_{p,q}} \leq C \|f\|_{L^{p_1}} \|f\|_{\BB^\al_{p_2,q}},
\end{equation}
where $p = \frac{1}{p_1}+\frac{1}{p_2}$. 
\end{proposition}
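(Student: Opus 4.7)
The plan is to establish this product estimate via a paraproduct (Bony) decomposition, which is the standard tool for these Besov‐type bilinear bounds in the periodic setting used by the paper. Write
\[
f^2 \;=\; 2\, T_f f \;+\; \Pi(f,f),
\]
where $T_f g := \sum_{\kappa} (S_{\kappa-1} f)(g * \eta_\kappa)$ is the low–high paraproduct and $\Pi(f,g)$ is the resonant term collecting the diagonal blocks $|k - k'| \le 1$. Here $S_{\kappa-1}$ denotes the low‐frequency cutoff associated with the same dyadic partition $\{\eta_\kappa\}_\kappa$ used in Definition \ref{def:besov}. The whole argument will consist in bounding the paraproduct and the resonant pieces separately, with the right side of \eqref{eq:Lp_mult} as target.

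First I would handle the paraproduct $T_f f$. Using the Fourier support property that the frequencies of $(S_{\kappa-1} f)(f*\eta_\kappa)$ are localised in a dyadic annulus of scale $2^\kappa$, together with Hölder's inequality with exponents $\frac{1}{p} = \frac{1}{p_1} + \frac{1}{p_2}$, one obtains
\[
\|T_f f * \eta_\kappa\|_{L^p} \;\lesssim\; \|S_{\kappa-1} f\|_{L^{p_1}} \, \|f * \eta_\kappa\|_{L^{p_2}}
\;\lesssim\; \|f\|_{L^{p_1}} \, \|f * \eta_\kappa\|_{L^{p_2}},
\]
since the low-frequency cutoff $S_{\kappa-1}$ is bounded on every $L^{p_1}$. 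Multiplying by $2^{\al\kappa}$ and taking the $\ell^q$-norm yields $\|T_f f\|_{\BB^\al_{p,q}} \lesssim \|f\|_{L^{p_1}} \|f\|_{\BB^\al_{p_2,q}}$. Note that this step does not use $\al > 0$; the positivity of $\al$ is only needed for the resonant piece.

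For the resonant term $\Pi(f,f)$, the standard estimate (see e.g.\ \cite[Theorem 2.85]{BCD11}, adapted to the torus as in \cite[Section 4.2]{MW17i}) gives, for any $\beta_1, \beta_2 \in \RR$ with $\beta_1 + \beta_2 > 0$ and matched exponents $\frac{1}{p} = \frac{1}{p_1}+\frac{1}{p_2}$, $\frac{1}{q} = \frac{1}{q_1}+\frac{1}{q_2}$,
\[
\|\Pi(f,g)\|_{\BB^{\beta_1+\beta_2}_{p,q}} \;\lesssim\; \|f\|_{\BB^{\beta_1}_{p_1,q_1}} \, \|g\|_{\BB^{\beta_2}_{p_2,q_2}} .
\]
Applied with $\beta_1 = 0$, $\beta_2 = \al$, $q_1 = \infty$, $q_2 = q$ (so $\beta_1+\beta_2 = \al > 0$ exactly where the hypothesis is used), this gives
\[
\|\Pi(f,f)\|_{\BB^\al_{p,q}} \;\lesssim\; \|f\|_{\BB^0_{p_1,\infty}} \, \|f\|_{\BB^\al_{p_2,q}}
\;\lesssim\; \|f\|_{L^{p_1}} \, \|f\|_{\BB^\al_{p_2,q}},
\]
where the last inequality is the left-hand embedding in \eqref{eq:Lp_besov_control}. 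Combining the two estimates yields \eqref{eq:Lp_mult}.

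The only potentially delicate point is verifying that the resonant‐term estimate holds in the full generality of exponents $(p_1,p_2,q)$ on the torus — the Bernstein inequalities and almost‐orthogonality arguments behind it go through without change in the periodic setting, but one should check that the dyadic block structure from Definition \ref{def:besov} is the one used in \cite{BCD11}; this is recorded in \cite[Section 4.2]{MW17i}. Once that is in place there is no further obstacle, and the proof is in fact just the $f = g$ specialisation of the general bilinear estimate with one factor in $\BB^0_{p_1,\infty} \supset L^{p_1}$.
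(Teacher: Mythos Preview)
The paper does not supply its own proof of this proposition; it is simply recorded in the appendix with a citation to \cite[Corollary~A.8]{MW17ii}. Your paraproduct argument is correct and is the standard route to such bilinear estimates (and is in all likelihood how the cited reference proceeds): the Bony decomposition $f^2 = 2T_f f + \Pi(f,f)$, the paraproduct bound via frequency localisation and H\"older, and the resonant estimate at combined regularity $0+\al>0$ followed by the embedding $L^{p_1}\hookrightarrow \BB^0_{p_1,\infty}$ from \eqref{eq:Lp_besov_control}.
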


In the next proposition we prove convergence of the Galerkin approximations $\Pi_Nf$ to $f$ in Besov spaces. Here we
use that the projection $\Pi_Nf$ is defined as the convolution of $f$ with the $2$-dimensional square Dirichlet kernel,
which satisfies a logarithmic growth bound in the $L^1$ norm.  

\begin{proposition} \label{prop:proj_bound} Let $\Pi_N: L^2 \to L^2$ be the projection on $\{f \in L^2: f(z)=\sum_{|k|\leq N} \hat f(k) 
L^{-2}\ee^{2\ii\pi k\cdot z/L}\}$. Then for every $\al\in \RR$, $p,q\in[1,\infty]$ and $\lambda>0$ 
\begin{align}
 & \|\Pi_Nf -f\|_{\BB^{\al}_{p,q}} \leq \frac{C(\log N)^2}{N^\lambda} \|f\|_{\BB^{\al+\lambda}_{p,q}} \label{eq:proj_diff_bound} \\
 & \|\Pi_Nf\|_{\BB^{\al}_{p,q}} \leq C \|f\|_{\BB^{\al+\lambda}_{p,q}}. \label{eq:proj_bound}
\end{align}
If we furthermore assume that $p=2$ then
\begin{align}
 & \|\Pi_Nf -f\|_{\BB^{\al}_{2,q}} \leq \frac{C}{N^\lambda} \|f\|_{\BB^{\al+\lambda}_{2,q}} \label{eq:proj_diff_bound_2}\\
 & \|\Pi_Nf\|_{\BB^{\al}_{2,q}} \leq \|f\|_{\BB^{\al}_{2,q}}. \label{eq:proj_bound_2} 
\end{align}
\end{proposition}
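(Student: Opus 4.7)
The key observation is the identity $\Pi_N f = f \ast D_N$, combined with the fact that $D_N(z) = D_N^{(1)}(z_1) D_N^{(1)}(z_2)$ factorises as a product of one-dimensional square Dirichlet kernels. Consequently the classical Lebesgue-constant estimate $\|D_N^{(1)}\|_{L^1(\TT)} \lesssim \log N$ yields $\|D_N\|_{L^1(\TT^2)} \lesssim (\log N)^2$. When $p=2$ this will be replaced by the trivial Parseval bound $\|\Pi_N g\|_{L^2} \leq \|g\|_{L^2}$, which is precisely what removes the logarithmic loss in \eqref{eq:proj_diff_bound_2} and \eqref{eq:proj_bound_2}.

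The plan is to split the Littlewood--Paley scales $\kappa$ into three regimes according to the size of $2^\kappa$ relative to $N$. Since $\Pi_N$ and convolution with $\eta_\kappa$ are both Fourier multipliers they commute, so $(\Pi_N f - f) \ast \eta_\kappa = \Pi_N(f \ast \eta_\kappa) - f \ast \eta_\kappa$. For $2^\kappa \ll N$ the Fourier support of $\eta_\kappa$ lies inside $\{|k|\leq N\}$, hence $\Pi_N(f \ast \eta_\kappa) = f\ast \eta_\kappa$ and the difference vanishes. For $2^\kappa \gg N$ the Fourier support of $\eta_\kappa$ is disjoint from $\{|k|\leq N\}$, hence $\Pi_N(f \ast \eta_\kappa) = 0$ and the difference equals $-f \ast \eta_\kappa$. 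For the borderline regime $2^\kappa \sim N$, which contains only an $O(1)$ number of scales, I would apply Young's inequality to get $\|\Pi_N(f\ast\eta_\kappa)\|_{L^p} \leq \|D_N\|_{L^1}\|f\ast\eta_\kappa\|_{L^p} \lesssim (\log N)^2 \|f\ast\eta_\kappa\|_{L^p}$.

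Summing these estimates using the defining expression \eqref{eq:besov_norm} and the elementary trade
\begin{equation*}
2^{\al\kappa} = 2^{(\al+\lambda)\kappa}\cdot 2^{-\lambda\kappa} \lesssim 2^{(\al+\lambda)\kappa}\, N^{-\lambda} \qquad \text{whenever } 2^\kappa \gtrsim N,
\end{equation*}
I would immediately obtain \eqref{eq:proj_diff_bound}. The bound \eqref{eq:proj_bound} then follows from the triangle inequality $\|\Pi_N f\|_{\BB^\al_{p,q}} \leq \|\Pi_N f - f\|_{\BB^\al_{p,q}} + \|f\|_{\BB^\al_{p,q}}$, \eqref{eq:alpha_beta_ineq}, and the fact that $(\log N)^2 N^{-\lambda}$ is uniformly bounded in $N\geq 1$.

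For the $p=2$ statements, the same three-regime decomposition applies but the intermediate step is carried out with Parseval rather than Young: $\|\Pi_N g\|_{L^2}^2 = \sum_{|k|\leq N} |\hat g(k)|^2 \leq \|g\|_{L^2}^2$. Applying this with $g = f\ast\eta_\kappa$ for every $\kappa$ and taking $\ell^q$ norms gives \eqref{eq:proj_bound_2} directly, and the same trade argument as above then yields \eqref{eq:proj_diff_bound_2} without the $(\log N)^2$ prefactor. The only mildly delicate point is the Lebesgue-constant estimate for the two-dimensional square Dirichlet kernel, but this reduces at once to the one-dimensional case via the tensor product structure.
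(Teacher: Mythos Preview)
Your proposal is correct and follows essentially the same argument as the paper: the three-regime Littlewood--Paley decomposition, the identity $\Pi_N f = f\ast D_N$ combined with Young's inequality and the Lebesgue-constant bound $\|D_N\|_{L^1}\lesssim(\log N)^2$ from the tensor-product structure, and Parseval for the $p=2$ improvement. The only cosmetic difference is that you derive \eqref{eq:proj_bound} from \eqref{eq:proj_diff_bound} via the triangle inequality, whereas the paper reads both bounds off the same casewise estimate directly.
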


\begin{proof} We first notice that for $c_2>c_1>0$
\begin{equation*}
 \delta_\kappa\left(\Pi_Nf - f\right) =
 \begin{cases}
  0 & , \text{ if } 2^\kappa \leq c_1 N \\
  \delta_\kappa f &, \text{ if } 2^\kappa > c_2 N
 \end{cases}
 .
\end{equation*}
Let $D_N(z)=\sum_{|k|\leq N} L^{-2} \ee^{-2\ii\pi k\cdot z/L}$ be the square Dirichlet kernel. Then $\Pi_N f = f*D_N$. Using 
the triangle inequality and Young's inequality for convolution we have that
\begin{equation*}
 \|\delta_\kappa\left( \Pi_Nf - f\right)\|_{L^p} \leq (\|D_N\|_{L^1}+1) \|\delta_\kappa f\|_{L^p}.
\end{equation*}
Thus
\begin{equation*}
\|\delta_\kappa(\Pi_Nf - f)\|_{L^p} \leq
\begin{cases}
 0 & , \text{ if } 2^\kappa \leq c_1N \\
 C (\log N)^2 \|\delta_\kappa f\|_{L^p} & , \text{ if } c_1N \leq 2^\kappa < c_2 N \\
 \|\delta_\kappa f\|_{L^p} & , \text{ if } 2^\kappa > c_2 N
\end{cases}
\end{equation*}
where in the second case we use that $\|D_N\|_{L^1}\lesssim (\log N)^2$. This bound immediate form the fact that the $2$-dimensional
square Dirichlet kernel is the product of two $1$-dimensional Dirichlet kernels 
(see \cite[Section 3.1.3]{Gr14}). The last implies \eqref{eq:proj_diff_bound} and \eqref{eq:proj_bound}. For $p=2$ we notice that 
\begin{equation*}
\|\delta_\kappa \Pi_Nf\|_{L^2} \leq \| \delta_\kappa f\|_{L^2}
\end{equation*}
which implies \eqref{eq:proj_diff_bound_2} and \eqref{eq:proj_bound_2}.
\end{proof}

\stepcounter{section}
\section*{\large Appendix \thesection}

\begin{lemma}[Generalised Gronwall lemma] \label{lem:grwl} Let $f:[0,T] \to \RR$ be a measurable function and $\sigma_1+\sigma_2<1$ such that 
\begin{equation*}
 f(t) \leq \ee^{-c_0t} a + b \int_0^t \ee^{-c_0(t-s)} (t-s)^{-\sigma_1} s^{-\sigma_2} f(s) \dd s.
\end{equation*}
Then there exists $c,C>0$ such that
\begin{equation*}
 f(t) \leq C \exp\left\{-c_0t + c b^{\frac{1}{1-\sigma_1-\sigma_2}} t\right\} a.
\end{equation*}
\end{lemma}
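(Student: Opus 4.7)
The plan is to absorb the exponential factor by setting $g(t) := \ee^{c_0 t} f(t)$, which reduces the hypothesis to
$$g(t) \leq a + b \int_0^t (t-s)^{-\sigma_1} s^{-\sigma_2} g(s) \,\dd s,$$
and the conclusion to $g(t) \leq Ca \exp\!\bigl(c\, b^{1/(1-\sigma_1-\sigma_2)}\, t\bigr)$. For a parameter $\lambda > 0$ to be chosen of the order $b^{1/(1-\sigma_1-\sigma_2)}$, I would then work with the weighted supremum $F(T) := \sup_{t \in [0,T]} \ee^{-\lambda t} g(t)$, which is finite under the implicit local boundedness of $f$. Inserting $g(s) \leq \ee^{\lambda s} F(T)$ into the integral, dividing by $\ee^{\lambda t}$, and taking the supremum over $t \leq T$, the inequality collapses into a fixed-point form
$$F(T) \leq a + b\, K(\lambda)\, F(T), \qquad K(\lambda) := \sup_{t > 0} \int_0^t u^{-\sigma_1} (t-u)^{-\sigma_2} \ee^{-\lambda u}\,\dd u,$$
so it suffices to choose $\lambda$ making $bK(\lambda) \leq \tfrac12$, which then forces $F(T) \leq 2a$ and hence $g(t) \leq 2a\, \ee^{\lambda t}$.

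The heart of the argument, and the only real obstacle, is the scale-invariant estimate $K(\lambda) \leq C(\sigma_1,\sigma_2)\, \lambda^{-(1-\sigma_1-\sigma_2)}$. Two complementary bounds are available: the exact Beta-integral $\int_0^t u^{-\sigma_1}(t-u)^{-\sigma_2}\,\dd u = B(1-\sigma_1, 1-\sigma_2) \, t^{1-\sigma_1-\sigma_2}$, which is good for small $t$ but grows in $t$, and the splitting of the integral at $u = t/2$, which yields a contribution of order $t^{-\sigma_2} \lambda^{\sigma_1-1}$ from $[0,t/2]$ (using $(t-u)^{-\sigma_2} \leq 2^{\sigma_2} t^{-\sigma_2}$ and $\int_0^\infty u^{-\sigma_1}\ee^{-\lambda u}\,\dd u = \Gamma(1-\sigma_1)\lambda^{\sigma_1-1}$) together with an exponentially decaying piece from $[t/2,t]$ of order $t^{1-\sigma_1-\sigma_2}\ee^{-\lambda t/2}$. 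The two families of bounds agree at the crossover scale $t \sim \lambda^{-1}$, both evaluating to $\lambda^{-(1-\sigma_1-\sigma_2)}$: one uses the Beta bound for $t \leq \lambda^{-1}$ and the split bound for $t \geq \lambda^{-1}$, producing the desired uniform estimate after a short optimisation that exploits $\sigma_2 \geq 0$ and $1 - \sigma_1 - \sigma_2 > 0$.

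Plugging this estimate into the fixed-point inequality and taking $\lambda := c\, b^{1/(1-\sigma_1-\sigma_2)}$ with $c$ chosen so that $b \, C(\sigma_1,\sigma_2)\, \lambda^{-(1-\sigma_1-\sigma_2)} \leq \tfrac12$ (which is possible with $c$ depending only on $\sigma_1,\sigma_2$) gives $F(T) \leq 2a$ uniformly in $T$, hence $g(t) \leq 2a\, \ee^{\lambda t}$. Reinstating the original variable yields $f(t) \leq 2a \exp\!\bigl(-c_0 t + c\, b^{1/(1-\sigma_1-\sigma_2)} t\bigr)$, which is the statement with $C = 2$. The only subtle point to address is ensuring $F(T) < \infty$ so that the bootstrap closes; this can be done either by adding local boundedness of $f$ to the hypotheses (needed in any case for the kernel to be integrable against $f$), or by an initial application of the Beta-integral bound on the short interval $[0,t_0]$ with $t_0 = (2b B(1-\sigma_1,1-\sigma_2))^{-1/(1-\sigma_1-\sigma_2)}$, which already gives $f \leq 2a$ on $[0,t_0]$ and then propagates by the main argument.
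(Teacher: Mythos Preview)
Your argument is correct. The paper's own proof makes exactly the same substitution $g(t)=\ee^{c_0 t}f(t)$ that you start with, but then simply invokes \cite[Lemma~5.7]{HW13} for the reduced inequality rather than proving it. So at the level of what is actually written in the paper, your proposal strictly contains more: you supply a self-contained proof where the paper defers to an external reference.

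As for method, your weighted-supremum / contraction argument is one of the two standard routes to such singular Gronwall bounds; the other (and the one typically found in references like \cite{HW13} or Henry's monograph) is Picard iteration, where one iterates the integral inequality and sums the resulting Volterra series into a Mittag--Leffler-type function, which is then bounded by the stated exponential. Both give the same exponent $b^{1/(1-\sigma_1-\sigma_2)}$. Your approach is arguably cleaner here because it sidesteps the bookkeeping of iterated Beta integrals; the only mild cost is the need to justify $F(T)<\infty$ a priori, which you handle correctly via the short-time Beta bound. Note also that your splitting argument for $K(\lambda)$ implicitly uses $\sigma_1,\sigma_2\in[0,1)$ individually (not just $\sigma_1+\sigma_2<1$), which is the regime in which the lemma is applied throughout the paper, though it is not stated explicitly in the lemma.
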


\begin{proof} The lemma is essentially \cite[Lemma 5.7]{HW13} if we set $x(t) = \ee^{c_0t} f(t)$ with their notation.
\end{proof}

\begin{lemma}\label{lem:int_bd} Let $\al+\bt <1$ and $c>0$. Then
\begin{equation*}
 \sup_{t\geq 0} \int_0^t (t-s)^{-\al} (s\wedge 1)^{-\bt} \ee^{-c(t-s)} \dd s <\infty.
\end{equation*}
\end{lemma}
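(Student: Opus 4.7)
The plan is to split the range of integration so that the factor $(s \wedge 1)^{-\beta}$ simplifies, and then to reduce each piece either to a Beta integral (using the hypothesis $\alpha+\beta<1$) or to an incomplete Gamma integral (using $c>0$). Throughout, I will treat the cases $t \leq 1$ and $t > 1$ separately.

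For $t \leq 1$, on the whole interval $[0,t]$ we have $s \wedge 1 = s$ and $\ee^{-c(t-s)} \leq 1$, so the integral is bounded by $\int_0^t (t-s)^{-\al} s^{-\bt}\dd s$. The substitution $s = tu$ converts this into $t^{1-\al-\bt}\int_0^1 (1-u)^{-\al} u^{-\bt}\dd u = t^{1-\al-\bt} B(1-\al,1-\bt)$, and since $1-\al-\bt > 0$ and $t \leq 1$, this quantity is bounded uniformly by $B(1-\al,1-\bt)$, which is finite since $\al,\bt<1$.

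For $t > 1$ I will split the integral as $\int_0^1 + \int_1^t$. On $[1,t]$ we have $(s \wedge 1)^{-\bt} = 1$, and the substitution $r = t-s$ gives $\int_0^{t-1} r^{-\al}\ee^{-cr}\dd r \leq \Gamma(1-\al) c^{\al-1}$, a uniform bound. On $[0,1]$ we have $(s\wedge1)^{-\bt} = s^{-\bt}$, and I will further split into $t \in (1,2]$ and $t > 2$. For $t > 2$, on $[0,1]$ we have $t-s \geq 1$, so $(t-s)^{-\al}$ is bounded and the exponential factor $\ee^{-c(t-s)} \leq \ee^{-c(t-1)}$ is uniformly bounded; the remaining $\int_0^1 s^{-\bt}\dd s = (1-\bt)^{-1}$ is finite. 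For $t \in (1,2]$ the scaling $s = tu$ as in the first case reduces the $[0,1]$ piece to a bounded multiple of $B(1-\al,1-\bt)$.

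No real obstacle is expected. The argument only uses the finiteness of $B(1-\al,1-\bt)$ (from $\al+\bt<1$), the finiteness of $\Gamma(1-\al)$ (which follows from $\al<1$, itself implied by $\al+\bt<1$ together with $\bt \geq 0$ in the intended applications), and the exponential decay provided by $c>0$. Collecting the four uniform bounds yields the desired supremum estimate.
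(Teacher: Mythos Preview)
Your proof is correct and follows essentially the same approach as the paper: split at $s=1$, handle $\int_1^t$ via a Gamma-type integral after the substitution $r=t-s$, and handle $\int_0^1$ (and the case $t\le 1$) via the Beta integral $B(1-\al,1-\bt)$ together with the exponential decay. The only cosmetic difference is that for the piece $\int_0^1$ with $t\ge 1$ the paper bounds $\ee^{-c(t-s)}\lesssim \ee^{-ct}$ and absorbs the resulting polynomial in $t$ into the exponential, whereas you instead subdivide $t\in(1,2]$ versus $t>2$; both variants rely implicitly on $\al,\bt\ge 0$, which is the setting used in the paper.
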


\begin{proof} Assume $t\geq 1$. Then
\begin{align*}
 \int_0^1 (t-s)^{-\al} (s\wedge 1)^{-\bt} \ee^{-c(t-s)} \dd s \lesssim \ee^{-ct} \int_0^t (t-s)^{-\al} (s\wedge 1)^{-\bt} \dd s 
 \lesssim t^{1-\al-\bt} \ee^{-ct}  
\end{align*}
and
\begin{align*}
 \int_1^t (t-s)^{-\al} (s\wedge 1)^{-\bt} \ee^{-c(t-s)} \dd s & \leq \int_0^t s^{-\al} \ee^{-cs} \dd s
 \lesssim 1 + \int_1^t s^{-\al} \ee^{-cs} \dd s 
 \lesssim 1+ \int_1^t \ee^{-cs} \dd s.
\end{align*}
The above implies that 
\begin{equation*}
 \sup_{t\geq 1} \int_0^t (t-s)^{-\al} (s\wedge 1)^{-\bt} \ee^{-c(t-s)} \dd s <\infty.
\end{equation*}
The bound for $t\leq 1$ follows easily.
\end{proof}

\stepcounter{section}
\section*{\large Appendix \thesection}

Propositions \ref{prop:stat_conv} and \ref{prop:unst} are a consequence of \cite[Section 8]{FJL82} and \cite[Appendix B.1]{KORV07}. 
Although the results in \cite[Section 8]{FJL82} concern 1 space-dimension they can be easily generalised in 2 space-dimensions.
For consistency we have also replaced the space $H^1$ appearing in \cite[Section 8]{FJL82} by $\BB^1_{2,2}$. The fact that these
spaces coincide is immediate from Definition \ref{def:besov} for $p=q=2$ if we rewrite $\|f*\eta_k\|_{L^2}$ using Plancherel's identity.  

\begin{proposition} \label{prop:stat_conv} For every $x\in \BB^1_{2,2}$ there exists $x_*\in \{-1,0,1\}$ such that $X_{det}(t;x) 
\stackrel{\BB^1_{2,2}}{\to} x_*$.
\end{proposition}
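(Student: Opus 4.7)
The proof is a classical LaSalle invariance argument adapted to the gradient flow structure of the deterministic Allen--Cahn equation. The plan is to exploit the fact that $X_{det}$ is the $L^2$-gradient flow of $V$ defined in \eqref{eq:V_def}, combined with the fact that under the assumption $L < 2\pi$ the set of stationary solutions consists precisely of the three constants $\{-1, 0, 1\}$ (see \cite[Appendix B.1]{KORV07}).

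First I would establish the energy dissipation identity
\begin{equation*}
\frac{d}{dt} V(X_{det}(t;x)) = -\|\partial_t X_{det}(t;x)\|_{L^2}^2,
\end{equation*}
together with the obvious lower bound $V(X) \geq -\frac{L^2}{4}$ coming from pointwise minimization of the double-well density. Integrating in time yields $\int_0^\infty \|\partial_t X_{det}(t;x)\|_{L^2}^2 \, dt < \infty$ and $V(X_{det}(t;x)) \searrow V_\infty$ for some $V_\infty \in \mathbb{R}$. In particular $\sup_{t \geq 0} \|X_{det}(t;x)\|_{\BB^1_{2,2}} < \infty$, since the $H^1$ norm is controlled by $V$ plus the conserved-in-sign $L^4$ term.

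Next, I would invoke parabolic smoothing: for any $t \geq 1$, standard Schauder-type estimates for the semilinear heat equation on the torus upgrade the uniform $\BB^1_{2,2}$ bound to a uniform bound in $\CC^{2+\lambda}$ for some $\lambda > 0$ (cf. Proposition \ref{prop:Linfty_2+lambda_reg}, used elsewhere in the paper). Hence the trajectory $\{X_{det}(t;x) : t \geq 1\}$ is precompact in $\BB^1_{2,2}$ by the compact embedding $\CC^{2+\lambda} \hookrightarrow \BB^1_{2,2}$ (Proposition \ref{prop:comp_emb}). Let $\omega(x)$ be the $\omega$-limit set in $\BB^1_{2,2}$; it is nonempty, compact, and connected. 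For any $x_* \in \omega(x)$ one picks $t_n \to \infty$ with $X_{det}(t_n;x) \to x_*$; because $\int_0^\infty \|\partial_t X_{det}\|_{L^2}^2 \, dt < \infty$ one can, after relabelling, arrange $\|\partial_t X_{det}(t_n;x)\|_{L^2} \to 0$, which combined with the equation forces $-\Delta x_* + x_*^3 - x_* = 0$ in the sense of distributions. By the classification of stationary solutions for $L < 2\pi$ this gives $x_* \in \{-1, 0, 1\}$.

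The final and most delicate step is to upgrade the subsequential convergence to full convergence: since $\omega(x)$ is connected and contained in the finite (hence totally disconnected) set $\{-1, 0, 1\}$, it must be a singleton $\{x_*\}$, which yields $X_{det}(t;x) \to x_*$ in $\BB^1_{2,2}$. The main obstacle in making this rigorous is justifying the precompactness in the $\BB^1_{2,2}$ topology uniformly in $t$: one needs the gradient-flow trajectory to be genuinely bounded in a space that compactly embeds into $\BB^1_{2,2}$, and for this the parabolic regularisation step and the uniform-in-time $H^1$ bound (which requires carefully combining the decreasing $V$ with the coercivity of $\int |\nabla X|^2$ controlled by $V(X) + \|X\|_{L^4}^4$) must be executed with some care. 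Once this compactness is in place, the LaSalle argument via connectedness of $\omega(x)$ closes the proof with no further analytical work.
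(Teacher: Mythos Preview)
The paper does not actually prove this proposition; it merely cites \cite[Section~8]{FJL82} (noting that the one-dimensional argument there carries over to $d=2$) together with \cite[Appendix~B.1]{KORV07} for the classification of equilibria when $L<2\pi$. Your LaSalle-type argument is precisely the classical one those references contain: gradient-flow energy dissipation, precompactness of the orbit via parabolic smoothing, and connectedness of the $\omega$-limit set combined with discreteness of the equilibrium set $\{-1,0,1\}$. So your approach is essentially the cited proof, written out.

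One step deserves a little more care. From $\int_0^\infty \|\partial_t X_{det}\|_{L^2}^2\,dt<\infty$ alone you cannot, for a \emph{given} $x_*\in\omega(x)$, relabel so that $\|\partial_t X_{det}(t_n;x)\|_{L^2}\to 0$ along the \emph{same} sequence witnessing $X_{det}(t_n;x)\to x_*$. The cleanest fix is the standard one: use invariance of $\omega(x)$ under the flow. For $x_*\in\omega(x)$ the trajectory $s\mapsto X_{det}(s;x_*)$ remains in $\omega(x)$, on which $V\equiv V_\infty$; the dissipation identity then forces $\partial_s X_{det}(s;x_*)=0$, so $x_*$ is an equilibrium. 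Alternatively, the uniform $\CC^{2+\lambda}$ bound you already invoke gives, via the equation, enough time-regularity on $\partial_t X_{det}$ that a Barbalat-type argument upgrades the $L^2$-in-time bound to $\|\partial_t X_{det}(t;x)\|_{L^2}\to 0$ as $t\to\infty$, after which your argument goes through verbatim.
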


\begin{proposition} \label{prop:unst} For every $\delta>0$ there exists $x_\pm\in B_{\BB^1_{2,2}}(0;\delta)$ such that 
$X_{det}(t;x_\pm) \stackrel{\BB^1_{2,2}}{\to} \pm 1$.
\end{proposition}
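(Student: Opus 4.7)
The plan is to exploit the very simple structure of the deterministic flow near the unstable equilibrium $0$. Since $L < 2\pi$, the linearisation of \eqref{eq:det_AC} around $0$, namely $\partial_t Y = \Delta Y + Y$, has eigenvalues $1 - (2\pi|k|/L)^2$ on the Fourier modes indexed by $k \in \ZZ^2$. All modes with $|k| \geq 1$ are strictly stable (since $2\pi/L > 1$), and only the constant mode $k=0$ is unstable with eigenvalue $1$. This suggests building the perturbations $x_\pm$ purely from the unstable direction, i.e.\ as small \emph{constant} profiles.

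Concretely, I would take $x_\pm := \pm \eta \cdot \mathbf{1}_{\TT^2}$ for a sufficiently small $\eta \in (0,1)$, chosen so that $\|x_\pm\|_{\BB^1_{2,2}} < \delta$. The key observation is that the deterministic flow preserves the space of spatially constant functions: if $y_0 \in \RR$ and $Y(t,z) \equiv y(t)$, then $\Delta Y = 0$ and $-Y^3 + Y = -y(t)^3 + y(t)$ is again constant, so $X_{det}(t; \pm \eta)$ coincides at all times with the solution of the one-dimensional ODE
\begin{equation*}
\dot y = y - y^3, \qquad y(0) = \pm \eta.
\end{equation*}
The scalar double-well dynamics has $\pm 1$ as attracting fixed points with basins $(0,\infty)$ and $(-\infty,0)$, respectively. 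Hence $y(t) \to \pm 1$ in $\RR$ as $t \to \infty$.

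To upgrade the scalar convergence to convergence in $\BB^1_{2,2}$, it suffices to note that for spatially constant profiles, only the zero-th Fourier mode is non-vanishing and $\|c \cdot \mathbf{1}_{\TT^2}\|_{\BB^1_{2,2}}$ equals a fixed multiple of $|c|$ (read off directly from Definition~\ref{def:besov} via Plancherel, since the identification $\BB^1_{2,2} \cong H^1$ reduces the norm of a constant to its $L^2$-norm). Therefore
\begin{equation*}
\|X_{det}(t; \pm \eta) - (\pm 1)\|_{\BB^1_{2,2}} = C_L\, |y(t) \mp 1| \to 0,
\end{equation*}
which gives the two statements simultaneously (after adjusting $\eta$ downward so that $C_L\eta < \delta$).

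There is essentially no hard step here: the whole argument is a two-line reduction to a scalar ODE thanks to the fact that the unique unstable direction at $0$ is the constant mode. The only minor check is the norm computation for constants in $\BB^1_{2,2}$, which follows immediately from the Littlewood--Paley definition since the support of the Fourier transform of a constant lies in the first block $\eta_{-1}$. A more sophisticated alternative (unstable-manifold theorem for the semi-flow on $\BB^1_{2,2}$, as in \cite[Section 8]{FJL82}) would also work and would yield $x_\pm$ on the one-dimensional unstable manifold, but the explicit constant construction bypasses this machinery entirely.
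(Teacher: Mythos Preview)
Your proof is correct. The reduction to the scalar ODE $\dot y = y - y^3$ via spatially constant initial data is valid because the deterministic flow preserves constants, and the norm computation for constants in $\BB^1_{2,2}$ is as you describe (only the lowest Littlewood--Paley block survives).

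The paper does not actually supply a proof of this proposition: it simply records that Propositions~\ref{prop:stat_conv} and~\ref{prop:unst} follow from \cite[Section~8]{FJL82} and \cite[Appendix~B.1]{KORV07}, with the remark that the one-dimensional arguments of \cite{FJL82} extend to two dimensions. That route goes through the general gradient-flow structure and the classification of stationary solutions, and would in principle furnish points on the full unstable manifold of $0$. Your argument is strictly more elementary and self-contained: by working directly in the one-dimensional invariant subspace of constant functions you bypass any abstract dynamical-systems machinery. The only thing you lose is generality you do not need---for the purposes of this paper (in particular Steps~4--6 of Propositions~\ref{prop:1st_entry} and~\ref{prop:prefact_free_est}) the explicit constants $x_\pm = \pm\eta$ are perfectly adequate, and indeed make the subsequent energy estimates slightly cleaner since $x_\pm \in \CC^\infty$ trivially.
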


\begin{proposition} \label{prop:Linfty_2+lambda_reg} Let $R>0$. Then there exists $C\equiv C(R)>0$ such that for every $\lambda>0$ 
sufficiently small 
\begin{equation*}
 \sup_{\|x\|_{\CC^{-\al_0}}\leq R} \|X_{det}(1;x)\|_{\CC^{2+\lambda}} \leq C.
\end{equation*}
\end{proposition}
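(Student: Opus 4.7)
My plan is to combine the local solution theory of \cite[Theorems 3.3, 3.9]{TW18}, applied to the $\eps=0$ case of \eqref{eq:rem_eq} (which coincides with the deterministic equation \eqref{eq:det_AC}), with a parabolic maximum-principle argument for global $L^\infty$ control and a Besov-space bootstrap for higher regularity.

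First I would apply the local theory to obtain a time $\tau_*\equiv\tau_*(R)\in(0,1/4)$ and a constant $C_1\equiv C_1(R)$ such that
\[
\sup_{\|x\|_{\CC^{-\al_0}}\leq R}\,\sup_{t\in(0,\tau_*]}\, t^\gamma\|X_{det}(t;x)\|_{\CC^\bt}\leq C_1.
\]
Via the embedding $\CC^\bt\hookrightarrow L^\infty$ (valid for $\bt>0$), this gives a uniform $L^\infty$ bound for $X_{det}(\tau_*;x)$. Next, to propagate this bound to all of $[\tau_*,1]$, I would invoke parabolic comparison: for any $M\geq 1$, the constant function $u\equiv M$ is a classical supersolution of $(\partial_t-\Delta)u = -u^3+u$ since $(\partial_t-\Delta)M = 0 \geq M(1-M^2)$. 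Choosing $M=\max(\|X_{det}(\tau_*;x)\|_{L^\infty},1)$ and the analogous lower barrier $-M$ yields $\sup_{t\in[\tau_*,1]}\|X_{det}(t;x)\|_{L^\infty}\leq C_2\equiv C_2(R)$.

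Finally I would perform a two-step Besov bootstrap based on the mild formulation on $[\tau_*,1]$ combined with \eqref{eq:Heat_Smooth} and \eqref{eq:mult_ineq_1}. A first Duhamel iteration, starting from the $L^\infty$ bound of the previous step and using that $N(X):=-X^3+X$ is a polynomial, produces $\sup_{t\in[(\tau_*+1)/2,1]}\|X_{det}(t;x)\|_{\CC^\bt}\leq C_3(R)$ (the time integral $\int s^{-\bt/2}\,\dd s$ is trivially finite). A second iteration with input $\CC^\bt$ and output $\CC^{2+\lambda}$ at $t=1$ then closes the argument provided the singular integral $\int_0^{(1-\tau_*)/2} s^{-(2+\lambda-\bt)/2}\,\dd s$ converges, which is equivalent to $\lambda<\bt$. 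Since the paper's conventions allow $\bt$ arbitrarily close to $2/3$, any sufficiently small $\lambda>0$ is admissible.

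The main obstacle is the uniformity in the first step: one must verify that both $\tau_*$ and $C_1$ produced by the fixed-point construction in \cite[Theorem 3.3]{TW18} depend on $x$ only through $\|x\|_{\CC^{-\al_0}}$. This follows by inspection of the contraction argument there, where the estimates see the initial condition only through its $\CC^{-\al_0}$ norm, but it is the step requiring care since the local theory was developed with the stochastic perturbation in mind. The maximum-principle step and the Besov bootstrap are then routine applications of tools already present in the paper.
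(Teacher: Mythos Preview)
Your proof is correct but takes a longer route than the paper's. The paper observes that \cite[Theorems 3.3 and 3.9]{TW18} already yield a \emph{global} $\CC^\bt$ bound uniform over the ball,
\[
\sup_{\|x\|_{\CC^{-\al_0}}\leq R}\,\sup_{t\leq 1}\, t^\gamma\|X_{det}(t;x)\|_{\CC^\bt}\leq C(R),
\]
and then performs a single Duhamel bootstrap from $\CC^\bt$ to $\CC^{2+\lambda}$ on the interval $[1/2,1]$, exactly as in your final step (with the same integrability condition $\lambda<\bt$). You instead extract only the local-in-time bound from \cite{TW18}, then rebuild global control via the maximum principle, which forces an additional bootstrap from $L^\infty$ back to $\CC^\bt$ before you can reach $\CC^{2+\lambda}$. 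Your approach has the virtue of being more self-contained---the comparison argument for the Allen--Cahn nonlinearity is elementary and avoids appealing to the uniform a~priori estimate in Theorem~3.9---but the paper's version is shorter precisely because that uniform estimate is already available. The uniformity concern you flag as the ``main obstacle'' is in fact resolved directly by the cited results, so your detour through the maximum principle, while valid, is unnecessary.
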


\begin{proof} By \cite[Theorem 3.3, Theorem 3.9]{TW18} there exists $C\equiv C(R)>0$ such that 
\begin{equation*}
 \sup_{\|x\|_{\CC^{-\al_0}}\leq R}\sup_{t\leq 1} t^\gamma \|X_{\det}(t;x)\|_{\CC^\bt} \leq C.
\end{equation*}
Let $S(t) = \ee^{\Delta t}$. Using the mild form we write
\begin{equation*}
 X_{det}(1;x) = S(1/2)X_{det}\left(1/2;x\right) - \int_{1/2}^1 S(1-s)\left(X_{det}(s;x)^3 + X_{det}(s;x)\right) \dd s. 
\end{equation*}
Then
\begin{align*}
 & \|X_{det}(1;x)\|_{\CC^{2+\lambda}} \\
 & \quad \lesssim \|X_{det}\left(1/2;x\right)\|_{\CC^\bt} + \int_{1/2}^1 (1-s)^{-\frac{2+\lambda-\bt}{2}} 
 \left(\|X_{det}(s;x)\|_{\CC^\bt}^3 + \|X_{det}(s;x)\|_{\CC^\bt}\right)
\end{align*}
and if we choose $\lambda <\bt$ the above implies that 
\begin{equation*}
 \sup_{\|x\|_{\CC^{-\al_0}}\leq R} \|X_{det}(1;x)\|_{\CC^{2+\lambda}} \lesssim 
 \sup_{\|x\|_{\CC^{-\al_0}}\leq R}\sup_{t\leq 1} t^{3\gamma} \|X_{\det}(t;x)\|_{\CC^\bt}^3 
 +\sup_{\|x\|_{\CC^{-\al_0}}\leq R}\sup_{t\leq 1} t^\gamma \|X_{\det}(t;x)\|_{\CC^\bt}.
\end{equation*}
\end{proof}

\stepcounter{section}
\section*{\large Appendix \thesection}

\begin{proposition} \label{prop:exp_mom} For every $n\geq 1$ there exists $c\equiv c(n)>0$ such that 
\begin{equation*}
 \sup_{k\geq 0}\EE \exp\left\{c \left(\sup_{t\in[k,k+1]} (t\wedge 1)^{(n-1)\al'} \|\<n>(t)\|_{\CC^{-\al}}\right)^{\frac{2}{n}}\right\} 
 < \infty.
\end{equation*}
\end{proposition}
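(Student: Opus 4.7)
The plan is to combine polynomial moment bounds on $\<n>$ with Gaussian hypercontractivity and a Taylor expansion of the exponential. The $2/n$ power appearing in the exponent reflects the fact that $\<n>(t,z)$ is a random variable in the $n$-th inhomogeneous Wiener chaos with respect to the driving white noise $\xi$: for such a variable $X$, Nelson's inequality gives $\|X\|_{L^p(\Omega)}\leq (p-1)^{n/2}\|X\|_{L^2(\Omega)}$ for every $p\geq 2$, and a random variable whose $L^p$-norm grows like $p^{n/2}$ has exponential moments of order $2/n$ by a standard Fernique-type computation.

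First, I would establish the polynomial moment bound: for every $p\geq 2$,
\begin{equation*}
\EE\left[\sup_{t\in[k,k+1]}\bigl((t\wedge 1)^{(n-1)\al'}\|\<n>(t)\|_{\CC^{-\al}}\bigr)^p\right]^{1/p} \leq C\, p^{n/2},
\end{equation*}
uniformly in $k\geq 0$, with $C$ depending only on $n,\al,\al'$. This comes from applying Nelson's hypercontractivity to each Littlewood--Paley block $\delta_\kappa \<n>(t,z)$ -- which also lives in the $n$-th Wiener chaos -- then inserting the resulting bound into $\EE\|\<n>(t)\|^p_{\BB^{-\bt}_{p,p}}$ for a $\bt$ slightly larger than $\al$, passing from $\BB^{-\bt}_{p,p}$ to $\CC^{-\al}$ via the Besov embedding \eqref{eq:Besov_Emb} for $p$ large, and producing the supremum over $[k,k+1]$ by a Kolmogorov-type argument applied to $\<n>(t)-\<n>(s)$ with the same hypercontractive second-moment input. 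All the $L^2(\Omega)$ calculations needed are essentially those already carried out in the proofs of \cite[Propositions 2.2 and 2.3]{TW18}. The stationarity in $t$ of $\<1>$ (owing to the mass-$1$ Laplacian) and the prefactor $(t\wedge 1)^{(n-1)\al'}$, which absorbs the small-time singularity of the Wick powers, together deliver the uniformity in $k$.

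Second, writing $Z_k$ for the supremum appearing in the statement, I would Taylor-expand the exponential:
\begin{equation*}
\EE\exp\bigl(c Z_k^{2/n}\bigr)=1+\sum_{m\geq 1}\frac{c^m}{m!}\,\EE\bigl[Z_k^{2m/n}\bigr].
\end{equation*}
For $m\geq n$, applying the moment bound above with $p=2m/n$ gives $\EE[Z_k^{2m/n}]\leq C^{2m/n}(2m/n)^m$. Combined with Stirling's inequality $m!\geq (m/e)^m$, the $m$-th term is bounded by a constant multiple of $(2ecC^{2/n}/n)^m$. Choosing $c\equiv c(n)>0$ so small that $2ecC^{2/n}/n<1$ makes the series convergent uniformly in $k\geq 0$, and the finitely many initial terms are harmless since $Z_k$ already has all polynomial moments.

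The main obstacle is the first step: not merely showing finiteness of $\EE Z_k^p$, but pinning down the sharp $p^{n/2}$ growth with constants independent of $p$ and $k$. This requires tracking hypercontractivity carefully at the level of individual Paley blocks and through the Besov embedding and Kolmogorov argument, so that the final dependence on $p$ is exactly what the chaos structure predicts. This is by now a standard technique in the singular SPDE literature and the necessary computations are minor adaptations of those already present in \cite[Section 2]{TW18}; once it is done, the Taylor-expansion step is essentially automatic.
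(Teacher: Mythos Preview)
Your proposal is correct and follows essentially the same approach as the paper: establish the sharp moment bound $\EE[Z_k^p]\leq C_n^{p/2}(p-1)^{np/2}$ uniformly in $k$ by tracking Nelson's hypercontractivity through the argument of \cite[Theorem~2.1]{TW18}, then Taylor-expand the exponential and choose $c$ small enough for the resulting series to converge. The paper's proof is terser but structurally identical to what you describe.
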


\begin{proof} Following step by step the proof of \cite[Theorem 2.1]{TW18} but using the explicit bound in Nelson's
estimate \cite[Equation (B.3)]{TW18} (see also \cite[Section 1.6]{Bo07}), we have that for every $p\geq 1$
\begin{equation*}
 \sup_{k\geq 0}\EE\left(\sup_{t\in[k,k+1]} (t\wedge 1)^{(n-1)\al'} \|\<n>(t)\|_{\CC^{-\al}}\right)^p \leq (p-1) ^{\frac{n}{2}p} C_n^\frac{p}{2},
\end{equation*}
for some $C_n>0$. Then for any $c>0$
\begin{align*} 
 & \EE\exp\left\{c\left(\sup_{t\in[k,k+1]} (t\wedge 1)^{(n-1)\al'} \|\<n>(t)\|_{\CC^{-\al}}\right)^{\frac{2}{n}}\right\} \\
 & \quad = 
 \sum_{k\geq0} \frac{c^p \EE\left(\sup_{t\in[k,k+1]} (t\wedge 1)^{(n-1)\al'}
 \|\<n>_{-\infty}(t)\|_{\CC^{-\al}}\right)^{\frac{2}{n}p}}{p!} 
 \leq \sum_{p\geq0} \frac{c^p (p-1)^p (C_n)^\frac{p}{n}}{p!}
\end{align*}
and by choosing $c\equiv c(n)>0$ sufficiently small the series converges.
\end{proof}

\stepcounter{section}
\section*{\large Appendix \thesection}

\begin{lemma}\label{lem:decr_exp} Let $g_1,\tilde g_1$ be positive random variables such that
\begin{equation*}
 \PP(g_1 \geq g) \leq \PP(\tilde g_1 \geq g)
\end{equation*}
for every $g\geq 0$ and let $F$ be a positive decreasing measurable function on $[0,\infty)$. Then
\begin{equation*}
 \int_0^\infty F(g) \, \mu_{g_1}(\dd g) \geq \int_0^\infty F(g) \, \mu_{\tilde g_1}(\dd g)
\end{equation*}
where $\mu_{g_1}$ and $\mu_{\tilde g_1}$ is the law of $g_1$ and $\tilde g_1$.
\end{lemma}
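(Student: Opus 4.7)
The plan is to reduce the inequality between expectations to a pointwise inequality via a standard comonotone coupling and then exploit monotonicity of $F$.

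First, I would define the generalised inverse (quantile) functions of $g_1$ and $\tilde g_1$ by
\begin{equation*}
G_1(u) := \inf\{g \geq 0 : \PP(g_1 \leq g) \geq u\}, \qquad \tilde G_1(u) := \inf\{g \geq 0 : \PP(\tilde g_1 \leq g) \geq u\},
\end{equation*}
for $u \in (0,1)$. Letting $U$ be a uniform random variable on $(0,1)$ on some auxiliary probability space, the random variables $G_1(U)$ and $\tilde G_1(U)$ have laws $\mu_{g_1}$ and $\mu_{\tilde g_1}$ respectively, so it suffices to prove that $F(G_1(U)) \geq F(\tilde G_1(U))$ almost surely.

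The key observation is that the hypothesis $\PP(g_1 \geq g) \leq \PP(\tilde g_1 \geq g)$ for every $g \geq 0$ is equivalent to $\PP(g_1 \leq g) \geq \PP(\tilde g_1 \leq g)$ for every $g \geq 0$ (using $\PP(g_1 = g) = \PP(g_1 \geq g) - \PP(g_1 > g)$ and a limiting argument, or simply by taking complements once one observes both variables are positive). From this, a direct check on the definition of the generalised inverses gives $G_1(u) \leq \tilde G_1(u)$ for every $u \in (0,1)$: indeed if $\PP(\tilde g_1 \leq g) \geq u$ then a fortiori $\PP(g_1 \leq g) \geq u$, so the infimum defining $G_1(u)$ is at most that defining $\tilde G_1(u)$. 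Since $F$ is decreasing on $[0,\infty)$ it follows that $F(G_1(u)) \geq F(\tilde G_1(u))$ for every $u \in (0,1)$.

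Taking expectations with respect to $U$ and using positivity of $F$ to apply Fubini/monotone convergence yields
\begin{equation*}
\int_0^\infty F(g) \, \mu_{g_1}(\dd g) = \EE F(G_1(U)) \geq \EE F(\tilde G_1(U)) = \int_0^\infty F(g) \, \mu_{\tilde g_1}(\dd g),
\end{equation*}
which is the desired inequality. The only mild technical point is justifying the equivalence of the two forms of stochastic dominance at points of discontinuity of the distribution functions; this is handled by the right-continuity of CDFs. No essential obstacle arises, and the argument is a routine application of comonotone coupling.
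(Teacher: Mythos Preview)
Your proof is correct, but it follows a genuinely different route from the paper's. The paper first assumes $F$ is smooth, writes
\[
\int_0^\infty F(g)\,\mu_{g_1}(\dd g) = F(0) + \int_0^\infty F'(g)\,\PP(g_1 \geq g)\,\dd g
\]
via integration by parts, and then uses $F'\leq 0$ together with the tail inequality directly; the general case is recovered by mollifying $F$ and passing to the limit. Your argument instead builds a comonotone coupling through the quantile functions, reducing the statement to the pointwise inequality $F(G_1(u)) \geq F(\tilde G_1(u))$, with no smoothness assumption and no approximation step needed. Both approaches are standard; yours is arguably cleaner here because it handles all measurable decreasing $F$ in one stroke, whereas the paper's method trades that for a slightly more elementary computation that avoids invoking the quantile representation. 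The only minor comment is that your appeal to ``Fubini/monotone convergence'' is unnecessary: once $F\geq 0$ is measurable, $\EE F(G_1(U))$ is automatically well-defined in $[0,\infty]$ and equals $\int F\,\dd\mu_{g_1}$ by the change-of-variables formula.
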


\begin{proof} We first assume that $F$ is smooth. Then $\frac{\dd}{\dd g}F(g) \leq 0$ for every $g\geq 0$. 
Hence 
\begin{align*}
 \int_0^\infty F(g) \, \mu_{g_1}(\dd g) & = F(0) + \int_0^\infty \frac{\dd}{\dd g}F(g) \, \PP(g_1 \geq g) \dd g \geq F(0) + \int_0^\infty \frac{\dd}{\dd g}F(g) \, \PP(\tilde g_1 \geq g) \dd g \\ 
 & = \int_0^\infty F(g) \, \mu_{\tilde g_1}(\dd g)
\end{align*}
which proves the estimate for $F$ differentiable. To prove the estimate for a general decreasing function $F$ we define $F_\delta = F*\eta_\delta$ for
some positive mollifier $\eta_\delta$ to preserve monotonicity and use the last estimate together with the dominated convergence theorem.
\end{proof}

\stepcounter{section}
\hypertarget{proof:X_approx_conv}{} 
\section*{\large Appendix \thesection} 

\begin{proof}[Proof of Proposition \ref{prop:X_approx_conv}] By \cite[Proposition 2.3]{TW18} for every $\al>0$, 
$p\geq 1$ and $T>0$
\begin{equation*}
\lim_{N\to\infty} \EE\left(\sup_{t\leq T} (t\wedge 1)^{(n-1)\al'} \|\<n>_N(t) - \<n>(t)\|_{\CC^{-\al}}\right)^p = 0.
\end{equation*}
Hence $\sup_{t\leq T} (t\wedge 1)^{(n-1)\al'} \|\<n>_N(t) - \<n>(t)\|_{\CC^{-\al}}$ convergences to $0$ in probability.

It is enough to prove that
\begin{equation*}
\lim_{N\to\infty} \sup_{x\in \aleph} \sup_{t\leq T} (t\wedge1)^\gm \|v_N(t;x_N) - v(t;x)\|_{\CC^\bt} = 0. 
\end{equation*}
This, convergence in probability of $\sup_{t\leq T}\|\<1>_N(t) - \<1>(t)\|_{\CC^{-\al}}$ to $0$ and the embedding 
$\CC^\bt \subset \CC^{-\al}$ (see \eqref{eq:alpha_beta_ineq}) imply the result.  

Let $S(t) = \ee^{\Delta t}$. For simplicity we write $v_N(t)$ and $v(t)$ to 
denote $v_N(t;x_N)$ and $v(t;x)$. Using the mild forms of \eqref{eq:v_approx} and \eqref{eq:rem_eq} we get
\begin{align}
 \|v_N(t) - v(t)\|_{\CC^\bt} & \leq \underbrace{\|S(t)(x_N - x)\|_{\CC^\bt}}_{=:I_1} 
 + \underbrace{\int_0^t \|S(t-s) [\Pi_N(v_N(s)^3) - v(s)^3]\|_{\CC^\bt} \dd s}_{=:I_2} \label{eq:gal_conv}\\
 & \quad + 3 \underbrace{\int_0^t \|S(t-s) \left[\Pi_N\left(v_N(s)^2\eps^{\frac{1}{2}}\<1>_N(s)\right) - v(s)^2\eps^{\frac{1}{2}}\<1>(s)\right]\|_{\CC^\bt} \dd s}_{=:I_3} \nonumber\\
 & \quad + 3 \underbrace{\int_0^t \|S(t-s) [\Pi_N(v_N(s) \eps \<2>_N(s)) - v(s) \eps \<2>(s)]\|_{\CC^\bt} \dd s}_{=:I_4}  \nonumber \\ 
 & \quad + \underbrace{\int_0^t \|S(t-s)\left(\Pi_N \eps^{\frac{3}{2}} \<3>_N(s) - \eps^{\frac{3}{2}} \<3>(s)\right)\|_{\CC^\bt} \dd s}_{=:I_5} \nonumber \\ 
 & \quad + \underbrace{2 \int_0^t \|S(t-s) \left(\eps^{\frac{1}{2}} \<1>_N(s) - \eps^{\frac{1}{2}} \<1>(s)\right)\|_{\CC^\bt} \dd s}_{=:I_6}  \nonumber \\
 & \quad + \underbrace{\int_0^t \|S(t-s) (v_N(s) - v(s))\|_{\CC^\bt} \dd s}_{=:I_7} . \nonumber
\end{align}
Let $\iota=\inf\{t>0:(t\wedge 1)^\gamma\|v_N(t) - v(t)\|_{\CC^\bt}\geq 1\}$ and $t\leq T\wedge \iota$.
We treat each of the terms in \eqref{eq:gal_conv} separately. Below the parameters $\al$ and $\lambda$ can be 
taken arbitrarily small and all the implicit constants depend on $\sup_{t\leq T}(t\wedge 1)^{(n-1)\al'} \|\<n>(t)\|_{\CC^{-\al}}$, 
and $\sup_{x\in \aleph}\sup_{t\leq T}(t\wedge 1)^\gamma \|v(t)\|_{\CC^\bt}$.

\underline{Term $I_1$}:
\begin{equation*}
 I_1 \stackrel{\eqref{eq:Heat_Smooth}}{\lesssim} (t\wedge 1)^{-\frac{\al_0+\bt}{2}} 
 \sup_{x\in \aleph} \|x_N - x\|_{\CC^{-\al_0}}
\end{equation*}
\underline{Term $I_2$}: 
\begin{align*}
 I_2 & \stackrel{\eqref{eq:Heat_Smooth}}{\lesssim}
 \int_0^t \left((t-s)^{-\frac{\lambda}{2}} \|\Pi_N(v_N(s)^3) - v_N(s)^3\|_{\CC^{\bt-\lambda}} + \|v_N(s)^3 - v(s)^3\|_{\CC^\bt}\right) \dd s\\
 & \stackrel{\eqref{eq:proj_diff_bound}}{\lesssim} 
 \int_0^t (t-s)^{-\frac{\lambda}{2}} \left(\frac{(\log N)^2}{N^\lambda} \|v_N(s)^3\|_{\CC^\bt} 
 + \|v_N(s)^3 - v(s)^3\|_{\CC^{\bt-\lambda}} \right) \dd s \\
 & \stackrel{\eqref{eq:mult_ineq_1}}{\lesssim} 
 \int_0^t \bigg[(t-s)^{-\frac{\lambda}{2}} \frac{(\log N)^2}{N^\lambda} \|v_N(s)\|_{\CC^\bt}^3 
 + \|v_N(s) - v(s)\|_{\CC^\bt} \\
 & \quad \times \left(\|v_N(s)\|_{\CC^\bt}^2 + \|v_N(s)\|_{\CC^\bt} \|v(s)\|_{\CC^\bt} 
 + \|v(s)\|_{\CC^\bt}^2\right)\bigg] \dd s \\
 & \lesssim \int_0^t \left((t-s)^{-\frac{\bt+\frac{2}{p}-1}{2}} \frac{(\log N)^2}{N^\lambda} (s\wedge 1)^{-3\gamma} 
 + (s\wedge 1)^{-2\gamma} \|v_N(s) - v(s)\|_{\CC^\bt} \right) \dd s.
\end{align*}
\underline{Term $I_3$}:
\begin{align*}
 I_3 & \stackrel{\eqref{eq:Heat_Smooth}}{\lesssim} 
 \int_0^t (t-s)^{-\frac{\al+\bt+\lambda}{2}} 
 \|\Pi_N(v_N(s)^2\<1>_N(s)) - v(s)^2\<1>(s)\|_{\CC^{-\al-\lambda}} \dd s\\
 & \stackrel{\eqref{eq:proj_diff_bound}}{\lesssim} \int_0^t (t-s)^{-\frac{\al+\bt+\lambda}{2}} \bigg(\frac{(\log N)^2}{N^\lambda} 
 \|v_N(s)^2\<1>_N(s)\|_{\CC^{-\al}} 
 + \|v_N(s)^2 (\<1>_N(s) - \<1>(s))\|_{\CC^{-\al}} \\
 & \quad + \|\<1>(s) (v_N(s)^2 - v(s)^2)\|_{\CC^{-\al}} \bigg) \dd s\\
 & \stackrel{\eqref{eq:mult_ineq_2},\eqref{eq:mult_ineq_1}}{\lesssim} 
 \int_0^t (t-s)^{-\frac{\al+\bt+\lambda}{2}} \bigg[\frac{(\log N)^2}{N^\lambda} 
 \|v_N(s)\|_{\CC^\bt}^2 \|\<1>_N(s)\|_{\CC^{-\al}} + \|v_N(s)\|_{\CC^\bt}^2 \|\<1>_N(s) - \<1>(s)\|_{\CC^{-\al}} \\
 & \quad + \left(\|v_N(s)\|_{\CC^\bt} + \|v(s)\|_{\CC^\bt}\right)
 \|v_N(s) - v(s)\|_{\CC^\bt} \|\<1>(s)\|_{\CC^{-\al}} \bigg]\dd s \\
 & \lesssim 
 \int_0^t (t-s)^{-\frac{\al+\bt+\lambda}{2}} \bigg(\frac{(\log N)^2}{N^\lambda} (s\wedge 1)^{-2\gamma} 
 + (s\wedge 1)^{-2\gamma} \|\<1>_N(s) - \<1>(s)\|_{\CC^{-\al}} \\
 & \quad + (s\wedge 1)^{-\gamma} \|v_N(s) - v(s)\|_{\CC^\bt}\bigg) \dd s.
\end{align*}
\underline{Term $I_4$}: Similarly to $I_3$, 
\begin{align*}
 I_4 & \lesssim \int_0^t (t-s)^{-\frac{\al+\bt+\lambda}{2}} \bigg(\frac{(\log N)^2}{N^\lambda} (s\wedge 1)^{-\gamma-\al'} 
 + (s\wedge 1)^{-\gamma} \|\<2>_N(s) - \<2>(s)\|_{\CC^{-\al}} \\
 & \quad + (s\wedge 1)^{-\al'} \|v_N(s) - v(s)\|_{\CC^\bt}\bigg) \dd s.
\end{align*}
\underline{Term $I_5$}:
\begin{align*}
 I_5 & \stackrel{\eqref{eq:Heat_Smooth}}{\lesssim} 
 \int_0^t (t-s)^{-\frac{\al+\bt+\lambda}{2}} \|\Pi_N\<3>_N(s) - \<3>(s)\|_{\CC^{-\al-\lambda}} \dd s \\
 & \stackrel{\eqref{eq:proj_diff_bound}}{\lesssim} \int_0^t (t-s)^{-\frac{\al+\bt+\lambda}{2}} \left(\frac{(\log N)^2}{N^\lambda} (s\wedge 1)^{-2\al'} 
 + \|\<3>_N(s) - \<3>(s)\|_{\CC^{-\al}}\right) \dd s.
\end{align*}
\underline{Terms $I_6$, $I_7$}:
\begin{align*}
 & I_6 \stackrel{\eqref{eq:Heat_Smooth}}{\lesssim} 
 \int_0^t (t-s)^{-\frac{\al+\bt}{2}} \|\<1>_N(s) - \<1>(s)\|_{\CC^{-\al}} \dd s. \\
 & I_7 \stackrel{\eqref{eq:Heat_Smooth}}{\lesssim} \int_0^t \|v_N(s) - v(s)\|_{\CC^\bt} \dd s. 
\end{align*}
Combining the above estimates we obtain that for $t\leq T\wedge \iota$ 
\begin{align*}
 \|v_N(t) - v(t)\|_{\CC^\bt} & \lesssim (t\wedge 1)^{-\frac{\al_0+\bt}{2}}  \sup_{x\in \aleph} \|x_N-x\|_{\CC^{-\al_0}} \\
 & \quad + T^{1-\frac{\al+\bt+\lambda}{2}-3\gm} \left(\frac{(\log N)^2}{N} + \sup_{t\leq T} (t\wedge 1)^{(n-1)\al'} \|\<n>_N(t) - \<n>(t)\|_{\CC^{-\al}}\right) \\
 & \quad + \int_0^t (t-s)^{-\frac{\al+\bt+\lambda}{2}} (s\wedge 1)^{-2\gm} \|v_N(s) - v(s)\|_{\CC^\bt} \dd s.
\end{align*}
By Lemma \ref{lem:grwl} on $f(t) = (t\wedge 1)^\gamma \|v_N(t) - v(t)\|_{\CC^\bt}$ we find $C\equiv C(T)>0$ such that
\begin{align*}
 & \sup_{t\leq T\wedge \iota}(t\wedge 1)^\gamma \|v_N(t) - v(t)\|_{\CC^\bt} \\
 & \quad \leq C 
 \left( \sup_{x\in \aleph} \|x_N-x\|_{\CC^{-\al_0}} + \frac{(\log N)^2}{N}
 + \sup_{t\leq T} (t\wedge 1)^{(n-1)\al'} \|\<n>_N(t) - \<n>(t)\|_{\CC^{-\al}}\right). 
\end{align*}
This and convergence of $\sup_{t\leq T}\|\<n>_N(t) - \<n>(t)\|_{\CC^{-\al}}$ to $0$ in probability 
imply the result.   
\end{proof}

\stepcounter{section}
\section*{\large Appendix \thesection}

In this section we fix $\bt\in\left(\frac{1}{3},\frac{2}{3}\right)$, $\gamma\in\left(\frac{\bt}{2},\frac{1}{3}\right)$ and $p\in(1,2)$ such that 
\begin{equation*}
 1-\frac{2}{3p} <\bt \text{ and } 1-\frac{\bt+\frac{2}{p}-1}{2}-2\gamma > 0.
\end{equation*}

The next proposition provides local existence of \eqref{eq:v_approx} in $\BB^\bt_{2,2}$ up to some time $T_*>0$
which is uniform in the regularisation parameter $N$. 

\begin{proposition} \label{prop:L_2_v_exist} Let $K, R, T>0$ such that $\|x\|_{\BB^{-\al_0}_{2,2}} \leq R$ and 
$\sup_{t\leq T} (t\wedge 1)^{(n-1)\al'}\|\<n>_N(t)\|_{\BB^{-\al}_{\infty,2}}\leq K$. Then there exist $T_*\equiv T_*(K,R) \leq T$
and $C\equiv C(K,R)>0$ such that \eqref{eq:v_approx} has a unique solution $v\in C((0,T_*];\BB^\bt_{2,2})$ satisfying
\begin{equation*}
 \sup_{t\leq T_*} (t\wedge 1)^\gamma \|v_N(t;x)\|_{\BB^\bt_{2,2}} \leq C.
\end{equation*}
\end{proposition}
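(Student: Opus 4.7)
The plan is a Picard iteration in the weighted ball
\begin{equation*}
E_{T_*, R'} = \Big\{v \in C((0, T_*]; \BB^\bt_{2,2}) : \sup_{t \leq T_*}(t \wedge 1)^\gamma \|v(t)\|_{\BB^\bt_{2,2}} \leq R'\Big\},
\end{equation*}
with the map $\mathcal{M}_N[v](t) = S(t) x_N + \int_0^t S(t-s) G_N(v(s), s) \dd s$ extracted from the mild form of \eqref{eq:v_approx}. The point of fixing $p=2$ throughout is that by \eqref{eq:proj_bound_2}, $\Pi_N$ is a contraction on $\BB^\alpha_{2,q}$, so routing every product involving $\Pi_N$ through a $p=2$ Besov space produces bounds that are uniform in $N$; this is exactly what a naive $L^p$ estimate would not give because of the logarithmic loss of the two-dimensional Dirichlet kernel. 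I would first show $\mathcal{M}_N : E_{T_*, R'} \to E_{T_*, R'}$ for $R'$ depending on $R, K$ and $T_* \leq T$ small depending on $(R, K, R')$, then contraction after possibly shrinking $T_*$.

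For the linear term \eqref{eq:Heat_Smooth} combined with $\|x_N\|_{\BB^{-\al_0}_{2,2}} \leq \|x\|_{\BB^{-\al_0}_{2,2}} \leq R$ (by \eqref{eq:proj_bound_2}) gives $(t\wedge 1)^\gamma\|S(t) x_N\|_{\BB^\bt_{2,2}} \lesssim R$ using $\gamma > (\al_0+\bt)/2$. For the cubic, the hypothesis $\bt > 1 - 2/(3p)$ and \eqref{eq:Besov_Emb} yield $\BB^\bt_{2,2} \hookrightarrow L^{3p}$, hence $\|v^3\|_{L^p} \lesssim \|v\|_{\BB^\bt_{2,2}}^3$. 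The chain $L^p \hookrightarrow \BB^0_{p,\infty} \hookrightarrow \BB^{1-2/p}_{2,\infty}$ (via \eqref{eq:Lp_besov_control} and \eqref{eq:Besov_Emb}) puts $v^3$ into a $p=2$ Besov space, and applying \eqref{eq:proj_bound_2} and then \eqref{eq:Heat_Smooth} (using \eqref{eq:q1_q2_ineq} to absorb the $q=\infty$) gives
\begin{equation*}
\|S(t-s)\Pi_N v(s)^3\|_{\BB^\bt_{2,2}} \lesssim (t-s)^{-\sigma_1} s^{-3\gamma} R'^3, \qquad \sigma_1 = \tfrac{\bt - 1 + 2/p + \lambda}{2},
\end{equation*}
for any small $\lambda>0$. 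The Beta integral times $(t\wedge 1)^\gamma$ produces a factor $T_*^{1 - \sigma_1 - 2\gamma}$, and the hypothesis $1 - (\bt + 2/p - 1)/2 - 2\gamma > 0$ is exactly what makes this exponent positive, so the cubic contribution is small for small $T_*$.

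The mixed terms $\Pi_N(v^2 \eps^{1/2}\<1>_N)$, $\Pi_N(v \eps\<2>_N)$, $\Pi_N(\eps^{3/2}\<3>_N)$, and $\eps^{1/2}\<1>_N$ are handled in the same spirit: combining \eqref{eq:mult_ineq_2} (together with \eqref{eq:Lp_mult} to distribute products inside $v^2$) with an intermediate embedding into a $p=2$ Besov space, each product is controlled in a $\BB^{-\al}_{2,q}$-type norm by $R'^{k}$ times $(s\wedge 1)^{-k\gamma - (n-1)\al'} K^n$; since $\al', \gamma < 1/3$, the time singularities at $s=0$ are integrable against the heat-kernel singularity and against the weight $(t\wedge 1)^\gamma$. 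Contraction of $\mathcal{M}_N$ in the same ball follows from reapplying these estimates to $v_1^3 - v_2^3 = (v_1 - v_2)(v_1^2 + v_1 v_2 + v_2^2)$ and $v_1^2 - v_2^2 = (v_1 - v_2)(v_1 + v_2)$, giving a Lipschitz constant of the form $C(R, K, R') T_*^\eta$ with $\eta > 0$, which is $<1$ once $T_*$ is small enough.

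The main technical obstacle is not any single estimate but the simultaneous requirement of $N$-uniform constants and closing of the scaling constraints. The projection $\Pi_N$ is not uniformly bounded on $L^p$ for $p \neq 2$, which forces every use of $\Pi_N$ through a $p=2$ Besov space; conversely, to estimate the cubic one needs at least some $L^{3p}$ control on $v$, which in turn is why $p \in (1,2)$ with $\bt > 1 - 2/(3p)$ enters the hypotheses. The remaining condition $1 - (\bt + 2/p - 1)/2 - 2\gamma > 0$ is exactly the time-integrability constraint coming from the cubic term in this geometry, and checking that the same choice of $p, \bt, \gamma$ also closes the analogous integrability constraints for the mixed terms (where $3\gamma$ is replaced by $2\gamma + (n-1)\al'$ or $\gamma + 2\al'$) is the only other nontrivial bookkeeping step.
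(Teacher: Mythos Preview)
Your proposal is correct and follows essentially the same approach as the paper: a contraction mapping argument in the weighted ball, with the key observation that $\Pi_N$ commutes with $S(\cdot)$ and is a uniform contraction on $\BB^\alpha_{2,q}$ by \eqref{eq:proj_bound_2}, together with the embedding chain $\BB^\bt_{2,2}\hookrightarrow \BB^0_{3p,2}$ (equivalently $L^{3p}$) for the cubic and the multiplicative inequalities for the mixed terms. The paper's proof differs only in cosmetic details of how the embeddings are phrased and in using a ball of radius $1$ rather than a generic $R'$.
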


\begin{proof} Let $S(t) = \ee^{\Delta t}$. We define  
\begin{align*}
 \mathscr{T}(v)(t) & := S(t)x - \int_0^t S(t-s)\Pi_N\left(v(s)^3 + 3 v(s)^2 \eps^{\frac{1}{2}}\<1>_N(s) + 3 v(s)\eps\<2>_N(s) + \eps^{\frac{3}{2}} \<3>_N(s)\right) \dd s \\
 & \quad + 2 \int_0^t S(t-s)\left(\eps^{\frac{1}{2}}\<1>_N(s) + v(s)\right) \dd s.
\end{align*}
It is enough to prove that there exists $T_*>0$ such that $\mathscr{T}$ is a contraction on 
\begin{equation*}
 \mathscr{B}_{T_*} := \left\{v: \sup_{t\leq T_*}(t\wedge 1)^\gamma\|v(t;x)\|_{\BB^\bt_{2,2}} \leq 1\right\}.
\end{equation*}
We first prove that for $T_*>0$ sufficiently small $\mathscr{T}$ maps $\mathscr{B}_{T_*}$ to itself. To do so we notice that
\begin{align*}
 \|\mathscr{T}(v)(t)\|_{\BB^\bt_{2,2}} & \lesssim \underbrace{\|S(t)x\|_{\BB^\bt_{2,2}}}_{=:I_1} + 
 \underbrace{\int_0^t \|S(t-s)v(s)^3\|_{\BB^\bt_{2,2}} \dd s}_{=:I_2} 
 + \underbrace{\int_0^t \|S(t-s)(v(s)^2\<1>_N(s))\|_{\BB^\bt_{2,2}} \dd s}_{=:I_3} \\
 & \quad + \underbrace{\int_0^t \|S(t-s)(v(s)\<2>_N(s))\|_{\BB^\bt_{2,2}} \dd s}_{=:I_4} 
 + \underbrace{\int_0^t \|S(t-s)\<3>_N(s)\|_{\BB^\bt_{2,2}} \dd s}_{=:I_5} \\
 & \quad + \underbrace{\int_0^t \|S(t-s)\<1>_N(s)\|_{\BB^\bt_{2,2}} \dd s}_{=:I_6} 
 + \underbrace{\int_0^t \|S(t-s)v(s)\|_{\BB^\bt_{2,2}} \dd s}_{=:I_7}
\end{align*}
where we use \eqref{eq:proj_bound_2} together with the relation $S(\cdot)\Pi_N = \Pi_NS(\cdot)$ to drop $\Pi_N$. 
We treat each term separately.

\underline{Term $I_1$}:
\begin{equation*}
 I_1 \stackrel{\eqref{eq:Heat_Smooth}}{\lesssim} (t\wedge 1)^{-\frac{\al_0+\bt}{2}} \|x\|_{\BB^{-\al_0}_{2,2}} \lesssim (t\wedge 1)^{-\frac{\al_0+\bt}{2}} R.
\end{equation*}
\underline{Term $I_2$}:
\begin{align*}
 I_2 & \stackrel{\eqref{eq:Besov_Emb}}{\lesssim} \int_0^t \|S(t-s)v(s)^3\|_{\BB^{\bt+\frac{2}{p}-1}_{p,2}} \dd s 
 \stackrel{\eqref{eq:Heat_Smooth}}{\lesssim} \int_0^t (t-s)^{-\frac{\bt+\frac{2}{p}-1}{2}} \|v(s)^3\|_{\BB^0_{p,2}} \dd s \\
 & \stackrel{\eqref{eq:mult_ineq_1}}{\lesssim} \int_0^t (t-s)^{-\frac{\bt+\frac{2}{p}-1}{2}} \|v(s)\|_{\BB^0_{3p,2}}^3 \dd s 
 \stackrel{\eqref{eq:Besov_Emb}}{\lesssim} \int_0^t (t-s)^{-\frac{\bt+\frac{2}{p}-1}{2}} \|v(s)\|_{\BB^{1-\frac{2}{3p}}_{2,2}}^3 \dd s \\
 & \stackrel{1-\frac{2}{3p}<\bt}{\lesssim} \int_0^t (t-s)^{-\frac{\bt+\frac{2}{p}-1}{2}} \|v(s)\|_{\BB^\bt_{2,2}}^3 \dd s 
 \lesssim \int_0^t (t-s)^{-\frac{\bt+\frac{2}{p}-1}{2}} (s\wedge 1)^{-3\gamma} \dd s.
\end{align*}
\underline{Term $I_3$}:
\begin{align*}
 I_3 & \stackrel{\eqref{eq:Besov_Emb},\eqref{eq:Heat_Smooth}}{\lesssim} 
 \int_0^t (t-s)^{-\frac{\bt+\frac{2}{p}-1+\al}{2}} \|v(s)^2\<1>_N(s)\|_{\BB^{-\al}_{p,2}} \dd s 
 \stackrel{\eqref{eq:mult_ineq_2},\eqref{eq:mult_ineq_1}}{\lesssim} 
 K \int_0^t (t-s)^{-\frac{\bt+\frac{2}{p}-1+\al}{2}} \|v(s)\|_{\BB^{\al+\lambda}_{2p,2}}^2 \dd s \\
 & \stackrel{\eqref{eq:Besov_Emb}}{\lesssim} 
 K \int_0^t (t-s)^{-\frac{\bt+\frac{2}{p}-1+\al}{2}} \|v(s)\|_{\BB^{\al+\lambda+1-\frac{1}{p}}_{2,2}}^2 \dd s 
 \stackrel{1-\frac{2}{3p}<\bt}{\lesssim} 
 K \int_0^t (t-s)^{-\frac{\bt+\frac{2}{p}-1+\al}{2}} \|v(s)\|_{\BB^\bt_{2,2}}^2 \dd s \\
 & \lesssim K \int_0^t (t-s)^{-\frac{\bt+\frac{2}{p}-1+\al}{2}} (s\wedge 1)^{-2\gamma} \dd s.
\end{align*}
\underline{Term $I_4$}:
\begin{align*}
 I_4 & \stackrel{\eqref{eq:Besov_Emb},\eqref{eq:Heat_Smooth}}{\lesssim} 
 \int_0^t (t-s)^{-\frac{\bt+\frac{2}{p}-1+\al}{2}} \|v(s)\<2>_N(s)\|_{\BB^{-\al}_{p,2}} \dd s \\
 & \stackrel{\eqref{eq:mult_ineq_2},\eqref{eq:Besov_Emb}}{\lesssim}
 K \int_0^t (t-s)^{-\frac{\bt+\frac{2}{p}-1+\al}{2}} (s\wedge1)^{-\al'} \|v(s)\|_{\BB^{\al+\lambda+1-\frac{2}{p}}_{2,2}} \dd s \\
 & \stackrel{1-\frac{2}{3p}<\bt}{\lesssim} K \int_0^t (t-s)^{-\frac{\bt+\frac{2}{p}-1+\al}{2}} (s\wedge1)^{-\al'} \|v(s)\|_{\BB^\bt_{2,2}} \dd s \\
 & \lesssim K \int_0^t (t-s)^{-\frac{\bt+\frac{2}{p}-1+\al}{2}} (s\wedge 1)^{-\gamma-\al'} \dd s.
\end{align*}
\underline{Terms $I_5$, $I_6$, $I_7$}:
\begin{align*}
 & I_5 \stackrel{\eqref{eq:Heat_Smooth}}{\lesssim} 
 \int_0^t (t-s)^{-\frac{\bt+\al}{2}} \|\<3>_N(s)\|_{\BB^{-\al}_{2,2}} \dd s 
 \lesssim K \int_0^t (t-s)^{-\frac{\bt+\al}{2}} (s\wedge 1)^{-2\al'} \dd s. \\
 & I_6 \stackrel{\eqref{eq:Heat_Smooth}}{\lesssim} 
 \int_0^t (t-s)^{-\frac{\bt+\al}{2}} \|\<1>_N(s)\|_{\BB^{-\al}_{2,2}} \dd s 
 \lesssim K \int_0^t (t-s)^{-\frac{\bt+\al}{2}} \dd s. \\
 & I_7 \stackrel{\eqref{eq:Heat_Smooth}}{\lesssim} 
 \int_0^t  \|v(s)\|_{\BB^\bt_{2,2}} \dd s \lesssim \int_0^t (s\wedge 1)^{-\gamma} \dd s.
\end{align*}
Combining all the above we find $C\equiv C(K,R)>0$ such that 
\begin{equation*}
 \sup_{t\leq T_*} (t\wedge 1)^\gamma \|\mathscr{T}(v)(t)\|_{\BB^\bt_{2,2}} \leq C T_*^\theta
\end{equation*}
for some $\theta \equiv \theta(\al,\al',\al_0,\bt,\gamma)\in (0,1)$. Choosing $T_*>0$ sufficiently small the above implies that 
\begin{equation*}
 \sup_{t\leq T_*} (t\wedge 1)^\gamma \|\mathscr{T}(v)(t)\|_{\BB^\bt_{2,2}} \leq 1.
\end{equation*}
Hence for this choice of $T_*$, $\mathscr{T}$ maps $\mathscr{B}_{T_*}$ to itself. In a similar way, but by possibly choosing a smaller value of $T_*$,
we prove that $\mathscr{T}$ is a contraction on $\mathscr{B}_{T_*}$. For simplicity we omit the proof. That way we obtain a unique solution 
$v\in C((0,T_*];\BB^\bt_{2,2})$. We can furthermore assume that $T_*$ is maximal in the sense that either $T_* = T$ or 
$\lim_{t\nearrow T_*} \|v(t;x)\|_{\BB^\bt_{2,2}} = \infty$.
\end{proof}

\begin{proposition} \label{prop:L2_to_Linfty} For every $t_0\in(0,1)$, and $K,R>0$ there exists $C\equiv C(t_0,K,R)>0$ such that if $\|x\|_{\BB^{-\al}_{2,2}}
\leq R$ and $\sup_{t\leq 1} t^{(n-1)\al'}\|\eps^{\frac{n}{2}}\<n>_N(t)\|_{\CC^{-\al}} \leq K$ then  
\begin{equation*}
 \sup_{\|x\|_{\BB^{-\al}_{2,2}} \leq R} \|X_N(t_0;x)\|_{\CC^{-\al}} \leq C.
\end{equation*}
\end{proposition}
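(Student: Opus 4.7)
The plan is to bootstrap the local $\BB^\bt_{2,2}$ bound from Proposition \ref{prop:L_2_v_exist} to a $\CC^\bt$ bound at time $t_0$, from which the desired $\CC^{-\al}$ control on $X_N(t_0;x) = v_N(t_0;x) + \eps^{\frac{1}{2}}\<1>_N(t_0)$ follows via the trivial embedding $\CC^\bt \subset \CC^{-\al}$ (since $\bt > 0 > -\al$) and the assumed bound on $\<1>_N$ in $\CC^{-\al}$. The hypotheses of Proposition \ref{prop:L_2_v_exist} are met because $\|x\|_{\BB^{-\al_0}_{2,2}} \leq \|x\|_{\BB^{-\al}_{2,2}} \leq R$ by \eqref{eq:alpha_beta_ineq}, and because $\CC^{-\al}$ embeds into $\BB^{-\al-\lambda}_{\infty, 2}$ for small $\lambda>0$ via \eqref{eq:q1_q2_ineq}. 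This yields a local existence interval $(0, T_*]$ with $T_* \equiv T_*(K, R) > 0$ and the estimate $(t\wedge 1)^\gamma \|v_N(t;x)\|_{\BB^\bt_{2,2}} \leq C(K, R)$ uniformly in $N$ and $x$. If $t_0 > T_*$, I iterate a finite number of times, using the pathwise identities of \cite[Corollary 2.4]{TW18} to express the problem in terms of the shifted stochastic objects $\<n>_{N, T_*}$ and noting that the new initial condition $X_N(T_*;x)$ has $\BB^{-\al_0}_{2,2}$-norm bounded in terms of $K, R$ alone.

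Assume now $t_0 \leq T_*$. I use the mild form of \eqref{eq:v_approx} from $t_1 := t_0/2$ to $t_0$:
\begin{equation*}
v_N(t_0) = S(t_0 - t_1) v_N(t_1) - \int_{t_1}^{t_0} S(t_0 - r) \Pi_N \bigl( v_N^3 - v_N + 3 v_N^2 \eps^{\frac{1}{2}}\<1>_N + 3 v_N \eps \<2>_N + \eps^{\frac{3}{2}} \<3>_N \bigr)(r)\, dr + 2 \int_{t_1}^{t_0} S(t_0 - r) \eps^{\frac{1}{2}} \<1>_N(r)\, dr.
\end{equation*}
The linear piece is controlled by combining the chain of embeddings $\BB^{\bt+1+\lambda}_{2,2} \subset \BB^{\bt+1}_{2,\infty} \subset \BB^\bt_{\infty, \infty} = \CC^\bt$ (from \eqref{eq:q1_q2_ineq} and \eqref{eq:Besov_Emb}) with the smoothing estimate \eqref{eq:Heat_Smooth}, yielding $\|S(t_0 - t_1) v_N(t_1)\|_{\CC^\bt} \lesssim (t_0 - t_1)^{-(1+\lambda)/2} \|v_N(t_1)\|_{\BB^\bt_{2,2}}$, which is bounded by $C(t_0, K, R)$.

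For the non-linear terms under $\Pi_N$, I keep all estimates in $L^2$-based Besov spaces, where \eqref{eq:proj_bound_2} makes $\Pi_N$ a genuine contraction. The products are controlled via the multiplication inequalities \eqref{eq:mult_ineq_1}--\eqref{eq:mult_ineq_2} and \eqref{eq:Lp_mult}, using the $\BB^\bt_{2,2}$ bound on $v_N$ (which via \eqref{eq:Besov_Emb} also implies $L^p$ bounds for suitable $p < \infty$ since $\bt > 0$), together with the stochastic bound embedded as $\CC^{-\al} \subset \BB^{-\al-\lambda}_{2, 2}$ (valid on $\TT^2$ since $L^\infty \subset L^2$). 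The heat semigroup then absorbs the resulting spatial regularity gaps into integrable time singularities in $r$, and summing up gives $\|v_N(t_0;x)\|_{\CC^\bt} \leq C(t_0, K, R)$.

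The main obstacle is preserving uniformity in $N$: the projection estimate \eqref{eq:proj_bound} would pick up a prohibitive $(\log N)^2$ factor whenever $\Pi_N$ acted on an $L^\infty$-based norm. The workaround, sketched above, is to route every non-linear estimate through $L^2$-based Besov spaces where \eqref{eq:proj_bound_2} gives a true contraction, and then pay for the lost spatial regularity with the smoothing of the heat semigroup at the price of additional but integrable time blow-up at $r = t_0$.
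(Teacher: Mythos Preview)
Your overall strategy matches the paper's: obtain a $\BB^\bt_{2,2}$ bound on $v_N$ via Proposition~\ref{prop:L_2_v_exist}, then bootstrap to an $L^\infty$-based norm through the mild form on $[t_0/2,t_0]$. Two points of divergence are worth noting.

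First, your ``main obstacle'' rests on a misreading of \eqref{eq:proj_bound}: that estimate \emph{is} uniform in $N$, because the small regularity loss $\lambda$ kills the $(\log N)^2$ coming from the Dirichlet kernel at the $O(1)$ Littlewood--Paley blocks near frequency $N$. The paper therefore works directly in $\BB^\cdot_{p,\infty}$ for the fixed $p\in(1,2)$ from the start of the appendix and applies \eqref{eq:proj_bound} with a $\lambda$-loss, which the heat semigroup absorbs. Your $L^2$-based alternative also goes through for the stated $\CC^{-\al}$ target (from $v_N\in\BB^\bt_{2,2}\subset L^r$ with $r$ just below $2/(3(1-\bt))>1$ one gets $v_N^3\in L^{r/3}\hookrightarrow\BB^{1-6/r}_{2,\infty}$ and the resulting time singularity is integrable), though for your stronger $\CC^\bt$ target it would force $\bt>1/2$. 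Either way, the detour is unnecessary.

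Second, for extending beyond $T_*$ the paper does not iterate with shifted stochastic objects. It invokes the pathwise $L^2$ a~priori bound \eqref{eq:v_approx_bd} established in the proof of Proposition~\ref{prop:X_approx_bd}: under the hypothesis on the $\<n>_N$ this gives $\|v_N(t;x)\|_{L^2}\leq C(K)\,t^{-1/2}$ on all of $(0,1]$, uniformly in $x$, which rules out blow-up and forces $T_*=1$ directly. Your restart argument is also valid but requires controlling the shifted objects $\<n>_{N,T_*}$ in terms of the original ones, which you leave implicit.
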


\begin{proof} Using the a priori estimate in Proposition \ref{prop:X_approx_bd} we can assume that $T_* =1$ in 
Proposition \ref{prop:L_2_v_exist}. This implies that
\begin{equation}\label{eq:L2_bd}
 \sup_{\|x\|_{\BB^{-\al}_{2,2}}\leq R}\sup_{t\leq 1} t^\gamma\|v_N(t;x)\|_{\BB^\bt_{2,2}} \leq C.
\end{equation}
For simplicity we assume that $t_0=1$. Let $S(t) = \ee^{\Delta t}$. Using the mild form of \eqref{eq:v_approx} we obtain that
\begin{align*}
 & \|v_N(1)\|_{\CC^{-\al}} \\
 & \quad \lesssim \underbrace{\|S(1/2)v_N(1/2)\|_{\CC^{-\al}}}_{=:I_1} + 
 \underbrace{\int_{1/2}^1 \|S(1-s)\Pi_N(v_N(s))^3\|_{\CC^{-\al}} \dd s}_{=:I_2} \\ 
 & \quad \quad + \underbrace{\int_{1/2}^1 \|S(1-s)\Pi_N\left(v_N(s)^2\eps^{\frac{1}{2}}\<1>_N(s)\right)\|_{\CC^{-\al}} \dd s}_{=:I_3} 
 + \underbrace{\int_{1/2}^1 \|S(1-s)\Pi_N\left(v_N(s)\eps\<2>_N(s)\right)\|_{\CC^{-\al}} \dd s}_{=:I_4} \\
 & \quad \quad + \underbrace{\int_{1/2}^{1} \|S(1-s)\Pi_N\eps^{\frac{3}{2}}\<3>_N(s)\|_{\CC^{-\al}} \dd s}_{=:I_5} 
 + \underbrace{\int_{1/2}^1 \|S(1-s)\eps^{\frac{1}{2}}\<1>_N(s)\|_{\CC^{-\al}} \dd s}_{=:I_6} \\
 & \quad \quad + \underbrace{\int_{1/2}^1 \|S(1-s)v_N(s)\|_{\CC^{-\al}} \dd s}_{=:I_7}.
\end{align*}
We treat each term separately.

\underline{Term $I_1$}:
\begin{equation*}
 I_1 \stackrel{\eqref{eq:Besov_Emb}}{\lesssim} \|S(1/2)v_N(1/2)\|_{\BB^{-\al+1}_{2,\infty}} 
 \stackrel{\eqref{eq:Heat_Smooth}}{\lesssim} \|v_N(1/2)\|_{\BB^{-\al}_{2,\infty}} 
 \lesssim \|v_N(1/2)\|_{\BB^{-\al}_{2,2}}
\end{equation*}
\underline{Term $I_2$}:
\begin{align*}
 I_2 & \stackrel{\eqref{eq:Besov_Emb}}{\lesssim} \int_{1/2}^1 \|S(1-s)\Pi_N(v_N(s)^3)\|_{\BB^{-\al+\frac{2}{p}}_{p,\infty}} \dd s
 \stackrel{\eqref{eq:Heat_Smooth}}{\lesssim} \int_{1/2}^1 (1-s)^{-\frac{-\al+\frac{2}{p}+\lambda}{2}} 
 \|\Pi_N(v_N(s)^3)\|_{\BB^{-\lambda}_{p,\infty}} \dd s \\
 & \stackrel{\eqref{eq:proj_bound},\eqref{eq:mult_ineq_1}}{\lesssim} \int_{1/2}^1 (1-s)^{-\frac{-\al+\frac{2}{p}+\lambda}{2}} 
 \|v_N(s)\|_{\BB^0_{3p,\infty}}^3 \dd s
 \stackrel{\eqref{eq:Besov_Emb}}{\lesssim} \int_{1/2}^1 (1-s)^{-\frac{-\al+\frac{2}{p}+\lambda}{2}} 
 \|v_N(s)\|_{\BB^{1-\frac{2}{3p}}_{2,\infty}}^3 \dd s \\ 
 & \stackrel{1-\frac{2}{3p}<\bt}{\lesssim} \int_{1/2}^1 (1-s)^{-\frac{-\al+\frac{2}{p}+\lambda}{2}} 
 \|v_N(s)\|_{\BB^\bt_{2,2}}^3 \dd s.
\end{align*}
\underline{Term $I_3$}:
\begin{align*}
 I_3 & \stackrel{\eqref{eq:Besov_Emb}}{\lesssim} 
 \int_{1/2}^1 \|S(1-s)\Pi_N\left(v_N(s)^2\eps^{\frac{1}{2}}\<1>_N(s)\right)\|_{\BB^{-\al+\frac{2}{p}}_{p,\infty}} \dd s \\
 & \stackrel{\eqref{eq:Heat_Smooth}}{\lesssim} \int_{1/2}^1 (1-s)^{-\frac{\frac{2}{p}+\lambda}{2}} 
 \|\Pi_N\left(v_N(s)^2\eps^{\frac{1}{2}}\<1>_N(s)\right)\|_{\BB^{-\al-\lambda}_{p,\infty}} \dd s \\
 & \stackrel{\eqref{eq:proj_bound},\eqref{eq:mult_ineq_2},\eqref{eq:mult_ineq_1}}{\lesssim}
 \int_{1/2}^1 (1-s)^{-\frac{\frac{2}{p}+\lambda}{2}} 
 \|v_N(s)\|_{\BB^{\al+\lambda}_{2p,\infty}}^2 \|\eps^{\frac{1}{2}}\<1>_N(s)\|_{\CC^{-\al}} \dd s \\
 & \stackrel{\eqref{eq:Besov_Emb}}{\lesssim} \int_{1/2}^1 (1-s)^{-\frac{\frac{2}{p}+\lambda}{2}} 
 \|v_N(s)\|_{\BB^{\al+\lambda+1-\frac{1}{p}}_{2,\infty}}^2 \|\eps^{\frac{1}{2}}\<1>_N(s)\|_{\CC^{-\al}} \dd s \\
 & \stackrel{1-\frac{2}{3p}<\bt}{\lesssim} \int_{1/2}^1 (1-s)^{-\frac{\frac{2}{p}+\lambda}{2}} 
 \|v_N(s)\|_{\BB^\bt_{2,2}}^2 \|\eps^{\frac{1}{2}}\<1>_N(s)\|_{\CC^{-\al}} \dd s.
\end{align*}
\underline{Term $I_4$}: Similarly to $I_3$,
\begin{equation*}
 I_4 \lesssim \int_{1/2}^1 (1-s)^{-\frac{\frac{2}{p}+\lambda}{2}} 
 \|v_N(s)\|_{\BB^\bt_{2,2}} \|\eps\<2>_N(s)\|_{\CC^{-\al}} \dd s.
\end{equation*}
\underline{Terms $I_5$, $I_6$, $I_7$}:
\begin{align*}
 & I_5 \stackrel{\eqref{eq:Heat_Smooth}}{\lesssim} \int_{1/2}^1 (1-s)^{-\frac{\lambda}{2}} \|\Pi_N\eps^{\frac{3}{2}}\<3>_N(s)\|_{\CC^{-\al-\lambda}} \dd s
 \stackrel{\eqref{eq:proj_bound}}{\lesssim} \int_{1/2}^1 (1-s)^{-\frac{\lambda}{2}} \|\eps^{\frac{3}{2}}\<3>_N(s)\|_{\CC^{-\al-\lambda}} \dd s. \\
 & I_6 \stackrel{\eqref{eq:Heat_Smooth}}{\lesssim} \int_{1/2}^1 (1-s)^{-\frac{\lambda}{2}} \|\eps^{\frac{1}{2}}\<1>_N(s)\|_{\CC^{-\al-\lambda}} \dd s. \\
 & I_7 \stackrel{\eqref{eq:Besov_Emb}}{\lesssim} \int_{1/2}^1 \|S(1-s)v_N(s)\|_{\BB^{-\al+1}_{2,2}} \dd s
 \stackrel{\eqref{eq:Heat_Smooth}}{\lesssim} \int_{1/2}^1 (1-s)^{-\frac{-\al+1-\bt}{2}} \|v_N(s)\|_{\BB^\bt_{2,2}} \dd s.
\end{align*}
The proof is complete if we combine these estimates with \eqref{eq:L2_bd}. 
\end{proof}

\end{appendices}

\pdfbookmark{References}{references}

\bibliographystyle{alpha}
\bibliography{exp_loss_bibliography}{}

\newcommand{\noopsort}[1]{}
\begin{thebibliography}{KORVE07}

\bibitem[BCD11]{BCD11}
H.~Bahouri, J.-Y. Chemin, and R.~Danchin.
\newblock {\em Fourier analysis and nonlinear partial differential equations},
  volume 343 of {\em Grundlehren der Mathematischen Wissenschaften [Fundamental
  Principles of Mathematical Sciences]}.
\newblock Springer, Heidelberg, 2011.

\bibitem[BDGW17]{BGW17}
N.~Berglund, G.~Di~Gesù, and H.~Weber.
\newblock An {E}yring--{K}ramers law for the stochastic {A}llen--{C}ahn
  equation in dimension two.
\newblock {\em Electron. J. Probab.}, 22:1--27, 2017.

\bibitem[BEGK04]{BEGK04}
A.~Bovier, M.~Eckhoff, V.~Gayrard, and M.~Klein.
\newblock Metastability in reversible diffusion processes. {I}. {S}harp
  asymptotics for capacities and exit times.
\newblock {\em J. Eur. Math. Soc. (JEMS)}, 6(4):399--424, 2004.

\bibitem[BG13]{BG13}
N.~Berglund and B.~Gentz.
\newblock Sharp estimates for metastable lifetimes in parabolic {SPDE}s:
  {K}ramers' law and beyond.
\newblock {\em Electron. J. Probab.}, 18:no. 24, 58, 2013.

\bibitem[Bog07]{Bo07}
V.~I. Bogachev.
\newblock {\em Measure theory. {V}ol. {I}, {II}}.
\newblock Springer-Verlag, Berlin, 2007.

\bibitem[DPD03]{DPD03}
G.~Da~Prato and A.~Debussche.
\newblock Strong solutions to the stochastic quantization equations.
\newblock {\em Ann. Probab.}, 31(4):1900--1916, 2003.

\bibitem[DPZ92]{DPZ92}
G.~Da~Prato and J.~Zabczyk.
\newblock {\em Stochastic equations in infinite dimensions}, volume~44 of {\em
  Encyclopedia of Mathematics and its Applications}.
\newblock Cambridge University Press, Cambridge, 1992.

\bibitem[EKM97]{EKM97}
P.~Embrechts, C.~Kl\"uppelberg, and T.~Mikosch.
\newblock {\em Modelling extremal events}, volume~33 of {\em Applications of
  Mathematics (New York)}.
\newblock Springer-Verlag, Berlin, 1997.
\newblock For insurance and finance.

\bibitem[FJL82]{FJL82}
W.~G. Faris and G.~Jona-Lasinio.
\newblock Large fluctuations for a nonlinear heat equation with noise.
\newblock {\em J. Phys. A}, 15(10):3025--3055, 1982.

\bibitem[GIP15]{GIP15}
M.~Gubinelli, P.~Imkeller, and N.~Perkowski.
\newblock Paracontrolled distributions and singular {PDE}s.
\newblock {\em Forum Math. Pi}, 3:e6, 75, 2015.

\bibitem[Gra14]{Gr14}
L.~Grafakos.
\newblock {\em Classical {F}ourier analysis}, volume 249 of {\em Graduate Texts
  in Mathematics}.
\newblock Springer, New York, third edition, 2014.

\bibitem[Hai14]{Ha14}
M.~Hairer.
\newblock A theory of regularity structures.
\newblock {\em Invent. Math.}, 198(2):269--504, 2014.

\bibitem[HW13]{HW13}
M.~Hairer and H.~Weber.
\newblock Rough {B}urgers-like equations with multiplicative noise.
\newblock {\em Probab. Theory Related Fields}, 155(1-2):71--126, 2013.

\bibitem[HW15]{HW15}
M.~Hairer and H.~Weber.
\newblock Large deviations for white-noise driven, nonlinear stochastic {PDE}s
  in two and three dimensions.
\newblock {\em Ann. Fac. Sci. Toulouse Math. (6)}, 24(1):55--92, 2015.

\bibitem[JLM90]{JLM90}
G.~Jona-Lasinio and P.~K. Mitter.
\newblock Large deviation estimates in the stochastic quantization of
  {$\phi^4_2$}.
\newblock {\em Comm. Math. Phys.}, 130(1):111--121, 1990.

\bibitem[KORVE07]{KORV07}
R.~Kohn, F.~Otto, M.~G. Reznikoff, and E.~Vanden-Eijnden.
\newblock Action minimization and sharp-interface limits for the stochastic
  {A}llen-{C}ahn equation.
\newblock {\em Comm. Pure Appl. Math.}, 60(3):393--438, 2007.

\bibitem[MOS89]{MOS89}
F.~Martinelli, E.~Olivieri, and E.~Scoppola.
\newblock Small random perturbations of finite- and infinite-dimensional
  dynamical systems: {U}npredictability of exit times.
\newblock {\em Journ. Stat. Phys.}, 55(3):477--504, 1989.

\bibitem[MS88]{MS88}
F.~Martinelli and E.~Scoppola.
\newblock Small random perturbations of dynamical systems: {E}xponential loss
  of memory of the initial condition.
\newblock {\em Comm. Math. Phys.}, 120(1):25--69, 1988.

\bibitem[MW17a]{MW17ii}
J.-C. Mourrat and H.~Weber.
\newblock The dynamic {$\Phi^4_3$} model comes down from infinity.
\newblock {\em Comm. Math. Phys.}, 356(3):673--753, 2017.

\bibitem[MW17b]{MW17i}
J.-C. Mourrat and H.~Weber.
\newblock Global well-posedness of the dynamic {$\Phi^4$} model in the plane.
\newblock {\em Ann. Probab.}, 45(4):2398--2476, 2017.

\bibitem[MWX17]{MWX17}
J.-C. Mourrat, H.~Weber, and W.~Xu.
\newblock Construction of {$\Phi^4_3$} diagrams for pedestrians.
\newblock In {\em From particle systems to partial differential equations},
  volume 209 of {\em Springer Proc. Math. Stat.}, pages 1--46. Springer, Cham,
  2017.

\bibitem[Nel73]{Ne73}
E.~Nelson.
\newblock The free {M}arkoff field.
\newblock {\em J. Funct. Anal.}, 12:211--227, 1973.

\bibitem[RY99]{RY99}
D.~Revuz and M.~Yor.
\newblock {\em Continuous Martingales and Brownian Motion}, volume 293 of {\em
  Grundlehren der Mathematischen Wissenschaften [Fundamental Principles of
  Mathematical Sciences]}.
\newblock Springer, Heidelberg, 1999.

\bibitem[TW18]{TW18}
P.~Tsatsoulis and H.~Weber.
\newblock Spectral gap for the stochastic quantization equation on the
  2-dimensional torus.
\newblock {\em Ann. Inst. Henri Poincar\'e Probab. Stat.}, 54(3):1204--1249,
  2018.

\end{thebibliography}

\begin{flushleft}
\small \normalfont
\textsc{Pavlos Tsatsoulis\\
University of Warwick\\
Coventry, UK}\\
\texttt{\textbf{p.tsatsoulis@warwick.ac.uk}}

\smallskip

\small \normalfont
\textsc{Hendrik Weber\\
University of Warwick\\
Coventry, UK}\\
\texttt{\textbf{hendrik.weber@warwick.ac.uk}}
\end{flushleft}

\end{document}